\let\overfence\overbrace 
\let\downfencefill\downbracefill 
\patchcmd{\overfence}{\downbracefill}{\downfencefill}{}{}
\patchcmd{\downfencefill}{\braceru \bracelu}{}{}{}
\apptocmd{\sloppy}{\hbadness 10000\relax}{}{}
\newcommand{\comment}[1]{} 
\def\N{\mathbb{N}} 
\def\Z{\mathbb{Z}}
\def\C{\mathbb{C}}
\def\ob{\overfence}
\def\topdf{\texorpdfstring}
\theoremstyle{plain}
\newtheorem{thm}[equation]{Theorem}
\newtheorem{lem}[equation]{Lemma}
\newtheorem{coro}[equation]{Corollary} 
\newtheorem{prop}[equation]{Proposition}
\theoremstyle{definition}
\newtheorem{ex}[equation]{Example}
\theoremstyle{remark} 
 \newtheorem{rem}[equation]{Remark}
\newtheorem{lambada}[equation]{$\lambda$-assumption}
  \numberwithin{equation}{section}
\newtheorem*{ack}{Acknowledgements}
\newtheorem*{dedi}{Dedication} 
\newcommand{\booh}{\Upsilon}
\newcommand{\cE}{\mathcal E}
\newcommand{\cH}{\mathcal H}
\newcommand{\cI}{\mathcal I}
\newcommand{\cK}{\mathcal K}
\newcommand{\cN}{\mathcal N}
\newcommand{\cP}{\mathcal P}
\newcommand{\cR}{\mathcal R}
\newcommand{\cS}{\mathcal S}
\newcommand{\cT}{\mathcal T}
\newcommand{\cU}{\mathcal U}
\newcommand{\cV}{\mathcal V}
\def\fA{\mathfrak{A}}
\def\fB{\mathfrak{B}}
\def\fC{\mathfrak{C}}
\def\fE{\mathfrak{E}}
\def\fF{\mathfrak{F}}
\def\fI{\mathfrak{I}}
\def\fP{\mathfrak{P}}
\def\fT{\mathfrak{T}}
\newcommand{\BF}{\fB\fF}
\def\ab{\mathfrak{Ab}}
\def\abmon{\ab\mathfrak{Mon}}
\def\proj{\mathfrak{Proj}}
\def\Mod{\operatorname{Mod}}
\def\forg{\operatorname{forg}}
\def\can{\operatorname{can}}
\def\chara{\operatorname{char}}
\def\gr{\operatorname{gr}}
\def\alg{\mathrm{Alg}}
\def\Gr{\mathrm{Gr}}
\newcommand{\aha}{{\alg_\ell}}
\newcommand{\ahas}{{{\rm Alg}^*_\ell}}
\newcommand{\lra}{\longrightarrow}
\newcommand{\iso}{\overset{\sim}{\lra}}
\newcommand{\onto}{\twoheadrightarrow}
\newcommand{\ol}{\overline}
\def\reg{\operatorname{reg}}
\def\sing{\operatorname{sing}}
\def\sink{\operatorname{sink}}
\def\inf{\operatorname{inf}}
\def\sour{\operatorname{sour}}
\def\triqui{\vartriangleleft}
\def\supp{\operatorname{supp}}
\def\inc{\operatorname{inc}}
\def\rk{\operatorname{rk}}
\def\diag{\operatorname{diag}}
\def\Gl{\operatorname{GL}}
\def\ad{\operatorname{ad}}
\def\ev{\operatorname{ev}}
\def\id{\operatorname{id}}
\def\inv{\operatorname{inv}}
\newcommand{\coker}{{\rm Coker}}
\renewcommand{\ker}{{\rm Ker}}
\newcommand{\im}{\mathrm{Im}}
\newcommand{\dom}{\mathrm{Dom}}
\newcommand{\op}{\mathrm{op}}
\DeclareMathOperator*{\colim}{colim}
\def\tor{\operatorname{Tor}}
\def\Hom{\operatorname{Hom}}
\begin{document}
\hfuzz=22pt
\vfuzz=22pt
\hbadness=2000
\vbadness=\maxdimen

\author{Guillermo Cortiñas}
\thanks{Supported by CONICET and partially supported by grants PICT 2017-1395 from Agencia Nacional de Promoci\'on Cient\'\i fica y T\'ecnica, UBACyT 0150BA from Universidad de Buenos Aires, and PGC2018-096446-B-C21 from Ministerio de Ciencia e Innovaci\'on}
\title[Classifying Leavitt path algebras]{Classifying Leavitt path algebras up to involution preserving homotopy}

\begin{abstract} 
We prove that the Bowen-Franks group classifies the Leavitt path algebras of purely infinite simple finite graphs over a regular supercoherent commutative ring with involution where $2$ is invertible, equipped with their standard involutions, up to matricial stabilization and involution preserving homotopy equivalence. We also consider a twisting of the standard involution on Leavitt path algebras and obtain partial results in the same direction for purely infinite simple graphs. Our tools are $K$-theoretic, and we prove several results about (Hermitian, bivariant) $K$-theory of Leavitt path algebras, such as Poincar\'e duality, which are of independent interest. 
\end{abstract}

\maketitle

\section{Introduction}
A directed graph $E$ consists of a set $E^0$ of vertices and a set $E^1$ of edges together with source and range functions $r,s:E^1\to E^0$. This article is concerned with the Leavitt path algebra $L(E)$ of a directed graph $E$ over a commutative ring $\ell$ with involution \cite{lpabook}. When $\ell=\C$, with complex conjugation as involution, $L(E)$ is a normed $*$-algebra; its completion is the graph $C^*$-algebra $C^*(E)$.  A graph $E$ is called finite if both $E^0$ and $E^1$ are; a finite graph $E$ is purely infinite simple if and only if $C^*(E)$ is, which in turn is equivalent to a combination of graph-theoretic conditions on $E$ \cite{lpabook}*{Section 5.6}. A result of Cuntz and R{\o}rdam (\cite{ror}*{Theorem 6.5}) says that $C^*$-algebras of finite purely infinite simple graphs, i.e. purely infinite simple Cuntz-Krieger algebras, are classified up to (stable) isomorphism by the Bowen-Franks group $\BF$ of the corresponding graph, defined as
\[
\BF(E)=\coker(I-A_E^t).
\]
Here $A_E$ is the incidence matrix of $E$. It is an open question whether a similar result holds for Leavitt path algebras 
\cite{alps}. Here we consider the question for the Leavitt path algebra $L(E)$ over a fixed commutative ring $\ell$ with involution, viewed as a $*$-algebra over $\ell$ by means of its usual $\ell$-semilinear involution $*$. 

The following classification theorem follows from Theorem \ref{thm:class2} (see Example \ref{ex:apliclass}); the vocabulary and notations therein are explained right after the theorem. 

\begin{thm}\label{intro:class2}
Let $E$ and $F$ be finite, purely infinite simple graphs and let $\ell$ be regular supercoherent such that $2$ is invertible and $-1$ is positive in $\ell$.  Let $\xi_0:\BF(E)\iso \BF(F)$ be a group isomorphism. Then there exist $*$-homomorphisms $\phi:LE\to LF$ and $\psi:LF\to LE$, compatible with $\xi_0$ and $\xi_0^{-1}$, both very full, and such that $\psi\circ\phi\sim^*_{M_{\pm 2}}\id_{LE}$ and $\phi\circ\psi\sim^*_{M_{\pm 2}}\id_{LF}$. If furthermore $\xi_0([1]_E)=[1]_F$, then $\phi$ and $\psi$ can be chosen to be unital. 
\end{thm}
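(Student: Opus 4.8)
The plan is to deduce this classification result from the more general Theorem \ref{thm:class2}, which I am permitted to assume. The statement of Theorem \ref{intro:class2} explicitly says it ``follows from Theorem \ref{thm:class2} (see Example \ref{ex:apliclass})''. So the real content of my proof is to verify that the hypotheses of Theorem \ref{intro:class2} place us in a situation where the stronger classification theorem applies, and then to translate the output of that theorem into the stated conclusion about $*$-homomorphisms $\phi,\psi$ compatible with $\xi_0,\xi_0^{-1}$, very full, and inverse up to $M_{\pm2}$-stabilized involution-preserving homotopy.

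Let me think about the structure carefully. The key tool is presumably a $K$-theoretic (Hermitian bivariant) classification: the Bowen-Franks group $\BF(E)=\coker(I-A_E^t)$ is isomorphic to some $K$-theoretic invariant of $L(E)$ (likely $K_0$ of the Leavitt path algebra, or a $kk$-group $kk(LE,LF)$ in the bivariant setting). For purely infinite simple graphs, $K_0(L(E))\cong\BF(E)$ and $K_1$ vanishes (or is accounted for), and the class $[1]_E$ of the unit corresponds to a distinguished element. The general Theorem \ref{thm:class2} should assert that under regularity/supercoherence of $\ell$, invertibility of $2$, and positivity of $-1$, a $K$-theoretic (or Hermitian $KK$-theoretic) isomorphism lifts to an actual $*$-homomorphism, unique up to the appropriate homotopy, realizing the bivariant class. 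I would begin by recalling precisely the equivalence $\BF(E)\cong K_0(LE)$ for purely infinite simple finite graphs, and noting that these hypotheses on $\ell$ are exactly those needed to invoke the Hermitian-$K$-theoretic machinery (Poincaré duality, the algebraic analogue of Kirchberg--Phillips / Kasparov-type realization).

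The main steps I would carry out are as follows. First, I would observe that a purely infinite simple finite graph has $L(E)$ purely infinite simple as an algebra, so its ordered $K$-theory degenerates and $K_0(LE)\cong\BF(E)$ with the unit class $[1]_E$ being a distinguished generator-datum. Second, I would show the given group isomorphism $\xi_0:\BF(E)\iso\BF(F)$ corresponds, under this identification, to an isomorphism on the relevant $K$-theoretic invariant; this is the datum fed into Theorem \ref{thm:class2}. Third, I would apply Theorem \ref{thm:class2} to produce the $*$-homomorphisms $\phi$ and $\psi$ inducing $\xi_0$ and $\xi_0^{-1}$, together with the homotopies $\psi\phi\sim^*_{M_{\pm2}}\id$ and $\phi\psi\sim^*_{M_{\pm2}}\id$; the ``very full'' property and the $M_{\pm2}$-stabilization are features delivered by that theorem's construction (reflecting that classification is only up to matricial stabilization by $M_{\pm2}$). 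Finally, for the unital refinement, I would track the unit: if $\xi_0$ sends $[1]_E\mapsto[1]_F$, then the realizing $*$-homomorphisms preserve the class of the unit in $K_0$, and by the unital version of the realization theorem (again inside Theorem \ref{thm:class2}, using that in a purely infinite simple algebra a full projection equivalent to $1$ can be conjugated to $1$) one can arrange $\phi(1_{LE})=1_{LF}$ and similarly for $\psi$, making both unital.

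The step I expect to be the main obstacle is the unital refinement and, relatedly, the bookkeeping of the $M_{\pm2}$-stabilization. Realizing a $K_0$-isomorphism by an honest $*$-homomorphism up to \emph{involution-preserving} homotopy is delicate: the positivity hypothesis ``$-1$ is positive in $\ell$'' and $2\in\ell^\times$ are precisely what make the Hermitian $K$-theory behave like the complex $C^*$-algebraic case, collapsing the extra Witt-theoretic obstructions and forcing stabilization by the hyperbolic-type factor $M_{\pm2}$ rather than a larger matricial size. I would need to argue that compatibility with $\xi_0([1]_E)=[1]_F$ suffices to promote the general (nonunital, stabilized) equivalence to a unital one; the crux is that in a purely infinite simple setting two full projections with the same $K_0$-class are Murray--von Neumann $*$-equivalent, so the unit of $LE$ maps to a projection $*$-equivalent to $1_{LF}$, and an inner $*$-automorphism (unitary conjugation) then normalizes $\phi$ to a unital $*$-homomorphism. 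Verifying that this normalization is compatible with the stabilized homotopies, and that it can be done for $\phi$ and $\psi$ simultaneously and coherently, is where the careful work lies; everything else is a translation of the hypotheses into the input format of Theorem \ref{thm:class2}.
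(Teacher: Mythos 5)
Your proposal is correct and follows the paper's own route: Theorem \ref{intro:class2} is obtained by specializing Theorem \ref{thm:class2}, the only substantive check being that regular supercoherence of $\ell$ plus invertibility of $2$ gives the $\lambda$-assumption and the $K_0^h$-regularity of $L(E)$ and $L(F)$ (this is Lemma \ref{lem:lekreg}, recorded in Example \ref{ex:apliclass}), while positivity of $-1$ supplies the remaining hypothesis. The unital refinement and the very-fullness you worry about are already part of the statement of Theorem \ref{thm:class2}, so no further normalization argument is needed once that theorem is assumed.
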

In the theorem above,
\begin{equation}\label{intro:1E}
[1]_E=\sum_{v\in E^0}[v]\in\BF(E).
\end{equation}
A ring $R$ is \emph{regular} if every $R$-module admits a projective resolution of finite length, \emph{coherent} if the category of finitely presented $R$-modules is abelian, and \emph{supercoherent} if $R[t_1,\dots,t_n]$ is coherent for all $n\ge 0$. The hypothesis that $-1$ be positive in $\ell$ means that there exist $n\ge 1$ and $x_1,\dots,x_n\in\ell$ such that 
\begin{equation}\label{intro:posi}
-1=\sum_{i=1}^nx_ix_i^*.
\end{equation}
Hypothesis \eqref{intro:posi} is satisfied, for example, when $\ell=\C$ with trivial involution, but fails for $\C$ with complex conjugation as involution. A $*$-homomorphism is an involution preserving homomorphism between $*$-algebras over $\ell$. A projection $p$ in a unital $*$-algebra $R$ is \emph{very full} if there is $x\in pR$ such that $x^*x=1$. If $E$ is purely infinite simple and finite and $R$ is a unital $*$-algebra, a $*$-homomorphism $\phi:L(E)\to R$ is very full if $\phi(1)$ is very full. The compatibility condition above is made explicit in Theorem \ref{thm:class2} below. We write $M_2R$ for the algebra of $2\times 2$ matrices equipped with the conjugate transpose involution, and $M_{\pm}R$ for the same algebra but with the following involution
\begin{equation}\label{map:invopm}
\begin{bmatrix}a&b\\ c&d\end{bmatrix}^*=\begin{bmatrix}a^*&-c^*\\ -b^*&d^*\end{bmatrix}.
\end{equation}
Consider the upper left hand corner inclusions $\iota_1:R\to M_2R$ and $\iota_+:R\to M_{\pm}R$.
 We write $f\sim^*_{M_{\pm 2}}g$ to indicate that two $*$-homomorphisms $f,g:A\to R$ become homotopic via an involution preserving homotopy upon composing both of them with $\iota_+\iota_1$. 

In the absence of the positivity hypothesis \eqref{intro:posi}, we need to stabilize further.

Theorem \ref{thm:class1} implies the following (see Example \ref{ex:apliclass}).

\begin{thm}\label{intro:class1} 
Let $E$ and $F$ be purely infinite simple, finite graphs. Assume that $2$ is invertible in $\ell$. Let $\xi_0:\BF(E)\iso \BF(F)$ be an isomorphism. Then there are $*$-homomorphisms $\phi:L(E)\to M_{\pm}L(F)$ and $\psi:L(F)\to M_{\pm}L(E)$ compatible with $\xi_0$ and $\xi_0^{-1}$, both very full, and such that $M_{\pm}(\psi)\circ\phi\sim^s_{M_{\pm 2}}\iota_+^2:LE\to M_{\pm}M_{\pm}L(E)$ and $M_{\pm}(\phi)\circ\psi\sim^s_{M_{\pm 2}}\iota_+^2:L(F)\to M_{\pm}M_{\pm}L(F)$.
\end{thm}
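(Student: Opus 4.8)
The plan is to argue inside Hermitian bivariant algebraic $K$-theory, i.e. a triangulated category $kk^h$ of $*$-algebras over $\ell$ receiving $\ell$-linear $*$-homomorphisms and universal among functors that are homotopy invariant, excisive and stable for both $M_2$ and $M_{\pm}$. The first step is to record, for a finite graph $E$, the fundamental distinguished triangle expressing $L(E)$ through its incidence matrix,
\[
\textstyle\bigoplus_{E^0}\ell\ \xrightarrow{\ I-A_E^t\ }\ \bigoplus_{E^0}\ell\ \lra\ L(E),
\]
and to promote it to the Hermitian setting by means of the Poincar\'e duality for $L(E)$ advertised in the introduction, which is what renders the self-dual $*$-structure of $L(E)$ visible at the level of $kk^h$. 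Feeding the purely infinite simple hypothesis into this triangle (it forces $I-A_E^t$ to be injective, so the $K_1$-type term vanishes and $\coker(I-A_E^t)=\BF(E)$) I would read off that the $kk^h$-isomorphism type of $L(E)$ is governed by the group $\BF(E)$, with the pointed refinement by $[1]_E$ of \eqref{intro:1E} controlling unitality exactly as in Theorem \ref{intro:class2}.

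The second step turns the group isomorphism $\xi_0\colon\BF(E)\iso\BF(F)$ into an invertible class $\Xi\in kk^h_0(L(E),L(F))$, together with its inverse $\Xi^{-1}$. The third, and decisive, step is to realize $\Xi$ and $\Xi^{-1}$ by genuine $\ell$-linear $*$-homomorphisms. Here the purely infinite simple structure of $L(E)$ and $L(F)$ is indispensable: for algebras of this type the natural map from homotopy classes of very full $*$-homomorphisms, after matricial stabilization, to morphisms in $kk^h$ should be surjective and injective up to the appropriate stable homotopy relation $\sim^s$. This is the algebraic, Hermitian analogue of the Kirchberg--Phillips lifting of $KK$-classes to $*$-homomorphisms, and it is the step I expect to be the main obstacle: one must manufacture actual maps $\phi\colon L(E)\to M_{\pm}L(F)$ and $\psi\colon L(F)\to M_{\pm}L(E)$ from the $K$-theoretic data, track their effect on $\BF$ so that they are compatible with $\xi_0$ and $\xi_0^{-1}$, and arrange that $\phi(1)$ and $\psi(1)$ are very full.

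The last step is bookkeeping in $kk^h$. Both $M_{\pm}(\psi)\circ\phi$ and $\iota_+^2$ induce the class $\Xi^{-1}\Xi=\id$ on $L(E)$ (and symmetrically on $L(F)$), so the injectivity-up-to-$\sim^s$ half of the realization statement forces $M_{\pm}(\psi)\circ\phi\sim^s_{M_{\pm 2}}\iota_+^2$ and $M_{\pm}(\phi)\circ\psi\sim^s_{M_{\pm 2}}\iota_+^2$, as asserted. Finally, it is precisely the absence of the positivity hypothesis \eqref{intro:posi} that obliges us to settle for the stable relation $\sim^s$ and for the extra $M_{\pm}$-layer on the targets rather than the sharper conclusion of Theorem \ref{intro:class2}: positivity is what absorbs the hyperbolic stabilization and straightens stable homotopies into honest ones, so its removal is accounted for exactly by passing to $M_{\pm}L(-)$ and to $\sim^s$.
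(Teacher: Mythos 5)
Your overall architecture is the paper's: lift $\xi_0$ to an invertible class in $kk^h(L(E),L(F))$ using the fundamental triangle, realize that class and its inverse by very full $*$-homomorphisms into $M_{\pm}L(F)$ and $M_{\pm}L(E)$ via a lifting theorem (the paper's Theorem \ref{thm:kklift}, applicable because $-1$ is positive in $M_{\pm}R$), and then deduce the two stable $M_{\pm 2}$-homotopies from injectivity of that lifting (Corollary \ref{coro:kklift}). Your closing remark about why positivity forces the extra $M_{\pm}$-layer and the stable relation is exactly right. However, your first step contains a genuine error: purely infinite simplicity does \emph{not} force $I-A_E^t$ to be injective. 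For instance the graph on two vertices with incidence matrix $\left[\begin{smallmatrix}2&1\\1&2\end{smallmatrix}\right]$ is finite, cofinal, has cycles with exits, yet $I-A_E^t=\left[\begin{smallmatrix}-1&-1\\-1&-1\end{smallmatrix}\right]$ has rank one, so $\ker(I-A_E^t)\cong\Z$ and $\BF(E)\cong\Z$. Consequently the ``$K_1$-type term'' in the triangle does not vanish, and your claim that the $kk^h$-type of $L(E)$ is governed by $\BF(E)$ alone is unjustified as stated. What actually makes the lifting of $\xi_0$ work (Theorem \ref{thm:isokk} via Lemma \ref{lem:resol}) is that purely infinite simple finite graphs are regular, so $\sing(E)=\sing(F)=\emptyset$ and hence $\rk\ker(I-A_E^t)=\rk\ker(I-A_F^t)$ once $\BF(E)\cong\BF(F)$; one then chooses an auxiliary isomorphism between these free kernels and builds a quasi-isomorphism of the two-term complexes, which a filler argument in the triangulated category upgrades to an isomorphism $j^h(L(E))\iso j^h(L(F))$ compatible with $\xi_0$. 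Your argument needs this repair to cover graphs with $\BF$ of positive rank.

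Two smaller points. First, Poincar\'e duality is not what puts the fundamental triangle into the Hermitian setting: the triangle is obtained directly in $kk^h$ from the Cohn extension and the computation $j^h(C(E))\cong j^h(\ell)^{E^0}$ (Theorems \ref{thm:cekk} and \ref{thm:fundtriang}); duality enters instead in the proof of \emph{injectivity} of the lifting map, where it identifies the boundary term in the universal coefficient sequence (Lemma \ref{lem:dualindex} and Lemma \ref{lem:borde}). Second, the lifting theorem you invoke requires not only strict proper infiniteness of the target (Corollary \ref{coro:spipis}) and positivity of $-1$ in $M_{\pm}L(F)$, but also $K_0^h$-regularity of $L(F)$; this is the hypothesis supplied by Lemma \ref{lem:lekreg} (regular supercoherence of $\ell$ plus invertibility of $2$) and is explicitly assumed in Theorem \ref{thm:class1}, so it should be flagged rather than absorbed into ``algebras of this type.''
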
 

Here $\sim^s_{M_{\pm 2}}$ refers to stable $M_{\pm 2}$-$*$-homotopy, defined in Section \ref{sec:k0spi}, right before \eqref{defi:stably}. For very full $*$-homomorphisms it agrees with $M_{\pm 2}$-$*$-homotopy. 

Theorems \ref{intro:class2} and \ref{intro:class1} complement \cite{cm2}*{Theorem 6.1} where a similar classification result was obtained for Leavitt path algebras over a field, up to not necessarily involution preserving homotopy. In this  paper we also prove a version of the latter result that is valid for Leavitt path algebras over any regular supercoherent ring, Theorem \ref{thm:class3}. 
All these results can be seen as algebraic versions of a theorem of Cuntz (predating R\o rdam's article \cite{ror}) which says that $\BF$ classifies purely infinite simple Cuntz-Krieger $C^*$-algebras up to homotopy \cite{cclass}*{Proposition 1}.

We also consider the problem of the existence of homomorphisms connecting the Leavitt path algebras $L_2$ of the graph $\cR_2$ consisting of one vertex and two loops, and $L_{2^-}$ of its Cuntz splice. It is well-known that if $\ell$ is a field, then there is no isomorphism $L_2\iso L_{2^-}$ that is homogeneous with respect to the standard $\Z$-grading. Associated to this $\Z$-grading, we have a $\Z/2\Z$, even-odd grading. We show in Proposition \ref{prop:nograded} that if $\ell$ is regular supercoherent and such that the canonical map $\Z\to K_0(\ell)$ is an isomorphism (e.g. if $\ell=\Z$ or a field) then there is no $\Z/2\Z$-homogeneous unital $\ell$-algebra homomorphism from $L_2$ to $L_{2^-}$ nor in the opposite direction. R. Johansen and A. S\o rensen have shown in \cite{splice} that in the case $\ell=\Z$, $L_2$ and $L_{2^-}$, equipped with their standard involutions, are not isomorphic as $*$-rings. Here we consider, for any graph $E$ the \emph{signed} involution
\begin{equation}\label{intro:sign}
\ol{\,}:L(E)\to L(E)^{\op},\,\, \ol{v}=v,\,\, \ol{e}=-e^*,\,\, (v\in E^0, \,\, e\in E^1).
\end{equation}
We write $\ol{L(E)}$ for $L(E)$ equipped with the signed involution. Proposition \ref{prop:nograded} also shows that if $\ell=\C$ equipped with complex conjugation, then there is no unital $*$-homomorphism connecting $\ol{L_2}$ and $\ol{L_{2^-}}$.

The classification theorem for graph $C^*$-algebras of \cite{ror} uses Kasparov's bivariant $C^*$-algebra $K$-theory, which takes involutions into consideration. The classification result for Leavitt path algebras in \cite{cm2} uses the bivariant algebraic $K$-theory  of \cite{ct}, which is defined on the category of all algebras. Here we use the hermitian bivariant $K$-theory $kk^h$ introduced in \cite{cv}; it is a bivariant $K$-theory for $*$-algebras over a commutative ring $\ell$ with involution. The ring $\ell$ is assumed to satisfy the following.

\begin{lambada}\label{stan:lambda}
The ground ring $\ell$ contains an element $\lambda$ such that $\lambda+\lambda^*=1$.
\end{lambada}

For such $\ell$ there are a triangulated category $kk^h$ and a functor
$j^h:\ahas\to kk^h$ from the category of $*$-algebras which is homotopy invariant, matricially stable, hermitian stable, and satisfies excision and is universal with those properties. We write $[-1]$ for the suspension in $kk^h$ and set
\begin{equation}\label{intro:kkhn}
kk^h_n(A,B)=\hom_{kk^h}(j^h(A),j^h(B)[n]), \,\, kk^h(A,B)=kk_0^h(A,B).
\end{equation}
The main technical result about $kk^h$ that we prove in this paper, which is key to the proofs of Theorems \ref{intro:class2} and \ref{intro:class1}, is Theorem \ref{thm:kklift}, which we reproduce in part below as Theorem \ref{intro:kklift}. 
We call a $*$-algebra $R$ \emph{strictly properly infinite} if it contains two orthogonal isometries. A ring $R$ is regular with respect to a functor $F$ if $F(R)\to F(R[t_1,\dots,t_n])$ is an isomorphism for all $n\ge 1$. We write $K_0^h(R)$ for the Witt-Grothendieck group of $R$ (see Section \ref{sec:k0}). We write $[L(E),R]^f_{M_{\pm 2}}$ for the set of $M_{\pm 2}$-homotopy classes of very full $*$-homomorphisms; if $R$ is strictly properly infinite, this set has a natural semigroup structure (see Remark \ref{rem:Psemigroup}). 

\begin{thm}\label{intro:kklift}
Let $E$ be a finite, purely infinite simple graph and $R\in\ahas$ a $K_0^h$-regular, strictly properly infinite $*$-algebra over a ring $\ell$ satisying the $\lambda$-assumption. Assume that $-1$ is positive in $R$. Then the map
\begin{equation}\label{intromap:kklift}
j^h:[L(E),R]^f_{M_{\pm 2}}\to kk^h(L(E),R)
\end{equation}
is a semigroup isomorphism. 
\end{thm}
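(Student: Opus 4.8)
The plan is to show that $j^h$ is a well-defined homomorphism of semigroups and then to prove bijectivity by computing both the source and the target as one and the same explicit group, namely the Bowen--Franks data of $E$ with coefficients in the Witt--Grothendieck group $K_0^h(R)$. Well-definedness and additivity come first and are the soft part: homotopy invariance together with matricial and hermitian stability of $kk^h$ ensure that $M_{\pm 2}$-homotopic very full $*$-homomorphisms have equal image, so $j^h$ descends to classes. For the semigroup structure, the sum on $[L(E),R]^f_{M_{\pm 2}}$ of Remark~\ref{rem:Psemigroup} is formed by compressing $\phi$ and $\psi$ by the two orthogonal isometries of the strictly properly infinite algebra $R$ and adding; since these isometries become invertible in $kk^h$ by stability, one gets $j^h(\phi\boxplus\psi)=j^h(\phi)+j^h(\psi)$, so that $j^h$ is a semigroup homomorphism into the abelian group $kk^h(L(E),R)$.

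Next I would compute the target. Using the fundamental distinguished triangle attached to $L(E)$, of the form
\[
j^h(\ell)^{E^0}\xrightarrow{\ I-A_E^t\ }j^h(\ell)^{E^0}\longrightarrow j^h(L(E))\longrightarrow j^h(\ell)^{E^0}[1],
\]
I would apply the cohomological functor $kk^h(-,R)$ to obtain a long exact sequence. Here $K_0^h$-regularity of $R$ is what forces $kk^h(\ell,R)=K_0^h(R)$ and controls the neighbouring terms, while the positivity of $-1$ in $R$ is what identifies the hermitian stabilization governing the involution on $\ell^{E^0}$ with ordinary matrix stabilization, so that the induced map is the expected one and the sign in \eqref{map:invopm} becomes harmless. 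For $E$ purely infinite simple the relevant boundary maps collapse, and the sequence identifies $kk^h(L(E),R)\cong\coker\!\big(I-A_E^t:K_0^h(R)^{E^0}\to K_0^h(R)^{E^0}\big)$, i.e. the Bowen--Franks group of $E$ with coefficients in $K_0^h(R)$.

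Then I would compute the source by the same recipe, using the structure theory of very full $*$-homomorphisms from Section~\ref{sec:k0spi}. Such a homomorphism is the same as a very full Cuntz--Krieger $E$-family in $R$, and the assignment $\phi\mapsto([\phi(v)])_{v\in E^0}$ lands in $K_0^h(R)^{E^0}$, respects the Cuntz--Krieger relation, and so factors through $\coker(I-A_E^t)$. The heart of the matter, and the step I expect to be the main obstacle, is to prove that this assignment is a \emph{bijection} onto $\coker(I-A_E^t)$ up to $M_{\pm 2}$-homotopy: surjectivity requires lifting an arbitrary tuple of $K_0^h$-classes to an honest very full family, and injectivity requires that two families with equal $K_0^h$-data be joined by an $M_{\pm 2}$-homotopy. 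Both are where strict proper infiniteness (to absorb and rotate isometries) and the positivity of $-1$ (so that $M_\pm$-stabilization costs nothing) do the real work, this being the algebraic hermitian counterpart of Kirchberg--Phillips existence and uniqueness for purely infinite simple algebras. Finally I would verify that the two identifications are compatible, since the connecting map of the fundamental triangle sends the class of $\phi$ to the tuple $([\phi(v)])_{v\in E^0}$ by construction; hence the composite
\[
[L(E),R]^f_{M_{\pm 2}}\xrightarrow{\ j^h\ }kk^h(L(E),R)\xrightarrow{\ \sim\ }\coker(I-A_E^t)
\]
equals the parametrization $\phi\mapsto([\phi(v)])_{v}$, and bijectivity of $j^h$ follows at once from bijectivity of the parametrization.
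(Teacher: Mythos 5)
There is a genuine gap, and it lies exactly where you place the ``heart of the matter.'' Your computation of the target is wrong: applying $kk^h(-,R)$ to the fundamental triangle does \emph{not} yield $kk^h(L(E),R)\cong\coker\bigl(I-A_E^t:K_0^h(R)^{E^0}\to K_0^h(R)^{E^0}\bigr)$. First, the variance is off --- $kk^h(L(E),R)$ is contravariant in the first slot, so the $K_0^h$-contribution is $\hom(\BF(E),KH_0^h(R))$, not a cokernel/tensor product. Second, and more seriously, the boundary maps do \emph{not} collapse: the long exact sequence produces an extension
\[
0\to KH_1^h(R)\otimes\BF^\vee(E)\to kk^h(L(E),R)\to \hom(\BF(E),KH_0^h(R))\to 0,
\]
(this is Theorem \ref{thm:uct} of the paper), and for $E$ purely infinite simple with $\BF(E)$ finite and nontrivial the group $\BF^\vee(E)=\coker(I-A_E)$ is finite and nontrivial, so the $KH_1^h$-term does not vanish for general $K_0^h$-regular strictly properly infinite $R$ (e.g.\ $R$ a Leavitt path algebra). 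Consequently your proposed parametrization $\phi\mapsto([\phi(v)])_{v}$ of the source by $K_0^h$-data cannot be injective up to $M_{\pm 2}$-homotopy: two very full $*$-homomorphisms with identical $K_0^h$-data differ by a unitary $u\in\cU(R_\phi)$ (i.e.\ $\psi=\phi_u$), and the class of $u$ contributes a generally nonzero element of the $KH_1^h(R)\otimes\BF^\vee(E)$ summand. The ``existence and uniqueness'' statement you hope to prove at the level of $K_0$-data alone is false.

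The paper's proof is organized precisely around this $K_1$-obstruction. Surjectivity combines the $K_0$-lifting theorem (Theorem \ref{thm:k0lift}) with Lemma \ref{lem:borde}, which shows $j^h(\phi_u)=j^h(\phi)+\partial([u])$, so that twisting by unitaries sweeps out the whole fiber of $\ev$ over a given $K_0^h$-datum. Injectivity reduces, via Lemma \ref{lem:agree*} and Lemma \ref{lem:vmorite}, to the case $\psi=\phi_u$ with $\partial([u])=0$; one then needs to know that the kernel of $\partial$ on $KV_1^h(R_\phi)\cong KV_1^h(R)^{E^1}$ is exactly the image of $1-B_E^t$ (diagram \eqref{diag:lambdaB}), which is where $K_0^h$-regularity (to identify $KH_1^h$ with $KV_1^h$), the description of $KV_1^h$ for strictly properly infinite algebras (Lemma \ref{lem:kv1}), and Poincar\'e duality together with Lemma \ref{lem:dualindex} (to identify the injective arrow in the universal coefficient sequence) are all used. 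None of this machinery appears in your outline, and without it the argument does not close. Your soft observations (well-definedness, additivity of $\boxplus$ under $j^h$, the role of positivity of $-1$ in identifying $K_0(R)^*$ with $K_0^h(R)$) are correct but are only the preamble.
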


We show in Corollary \ref{coro:spipis} that if $F$ is finite purely infinite simple then $L(F)$ is strictly properly infinite. If $\ell$ is regular supercoherent, then $L(F)$ is $K_0$-regular, and even $K_0^h$-regular if $2$ is invertible in $\ell$ (Lemma \ref{lem:lekreg}). Thus if $-1$ is positive in $\ell$, Theorem \ref{intro:kklift} reduces the proof of Theorem \ref{intro:class2}, to showing that an isomorphism $\BF(E)\iso \BF(F)$ lifts to an isomorphism $j^h(L(E))\iso j^h(L(F))$. If $-1$ is not positive in $\ell$, then it cannot be positive in $L(E)$ (Lemma \ref{lem:-1}), hence we cannot apply Theorem \ref{intro:kklift} with $R=L(F)$. Observe however that $-1$ is positive in $M_{\pm}$ (see Example \ref{ex:mpm-1}); the proof of Theorem \ref{intro:class1} uses Theorem \ref{intro:kklift} applied to $R=M_{\pm}L(F)$ and the fact that $\iota_+$ is an isomorphism in $kk^h$. 

If $F$ is finite and regular, then $-1$ is positive in $\ol{L(F)}$ (see \eqref{eq:-1ole}). We show in Corollary \ref{coro:spipis} that if $F$ is finite and purely infinite simple, then $\ol{L(F)}$ is strictly properly infinite, hence Theorem \ref{intro:kklift} applies to $R=\ol{L(F)}$. In order to obtain a classification result similar to those of Theorems \ref{intro:class2} and \ref{thm:class1} for Leavitt path algebras of finite purely infinite simple graphs equipped with the involution \eqref{intro:sign}, we would need to have an analogue of Theorem \ref{intro:kklift} with $\ol{L(E)}$ substituted for $L(E)$. The proof of the surjectivity of \eqref{intromap:kklift} uses Theorem \ref{thm:k0lift} which in turn relies on the fact that the edges of $E$ are partial isometries of $L(E)$. This is no longer the case in $\ol{L(E)}$, as $e\ol{e}e=-e$ for $e\in E^1$, so our argument does not apply. However we do manage to show, in Theorem \ref{thm:olkklift}, that for $E$ and $R$ as in Theorem \ref{intro:kklift}, the map
\begin{equation}\label{intromap:olkklift}
j^h:[\ol{L(E)},R]^f_{M_{\pm 2}}\to kk^h(\ol{L(E)},R)
\end{equation}
is injective. The proof of this injectivity result as well as of the injectivity  of \eqref{intromap:kklift} uses Poincar\'e duality for Leavitt path algebras, established in Theorem \ref{thm:duality}, which says that if $E$ is finite and has no sinks and no sources and $E_t$ is the dual graph, then for any two $*$-algebras $R$ and $S$ there are isomorphisms
\begin{equation}\label{intro:pd}
kk^h(R\otimes L(E),S)\cong kk_1^h(R,S\otimes L(E_t)),\,\, kk^h(R\otimes \ol{L(E)},S)\cong kk_1^h(R,S\otimes \ol{L(E_t)}).
\end{equation}
This is a purely algebraic version of the analogue statement for graph $C^*$-algebras established by J. Kaminker and I. Putnam in \cite{kamiput}.   

The rest of this article is organized as follows. Section \ref{sec:prelis} contains some basic material about $*$-algebras, bivariant hermitian $K$-theory and Cohn and Leavitt path algebras. In particular we recall from \cite{cv} that  $kk_*^h(\ell,R)=KH^h_*(R)$ is a Weibel-style \cite{kh} homotopy invariant version of hermitian $K$-theory $K^h$, and is related to the latter via a comparison map $K_*^h(R)\to KH_*^h(R)$ that is an isomorphism when $R$ is $K^h$-regular. For example we show in Lemma \ref{lem:lekreg} that if $R$ is regular supercoherent and $E$ is countable, then $L(E)\otimes R$ is $K$-regular, and even $K^h$-regular if furthermore $2$ is invertible in $R$. In Lemma \ref{lem:-1} we show that if $E^0$ is finite then $-1$ is positive in $L(E)$ if and only if it is positive in $\ell$, in which case $L(E)\cong \ol{L(E)}$. 

The category $kk^h$ is enriched over $KH_0^h(\ell)=kk^h(\ell,\ell)$, which is an algebra over $\Z[\sigma]$, the group ring of the group with two elements. In Section \ref{sec:lpaskkh} we prove Theorem \ref{thm:gentri}, which says that if $\cT$ is a triangulated category and $X:\ahas\to\cT$ is a homotopy invariant, matricially and hermitian stable and excisive which commutes with direct sums of at most $|E^0|$ summands, then there are distinguished triangles in $\cT$
\begin{gather}
\xymatrix{X(\ell)^{(\reg(E))}\ar[r]^{I-A_E^t}&X(\ell)^{(E^0)}\ar[r]& X(L(E))}\label{intro:xtri1}\\
\xymatrix{X(\ell)^{(\reg(E))}\ar[r]^(.55){I-\sigma A_E^t}&X(\ell)^{(E^0)}\ar[r]& X(\ol{L(E)}).}\label{intro:xtri2}
\end{gather}
In particular this applies to $j^h$ when $E^0$ is finite (Theorem \ref{thm:fundtriang}). Both triangles above are obtained from the standard presentation $L(E)=C(E)/\cK(E)$ as a quotient of the Cohn algebra by an ultramatricial ideal. The key calculation is done in Theorem \ref{thm:cekk}, where we show that $j^h(C(E))\cong j^h(\ol{C(E)})\cong j^h(\ell)^{E^0}$. 

In Section \ref{sec:kkhbf} we introduce, for a graph $E$, the $\Z[\sigma]$-module
\[
\ol{\BF}(E)=\coker(I-\sigma A_E^t).
\]
We use the results of the previous section, and $K^h$-regularity Lemma \ref{lem:lekreg}, to make some hermitian $K$-theory computations in Corollaries \ref{coro:kseq} and
\ref{coro:kseq2}. For example, the latter corollary proves that if $\ell$ is a field with $\chara(\ell)\ne 2$ and $E$ is countable, then 
\begin{equation}\label{intro:kh0}
K^h_0(L(E))=\BF(E)\otimes_\Z K_0^h(\ell), \, K^h_0(\ol{L(E)})=\ol{\BF}(E)\otimes_{\Z[\sigma]}K_0^h(\ell). 
\end{equation}

We also show, in Lemma \ref{lem:phantom}, that if $E^0$ is finite then $j^h(L(E))=0$ if and only if $\BF(E)\otimes KH_0^h(\ell)=0$ and that $j^h(\ol{L(E)})=0$ if and only if $\ol{\BF}(E)\otimes_{\Z[\sigma]}KH_0^h(\ell)=0$. In Example \ref{ex:phantom} we exhibit a purely infinite simple finite graph $\Upsilon$ such that $j^h(L(\Upsilon))=j^h(\ol{L(\Upsilon)})=0$.

In Section \ref{sec:nonex} we observe that if $\ell$ is regular supercoherent and $E^0$ is finite, then for the $K$-theory of $\Z/2\Z$-graded projective modules, 
\begin{equation}\label{intro:kgr0}
K_0^{\gr}(L(E))=\ol{\BF}(E)\otimes_{\Z}K_0(\ell).
\end{equation}
The results about non-existence of unital homomorphisms connecting $L_2$ and $L_{2^-}$ mentioned above are proved in Proposition \ref{prop:nograded}. The proof combines \eqref{intro:kh0}, \eqref{intro:kgr0} and the computation
\[
\ol{\BF}(\cR_2)=\Z/2\Z,\,\, \ol{\BF}(\cR_{2^-})=\Z/7\Z.
\]
In Section \ref{sec:struct} we classify $j^h(L(E))$ and $j^h(\ol{L(E)})$ up to isomorphism. For example, we show in Theorem \ref{thm:isokk}  that if $E$ and $F$ have finitely many vertices and the same number of singular vertices, then any group isomorphism $\BF(E)\iso \BF(F)$ lifts to an isomorphism $j^h(L(E))\iso j^h(L(F))$; this is key to the strategy, explained above, of the proof of Theorem \ref{thm:kklift}. We also show, in Theorem \ref{thm:isokk2}, that if furthermore $\ker(I-\sigma A_E^t)=\ker(I-\sigma A_F^t)=0$, then any $\Z[\sigma]$-module isomorphism $\ol{\BF}(E)\iso\ol{\BF}(F)$ lifts to an isomorphism $j^h(\ol{L(E)})\iso j^h(\ol{L(F)})$. In Proposition \ref{prop:bfolbf} and Remark \ref{rem:bfolbf2}, we also characterize, under some hypothesis on $\ell$, those finite graphs $E$ such that $\BF(E)$ and $\ol{\BF}(E)$ are finite and isomorphic, an show that for such $E$, $j^h(\ol{L(E)})$ is a direct summand of $j^h(L(E))$. 

In Section \ref{sec:k0} we consider, for a unital $*$-algebra $R$, the group completion $K_0(R)^*$ of the monoid $\cV_\infty(R)^*$ of Murray-von Neumann equivalences classes of projections in $M_\infty R$. If $R$ is a $C^*$-algebra, $K_0(R)^*$ is just $K_0(R)$. For any unital $*$-algebra $R$, the hermitian Witt-Grothendieck group of $R$ is $K_0^h(R)=K_0(M_{\pm} R)^*$. The inclusion $\iota_+$ induces a natural transformation $K_0(R)^*\to K_0^h(R)$, which is an isomorphism if $-1$ is positive in $R$. We consider the concept of a strictly full projection and show in Lemma \ref{lem:vmorite} that if $p\in R$ is strictly full, then the inclusion $pRp\subset R$ induces an isomorphism in $\cV_\infty(-)^*$ and $K_0(-)^*$. For example if $-1$ is positive in $R$ then $\iota_+(1)\in M_{\pm}R$ is strictly full, and so the natural map $K_0(R)^*\to K_0^h(R)$ is an isomorphism. Remark \ref{rem:can'} 
introduces, for a graph $E$ with finite $E^0$, a canonical group homomorphism $\can':\BF(E)\to K_0(L(E))^*$.

In Section \ref{sec:k0spi} we prove several technical lemmas concerning projections and strictly properly infinite algebras. For example we show in Corollary \ref{coro:spipis} that if $E$ is finite and purely infinite simple, then $L(E)$ and $\ol{L(E)}$ are strictly properly infinite. This, together with the $K^h$-regularity result from Section \ref{sec:prelis} mentioned above (Lemma \ref{lem:lekreg}) allows us to apply Theorem \ref{intro:kklift} to $R=L(E), L(F)$ in the proof of Theorem \ref{intro:class2} and to $R=M_{\pm}L(E), M_{\pm}L(F)$ in that of Theorem \ref{intro:class1}.

In Section \ref{sec:k0lift} we prove Theorem \ref{thm:k0lift}, which says that if $E$ is a countable graph and $R$ a strictly properly infinite $*$-algebra,
then any group homomorphism 

\goodbreak

\noindent $\xi:\BF(E)\to K_0(R)^*$ lifts to a very full $*$-homomorphism $\phi:L(E)\to R$ such that $K_0(\phi)^*\circ\can'=\xi$. This result is used later on, in the proof of Theorem \ref{thm:kklift}, to show that the map \eqref{intromap:kklift} is onto. 

In Section \ref{sec:k1} we define the group $K_1(R)^*$ of a unital $*$-algebra $R$ as the abelianization of the infinite unitary group $\cU_\infty(R)=\bigcup_{n=1}^\infty \cU(M_nR)$. We show that $K_1(-)^*$ is invariant under passing to strictly full corners (Lemma \ref{lem:k1morite}). The usual hermitian $K_1$ is obtained as $K_1^h(R)=K_1(M_{\pm}R)^*$, and the natural map $\iota_+:K_1(R)^*\to K_1^h(R)$ is an isomorphism whenever $-1$ is positive in $R$ (Remark \ref{rem:k1=kh1}). We also consider a homotopy invariant, Karoubi-Villamayor style \cites{kv1,kv2} version of $K_1(-)^*$, which we call $KV_1(-)^*$. There is a canonical surjective map $K_1(R)^*\to KV_1(R)^*$ which is an isomorphism if $-1$ is positive in $R$ and $R$ is $K^h_0$-regular. Proposition \ref{prop:k1pi} and Lemma \ref{lem:kv1} describe $K_1(R)^*$ and $KV_1(R)^*$ for $R$ strictly properly infinite, as quotients of $\cU(R)$. This description is later used in the proofs of Theorems \ref{thm:kklift} and \ref{thm:olkklift}.

Section \ref{sec:duality} is concerned with Poincar\'e duality \eqref{intro:pd}, proved in Theorem \ref{thm:duality}.

In Section \ref{sec:uct} we use the triangles \eqref{intro:xtri1} and \eqref{intro:xtri2} to prove Theorem \ref{thm:uct}, which says that if $E$ is a graph with $|E^0|<\infty$ and $R$ is a $*$-algebra, then for 
\[
\ol{\BF}^\vee(E)=\coker(I^t-\sigma A_E)\,\text{ and }\, \BF^\vee(E)=\ol{\BF}^\vee(E)\otimes_{\Z[\sigma]}\Z,
\]
we have exact sequences
\begin{gather}\label{introseq:kkler}
0\to KH_1^h(R)\otimes\BF^\vee(E)\to kk^h(L(E),R)\to \hom(\BF(E),KH_0^h(R))\to 0,\\
0\to KH_1^h(R)\otimes_{\Z[\sigma]}\ol{\BF}^\vee(E)\to kk^h(\ol{L(E)},R)\to \hom_{\Z[\sigma]}(\ol{\BF}(E),KH_0^h(R))\to 0.\nonumber
\end{gather}
If $E$ is finite with no sinks and no sources, then $\BF^\vee(E)=\BF(E_t)$ and $\ol{\BF}^\vee(E)=\ol{\BF}(E_t)$. Combining this with Poincaré duality we obtain a description of the injective maps in \eqref{introseq:kkler} that is key in the proofs of Theorems \ref{thm:kklift} and \ref{thm:olkklift}.

Theorem \ref{intro:kklift} is proved in Section \ref{sec:kklift} as Theorem \ref{thm:kklift}. Theorem \ref{thm:olkklift} shows that the map \eqref{intromap:olkklift} is injective. The proofs use material from all of the previous sections. 

Section \ref{sec:class} uses Theorem \ref{intro:kklift} to prove Theorems \ref{intro:class2} and \ref{intro:class1}, which are respectively Theorem \ref{thm:class2} and Theorem \ref{thm:class1}. They classify Leavitt path algebras of finite, purely infinite simple graphs up to involution preserving homotopy and matricial stabilization. Theorem \ref{thm:class3} gives an analogous result under not necessarily involution preserving homotopy.

\begin{ack} I wish to express my gratitude towards Santiago Vega, whose contributions go beyond our collaboration in \cite{cv}. Thanks to Guido Arnone, who read several earlier drafts of this article and gave useful feedback.
Thanks also to Max Karoubi, Jonathan Rosenberg, Marco Schlichting and Chuck Weibel, whom I bothered (especially insistingly in the case of Marco) with several questions about Hermitian $K$-theory. I am also indebted to Joachim Cuntz for pointing out the relevance of some of his classic results and arguments to the present work. 
\end{ack}

\begin{dedi} I dedicate this article to the memory of my parents, Julio Corti\~nas (1933--2019) and Lina Villar (1934--2020).
\end{dedi}

\section{Preliminaries}\label{sec:prelis}

\subsection{Algebras and involutions}
\numberwithin{equation}{subsection}
 A commutative unital ring $\ell$ with involution $*$ is fixed throughout the article. A \emph{$*$-algebra} is an $\ell$-algebra $R$ equipped with an involution $*:R\to R^{\op}$ that is semilinear with respect to the $\ell$-module action; $(\lambda a)^*=\lambda^*a^*$ for all
$\lambda\in\ell$ and $a\in R$. We use the term $*$-ring for a $*$-$\Z$-algebra. A \emph{$*$-homomorphism} between $*$-algebras is an algebra homomorphism which commutes with involutions. We write $\aha$ for the category of algebras and algebra homomorphisms and 
$\ahas$ for the subcategory of $*$-algebras and $*$-homomorphisms.

Tensor products of algebras are taken over $\ell$; we write $\otimes$ for $\otimes_\ell$. We also use $\otimes$ for tensor products of abelian groups, e.g. $K_0(R)\otimes K_0(S)=K_0(R)\otimes_{\Z} K_0(S)$. If $R$ and $S$ are $*$-algebras, we regard $R\otimes S$ as a $*$-algebra with the tensor product involution $(a\otimes b)^*=a^*\otimes b^*$. If $L$ is a $*$-algebra and $A\in\ahas$, we shall often write $LA$ for $L\otimes A$.

\begin{ex}\label{ex:inv}
Let $A$ be a ring. Put $\inv(A)=A\oplus A^{\op}$ for the $*$-ring with the coordinatewise operations and involution $(a,b)\mapsto (b,a)$. If $B$ is a $*$-ring, then $\inv(A)\otimes_\Z B\to \inv(A\otimes_\Z B)$ $(a_1\otimes b_1,a_2\otimes b_2)\mapsto (a_1\otimes b_1,a_2\otimes b_2^*)$ is a $*$-isomorphism.  If $A$ is an algebra over a ground ring $\ell$, then $\inv(A)$ is a $*$-$\inv(\ell)$ algebra; this gives rise to a category equivalence 
$\inv:\aha\iso\alg^*_{\inv(\ell)}$. 
\end{ex}

The polynomial ring $B[t]=B\otimes_{\Z}\Z[t]$ with coefficients in a $*$-algebra is equipped with the tensor product involution, where
$\Z[t]$ has the trivial involution. 

An \emph{elementary $*$-homotopy} between two $*$-homomorphisms $f_0,f_1:A\to B$ is a $*$-homomorphism $H:A\to B[t]$ such that 
$\ev_i\circ H=f_i$ ($i=0,1$). We say that two $*$-homomorphisms $f,g:A\to B$ are \emph{$*$-homotopic}, and write $f\sim^*g$, if there is a finite sequence $f=f_0,\dots,f_n=g$ such that for each $i$ there is an elementary $*$-homotopy between $f_i$ and $f_{i+1}$. We write $[A,B]^*$ for the set of $*$-homotopy classes of $*$-homomorphisms $A\to B$.  

A $*$-ideal $I$ in a $*$-algebra $R$, denoted $I\triqui R$, is an $\ell$-$*$-subalgebra $I\subset R$ which is also a two sided ideal. An element $a$ in a $*$-algebra $R$ is \emph{self-adjoint} if $a^*=a$.

Let $X$ be a set; consider \emph{Karoubi's cone algebra} $\Gamma_X$ of square matrices indexed by $X$ with a finite coefficient set and a global bound on the size of the support of its rows and columns. Viewing $X\times X$-matrices with coefficients in $\ell$ as functions
$X\times X\to\ell$, the elements of $\Gamma_X$ are all the functions $a:X\times X\to\ell$ that satisfy the following conditions:
\[
|\im(a)|<\infty\,\text{ and }(\exists N) \max\{|\supp a(x,-)|,|\supp(a(-,x))|:x\in X\}\le N.
\]
 We equip $\Gamma_X$ with the product of matrices, or what is the same, with the convolution product of functions. Observe that $\Gamma_X$ contains the algebra of finitely supported matrices as an ideal
\[
M_X=\{a:X\times X\to \ell\colon\, |\supp(a)|<\infty\}\triqui\Gamma_X.
\]
The standard involution on $\Gamma_X$ is 
\[
*:\Gamma_X\to\Gamma_X,\, a^*(x,y)=a(y,x)^*\, (x,y\in X).
\]
Observe that $M_X$ is a $*$-ideal with respect to the standard involution. \emph{Karoubi's suspension algebra} is the quotient $*$-algebra
\[
\Sigma_X=\Gamma_X/M_X.
\]
If $(x,y)\in X\times X$, write $\epsilon_{x,y}$ for the matrix unit and $\iota_x:R\to M_XR$ for the corner embedding $\iota_x(a)=\epsilon_{x,x}a$. Observe that $\iota_x$ is a $*$-homomorphism for the standard involution on $M_X$ as well as for any other involution which makes $\epsilon_{x,x}$ self-adjoint. 
When $X=\{1,\dots,n\}$, we write $M_n$ for $M_X$. 
If $R$ is a $*$-algebra, we write $\Gamma_XR$ and $M_XR$ for $\Gamma_X\otimes R$ and $M_X\otimes R$. 

\subsection{\topdf{$\Z/2\Z$}{Z2}-gradings}\label{subsec:z2grad}

Let $G$ be an abelian group and $A=\bigoplus_{g\in G}A_g$ a $G$-graded algebra; the opposite algebra $A^{\op}$ is also $G$-graded with $A^{\op}_g=A_{-g}$. By a \emph{$G$-graded $*$-algebra} we understand a $G$-graded algebra equipped with an involution that $*:A\to A^{\op}$ is homogeneous of degree $0$. If $A=A_0\oplus A_1$ is a $\Z/2\Z$-graded $*$-algebra, then
\begin{equation}\label{map:eltau}
\tau:A\to A, \,\,\tau(a_0+a_1)=a_0-a_1
\end{equation}
is a $*$-automorphism. Composing it with the involution, we obtain a new involution 
\begin{equation}\label{invol}
\ol{\,}:A\to A^{\op},\,\, \ol{a_0+a_1}=a_0^*-a_1^*.
\end{equation}
Write $\ol{A}$ for $A$ equipped with the involution $\ol{\, }$. If $B=B_0\oplus B_1$ is another $\Z/2\Z$-graded $*$-algebra and 
$f:A\to B$ is a homogeneous $*$-homomorphism of degree $0$, then $f(\ol{a})=\ol{f(a)}$ for all $a\in A$. Hence the function $f$ defines a $*$-homomorphism 
\begin{equation}\label{map:olf}
\ol{f}:\ol{A}\to \ol{B},\,\, \ol{f}(a)=f(a).
\end{equation}
Note also that $A\mapsto \ol{A}$ commutes with tensor products of graded $*$-algebras; we have
\begin{equation}\label{eq:oltenso}
\ol{A\otimes B}=\ol{A}\otimes \ol{B}.
\end{equation}

\begin{ex}\label{ex:mpm} The algebra $M_2$ admits a $\Z/2\Z$-grading, where
$|\epsilon_{i,j}|\equiv i-j \mod (2)$. We write $M_{\pm}=\ol{M}_2$. 
 Thus $M_{\pm}$ is the algebra of $2\times 2$-matrices with coefficients in $\ell$, equipped with the involution 
\eqref{map:invopm}.
We write $\iota_+$ and $\iota_-$ for the upper left and lower right corner inclusions $\ell\to M_{\pm}$, respectively. For a $*$-algebra $R$, we put $M_{\pm}R=M_{\pm}\otimes R$ for the matrix algebra with the tensor product involution; we also abuse notation and write $\iota_{\pm}:R\to M_{\pm}R$, for $\iota_{\pm}\otimes\id_{R}$. It follows from \eqref{eq:oltenso} that 
\begin{equation}\label{eq:olmpm}
\ol{M_{\pm}R}=M_2\ol{R}. 
\end{equation}
\end{ex}

\begin{ex}\label{ex:x0x1} 
If $X$ is a set then any function $l:X\to \{0,1\}$ induces a $\Z/2\Z$-grading on $\Gamma_X$ that makes
$M_X$ a homogeneous ideal, where
$$
|\epsilon_{x,y}|\equiv l(x)+l(y) \mod (2).
$$
 
The grading of $M_2$ in Example \ref{ex:mpm} is  particular case of this. Observe also that, by passage to the quotient, we also obtain a $\Z/2\Z$-grading
on $\Sigma_X$. 
\end{ex}

\begin{ex}\label{ex:lemaptoolle}
If $A$ is any $\Z/2\Z$-graded $*$-algebra, then so are $M_2A$ and $M_{\pm}A$, with the tensor product grading. The involutions of
$M_2A$ and $M_\pm \ol{A}$ agree on the common $*$-subalgebra
\begin{equation}\label{fourier}
\hat{A}=\begin{bmatrix} A_0& A_1\\ A_1& A_0\end{bmatrix}
\end{equation}
We have a $*$-homomorphism
\begin{equation}\label{map:fourier}
A\to \hat{A},\,\, a_0+a_1\mapsto \begin{bmatrix} a_0& a_1\\ a_1& a_0\end{bmatrix}.
\end{equation}
Composing with the inclusions $\hat{A}\subset M_{\pm}\ol{A}$ and $\hat{A}\subset M_2A$ we get $*$-homomorphisms
\begin{equation}\label{map:delta} 
\Delta_A:A\to M_{\pm}\ol{A},\,\,\Delta'_A:A\to M_2A.
\end{equation}
A calculation shows that the following diagrams commute
\begin{align}\label{diag:delta}
\xymatrix{M_{2}A\ar[d]_{\Delta'_{M_{2}}}&A\ar[l]_(.4){\Delta'_A}\ar[dl]^{(\iota_1+\iota_2)\circ \Delta'_A} \\
 M_2M_2A&} &\xymatrix{A\ar[r]^{\Delta_A}\ar[d]_{(\iota_++\iota_-)\circ\Delta'_A}& M_{\pm}\ol{A}\ar[dl]^{\Delta_{M_{\pm}\ol{A}}}\\
M_{\pm}M_2A&}
\end{align}
\end{ex}
\begin{rem}\label{rem:ollegrade}
Let $\sigma$ be the generator of $\Z/2\Z$, written multiplicatively. Then
\[
\Z[\sigma]=\Z[\Z/2\Z]
\]
is a Hopf ring, and a $\Z/2\Z$-graded ring is the same thing as a comodule-algebra over $\Z[\sigma]$. One checks that the algebra $\hat{A}$ of \eqref{fourier} is the crossed product of $A$ with $\Z/2\Z$ under this coaction, as defined for example in \cite{gradstein}*{Definition 2.1}. In particular, $\hat{A}$ is equipped with a $\Z/2\Z$ action; the generator $\sigma$ acts by the automorphism
\[
\begin{bmatrix} a& b\\ c& d\end{bmatrix}\mapsto \begin{bmatrix} d& c\\ b& a\end{bmatrix}.
\] 
Observe that \eqref{map:fourier} is an isomorphism from $A$ onto the fixed ring under the automorphism above. Let $M=M_0\oplus M_1$ be a graded $A$-module; regarding an element $m_0+m_1\in M_0\oplus M_1$ as a column vector and using the matricial product, one obtains a $\hat{A}$-module $\hat{M}$ with the same underlying $\ell$-module $M$. Next assume that $A$ has graded local units in the sense of \cite{gradstein}*{Section 2.1}, and let $\Gr_{\Z/2\Z}\Mod A$ and $\Mod \hat{A}$ be the categories of graded and ungraded modules that are unital in the sense of \emph{loc.cit.} By \cite{gradstein}*{Proposition 2.5}, the functor 
\begin{equation}\label{map:gradfun}
\Gr_{\Z/2\Z}\Mod A\to \Mod \hat{A},\,\, M\mapsto \hat{M}
\end{equation}
is an isomorphism of categories, and maps the shift functor $M_*\mapsto M_{*+1}$ to the action of $\sigma$. 
\end{rem}

\subsection{Positivity}
Let $a\in A\in\ahas$ and $n\ge 1$; we call $a$ \emph{$n$-positive} if $a\ne 0$ and can be written as a sum $a=\sum_{i=1}^nx_ix_i^*$ for some $x_1,\dots,x_n\in A$, and \emph{positive} if it is $n$-positive for some $n$. We call $a$ \emph{negative} if $-a$ is positive. In a general $*$-algebra it can happen that an element is positive and negative at the same time. 

\begin{ex}\label{ex:field>0}
Assume that $\ell$ is a field of $\chara(k)\ne 2$. Then every self-adjoint element in $\ell$ can be written
as a difference of two $1$-positive elements. Hence $-1$ is positive if and only if every self-adjoint element of $\ell$ is positive. 
\end{ex}

\begin{ex}\label{ex:mpm-1}
The element
\[
x=\left[\begin{matrix}0& -1\\ 1&0\end{matrix}\right]\in M_{\pm}
\]
is self-adjoint and satisfies $x^2=-1$. Hence if $R$ is any unital $*$-algebra, then $-1$ is $1$-positive in $M_{\pm}R$.
\end{ex}

\begin{ex}\label{ex:zgr-1}
Let $L_1=\ell[t,t^{-1}]$ be the Laurent polynomials, with involution $t\mapsto t^{-1}$. A $\Z$-graded algebra $A=\bigoplus_{n\in\Z}A_n$ is an $L_1$-comodule algebra, and the comultiplication map $A\to A\otimes L_1$ which sends a homogeneous element $a$ to $at^{|a|}$ is a $*$-homomorphism. Now regard $A$ and $L_1$ as $\Z/2\Z$-graded, via their even/odd gradings. Then comultiplication defines a $*$-homomorphism
\[
c:\ol{A}\to A\otimes \ol{L}_1. 
\]
If now $R$ is a unital $*$-algebra and $x\in R$ a central element such that $xx^*=-1$, then $\mu_x:L_1\otimes R\to R$,
$t\otimes a\mapsto xa$ is a $*$-homomorphism. 
One checks that
\[
\theta_x:=(A\otimes \mu_x)\circ(c\otimes R):\ol{A}\otimes R\to A\otimes R
\] 
is a $*$-isomorphism with inverse $\theta_{x^{-1}}$. Similarly, the map
\[
\theta^x:\Hom_{\ahas}(A,R)\to \Hom_{\ahas}(\ol{A},R),\,\,\theta^x(f)=\mu_x\circ(f\otimes\ol{L_1})\circ c
\]
is bijective with inverse $\theta^{x^{-1}}$. 
\end{ex}

\begin{rem}\label{rem:xcentral} 
The hypothesis that $x$ be central --which is not satisfied in Example \ref{ex:mpm-1}-- is essential in Example \ref{ex:zgr-1}. For example $M_{\pm}\ol{A}$ and $M_{\pm}A$ are not isomorphic in general. Indeed, they have the same Hermitian $K_0$-groups $K_0^h$ as $\ol{A}$ and $A$, respectively, but in general, $K^h_0(A)\not\cong K^h_0(\ol{A})$; see Example \ref{ex:l=c}.  
\end{rem}

\begin{lem}\label{lem:-gamma}
Let $S$ be a set; equip $\Gamma_S$ with the standard involution. Then $-1$ is positive in $\Gamma_S$ if and only if it is positive in $\ell$. 
\end{lem}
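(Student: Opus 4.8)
The plan is to prove both implications directly at the level of matrix entries, the only real content being the finiteness constraints built into $\Gamma_S$.

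For the forward implication (positivity of $-1$ in $\ell$ yields positivity of $-1$ in $\Gamma_S$), I would transport a witness along the unital embedding $\ell\to\Gamma_S$, $\lambda\mapsto\lambda\cdot 1$, sending a scalar to the corresponding scalar diagonal matrix; this lies in $\Gamma_S$ since it has finite image and all row and column supports of size at most $1$. This map is a $*$-homomorphism because the identity matrix is self-adjoint for the standard involution, so from $-1=\sum_{i=1}^n\lambda_i\lambda_i^*$ in $\ell$ one gets $-1\cdot 1=\sum_{i=1}^n(\lambda_i 1)(\lambda_i 1)^*$ in $\Gamma_S$. Since positivity of $-1$ in $\ell$ forces $\ell\neq 0$, the element $-1$ is nonzero in $\Gamma_S$, so this expresses it as a positive element.

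For the converse, assume $-1=\sum_{i=1}^n x_ix_i^*$ in $\Gamma_S$ and fix any $s\in S$ (we take $S\neq\emptyset$, the only case in which $\Gamma_S\neq 0$). I would simply read off the $(s,s)$-entry of this identity. On the left it is $-1_\ell$; on the right, using the convolution product together with the standard involution $x_i^*(t,s)=x_i(s,t)^*$, the $(s,s)$-entry of $x_ix_i^*$ equals $\sum_{t\in S}x_i(s,t)\,x_i(s,t)^*$. Each such inner sum is finite because every element of $\Gamma_S$ has rows of finite support, and the sum over $i$ is finite too, so altogether $-1_\ell=\sum_{i,t}x_i(s,t)\,x_i(s,t)^*$ with only finitely many nonzero terms. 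Discarding the zero terms exhibits $-1$ as a sum of finitely many elements of the form $yy^*$ with $y\in\ell$, nonempty because $-1\neq 0$; hence $-1$ is positive in $\ell$.

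I expect no serious obstacle here: the argument is pure bookkeeping, and the single point deserving care is that the row-support bound in the definition of $\Gamma_S$ is exactly what guarantees the diagonal entry of $x_ix_i^*$ is a \emph{finite} sum of terms $yy^*$, keeping the conclusion within the finite-sum definition of positivity in $\ell$.
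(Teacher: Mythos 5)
Your proof is correct and follows essentially the same route as the paper's: the nontrivial direction is handled by reading off a diagonal entry of the identity $-1=\sum_i x_ix_i^*$, with the finite support of rows (the paper uses columns and the equivalent form $\sum_i y(i)^*y(i)$) guaranteeing that this entry is a finite sum of terms $yy^*$ in $\ell$. The only difference is that you spell out the easy direction via the unital embedding $\ell\to\Gamma_S$, which the paper dismisses as clear.
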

\begin{proof}
The if direction is clear. Assume conversely that $-1$ is positive in $\Gamma_S$. Then there are elements $y(1),\dots, y(n)\in \Gamma_S$ such that $-1=\sum_{i=1}^ny(i)^*y(i)$. Hence for $N=\max\{|\supp(y(i)_{*,1})|:1\le i\le n\}$, we have the following identity between elements of $\ell$
\[
-1=(\sum_{i=1}^ny(i)^*y(i))_{1,1}=\sum_{i=1}^n\sum_{j=1}^N (y(i)_{j,1})^\ast y(i)_{j,1}.
\]
Thus $-1$ is positive in $\ell$.
\end{proof}

\subsection{Hermitian bivariant \topdf{$K$}{K}-theory}

In this subsection we assume that $\ell$ satisfies the $\lambda$-assumption \ref{stan:lambda}. 

An \emph{extension} of $*$-algebras is a sequence of $*$-homomorphisms
\begin{equation}\label{seq:ext}
(E)\qquad \xymatrix{A\ar@{ >-}[r]^{i}&B\ar@{>>}[r]^{p}&C}
\end{equation}
with $i$ injective and $p$ surjective and which is exact as a sequence of $\ell$-modules. An extension is \emph{semi-split} if it is split as a sequence of $\ell$-modules. Under our standing Assumption \ref{stan:lambda}, if \eqref{seq:ext} is semisplit, then there exists an involution preserving linear map $s:C\to B$ such that $p\circ s=\id_C$. 
Let $\fT$ be a triangulated category; write $[-n]$ for the $n$-fold suspension in $\fT$. Let $\cE$ be the class of all semi-split extensions \eqref{seq:ext}. 
An \emph{excisive homology theory} on $\ahas$ with values in $\fT$ is a functor $\cH:\ahas\to\fT$ together with a family of maps
$\{\partial_E:\cH(C)[1]\to \cH(A)| E\in\cE\}$ such that for every $E\in\cE$, 
\[
\xymatrix{\cH(C)[1]\ar[r]^{\partial_E}&\cH(A)\ar[r]^{\cH(i)}& \cH(B)\ar[r]^{\cH(p)}& \cH(C)}
\]
is a triangle in $\fT$, and such that $\partial$ is compatible with maps of extensions in the sense of \cite{ct}*{Section 6.6}. 
Let $\cH:\ahas\to\fT$ be an excisive homology theory, $X$ an infinite set, $x\in X$ and $A\in\ahas$. Consider the natural evaluation and corner inclusion maps $\ev_0:A[t]\to A$ and $\iota_+:A\to M_{\pm}A$,
$\iota_x:A\to M_XA$. We say that a $\cH$ is \emph{homotopy invariant}, \emph{$\iota_+$-stable} and \emph{$M_X$-stable} if for every $A\in\ahas$, the maps  $\cH(\ev_0)$, $\cH(\iota_+)$ and $\cH(\iota_x)$ are isomorphisms in $\fT$. By \cite{cv}*{Lemma 2.4.1}, $M_X$-stability is independent of the choice of the element $x$ in the previous definition. 
It was proved in \cite{cv}*{Proposition 6.2.7} that for any infinite set  $X$, there exists an excisive, homotopy invariant, $\iota_+$-stable and $M_X$-stable homology theory $j^h:\ahas\to kk^h$, depending on $X$, such that if $\cH:\ahas\to\fT$ is any other excisive, homotopy invariant, $\iota_+$-stable and $M_X$-stable homology theory, then there exists a unique triangulated functor $\bar{\cH}:kk^h\to\fT$ such that $\bar{\cH}\circ j^h=\cH$. We fix such an $X$ and for $A,B\in\ahas$ and $n\in \Z$, we write
\[
kk^h_n(A,B)=\hom_{kk^h}(j^h(A),j^h(B)[n]),\, kk^h(A,B)=kk^h_0(A,B). 
\]
The suspension in $kk^h$ is represented by the Karoubi suspension; for any infinite set $Y$ with $|Y|\le |X|$ and any $A\in\ahas$, we have
\[
j^h(A)[-1]=j^h(\Sigma_YA).
\]
The inverse suspension is obtained by tensoring with $\Omega=(1-t)t\ell[t]$; for all $A\in\ahas$ we have
\[
j^h(A)[+1]=j^h(\Omega A).
\]
It was shown in \cite{cv}*{Proposition 8.1} that $kk^h$ recovers Weibel-style homotopy algebraic Hermitian $K$-theory; we have
\begin{equation}\label{eq:kkh=khh}
kk^h_n(\ell,B)=KH^h_n(B). 
\end{equation}
For a definition of $KH^h$ and its relation to the more standard Hermitian $K$-theory defined by Karoubi (sometimes called Grothendieck-Witt theory) see \cite{cv}*{Section ~3}; $K_0^h$ is discussed in Section \ref{sec:k0}. There is a natural map
\begin{equation}\label{map:khtokhh}
K_n^h(B)\to KH^h_n(B).
\end{equation}
Recall that if $F$ is a functor defined on a subcategory $\fC\subset\aha$ closed under tensoring with $\ell[t]$ and $A\in\fC$, then 
$A$ is \emph{$F$-regular} if for every $n\ge 1$, $F$ maps the inclusion $A\subset A[t_1,\dots,t_n]$ to an isomorphism. $A$ is $K^h$ or $K$-regular if it is $K^h_m$ or $K_m$-regular for every $m$. The map \eqref{map:khtokhh} is an isomorphism for all $n$ whenever $B$ is $K^h$-regular. If $B=\inv(B_0)$ for some unital ring $B_0$, then $B$ is $K^h$-regular if and only if it is $K$-regular. If multiplication by $2$ is invertible in $B$ and $B$ is $K$-regular and unital 
then it is $K^h$-regular \cite{cv}*{Lemma 3.8}. The last two assertions hold more generally for $B$ $K$-excisive in the sense of \cite{cv}*{Section 3}. 
 
It follows from \eqref{eq:kkh=khh} that for all $A,B\in\ahas$, the groups $kk^h(A,B)$ are modules over the ring $kk^h(\ell,\ell)=KH^h_0(\ell)$, and thus over $K^h_0(\ell)$ via the canonical map $K^h_0(\ell)\to KH^h_0(\ell)$. As will be recalled in Section \ref{sec:k0} below, $K_0^h(\ell)$ is the group completion of the monoid of equivalence classes of self-adjoint idempotent finite matrices over $M_{\pm}$. There is a unital ring homomorphism $\Z[\sigma]\to K^h_0(\ell)$ mapping $1$ to the class of $\iota_+(1)$ and $\sigma$ to the class of $\iota_{-}(1)$. Thus $kk^h(A,B)$ is
a $\Z[\sigma]$-module for all $A,B\in\ahas$. If $-1$ is $1$-positive in $\ell$, then $\sigma$ acts trivially in $kk^h$; $\sigma\xi=\xi$ for all
$\xi\in kk^h(A,B)$ and all $A,B\in\ahas$. 

\begin{rem}\label{rem:kkvkkh}
Let $\ell_0$ be a commutative ring and let $j:\alg_{\ell_0}\to kk(\ell_0)$ be the universal homotopy stable, $M_X$-stable and excisive homology theory of \cite{ct}. Let $\ell=\inv(\ell_0)$ and let $j^h:\ahas\to kk^h(\ell)$ be the universal homotopy stable, $M_X$-stable, hermitian stable and excisive homology theory. Then by \cite{cv}*{Example 6.2.11}, the category equivalence $\inv:\alg_{\ell_0}\to\ahas$ of Example \ref{ex:inv} induces a triangle equivalence $kk(\ell_0)\to kk^h(\ell)$. Thus $kk$ is a particular case of $kk^h$. 

\end{rem}  
\begin{lem}\label{lem:hermstab}
Assume that $\ell$ satisfies the $\lambda$-assumption \ref{stan:lambda}. Let $X$ be an infinite set and let $j^h:\ahas\to kk^h$ be the universal excisive, homotopy invariant, $\iota_+$-stable and $M_X$-stable homology theory. 
Let $l:X\to \{0,1\}$ be a function and let $x\in X$. Equip $M_X$ with the $\Z/2\Z$-grading induced by $l$, as in Example \ref{ex:x0x1}. Consider the corner embedding $\ol{\iota}_x:\ell\to \ol{M}_X$, $\ol{\iota}_x(a)=\epsilon_{x,x}a$. Then $j^h(\ol{\iota}_x)$ is an isomorphism for every $x\in X$, and if $l(x)=l(y)$, then 
$j^h(\ol{\iota}_x)=j^h(\ol{\iota}_y)$.
\end{lem}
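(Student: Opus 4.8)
The plan is to reduce the twisted statement to the already-known $M_X$-stability and $\iota_+$-stability of $j^h$. Write $X_i=l^{-1}(i)$. On each block $X_i\times X_i$ the twisting exponent $l(x)+l(y)$ is even, so on the subalgebra supported there the involution $\ol{\,}$ is the standard one; in particular, if $l$ is constant then $\ol{M}_X=M_X$ with the standard involution and $\ol{\iota}_x=\iota_x$ is already an isomorphism in $kk^h$ by $M_X$-stability. So I may assume $l$ is non-constant, i.e. $X_0,X_1\ne\emptyset$.

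The key step I propose is a balancing trick. Using \eqref{eq:oltenso}, the $*$-algebra $M_X\otimes\ol{M}_X$ (standard involution on the first factor, twisted by $l$ on the second) equals $\ol{M}_{X\times X}$ for the grading $\tilde l(a,b)=l(b)$, whose two level sets $X\times X_0$ and $X\times X_1$ now both have cardinality $|X|$. A grading-preserving relabelling bijection $X\times X\iso\{1,2\}\times W$ with $|W|=|X|$, sending the $\tilde l=1$ level set to $\{1\}\times W$, is then a graded $*$-isomorphism $\ol{M}_{X\times X}\cong M_{\pm}M_W$ (it sends matrix units to matrix units with no sign, since the twisted involution depends only on gradings, which are preserved). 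Composing $\ol{\iota}_x$ with the first-factor corner $\iota^{(1)}_{x'}:\ol{M}_X\to M_X\otimes\ol{M}_X$, $b\mapsto\epsilon_{x',x'}\otimes b$, gives the corner of $\ol{M}_{X\times X}$ at $(x',x)$, a point of $\tilde l$-grading $l(x)$. Since $j^h(\iota^{(1)}_{x'})$ is an isomorphism by $M_X$-stability, $j^h(\ol{\iota}_x)$ is an isomorphism if and only if the corresponding corner of $M_{\pm}M_W$ is.

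It then remains to treat corners of $M_{\pm}M_W$. A grading-$1$ corner factors as $\ell\xrightarrow{\iota_w}M_W\xrightarrow{\iota_+\otimes\id}M_{\pm}M_W$, a composite of isomorphisms by $M_W$-stability (here $|W|=|X|$) and $\iota_+$-stability applied to $M_W$. A grading-$0$ corner factors the same way with $\iota_-$ in place of $\iota_+$; to see $j^h(\iota_-\otimes\id)$ is an isomorphism, I would take the self-adjoint $u\in M_{\pm}$ of Example \ref{ex:mpm-1} with $u^2=-1$. Then $u\otimes 1$ is self-adjoint and invertible with $(u\otimes1)^*(u\otimes1)=-1$ central, so $\operatorname{Ad}(u\otimes1)$ is a $*$-automorphism of $M_{\pm}M_W$, and $u\epsilon_{1,1}u^{-1}=\epsilon_{2,2}$ gives $\operatorname{Ad}(u\otimes1)\circ(\iota_+\otimes\id)=\iota_-\otimes\id$. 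As any functor sends automorphisms to isomorphisms, $j^h(\iota_-\otimes\id)=j^h(\operatorname{Ad}(u\otimes1))\circ j^h(\iota_+\otimes\id)$ is an isomorphism. This proves the first assertion. For independence, suppose $l(x)=l(y)$; with a common $x'$, the corners of $\ol{M}_{X\times X}$ at $(x',x)$ and $(x',y)$ share the same $\tilde l$-grading, so under the isomorphism with $M_{\pm}M_W$ they differ only in the $W$-index, whence their $j^h$-classes agree by the base-point independence of $M_W$-stability \cite{cv}*{Lemma 2.4.1}; composing with $j^h(\iota^{(1)}_{x'})^{-1}$ gives $j^h(\ol{\iota}_x)=j^h(\ol{\iota}_y)$.

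The main obstacle is the unbalanced case $|X_0|\ne|X_1|$, which blocks any direct identification $\ol{M}_X\cong M_{\pm}M_W$; tensoring with a standard $M_X$ to equalize the two level sets is precisely what circumvents it. The only other genuinely twisted point is the grading-$0$ corner, which is not covered by $\iota_+$-stability on the nose and forces the small automorphism argument built from the element $u$ with $u^2=-1$.
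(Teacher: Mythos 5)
Your argument is correct, but it takes a genuinely different and considerably longer route than the paper's. The paper simply observes that the corner $\epsilon_{x,x}$ already lies in the block $M_{X_i}\subset \ol{M}_X$ for $X_i=l^{-1}(l(x))$, on which the twisted involution restricts to the \emph{standard} one (all matrix units $\epsilon_{y,z}$ with $y,z\in X_i$ have even degree), so that $\ol{\iota}_x$ factors as $\ell\xrightarrow{\iota_x}M_{X_i}\xrightarrow{\inc}\ol{M}_X$; both maps are sent to isomorphisms, independently of the choice of $x\in X_i$, by the matrix-stability lemmas \cite{cv}*{Lemma 2.4.1} and \cite{cv}*{Lemma 2.4.3}. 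In particular the paper's proof never invokes $\iota_+$-stability. Your route --- tensoring with a standard $M_X$ to equalize the two level sets, identifying $M_X\otimes\ol{M}_X\cong\ol{M}_{X\times X}\cong M_{\pm}M_W$ via \eqref{eq:oltenso} and a grading-preserving bijection, and then treating the degree-$0$ corner by conjugating with the element $u$ of Example \ref{ex:mpm-1} --- is valid: the identification of the twisted tensor product is right, $\ad(u)\otimes\id$ is a genuine $*$-automorphism of the nonunital algebra $M_{\pm}M_W$ even though $u\otimes 1$ itself only lives in its multiplier algebra, and the base-point independence you quote from \cite{cv}*{Lemma 2.4.1} correctly yields $j^h(\ol{\iota}_x)=j^h(\ol{\iota}_y)$ when $l(x)=l(y)$. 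What your approach buys is an explicit stable identification of $\ol{M}_X$ with $M_{\pm}M_W$ whenever $l$ is non-constant, which is a reasonable fact to record; what it costs is the extra machinery (the balancing tensor, $\iota_+$-stability, the inner automorphism). The two ``genuine obstacles'' you name at the end --- the unbalanced level sets and the degree-$0$ corner --- in fact never arise in the paper's argument, because the corner embedding never leaves the even block $M_{X_i}$, where your own first observation (that the involution is untwisted there) already finishes the proof.
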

\begin{proof} 
Let $i\in\{0,1\}$, $X_i=l^{-1}\{i\}$ and let $\inc:M_{X_i}\to M_X$ be the map induced by the inclusion $X_i\subset X$. We have a commutative diagram
\[
\xymatrix{& \ol{M}_X\\
\ell\ar[ur]^{\ol{\iota}_x}\ar[r]_{\iota_x}&M_{X_i}.\ar[u]^{\inc}}
\]
The map $j^h(\iota_x)$ is an isomorphism by $M_X$-stability; by \cite{cv}*{Lemma 2.4.1} it is independent of $x\in X_i$. The map $j^h(\inc)$ is an isomorphism by \cite{cv}*{Lemma 2.4.3}.
\end{proof}

\begin{prop}\label{prop:deltatau}
Let $A$ be a $\Z/2\Z$-graded $*$-algebra and let $\Delta=\Delta_A:A\to M_{\pm}\ol{A}$ and $\Delta'=\Delta_A':A\to M_2A$ be as in \eqref{map:delta}. Set
$\ob{\Delta_A}=j^h(\iota_+)^{-1}\circ j^h(\Delta_A)\in kk^h(A,\ol{A})$ and $\ob{\Delta'_A}=j^h(\iota_1)^{-1}\circ j^h(\Delta'_A)\in kk^h(A,A)$. Then
\[
{\ob{\Delta}}_{\ol{A}}\circ \ob{\Delta_A}=(1+\sigma)\circ\ob{\Delta'_A}, \quad\quad {\ob{\Delta'_A}}\circ{\ob{\Delta'_A}}=2\ob{\Delta'_A}.
\]
If furthermore $2$ is invertible and $1$-positive in $\ell$, then for $\tau$ as in \eqref{map:eltau}, we have $\ob{\Delta'}=1+j^h(\tau)$.
\end{prop}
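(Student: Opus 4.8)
The plan is to use two standard features of the universal theory $j^h$. First, a $*$-homomorphism $f\colon A\to M_\pm B$ (resp.\ $f\colon A\to M_2B$) represents the class $j^h(\iota_+)^{-1}j^h(f)$ (resp.\ $j^h(\iota_1)^{-1}j^h(f)$), and the naturality of the corner embeddings yields a multiplicativity rule: the composite of two such classes is represented, after normalizing by the iterated corner, by the amplified composite $M_\pm(g)\circ f$ (resp.\ $M_2(g)\circ f$). Second, $j^h$ is additive on $*$-homomorphisms with orthogonal images and invariant under conjugation by a permutation (more generally, a unitary) matrix; both facts follow from $M_X$- and $\iota_+$-stability, the latter making all corner embeddings into the same matrix algebra induce the same class. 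The two products will then be read off the squares \eqref{diag:delta}.

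For $\ob{\Delta'_A}\circ\ob{\Delta'_A}$ the multiplicativity rule gives the normalized class of $M_2(\Delta'_A)\circ\Delta'_A\colon A\to M_2M_2A$. A direct computation shows that $\Delta'_A(a)$ is homogeneous of degree $0$ in the graded algebra $M_2A$; hence $\Delta'_{M_2A}\circ\Delta'_A$ is block diagonal, while $M_2(\Delta'_A)\circ\Delta'_A$ has the same entries rearranged by the permutation $P$ that swaps the second and fourth basis vectors. Since $P$ is a permutation matrix, hence a unitary of $M_2M_2\ell$ fixing the upper left corner, conjugation by it is the identity in $kk^h$, so the two composites represent the same class. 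The left square of \eqref{diag:delta} identifies this with the class of $(\iota_1+\iota_2)\circ\Delta'_A$, and additivity, together with the fact that both corner embeddings $\iota_1,\iota_2\colon A\to M_2A$ induce the identity class, gives $2\ob{\Delta'_A}$.

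The identity $\ob{\Delta}_{\ol A}\circ\ob{\Delta_A}=(1+\sigma)\ob{\Delta'_A}$ runs along the same lines, now with $M_\pm$-amplifications and the right square of \eqref{diag:delta}, namely $\Delta_{M_\pm\ol A}\circ\Delta_A=(\iota_++\iota_-)\circ\Delta'_A$. Here the multiplicativity rule produces $M_\pm(\Delta_{\ol A})\circ\Delta_A$, which lands in $M_\pm M_\pm A$, whereas the square involves $\Delta_{M_\pm\ol A}\circ\Delta_A$, landing in $M_\pm M_2A$. I expect this to be the main obstacle: by \eqref{eq:olmpm} these two amplifications carry genuinely different involutions, so the permutation argument of the previous paragraph is not available verbatim. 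The resolution is that the conditional swap $P$ of the second and fourth basis vectors is, in this case, a $*$-isomorphism $M_\pm M_2A\iso M_\pm M_\pm A$: checking on matrix units, the signs prescribed by \eqref{map:invopm} occur in cancelling pairs, so that $P$ reabsorbs the twist exactly. As $P$ fixes the upper left corner it is again invisible after the corner normalizations, whence the two composites agree in $kk^h$. Additivity, together with the normalizations of the corner embeddings $\iota_+$ and $\iota_-$ of $M_2A$ into $M_\pm M_2A$ as the actions of $1$ and $\sigma$, then turns the class of $(\iota_++\iota_-)\circ\Delta'_A$ into $(1+\sigma)\ob{\Delta'_A}$.

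For the final assertion, write $2=yy^*$ with $y\in\ell$; since $2$ is invertible so is $y$, and $U=y^{-1}\big[\begin{smallmatrix}1&1\\1&-1\end{smallmatrix}\big]$ is a unitary of $M_2\ell$. A one-line computation diagonalizes $\Delta'_A$: one finds $\operatorname{Ad}_U\circ\Delta'_A=\diag(\id,\tau)=\iota_1\circ\id+\iota_2\circ\tau$, with $\tau$ as in \eqref{map:eltau}. Invariance under conjugation by the unitary $U$ shows that $\ob{\Delta'_A}$ equals the normalized class of $\diag(\id,\tau)$, and additivity with $\iota_1,\iota_2$ inducing the identity class gives $\ob{\Delta'}=1+j^h(\tau)$. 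This is consistent with the second product, since $\tau^2=\id$ forces $(1+j^h(\tau))^2=2(1+j^h(\tau))$. The delicate point is the involution bookkeeping of the first product; everything else is the standard stability-and-additivity calculus of $kk^h$.
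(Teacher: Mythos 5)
Your proof is correct and follows essentially the same route as the paper: both displayed identities are read off the squares \eqref{diag:delta} together with additivity and the identification (Lemma \ref{lem:hermstab}) of the normalized corner embeddings $\iota_+,\iota_-$ with $1$ and $\sigma$ (resp. $\iota_1,\iota_2$ both with $1$), and the final assertion is obtained by diagonalizing $\Delta'$ with an explicit unitary, exactly as in the paper. The only divergence is the bookkeeping for the first identity: where the paper invokes naturality of $\Delta$ with respect to $\iota_+\colon\ol{A}\to M_{\pm}\ol{A}$ (which, since $\ol{\iota_+}=\iota_1\colon A\to M_2A=\ol{M_{\pm}\ol{A}}$ by \eqref{eq:olmpm}, replaces the $M_{\pm}$-amplified composite directly by $\Delta_{M_{\pm}\ol{A}}\circ\Delta_A$), you instead exhibit the basis transposition of the second and fourth coordinates as a $*$-isomorphism $M_{\pm}M_2A\cong M_{\pm}M_{\pm}A$ fixing the upper left corner and intertwining the two composites --- your sign verification is correct and the two devices are interchangeable.
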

\begin{proof} The displayed identities follow from the commutative diagrams \eqref{diag:delta} and naturality of $\Delta$, using Lemma \ref{lem:hermstab}. Next assume that $2=xx^*$ for some invertible $x\in \ell$, and consider
\[
u=\begin{bmatrix} 1/x& 1/x\\ 1/x^*& -1/x^*\end{bmatrix}.
\]
A calculation shows that $\ad(u)\circ\Delta'=\iota_1+\iota_2\tau$, whence $\ob{\Delta'}=1+j^h(\tau)$. 
\end{proof} 
In the next corollary and elsewhere we write $kk^h[1/2]$ for the idempotent completion of the Verdier quotient of $kk^h$ by the full subcategory of those objects $C$ such that $j^h(\id_C)$ is $2$-torsion, $j^h[1/2]:\ahas\to kk^h\to kk^h[1/2]$ for the composite of the canonical functors, and $kk^h[1/2](A,B)=\hom_{kk^h[1/2]}(j^h[1/2](A),j^h[1/2](B))$ for $A,B\in\ahas$. 

\begin{coro}\label{coro:deltatau}
Let $p_+=(1+\sigma)/2\in \Z[1/2,\sigma]$. If $A$ is $\Z/2\Z$-graded then $({\ob{\Delta}}_A/2)$ and $({\ob{\Delta}}_{\ol{A}}/2)$ induce inverse isomorphisms $\im(p_+\circ({\ob{\Delta'}}_A/2))\leftrightarrows \im(p_+\circ({\ob{\Delta'}}_{\ol{A}}/2))$. 
\end{coro}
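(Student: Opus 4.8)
The plan is to work throughout in the idempotent-complete category $kk^h[1/2]$ and to recognize the two maps in the statement as a splitting of a conjugate pair of idempotents. First I would record that $p_+=(1+\sigma)/2$ is an idempotent of the coefficient ring $\Z[1/2,\sigma]$: since $\sigma^2=1$ one has $p_+^2=(1+2\sigma+\sigma^2)/4=(2+2\sigma)/4=p_+$. By the second identity of Proposition \ref{prop:deltatau}, $\ob{\Delta'_A}/2$ is idempotent, as $(\ob{\Delta'_A}/2)^2=\ob{\Delta'_A}^2/4=2\ob{\Delta'_A}/4=\ob{\Delta'_A}/2$; and since the scalar $p_+$ commutes with every morphism, $e_A:=p_+\circ(\ob{\Delta'_A}/2)$ is again idempotent, and likewise $e_{\ol A}:=p_+\circ(\ob{\Delta'_{\ol A}}/2)$. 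These are exactly the idempotents whose images appear in the statement, and their images exist because $kk^h[1/2]$ is idempotent complete by construction.

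Next I would produce the two key relations by dividing the first identity of Proposition \ref{prop:deltatau} by $4$. This gives $(\ob{\Delta_{\ol A}}/2)\circ(\ob{\Delta_A}/2)=\tfrac14(1+\sigma)\ob{\Delta'_A}=\tfrac{1+\sigma}{2}\circ(\ob{\Delta'_A}/2)=e_A$. For the reverse composite I would apply the same proposition with $A$ replaced by $\ol A$, using that the bar construction is an involution on $\Z/2\Z$-graded $*$-algebras, so $\ol{\ol A}=A$ and hence $\ob{\Delta_{\ol{\ol A}}}=\ob{\Delta_A}$; this yields $\ob{\Delta_A}\circ\ob{\Delta_{\ol A}}=(1+\sigma)\ob{\Delta'_{\ol A}}$, and after dividing by $4$, $(\ob{\Delta_A}/2)\circ(\ob{\Delta_{\ol A}}/2)=e_{\ol A}$.

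Finally I would invoke the elementary fact about idempotent completions: if $g\colon X\to Y$ and $h\colon Y\to X$ satisfy $h\circ g=e$ and $g\circ h=f$ with $e,f$ idempotent, then the trimmed morphisms $f\circ g\circ e\colon(X,e)\to(Y,f)$ and $e\circ h\circ f\colon(Y,f)\to(X,e)$ are mutually inverse, since $(e\,h\,f)(f\,g\,e)=e(hfg)e=e\cdot e\cdot e=e$, using $ff=f$ and $hfg=h(fg)=hge=e$, and symmetrically $(f\,g\,e)(e\,h\,f)=f$. Applying this with $g=\ob{\Delta_A}/2$, $h=\ob{\Delta_{\ol A}}/2$, $e=e_A$ and $f=e_{\ol A}$ gives precisely that $\ob{\Delta_A}/2$ and $\ob{\Delta_{\ol A}}/2$ induce inverse isomorphisms $\im(e_A)\leftrightarrows\im(e_{\ol A})$, as claimed. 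There is no deep obstacle once Proposition \ref{prop:deltatau} is in hand; the only place I would take care is the bookkeeping, namely checking that the proposition's identities apply symmetrically in $A$ and $\ol A$ through $\ol{\ol A}=A$, and confirming that the \emph{induced} map on images in the statement is the trimmed morphism $e_{\ol A}\circ(\ob{\Delta_A}/2)\circ e_A$ in the Karoubi envelope rather than the raw morphism $\ob{\Delta_A}/2$ itself.
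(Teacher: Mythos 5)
Your proof is correct and is exactly the argument the paper intends: the corollary is stated without proof, being a formal consequence of Proposition \ref{prop:deltatau}. Your three steps --- checking that $p_+\circ(\ob{\Delta'_A}/2)$ and $p_+\circ(\ob{\Delta'_{\ol{A}}}/2)$ are idempotents, obtaining the two relations by dividing the first identity of the proposition by $4$ (using $\ol{\ol{A}}=A$ for the symmetric one), and splitting the resulting conjugate pair of idempotents in the idempotent-complete category $kk^h[1/2]$ --- are precisely the intended ones, including the correct identification of the ``induced'' maps with the trimmed morphisms in the Karoubi envelope.
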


\bigskip

\subsection{Cohn and Leavitt path algebras}

A (directed) \emph{graph} is a quadruple $E=(s,r:E^1\rightrightarrows E^0)$ consisting of sets $E^0$ and $E^1$ of \emph{vertices} and \emph{edges}
and \emph{source} and \emph{range} maps $s$ and $r$. A vertex $v\in E^0$ is a \emph{sink} if $s^{-1}(\{v\})=\emptyset$, a \emph{source} if $r^{-1}(\{v\})=\emptyset$, an \emph{infinite emitter}
if $s^{-1}(\{v\})$ is infinite and a \emph{singular} vertex it is either a sink or an infinite emitter. Vertices which are not singular are called \emph{regular}. We write $\sink(E),\sour(E),\inf(E)$ for the sets of sinks, sources, and infinite emitters, and $\sing(E)$ and $\reg(E)$ for those of singular and regular vertices. We say that $E$ is \emph{regular} if $E^0=\reg(E)$. A graph $E$ is \emph{countable} or \emph{finite} if both $E^0$ and $E^1$ are. The \emph{reduced incidence matrix} of a graph $E$ is the matrix $A_E$ with nonnegative integer coefficients, indexed by $\reg(E)\times E^0$, whose $(v,w)$ entry is the number of edges with source $v$ and range $w$:
\[
(A_E)_{v,w}=|s^{-1}(v)\cap r^{-1}(w)|.
\]
Our conventions are such that we will mainly deal with the transpose $A_E^t$. We abuse notation and write $I$ for the ${E^0\times \reg(E)}$-matrix obtained from the identity matrix of $M_{E^0}\Z$ upon removing the columns corresponding to the singular vertices. Thus $I-A_E^t$ is a well-defined integral matrix indexed by $E^0\times\reg(E)$. 

We write $C(E)$ and $L(E)$ for the Cohn and Leavitt path algebras over $\ell$ \cite{lpabook}*{Definitions 1.5.1 and 1.2.3}. 
Each of these carries a standard $\ell$-semilinear involution $a\mapsto a^*$ which fixes the vertices and maps each edge $e$ to the corresponding phantom edge $e^*$. Let $\cP=\cP(E)$ be the set of all finite paths in $E$ (\cite{lpabook}*{Definitions 1.2.2}); we write $|\alpha|$ for the \emph{length} of a path $\alpha\in\cP(E)$. For $v\in E^0$, set
\begin{equation}\label{parribajo}
\cP_v=\{\mu \in \cP \mid r(\mu) = v \}, \quad \cP^v=\{\mu\in\cP\mid s(\mu)=v\}.
\end{equation}
Let 
\begin{gather}\label{map:rho}
\rho:C(E)\to \Gamma_\cP,\\
\rho(v)=\sum_{\alpha\in\cP^v}\epsilon_{\alpha,\alpha},\quad \rho(e)=\sum_{\alpha\in \cP^{r(e)}}\epsilon_{e\alpha,\alpha},\nonumber\\
\rho(e^*)=\sum_{\alpha\in\cP^{r(e)}}\epsilon_{\alpha,e\alpha},\quad (v\in E^0, e\in E^1).\nonumber
\end{gather}
Observe that $\rho$ is a $*$-homomorphism for the standard involutions on $C(E)$ and $\Gamma_{\cP(E)}$. Recall that $C(E)$ carries a natural $\Z$-grading, where $C(E)_n$ is generated by all $\alpha\beta^*$ with $|\alpha|-|\beta|=n$. Hence we may regard $C(E)$ as $\Z/2\Z$-graded, via the even/odd grading. The twisted involution $\bar{\,}$ of \eqref{invol} is the algebra homomorphism
\begin{gather}\label{map:*twist}
\bar{\,}:C(E)\to C(E)^{\op},\, \ol{v}=v,\, \ol{e}=-e^*,\,\ol{e^*}=-e,\,\,(v\in E^0,e\in E^1).
\end{gather}
Similarly, the length modulo $2$ induces a $\Z/2\Z$-grading on $\Gamma_{\cP(E)}$, and $\rho$ is homogeneous for this grading. Because $\rho$ is homogeneous and a $*$-homomorphism for the standard involution, it defines a $*$-homomorphism $\ol{\rho}:\ol{C(E)}\to \ol{\Gamma}_{\cP(E)}$ as in \eqref{map:olf}.

Let $\cK(E)=\ker(C(E)\to L(E))$ be the kernel of the canonical surjection. We have semi-split extensions of $*$-algebras
\begin{gather}\label{seq:cohnext}
0\to \cK(E)\to C(E)\to L(E)\to 0\\
0\to \ol{\cK(E)}\to \ol{C(E)}\to \ol{L(E)}\to 0.\label{seq:cohnext2}
\end{gather}

An important feature of the involution $a\mapsto \ol{a}$ is that if $E$ is finite and regular, then the following identity holds in $\ol{L(E)}$
\begin{equation}\label{eq:-1ole}
-1=\sum_{e\in E^1}e\ol{e}.
\end{equation}
This says that $-1$, which is clearly negative, is also positive in $\ol{LE}$. By contrast, $-1$ is positive in $L(E)$ if and only if this happens already in $\ell$, as shown by the next lemma.

\begin{lem}\label{lem:-1}
Let $E$ be a graph with finitely many vertices. Then $-1$ is positive in $L(E)$ if and only if it is positive in $\ell$. If $-1$ is $1$-positive in $\ell$, then $\ol{L(E)}\cong L(E)$ in $\ahas$. 
\end{lem}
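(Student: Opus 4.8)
The statement has two parts, and the second follows quickly from earlier material, so the real content is the first.

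\emph{Second part.} If $-1$ is $1$-positive in $\ell$, write $-1=xx^*$ with $x\in\ell$; note $x$ is then a central element of $\ell$ with $xx^*=-1$ (and automatically invertible, with $x^{-1}=-x^*$). I would simply apply Example \ref{ex:zgr-1} with $R=\ell$ and $A=L(E)$, the latter regarded as $\Z$-graded by path length and hence $\Z/2\Z$-graded via its even/odd grading, so that $\ol{A}$ is exactly $\ol{L(E)}$ for the signed involution \eqref{invol}. The $*$-isomorphism $\theta_x\colon \ol{A}\otimes\ell\to A\otimes\ell$ produced there is precisely a $*$-isomorphism $\ol{L(E)}\iso L(E)$ in $\ahas$.

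\emph{First part, easy direction.} Since $E^0$ is finite, $L(E)$ is unital with $1=\sum_{v\in E^0}v$, and $\lambda\mapsto\lambda 1$ is a unital $*$-homomorphism $\ell\to L(E)$; it carries a relation $-1=\sum_ix_ix_i^*$ in $\ell$ to the same relation in $L(E)$, where $-1\neq0$ (assuming $\ell\neq0$).

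\emph{First part, hard direction.} Here I would use the $*$-representation $\rho\colon C(E)\to\Gamma_{\cP}$ of \eqref{map:rho}. It is unital, since $\cP=\coprod_{v}\cP^v$ forces $\rho(1)=1_{\Gamma_\cP}$. The key preliminary computation is $\rho\bigl(v-\sum_{s(e)=v}ee^*\bigr)=\epsilon_{v,v}$, which shows that the standard generators of $\cK(E)$ land in the finitely supported matrices $M_\cP\triqui\Gamma_\cP$; since multiplying any element of $\Gamma_\cP$ by a finitely supported matrix stays finitely supported, $\rho(\cK(E))\subseteq M_\cP$. Now suppose $-1=\sum_{i=1}^n y_iy_i^*$ in $L(E)$, lift each $y_i$ to $x_i\in C(E)$, and set $w=1+\sum_i x_ix_i^*\in C(E)$. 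Then $w\in\cK(E)=\ker(C(E)\to L(E))$, so $\rho(w)\in M_\cP$ has finite support. Writing $\sum_i\rho(x_i)\rho(x_i)^*=\rho(w)-1_{\Gamma_\cP}$ and evaluating the $(\alpha,\alpha)$ entry at a path $\alpha$ outside the finite support of $\rho(w)$, the right side equals $-1$, while the left side equals $\sum_i\sum_\gamma \rho(x_i)_{\alpha,\gamma}\,(\rho(x_i)_{\alpha,\gamma})^*$, a finite sum of elements of the form $cc^*$ in $\ell$. Hence $-1$ is positive in $\ell$.

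This applies whenever $\cP$ is infinite, i.e.\ whenever $E$ has a cycle or infinitely many edges. The only remaining case is $\cP$ finite, which forces $E$ to be a finite acyclic graph; then, with its standard involution, $L(E)$ is a finite direct sum of matrix algebras $M_{n_i}(\ell)=\Gamma_{S_i}$ (conjugate-transpose involution), and projecting a positivity relation onto one nonzero summand and invoking Lemma \ref{lem:-gamma} again gives $-1$ positive in $\ell$. The main obstacle I anticipate is twofold: verifying the containment $\rho(\cK(E))\subseteq M_\cP$, which is what guarantees the correction term $\rho(w)$ is finitely supported and hence that a clean far-off diagonal entry exists; and isolating the degenerate finite-$\cP$ case, where no such entry is available and one must instead fall back on the explicit matrix-algebra structure of $L(E)$ for finite acyclic $E$.
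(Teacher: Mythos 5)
Your proof is correct, but your argument for the hard direction of the first assertion takes a genuinely different route from the paper's. The paper reduces to Lemma \ref{lem:-gamma} by producing a unital $*$-homomorphism $L(E)\to\Gamma_X$ outright: it represents the inverse semigroup $\cS(E)=\{\alpha\beta^*:\alpha,\beta\in\cP(E)\}$ by partial injections of an infinite set $X$ so that the resulting action is tight in Exel's sense, and invokes \cite{cr}*{Proposition 4.11 and Lemma 3.1} together with \cite{ac}*{Lemma 6.1} to convert this into a unital algebra homomorphism $L(E)\to\Gamma_X$, which one then checks preserves $*$. You instead stay with the Cohn representation $\rho$ of \eqref{map:rho}, which does \emph{not} factor through $L(E)$, and exploit the computation $\rho(v-\sum_{s(e)=v}ee^*)=\epsilon_{v,v}$ to see that $\rho(\cK(E))\subseteq M_{\cP}$; the positivity relation then lifts to $C(E)$ with an error term supported on finitely many paths, and reading off a diagonal entry of $\Gamma_{\cP}$ beyond that support reruns the argument of Lemma \ref{lem:-gamma}. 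In effect you prove the $\Sigma_{\cP}$-analogue of Lemma \ref{lem:-gamma} and use that $\rho$ induces a unital $*$-homomorphism $L(E)\to\Sigma_{\cP}$. What this buys is self-containedness: every ingredient is already displayed in the paper, with no appeal to tight inverse-semigroup representations or the external references. The price is the case split when $\cP$ is finite (i.e.\ $E$ finite and acyclic), where no off-support diagonal entry exists; your fallback via the $*$-isomorphism $L(E)\cong\bigoplus_{v\in\sink(E)}M_{\cP_v}\ell$ is fine, since a finite acyclic graph has a sink and the relevant summand is nonzero whenever $\ell$ is. The easy direction and the second assertion (via Example \ref{ex:zgr-1}) are handled exactly as in the paper.
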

\begin{proof} The if direction is clear. To prove the converse, it suffices, in view of Lemma 
\ref{lem:-gamma}, to find a set $X$ and a unital $*$-homomorphism $f:L(E)\to \Gamma_X$. Let $\cS(E)=\{\alpha\beta^*:\alpha,\beta\in \cP(E)\}$ be the inverse semigroup associated to $E$. Let $X$ be an infinite set of cardinality $|X|\ge |E^1|$ and $\cI(X)$ the inverse semigroup of all partially defined injections $X\supset\dom(f)\overset{f}\lra X$. Proceed as in the proof of \cite{cr}*{Proposition 4.11} to find a
semigroup homomorphism $\mu:\cS(E)\to\cI(X)$ such that the associated action of $\cS(E)$ on $X$ is tight in the sense of Exel \cite{cr}*{Section 3}. By \cite{cr}*{Lemma 3.1} and \cite{ac}*{Lemma 6.1}, $\mu$ induces an algebra homomorphism $L(E)\to \Gamma_X$. One checks further that $\mu$ is a $*$-homomorphism. This completes the proof of the first assertion. The second assertion is immediate from Example \ref{ex:zgr-1}. 
\end{proof}

\section{Leavitt path algebras in \topdf{$kk^h$}{kkh}}\label{sec:lpaskkh}
\numberwithin{equation}{section}

This section is concerned with establishing the triangles \eqref{intro:xtri1} and \eqref{intro:xtri2}, the first of which is analogous to that for $C^*$-algebras established by Joachim Cuntz in \cite{ck2}. The general strategy for proving these results, using the presentation of the Leavitt path algebra as a quotient of the Cohn algebra, is similar to that used in \cite{cm2} to establish analogous ones in $kk$, and ultimately goes back to the pioneering work of Cuntz on the computation of $K$-theory of Cuntz algebras \cite{on}. However some technical difficulties appear in that we need all maps and homotopies to preserve involutions.

\bigskip

Throughout this section, we assume that $\ell$ satisfies the $\lambda$-assumption \ref{stan:lambda}.

\bigskip

For a set $X$ and a $*$-algebra $R$, we write $R^X$ for the $*$-algebra of all functions $X\to R$ with pointwise operations and pointwise involution, and $R^{(X)}\subset R^X$ for the $*$-ideal of finitely supported functions. If $x\in X$ and $a\in R$, we write $a\chi_x$ for the function supported in $\{x\}$ which maps $x\mapsto a$.

Let $E$ be a graph and $C(E)$ the Cohn algebra of $E$. The assignment  
\begin{equation}\label{map:elphi}
\ell^{(E^0)} \to C(E),\, \chi_v\mapsto v
\end{equation}
defines $*$-homomorphisms $\phi:\ell^{(E^0)}\to C(E)$ and $\ol{\phi}:\ell^{(E^0)}\to\ol{C(E)}$. 

We shall say that a homology theory is \emph{$E$-stable} if it is stable with respect to a set $X$ of cardinality $|E^0\sqcup E^1\sqcup\N|$.  

\begin{thm}\label{thm:cekk} Assume that $\ell$ satisfies the $\lambda$-assumption \ref{stan:lambda}. Let $j^h:\ahas\to kk^h$ be the universal homotopy invariant, excisive, Hermitian stable and $E$-stable homology theory. 
Let $E$ be a graph and let $\phi$ be as in \eqref{map:elphi}. Then $j^h(\phi)$ and $j^h(\ol{\phi})$ are isomorphisms. In particular, 
$j^h(C(E))\cong j^h(\ol{C(E)})$. 
\end{thm}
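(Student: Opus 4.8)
The plan is to prove that $j^h(\phi):j^h(\ell^{(E^0)})\to j^h(C(E))$ is an isomorphism by exhibiting an explicit homotopy inverse coming from the Cohn algebra's combinatorial structure. The key structural fact about $C(E)$ is that, via the map $\rho:C(E)\to\Gamma_\cP$ of \eqref{map:rho}, the Cohn algebra is realized inside a matrix-like algebra indexed by paths, and the vertices $v$ act as orthogonal idempotents whose sum (in the unital or multiplier sense) is the identity. Concretely, I would use the decomposition of $C(E)$ that records a path $\alpha$ by its source and range: the generators $\rho(e)=\sum_{\alpha}\epsilon_{e\alpha,\alpha}$ realize $C(E)$ as an ultramatricial-type algebra built from the $*$-homomorphism $\phi$ and the ``partial shift'' maps indexed by edges. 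The slogan is that $C(E)$ is Morita equivalent, or more precisely $M_X$-stably $*$-isomorphic, to a product of copies of $\ell$ indexed by the vertices.

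\smallskip

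First I would set up the relevant maps. Starting from $\phi:\ell^{(E^0)}\to C(E)$, $\chi_v\mapsto v$, I would compose with $\rho:C(E)\to\Gamma_\cP$ to land in the path-indexed cone algebra, where $\rho(v)=\sum_{\alpha\in\cP^v}\epsilon_{\alpha,\alpha}$ is the diagonal projection onto the paths emitted from $v$. Since the sets $\{\cP^v:v\in E^0\}$ partition $\cP$, the composite $\rho\circ\phi$ identifies $\ell^{(E^0)}$ with the ``block-diagonal'' subalgebra $\prod_v M_{\cP^v}$-type piece sitting diagonally in $\Gamma_\cP$. The crucial point is that $\rho$ itself induces an isomorphism in $j^h$: this should follow because $C(E)$ is $*$-isomorphic onto its image under $\rho$ (the Cohn relations are precisely the matrix identities satisfied by the $\epsilon$'s), and the image is a corner of $\Gamma_\cP$ that by Hermitian and $M_X$-stability (Lemma~\ref{lem:hermstab}) is equivalent to $j^h(\ell^{(E^0)})$. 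Here $E$-stability is exactly what licenses using a path-index set of cardinality $|E^0\sqcup E^1\sqcup\N|$, since $|\cP(E)|$ has that cardinality.

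\smallskip

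The technical heart of the argument is to show that the corner embedding picking out, for each $v$, a single diagonal matrix unit $\epsilon_{\alpha_v,\alpha_v}$ (say $\alpha_v$ the trivial path at $v$) from the block $M_{\cP^v}$ becomes an isomorphism after applying $j^h$. This is where $M_X$-stability and the independence of the corner from the chosen index element (via \cite{cv}*{Lemma 2.4.1}) do the work: each block $\prod_v M_{\cP^v}R$ collapses onto $\prod_v R$, i.e. $\ell^{(E^0)}$, compatibly with $\rho\circ\phi$. Assembling these, $j^h(\phi)=j^h(\rho)^{-1}\circ j^h(\rho\phi)$ is a composite of isomorphisms. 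For the twisted case $\ol\phi:\ell^{(E^0)}\to\ol{C(E)}$, I would run the identical argument using $\ol\rho:\ol{C(E)}\to\ol\Gamma_\cP$, which exists because $\rho$ is homogeneous for the even/odd grading and a $*$-homomorphism for the standard involution (as noted right after \eqref{map:*twist}); the graded version of Hermitian stability in Lemma~\ref{lem:hermstab}, applied with the length-parity function $l$, yields that the graded corner $\ol\iota$ is still an isomorphism, so $j^h(\ol\phi)$ is an isomorphism as well. The final assertion $j^h(C(E))\cong j^h(\ol{C(E)})$ then follows formally, since both are isomorphic to $j^h(\ell^{(E^0)})$ via $\phi$ and $\ol\phi$.

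\smallskip

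I expect the main obstacle to be the careful verification that $\rho$ (and $\ol\rho$) genuinely induces an isomorphism in $j^h$ onto a stabilizable corner, rather than merely an injection: one must check that the image of $\rho$ is exactly a hereditary corner of $\Gamma_\cP$ to which the stability lemmas apply, and that the infinite block structure $\prod_v M_{\cP^v}$ is handled correctly (the blocks may be infinite, so this is genuinely a cone-algebra statement and not a naive finite-matrix computation). Keeping all maps and homotopies involution-preserving throughout — particularly in reconciling the standard and twisted involutions through the same homogeneous $\rho$ — is the delicate bookkeeping that the informal $C^*$-algebra version in \cite{ck2} glosses over.
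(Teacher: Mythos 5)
There is a genuine gap, and it sits exactly where your proof needs to do its work. Your argument reduces everything to the claim that $\im(\rho)\cong C(E)$ is ``a (hereditary) corner of $\Gamma_{\cP}$ to which the stability lemmas apply,'' so that $j^h(\im\rho)\cong j^h(\ell^{(E^0)})$ by $M_X$- and Hermitian stability alone. This claim is both unsubstantiated and false as stated: the subalgebra of $\Gamma_{\cP}$ generated by the elements $\rho(v)=\sum_{\alpha\in\cP^v}\epsilon_{\alpha,\alpha}$ and $\rho(e)=\sum_{\alpha\in\cP^{r(e)}}\epsilon_{e\alpha,\alpha}$ is not of the form $p\Gamma_{\cP}p$, and no corner-stability lemma collapses it onto the diagonal subalgebra $\ell^{(E^0)}$. (The composite $\rho\circ\phi$ sends $\chi_v$ to a single diagonal idempotent, the unit of the block over $\cP^v$ --- it does not identify $\ell^{(E^0)}$ with a block-diagonal matrix algebra.) Asserting that stability identifies $j^h(\im\rho)$ with $j^h(\ell^{(E^0)})$ is asserting the theorem. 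Note also that the formula $j^h(\phi)=j^h(\rho)^{-1}\circ j^h(\rho\phi)$ cannot be taken literally: whenever $E$ has a cycle, $\cP$ is infinite and $\Gamma_{\cP}$ is an infinite sum ring, so $j^h(\Gamma_{\cP})=0$ and $j^h(\rho)$, as a map into $j^h(\Gamma_{\cP})$, is the zero map; only the corestriction of $\rho$ to its image is invertible, which returns you to the unproven claim above.

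What is actually required --- and what the paper does --- is to import the homotopy-theoretic content of the non-Hermitian statement, \cite{cm1}*{Theorem 4.2}, whose proof constructs an explicit quasi-inverse to $j(\phi)$ via the homotopies of \cite{cm1}*{Lemmas 4.17 and 4.21}. The entire new difficulty in the Hermitian setting is that those homotopies are not $*$-homomorphisms; the paper replaces them by involution-preserving ones using the unitary trick $c(A,B)$ of \cite{cv}*{Lemma 5.4}, writing out modified homotopies $H$ with $H(e^*)=H(e)^*$ that work simultaneously for the standard and the twisted involution (the latter using Lemma \ref{lem:hermstab} to control corner embeddings $\ol{\iota}_\alpha$ only up to the parity of $|\alpha|$). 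Your proposal contains no analogue of these homotopies; the ``delicate bookkeeping'' you defer to the end is not bookkeeping but the proof itself. The parts of your sketch that are correct --- the role of $E$-stability in licensing the index set $\cP$, and the use of the graded version of Hermitian stability for $\ol{\rho}$ --- are the easy parts, and they do not combine to yield that $j^h(\phi)$ is an isomorphism.
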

\begin{proof}
The analogue statement for the universal homotopy invariant, excisive and $E$-stable homology theory $j:\aha\to kk$ was proved in \cite{cm1}*{Theorem 4.2}. The same proof goes through here with minor adjustments and works for both choices of involution on $C(E)$. The adjustments are in the homotopies occurring in \cite{cm1}*{Lemma 4.17} and \cite{cm1}*{Lemma 4.21}, which are not $*$-homomorphisms. In both cases the problem is fixed by applying the trick of \cite{cv}*{Lemma 5.4}, as we shall explain presently. Lemma 4.17 of \cite{cm1} says that a certain map 
$\hat{\iota}_\tau:C(E)\to M_{\cP(E)}C(E)$ is sent by $j^h$ to the same isomorphism as $\iota_{\alpha}$ for any $\alpha\in\cP(E)$. Observe that Lemma \ref{lem:hermstab} guarantees that $j^h(\ol{\iota}_\alpha)=j^h(\ol{\iota}_\beta)$ only when $|\alpha|\equiv|\beta|\mod (2)$. This is however not a problem, as the lemma in \cite{cm1} is needed only to establish that $j^h(\hat{\iota}_\tau)$ is an isomorphism, so it suffices to prove it when $|\alpha|=0$. In the proof of \cite{cm1}*{Lemma 4.17}, a vertex $w\in E^0$ is fixed and elements $A_v,B_v\in M_{\cP(E)}C(E)$ are defined for each $v\in E^0$. With notation as in \cite{cv}*{Lemma 5.4}, put $C_v=c(A_v,B_v)$ and let $H:C(E)\to M_{\pm}M_{\cP(E)}C(E)[t]$ be the homomorphism determined by $H(v)=C_v\iota_+(\epsilon_{v,v}\otimes v)C_v^*$, $H(e)=C_{s(e)} \iota_+(\epsilon_{s(e),r(e)}\otimes e) C_{r(e)}^*$, $H(e^*)=H(e)^*$. One checks that $H$ is a $*$-algebra homomorphism for both choices of involution. It follows that $H$ is an elementary $*$-homotopy between the composite of $\iota_+$ with the maps $\hat{\iota}_\tau$ and $\iota_w$ of \cite{cm1}*{Lemma 4.17}, again for both choices of involution. Next we pass to the analogue of \cite{cm1}*{Lemma 4.21}. With notations as in \emph{loc.cit.}, for each $e\in E^1$ consider the following elements of $\fA[t]$
\begin{align*}
U_e=&\epsilon_{s(e),s(e)} (1-t^2) ee^* + \epsilon_{e,s(e)}te^*,\\
V_e=&\epsilon_{s(e),s(e)}(1-t^2)ee^*+\epsilon_{s(e),e}(2t-t^3)e.
\end{align*}
One checks that the homotopy $H^+:C(E)\to D[t]$ defined in the proof of \cite{cm1}*{Lemma 4.21} satisfies the following identity for each $e\in E^1$. 
\[
H^+(e)=(em_{r(e)},U_e\epsilon_{s(e),r(e)}e),\, H^+(e^*)=(m_{r(e)}e^*,\epsilon_{r(e),s(e)}e^*V_e).
\]
Put $W_e=c(U_e,V_e)$. Let 
\begin{gather*}
H:C(E)\to M_{\pm}D[t],\\ H(e)=\iota_+(em_{r(e)},0)+(0,W_e)(0,\iota_+(\epsilon_{s(e),r(e)}e)),\\
H(e^*)=H(e)^*,\, H(v)=(m_v,0)\quad (v\in E^0,\, e\in E^1).
\end{gather*}
One checks that $H$ is a  $*$-algebra homomorphism for both choices of involution, so that for both choices of involution, $H$ is a $*$-homotopy between the maps $\psi_0$ and $\psi_{1/2}$ of \cite{cm1}*{Lemma 4.21}. This finishes the proof.
\end{proof}

\begin{rem}\label{rem:ackvega} The fact that $j^h(\phi)$ is an isomorphism, proved in Theorem \ref{thm:cekk}, is joint work with Santiago Vega, and is part of his PhD thesis, \cite{santi}.
\end{rem}
Let $S$ be a set, $\fT$ a triangulated category, and $\cH:\ahas\to\fT$ an excisive homology theory. We say that $\cH$ is
\emph{$S$-additive} if direct sums of at most $|S|$ factors exist in $\fT$, and for any set $T$ with cardinality $|T|\le |S|$ and any family of $*$-algebras $\{A_t:t\in T\}$, the canonical map $\bigoplus_{t\in T}\cH(A_t)\to \cH(\bigoplus_{t\in T}A_t)$ is an isomorphism.

\begin{thm}\label{thm:gentri}
 Assume that $\ell$ satisfies the $\lambda$-assumption \ref{stan:lambda}. Let $X:\ahas\to\fT$ be an excisive, homotopy invariant, Hermitian stable, $E$-stable and $E^0$-additive homology theory and let $R\in\ahas$. Then \eqref{seq:cohnext} and \eqref{seq:cohnext2} induce the following distinguished triangles in $\fT$
\begin{gather*}
\xymatrix{X(R)^{(\reg(E))}\ar[r]^{I-A_E^t}& X(R)^{(E^0)}\ar[r]& X(L(E)\otimes R)}\\
\xymatrix{X(R)^{(\reg(E))}\ar[r]^(.55){I-\sigma A_E^t}& X(R)^{(E^0)}\ar[r]& X(\ol{L(E)}\otimes R)}.
\end{gather*} 
\end{thm}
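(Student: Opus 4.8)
The plan is to reduce Theorem \ref{thm:gentri} to the key calculation $X(C(E)\otimes R)\cong X(R)^{(E^0)}$, which follows from Theorem \ref{thm:cekk}, together with an explicit identification of the connecting maps coming from the semi-split extensions \eqref{seq:cohnext} and \eqref{seq:cohnext2}. First I would tensor the extensions \eqref{seq:cohnext} and \eqref{seq:cohnext2} with $R$; since both are semi-split (split as sequences of $\ell$-modules, a splitting that survives tensoring over $\ell$ and can be taken involution-preserving under Assumption \ref{stan:lambda}), the results remain semi-split extensions of $*$-algebras. Applying the excisive homology theory $X$ then produces, for each extension, a distinguished triangle
\begin{gather*}
\xymatrix{X(\cK(E)\otimes R)\ar[r]& X(C(E)\otimes R)\ar[r]& X(L(E)\otimes R)}\\
\xymatrix{X(\ol{\cK(E)}\otimes R)\ar[r]& X(\ol{C(E)}\otimes R)\ar[r]& X(\ol{L(E)}\otimes R)}.
\end{gather*}

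Next I would identify the first two terms. Applying Theorem \ref{thm:cekk} (valid since $X$ is excisive, homotopy invariant, Hermitian stable and $E$-stable, so it factors through $kk^h$), the map $\phi$ of \eqref{map:elphi} induces $X(\phi\otimes R):X(R)^{(E^0)}\iso X(C(E)\otimes R)$, and similarly $X(\ol\phi\otimes R):X(R)^{(E^0)}\iso X(\ol{C(E)}\otimes R)$; here $E^0$-additivity is used to identify $X(\ell^{(E^0)}\otimes R)=X(R^{(E^0)})$ with $X(R)^{(E^0)}$. For the ideal terms, the kernel $\cK(E)$ is the ultramatricial ideal generated by the elements $q_v=v-\sum_{e\in s^{-1}(v)}ee^*$ for $v\in\reg(E)$; each such corner is Morita equivalent to $\ell$, and $E$-stability together with $E^0$-additivity (indexing over $\reg(E)$) yields $X(\cK(E)\otimes R)\cong X(R)^{(\reg(E))}$, and likewise for $\ol{\cK(E)}$. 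This is essentially the same computation carried out for $kk$ in \cite{cm2}; I would invoke that structure and check only that the identifications are compatible with both choices of involution, again using Lemma \ref{lem:hermstab} to control the grading-dependent matricial identifications.

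The main work, and the step I expect to be the real obstacle, is computing the connecting map $X(R)^{(\reg(E))}\to X(R)^{(E^0)}$ under these identifications and showing it is $I-A_E^t$ in the untwisted case and $I-\sigma A_E^t$ in the twisted case. The composite $X(\cK(E)\otimes R)\to X(C(E)\otimes R)$ is induced by the inclusion $\cK(E)\into C(E)$; under the isomorphisms above it becomes a matrix whose $v$-column records how $q_v$ decomposes. The vertex $v$ itself contributes the identity part $I$, while the summand $\sum_{e\in s^{-1}(v)}ee^*$ contributes $-A_E^t$, since in $X$ the class of the corner at $ee^*$ is pulled back, via $\iota$-stability and Hermitian stability, to the class of the range vertex $r(e)$; counting edges gives exactly the entry $(A_E)_{v,w}=|s^{-1}(v)\cap r^{-1}(w)|$. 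The only difference in the twisted extension \eqref{seq:cohnext2} is the sign in $\ol e=-e^*$, so that $e\ol e$ carries an extra factor that, at the level of the $\Z/2\Z$-graded theory $X$, is recorded by the action of $\sigma$ (the generator of $\Z[\sigma]$ acting on $kk^h$), turning $A_E^t$ into $\sigma A_E^t$; here I would lean on Remark \ref{rem:ollegrade} and the comodule/crossed-product description to make the bookkeeping of $\sigma$ precise. The delicate point throughout is that every homotopy and every Morita identification must preserve involutions and track the even/odd grading correctly, so that the two triangles differ exactly by the insertion of $\sigma$; verifying this coherence for both involutions simultaneously is where the care is required.
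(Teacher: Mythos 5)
Your proposal is correct and follows essentially the same route as the paper: reduce to the semi-split Cohn extensions tensored with $R$, identify $X(C(E)\otimes R)$ via Theorem \ref{thm:cekk} and $X(\cK(E)\otimes R)$ via the ultramatricial structure, and read off the connecting map from the decomposition $q_v=v-\sum_{s(e)=v}ee^*$. The one step you leave as bookkeeping --- why the twisted case produces $\sigma A_E^t$ --- is exactly where the paper does its only new computation, namely the explicit matrix identity \eqref{eq:ee*re} showing that $\iota_+(ee^*)$ is MvN-equivalent to $\iota_-(r(e))$ in $M_\pm\ol{C(E)}$, so that $[ee^*]=\sigma[r(e)]$ in $KH_0^h(\ol{C(E)})$; your appeal to the grading/comodule picture is the right idea but that identity is the concrete form it must take.
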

\begin{proof}
It follows from the universal property of $j^h$ that $\otimes R$ induces a triangulated functor in $kk^h$. Hence upon replacing $X$ by $X(-\otimes R)$ if necessary, we may assume that $R=\ell$.  
Let $\inc:\cK(E)\to C(E)$ and $\ol{\inc}:\ol{\cK(E)}\to \ol{C(E)}$ be the inclusions. For each vertex $w\in E^0$, let $p_w:\ell^{(E^0)}\leftrightarrows \ell:\chi_w$ be the projection onto the $w$-coordinate and the inclusion into the $w$-summand. If $v\in \reg(E)$, write $m_v=\sum_{s(e)=v}ee^*\in C(E)$ and let $q:\ell^{(\reg(E))}\to \cK(E)$, $v\mapsto q_v=v-m_v$. We shall abuse notation and write $q_v$ also for the map $\ell\to \cK(E)$, $v\mapsto q_v$. In view of Theorem \ref{thm:cekk} and the additivity hypothesis on $X$, it suffices to identify, for each pair $(v,w)\in\reg(E)\times E^0$, the composite $j^h(p_w)j^h(\phi)^{-1}j^h(\inc\circ q_v)\in KH^h_0(\ell)$, and similarly for $\ol{\phi}$, $\ol{\inc}$ and $\ol{q_v}$ substituted for $\phi$, $\inc$ and $q_v$. In the case of the standard involution, for each $e\in E^1$ the projection $ee^*$ is M-vN equivalent to $e^*e=r(e)$ and the same calculation as in \cite{cm1}*{Proposition 5.2}
goes through. However in the case of the twisted involution, this is no longer true. In fact, taking into account \eqref{map:invopm}
and \eqref{map:*twist} and writing ${}^*$ for the involution of $M_\pm \ol{C(E)}$, we obtain the following identities
\begin{align}\label{eq:ee*re}
\iota_+(ee^*)=&\left[\begin{matrix}0& e\\ 0& 0\end{matrix}\right]\cdot \left[\begin{matrix}0& 0\\ e^*& 0\end{matrix}\right]\nonumber\\
             =&\left[\begin{matrix}0& e\\ 0& 0\end{matrix}\right]\cdot\left[\begin{matrix}0& e\\ 0& 0\end{matrix}\right]^*\\
\iota_-(r(e))=&\left[\begin{matrix}0& e\\ 0& 0\end{matrix}\right]^*\cdot\left[\begin{matrix}0& e\\ 0& 0\end{matrix}\right]\nonumber
\end{align}
It follows that $[ee^*]=\sigma [r(e)]$ in $KH^h_0(\ol{C(E)})$, so from the orthogonal sum $v=q_v+m_v$ we get that 
$j^h(\ol{\inc q_v})=j^h(\ol{\phi\chi_v})-\sum_{s(e)=v}\sigma j^h(\ol{\phi \chi_{r(e)}})$. Hence $j^h(\ol{\phi})^{-1}\circ j^h(\ol{\inc})\circ j^h(\ol{q})=I-\sigma A_E^t$. 
\end{proof}

\begin{thm}\label{thm:fundtriang}
Let $E$ be a graph. Assume that $|E^0|<\infty$ and that $\ell$ satisfies the $\lambda$-assumption \ref{stan:lambda}. Then there are distinguished triangles in $kk^h$
\begin{equation*}
\xymatrix{
j^h(\ell^{\reg(E)})\ar[r]^{I-A_E^t}&j^h(\ell^{E^0})\ar[r]& j^h(LE)\\
j^h(\ell^{\reg(E)})\ar[r]^{I-\sigma A_E^t}&j^h(\ell^{E^0})\ar[r]& j^h(\ol{LE}).
}
\end{equation*}
\end{thm}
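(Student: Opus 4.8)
The plan is to recognize this statement as the specialization of the general Theorem~\ref{thm:gentri} to the universal theory itself. Concretely, I would apply Theorem~\ref{thm:gentri} to the homology theory $X = j^h \colon \ahas \to kk^h$ and to the coefficient algebra $R = \ell$. Since $\ell \otimes L(E) = L(E)$ and $\ell \otimes \ol{L(E)} = \ol{L(E)}$, the two triangles produced by Theorem~\ref{thm:gentri} are exactly the two asserted here, with connecting maps $I - A_E^t$ and $I - \sigma A_E^t$; under $R=\ell$ these integral (respectively $\Z[\sigma]$-) matrices act through the canonical ring homomorphisms $\Z \to KH_0^h(\ell)$ and $\Z[\sigma] \to KH_0^h(\ell) = kk^h(\ell,\ell)$. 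So the entire burden is to check that $j^h$ satisfies the five standing hypotheses of Theorem~\ref{thm:gentri}: excisive, homotopy invariant, Hermitian stable, $E$-stable, and $E^0$-additive.

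The first three are immediate: by its defining universal property (recalled in the Preliminaries), $j^h$ is excisive, homotopy invariant, $\iota_+$-stable (i.e.\ Hermitian stable), and $M_X$-stable for the fixed infinite set $X$. For $E^0$-additivity, since $|E^0| < \infty$ I only need \emph{finite} additivity, and this is automatic for any excisive homology theory: for $*$-algebras $A, B$ the split extension $A \to A \oplus B \to B$ is semi-split, so it induces a distinguished triangle which splits, giving $j^h(A \oplus B) \cong j^h(A) \oplus j^h(B)$. Iterating over the finite set $E^0$ (and noting $\ell^{E^0} = \ell^{(E^0)}$, $\ell^{\reg(E)} = \ell^{(\reg(E))}$ because these index sets are finite) identifies $j^h(\ell^{E^0}) \cong j^h(\ell)^{E^0}$ and $j^h(\ell^{\reg(E)}) \cong j^h(\ell)^{\reg(E)}$, which is what the additivity hypothesis requires.

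The one hypothesis demanding genuine care, and the point I expect to be the main obstacle, is $E$-stability, since $j^h$ is only \emph{given} to be $M_X$-stable for one fixed $X$, whereas $E$-stability asks for $M_Y$-stability with $|Y| = |E^0 \sqcup E^1 \sqcup \N|$. My plan is to note that since $X$ is infinite and the cardinality bound $|E^0 \sqcup E^1 \sqcup \N| \le |X|$ is available in the situations of interest (for countable $E$ the left-hand side is just $\aleph_0 \le |X|$ automatically), $M_X$-stability propagates to $M_Y$-stability for every such $Y$ by comparing the corner embeddings $M_Y \to M_X$ via \cite{cv}*{Lemma 2.4.3}, exactly the mechanism already used in Lemma~\ref{lem:hermstab}. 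Once this cardinality bookkeeping is settled, all hypotheses of Theorem~\ref{thm:gentri} hold for $j^h$, and the two distinguished triangles follow formally; no new homotopies or computations beyond those in Theorems~\ref{thm:cekk} and~\ref{thm:gentri} are needed.
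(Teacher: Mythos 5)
Your proposal is correct and matches the paper's own proof, which is precisely the one-line application of Theorem~\ref{thm:gentri} to $X=j^h$ (with $R=\ell$); your additional verifications of finite additivity via split exactness and of $E$-stability via the choice of a sufficiently large stabilizing set are exactly the points the paper leaves implicit.
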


\begin{proof}
Apply Theorem \ref{thm:gentri} to $X=j^h$. 
\end{proof}

\begin{coro}\label{coro:fundtau}
Let $E$ be as in Theorem \ref{thm:fundtriang} $\tau:L(E)\to L(E)$ as in \eqref{map:eltau} and $\ol{\tau}$ as in \eqref{map:olf}. Then
$$2(j^h(\tau)-j^h(\id_{L(E)}))=2(j^h(\ol{\tau})-j^h(\id_{\ol{L(E)}}))=0.$$ 
\end{coro}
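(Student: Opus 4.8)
The plan is to feed the fundamental triangle of Theorem \ref{thm:fundtriang} the ``doubling'' endomorphism coming from the map $\Delta'$ of \eqref{map:delta}, and to extract the factor of $2$ from the interplay of that endomorphism with the connecting map. Write the first triangle as
\[
j^h(\ell^{\reg(E)})\xrightarrow{\ } j^h(\ell^{E^0})\xrightarrow{\,g\,} j^h(L(E))\xrightarrow{\,h\,} j^h(\ell^{\reg(E)})[-1],
\]
where $g$ is induced by $\chi_v\mapsto v$ and $h$ is the connecting map; set $t=j^h(\tau)$ and $d=\ob{\Delta'_{L(E)}}\in kk^h(L(E),L(E))$. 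I would first record three identities. (i) Since $\Delta'$ sends each vertex $v$ (homogeneous of degree $0$) to $\diag(v,v)=\iota_1(v)+\iota_2(v)$, additivity of $j^h$ on orthogonal $*$-homomorphisms together with $j^h(\iota_1)=j^h(\iota_2)$ (matricial stability, \cite{cv}*{Lemma 2.4.1}) gives $d\circ g=2g$. (ii) The matrix identity $\Delta'_{L(E)}\circ\tau=\ad(s)\circ\Delta'_{L(E)}$ with $s=\diag(1,-1)$ a unitary shows, by triviality of inner $*$-automorphisms in $kk^h$ (matricial stability and homotopy invariance), that $d\circ t=d$, i.e. $d\circ(t-\id)=0$. (iii) The automorphism $\tau$ lifts to the graded $*$-automorphism $v\mapsto v$, $e\mapsto -e$ of $C(E)$, which preserves $\cK(E)$ and fixes each generator $q_v=v-\sum_{s(e)=v}ee^*$; since $q$ induces the isomorphism $j^h(\ell^{\reg(E)})\cong j^h(\cK(E))$ used to build the triangle, naturality of the connecting map yields $h\circ(t-\id)=0$.

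With these in hand the conclusion is immediate. Put $\theta=d-2\in kk^h(L(E),L(E))$. By (i), $\theta\circ g=0$, so the long exact sequence $\hom(-,j^h(L(E)))$ of the triangle factors $\theta$ through $h$: $\theta=\rho\circ h$ for some $\rho$. Using (iii),
\[
\theta\circ(t-\id)=\rho\circ h\circ(t-\id)=0,
\]
while by (ii), $\theta\circ(t-\id)=(d-2)\circ(t-\id)=d\circ(t-\id)-2(t-\id)=-2(t-\id)$. Comparing the two gives $2(t-\id)=0$, which is the first assertion. The statement for $\ol{L(E)}$ is proved verbatim, replacing the first triangle by the second one of Theorem \ref{thm:fundtriang}, $\tau$ by $\ol\tau$, and $d$ by $\ob{\Delta'_{\ol{L(E)}}}$: the three identities hold unchanged because $\Delta'$ depends only on the $\Z/2\Z$-grading and not on the involution, because $s$ is still a unitary in $M_2\ol{L(E)}$, and because the lift of $\ol\tau$ again fixes the degree-$0$ elements $q_v$.

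The main thing to get right is identity (iii), $h\circ(t-\id)=0$: it encodes that $\tau$ acts as the identity on the ``ideal part'' $j^h(\cK(E))\cong j^h(\ell^{\reg(E)})$, and this hinges on $q_v$ being homogeneous of degree $0$ (so the chosen lift of $\tau$ fixes it) together with the fact that $q$ realizes the very identification used in Theorem \ref{thm:fundtriang}. The remaining mild points are the two standard properties of $j^h$ invoked in (i) and (ii)—additivity on orthogonal $*$-homomorphisms with $j^h(\iota_1)=j^h(\iota_2)$, and triviality of inner $*$-automorphisms—both valid in any matricially stable, homotopy invariant theory. I would emphasize that neither the relation $d^2=2d$ of Proposition \ref{prop:deltatau} nor any positivity hypothesis on $\ell$ is needed here: the factor of $2$ arises solely from $\Delta'$ doubling the image of the vertices, i.e. from $d\circ g=2g$.
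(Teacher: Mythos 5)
Your argument is correct, but it extracts the factor of $2$ by a genuinely different mechanism than the paper. The paper's proof observes that $\tau$ (and $\ol\tau$) restricts to the identity on the images of both $\phi:\ell^{E^0}\to C(E)$ and $q:\ell^{\reg(E)}\to\cK(E)$, so that $1-j^h(\tau)$ factors \emph{both} as $\eta\circ\partial$ (your identity (iii)) \emph{and} as $p\circ\xi$ (the other half of the map of triangles, which you do not use); since $\partial\circ p=0$ this forces $(1-j^h(\tau))^2=0$, and the relation $\tau^2=\id$ turns $(1-j^h(\tau))^2$ into $2(1-j^h(\tau))$. You instead use only the factorization through $\partial$ and import the element $\ob{\Delta'}$ from Proposition \ref{prop:deltatau}, getting the $2$ from $\ob{\Delta'}\circ g=2g$ together with $\ob{\Delta'}\circ(j^h(\tau)-1)=0$. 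Both routes are sound: your identities (i)--(iii) all hold (for (ii), note that $\ad(\diag(1,-1))\sim^*_{M_{\pm 2}}\id$ is exactly the second assertion of Lemma \ref{lem:abc}, so it is hermitian stability, not just matricial stability, that makes inner automorphisms by unitaries trivial in $kk^h$; and the orthogonal additivity in (i) is the same principle the paper uses to prove $\ob{\Delta'}\circ\ob{\Delta'}=2\ob{\Delta'}$). What the paper's route buys is economy --- it needs nothing beyond the map of triangles and $\tau^2=\id$ --- while your route makes explicit the pleasant fact that $\ob{\Delta'}$ acts as $2$ on the image of $j^h(\ell^{E^0})$ and annihilates $j^h(\tau)-1$, i.e.\ it behaves like $1+j^h(\tau)$ without the positivity/invertibility hypotheses on $2$ required for the last assertion of Proposition \ref{prop:deltatau}.
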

\begin{proof} The restrictions of $\tau:C(E)\to C(E)$ to the images of $\phi:\ell^{E^0}\to C(E)$ and $q:\ell^{\reg(E)}\to \cK(E)\subset C(E)$ are the identity maps. Hence writing $1$ for all identity maps, we have a map of triangles
\[
\xymatrix{j^h(\ell^{\reg(E)})\ar[d]^1\ar[r]^{I-A_E^t}&j^h(\ell^{E^0})\ar[d]^1\ar[r]^p& j^h(LE)\ar[d]^{j^h(\tau)}\ar[r]^(.4)\partial&j^h(\ell^{\reg(E)})[-1]\ar[d]^1\\
j^h(\ell^{\reg(E)})\ar[r]^{I-A_E^t}&j^h(\ell^{E^0})\ar[r]^p& j^h(LE)\ar[r]^(.4)\partial& j^h(\ell^{\reg(E)})[-1]}
\]
From the exact sequences obtained by applying $kk^h(LE,-)$ and $kk^h(-,LE)$ to the triangles above we obtain factorizations $1-j^h(\tau)=p\circ\xi=\eta\circ\partial$. It follows that 
\[
0=\eta\circ\partial\circ p\circ \xi=(1-j^h(\tau))^2=2(1-j^h(\tau)).
\]
The same argument shows that $2(1-j^h(\ol{\tau}))=0$. 
\end{proof}
Let $E$ be a finite graph; let $B_E\in \{ 0,1 \}^{E^1 \times (E^1 \sqcup \sink(E))}$,
\begin{equation}\label{mat:B}
(B_E)_{e,x} =\left\{\begin{matrix} \delta_{r(e),s(x)}& x\in E^1\\ \delta_{r(e),x} & x\in\sink(E)\end{matrix}\right.
\end{equation}
Also let $J\in \Z^{(E^1\sqcup\sink(E))\times E^1}$,
\begin{equation}\label{mat:IB}
J_{x,e}=\left\{\begin{matrix}\delta_{s(x),r(e)}& x\in E^1\\ \delta_{x,r(e)} & x\in\sink(E)\end{matrix}\right.
\end{equation}
\begin{coro}\label{coro:fundtriang}
Let $E$ be a finite graph and $B_E$ and $J$ as in \eqref{mat:B} and \eqref{mat:IB}. Then there are distinguished triangles in $kk^h$
\[
\xymatrix{
\ell^{E^1}\ar[r]^(.4){J-B_E^t}&\ell^{E^1\sqcup\sink(E)}\ar[r]& LE\\
\ell^{E^1}\ar[r]^(.4){J-\sigma B_E^t}&\ell^{E^1\sqcup\sink(E)}\ar[r]& \ol{LE}.
}
\]
\end{coro}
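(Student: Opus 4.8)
The plan is to read the pair $(B_E,J)$ as the incidence-type and identity-type data of the edge (dual) graph $E_t$, with $\reg(E_t)=E^1$ and $E_t^0=E^1\sqcup\sink(E)$, and to deduce both triangles from Theorem \ref{thm:fundtriang} by an elementary strong shift equivalence argument. Concretely, I would introduce three morphisms of free objects in $kk^h$ built from the graph: the map $\delta\colon j^h(\ell^{\reg(E)})\to j^h(\ell^{E^1})$ sending a regular vertex to the sum of the edges it emits; the range map $\gamma\colon j^h(\ell^{E^1\sqcup\sink(E)})\to j^h(\ell^{E^0})$ sending an edge to its range and fixing the sinks; and the extension $\hat\delta\colon j^h(\ell^{E^0})\to j^h(\ell^{E^1\sqcup\sink(E)})$ of $\delta$ by the identity on $\ell^{\sink(E)}$. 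A direct check on basis vectors gives $\gamma\hat\delta|_{\ell^{\reg(E)}}=A_E^t$ (and $\gamma\hat\delta|_{\ell^{\sink(E)}}=\id$), and $\hat\delta\gamma|_{\ell^{E^1}}=B_E^t$ (and $\hat\delta\gamma|_{\ell^{\sink(E)}}=\id$); this is precisely the strong shift equivalence $A_E=\mathsf{s}\,\mathsf{r}^{t}$, $A_{E_t}=\mathsf{r}\,\mathsf{s}^{t}$ that expresses the coincidence of the vertex and edge presentations.

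First I would record the swap lemma in the triangulated category $kk^h$: for $u\colon P\to Q$ and $v\colon Q\to P$, the commuting square relating $1-vu$ and $1-uv$ (from $u(1-vu)=(1-uv)u$) induces mutually inverse isomorphisms $\operatorname{cofib}(1-vu)\cong\operatorname{cofib}(1-uv)$, the point being that the self-map of a cofiber induced by its own boundary map is null-homotopic via the identity, hence zero. Applying this with $u=\hat\delta$ and $v=\gamma$ yields $\operatorname{cofib}(1-\gamma\hat\delta)\cong\operatorname{cofib}(1-\hat\delta\gamma)$. Since both composites restrict to the identity on the $\sink(E)$-summand, the left cofiber splits, by Theorem \ref{thm:fundtriang}, as $j^h(LE)\oplus j^h(\ell^{\sink(E)})[-1]$, while the right one splits as $\operatorname{cofib}(J-B_E^t)\oplus j^h(\ell^{\sink(E)})[-1]$; comparing and cancelling the common suspended summand produces the first triangle.

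For the twisted triangle I would rerun the identical argument with $\gamma$ replaced by its $\sigma$-twisted version on the edge-to-range part, as dictated by the identity $[ee^*]=\sigma[r(e)]$ of \eqref{eq:ee*re} used in the proof of Theorem \ref{thm:gentri}. This replaces $A_E^t$ by $\sigma A_E^t$ and $B_E^t$ by $\sigma B_E^t$ simultaneously, while the identity-type maps $I$ and $J$ (which carry no range transition) remain untwisted, so the same swap yields the triangle with base map $J-\sigma B_E^t$ and cofiber $j^h(\ol{LE})$.

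The step I expect to be the main obstacle is the bookkeeping forced by sinks: because the source of an edge is always regular but its range may be a sink, the vertex and edge complexes have different objects in each degree, so the clean single-object swap lemma does not apply on the nose and some padding is unavoidable. The honest content is then the cancellation of the spurious $j^h(\ell^{\sink(E)})[-1]$ summand — one must verify that the swap isomorphism respects the $\sink(E)$-decomposition (which it does, since $\hat\delta$ and $\gamma$ are the identity there), so that it restricts to an isomorphism of the complementary summands rather than merely matching their direct sums. An alternative that trades this cancellation for a different verification is to apply Theorem \ref{thm:fundtriang} directly to $E_t$ and to identify $j^h(L(E_t))\cong j^h(LE)$ and $j^h(\ol{L(E_t)})\cong j^h(\ol{LE})$ through the same strong shift equivalence, compatibly with both involutions.
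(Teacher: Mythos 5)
Your argument is correct, but it takes a genuinely different route from the paper's. The paper's proof is a two-line reduction: it observes (following \cite{cm1}*{Remark 5.7}) that $B_E=A_{E_s}$ for the out-split graph $E_s$ of \cite{lpabook}*{Definition 6.3.23} --- this is the graph you call $E_t$, a notation that clashes with the paper's transpose graph $E_t$ of Section \ref{sec:duality}, which has vertex set $E^0$ and is a different object --- and then invokes the explicit $\Z$-graded $*$-algebra isomorphism $L(E)\cong L(E_s)$ constructed in \cite{aalp}, checking that it intertwines the twisted involutions, so that both triangles follow at once from Theorem \ref{thm:fundtriang} applied to $E_s$. (Both readings require interpreting $J$ as the identity-type matrix $J_{x,e}=\delta_{x,e}$; the displayed formula \eqref{mat:IB} as printed coincides with $B_E^t$ and is evidently a typo.) You instead stay entirely inside the triangulated category and derive the triangles from the vertex/edge factorization together with the swap isomorphism $\operatorname{cofib}(1-vu)\cong\operatorname{cofib}(1-uv)$. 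That lemma is true, but your justification is slightly off: the induced maps $\bar{u},\bar{v}$ are not mutually inverse on the nose; rather $\bar{v}\bar{u}-1$ and $\bar{u}\bar{v}-1$ are killed by both $\pi$ and $\partial$, hence factor as $\alpha\partial$ with $\partial\alpha\partial=0$ and are square-zero, so $\bar{v}\bar{u}$ and $\bar{u}\bar{v}$ are unipotent and $\bar{u}$ is an isomorphism --- the same two-step nilpotence trick the paper itself uses at the end of Lemma \ref{lem:resol}. Your cancellation of the $j^h(\ell^{\sink(E)})[-1]$ summands does go through: $\hat\delta$ is block diagonal for the decompositions $\ell^{E^0}=\ell^{\reg(E)}\oplus\ell^{\sink(E)}$ and $\ell^{E^1\sqcup\sink(E)}=\ell^{E^1}\oplus\ell^{\sink(E)}$, both composites $1-\gamma\hat\delta$ and $1-\hat\delta\gamma$ split off a zero map on the sink block, so one may choose the fill-in $\bar{\hat\delta}$ as a direct sum of fill-ins, and a block-diagonal isomorphism one of whose blocks is invertible has its other block invertible. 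What your route buys is independence from the out-split isomorphism of \cite{aalp} and from the verification that it preserves $\ol{\phantom{a}}$; what it costs is exactly the padding bookkeeping you flag. Your closing alternative is essentially the paper's proof, except that the paper obtains the identification $j^h(L(E_s))\cong j^h(LE)$ from an honest graded $*$-algebra isomorphism rather than from a $kk^h$-equivalence extracted from the shift equivalence.
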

\begin{proof}
As observed in \cite{cm1}*{Remark 5.7}, $B_E=A_{E_s}$ for the out-split graph $E_s$ of \cite{lpabook}*{Definition 6.3.23}.
An explicit $\Z$-graded $*$-algebra isomorphism $f:LE\iso L(E_s)$ is constructed in the proof of \cite{aalp}*{Theorem 2.8}; one checks that 
$f(\ol{a})=\ol{f(a)}$ for all $a\in L(E)$. Given all this, the corollary is immediate from Theorem \ref{thm:fundtriang}.  
\end{proof}

\section{Hermitian \topdf{$K$}{K}-theory and Bowen-Franks groups}\label{sec:kkhbf}

The \emph{Bowen-Franks} group of a graph $E$ is
\[
\BF(E)=\coker(I-A_E^t).
\]
We shall also consider the following $\Z[\sigma]$-module
\[
\ol{\BF}(E)=\coker(I-\sigma A_E^t).
\]
\begin{rem}\label{rem:olbfreg}
Let $E$ be a regular graph and let $E^2$ be the graph with the same vertices and where an edge is a path of length $2$ in $E$. Then we have a group isomorphism 
\[
\ol{\BF}(E)\cong\coker(I-(A_E^t)^2)=\BF(E^2).
\]
Under the isomorphism above, the action of $\sigma$ becomes multiplication by $A_E^t$. 
\end{rem}

\begin{thm}\label{thm:khseq} 
Let $E$ be a graph and $R\in\ahas$. Then there are exact sequences

\begin{gather*}
0\to \BF(E)\otimes KH_n^h(R)\to KH^h_n(L(E)\otimes R)\to \ker((I-A_E^t)\otimes KH^h_{n-1}(R))\to 0\\
0\to \ol{\BF}(E)\otimes_{\Z[\sigma]} KH_n^h(R)\to KH^h_n(\ol{L(E)}\otimes R)\to \ker((I-\sigma A_E^t)\otimes KH^h_{n-1}(R))\to 0
\end{gather*}

\end{thm}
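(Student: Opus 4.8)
The plan is to derive both exact sequences directly from the distinguished triangles of Theorem~\ref{thm:gentri}, applied to the homology theory $X = KH^h = kk^h_*(\ell, -)$. Recall from \eqref{eq:kkh=khh} that $kk^h_n(\ell, B) = KH^h_n(B)$, and that $KH^h$ is an excisive, homotopy invariant, Hermitian stable, $E$-stable and $E^0$-additive homology theory taking values in the category of $\Z[\sigma]$-graded abelian groups (with $\sigma$ acting via the $\Z[\sigma]$-module structure on $kk^h$ noted at the end of the Hermitian bivariant $K$-theory subsection). Since $X = KH^h$ is additive, $X(R^{(E^0)}) = X(R)^{(E^0)} = KH^h_*(R)^{(E^0)}$, and similarly over $\reg(E)$.

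The main mechanism is the long exact sequence of a triangle. First I would apply Theorem~\ref{thm:gentri} with $X = KH^h$ to obtain the distinguished triangle
\[
\xymatrix{KH^h(R)^{(\reg(E))}\ar[r]^{I-A_E^t}& KH^h(R)^{(E^0)}\ar[r]& KH^h(L(E)\otimes R)}
\]
in the derived/triangulated sense, where the first map is induced by the integer matrix $I - A_E^t$ indexed by $E^0\times\reg(E)$ (so it acts on the graded group $KH^h_*(R)$ coefficientwise). Rotating this triangle yields a long exact sequence
\[
\cdots \to KH^h_n(R)^{(\reg(E))} \xrightarrow{I-A_E^t} KH^h_n(R)^{(E^0)} \to KH^h_n(L(E)\otimes R) \xrightarrow{\partial} KH^h_{n-1}(R)^{(\reg(E))} \xrightarrow{I-A_E^t} KH^h_{n-1}(R)^{(E^0)} \to \cdots
\]
From this long exact sequence I extract a short exact sequence in the standard way: the cokernel of $(I-A_E^t)$ acting on $KH^h_n(R)$ is by definition $\BF(E)\otimes KH^h_n(R)$ (here I use that $\coker(I-A_E^t)\otimes_\Z M = \coker((I-A_E^t)\otimes_\Z M)$ is right-exactness of tensor product applied to the presentation $\Z^{(\reg E)}\xrightarrow{I-A_E^t}\Z^{(E^0)}\to\BF(E)\to 0$, together with $KH^h_n(R)^{(\reg E)}=\Z^{(\reg E)}\otimes KH^h_n(R)$), and the image of $\partial$ is exactly $\ker\!\big((I-A_E^t)\otimes KH^h_{n-1}(R)\big)$. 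This gives
\[
0\to \BF(E)\otimes KH_n^h(R)\to KH^h_n(L(E)\otimes R)\to \ker\!\big((I-A_E^t)\otimes KH^h_{n-1}(R)\big)\to 0
\]
as claimed.

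For the second sequence the argument is identical, now using the second triangle of Theorem~\ref{thm:gentri} with the map $I-\sigma A_E^t$. The only point requiring care is that $\sigma$ must be interpreted as acting on the coefficient group $KH^h_*(R)$ via its $\Z[\sigma]$-module structure, and that the cokernel and kernel are then taken over $\Z[\sigma]$: since $KH^h_n(R)$ is a $\Z[\sigma]$-module and $I-\sigma A_E^t$ is an endomorphism of $\Z[\sigma]$-modules, right-exactness of $-\otimes_{\Z[\sigma]}KH^h_n(R)$ applied to the presentation $\Z[\sigma]^{(\reg E)}\xrightarrow{I-\sigma A_E^t}\Z[\sigma]^{(E^0)}\to\ol{\BF}(E)\to 0$ identifies the relevant cokernel with $\ol{\BF}(E)\otimes_{\Z[\sigma]}KH^h_n(R)$. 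I expect the only genuine obstacle to be bookkeeping: verifying that the connecting map and the matrix multiplication are compatible with the $\Z[\sigma]$-module structure so that the tensor-product and Tor identifications are legitimate, and checking that finiteness of $E^0$ (used to have the triangles land in honest finite sums) is not needed here beyond the $E^0$-additivity already hypothesized in Theorem~\ref{thm:gentri}. Everything else is the purely formal passage from a long exact sequence to short exact sequences via cokernel/kernel of the boundary matrix.
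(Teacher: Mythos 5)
Your proposal is correct and is essentially the paper's own proof: the paper applies Theorem \ref{thm:gentri} to the spectrum-valued functor $KH^h$ (landing in the homotopy category of spectra), takes homotopy groups to get the long exact sequence, and extracts the two short exact sequences exactly as you describe. The only caution is that $X$ must be taken as the spectrum-valued theory rather than the graded-group-valued $kk^h_*(\ell,-)$, since Theorem \ref{thm:gentri} needs a target triangulated category; once that is fixed, your cokernel/kernel bookkeeping over $\Z$ and $\Z[\sigma]$ matches the intended argument.
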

\begin{proof}
Apply Theorem \ref{thm:gentri} to the functor $KH^h$ from $\ahas$ to the homotopy category of spectra that sends $A\in\ahas$ to the homotopy Hermitian $K$-theory spectrum $KH^h(A)$; then take homotopy groups. 
\end{proof}

\begin{lem}\label{lem:lekreg}
Let $E$ be a countable graph and $R$ a unital algebra. If $R$ is regular supercoherent, then $L(E)\otimes R$ is 
$K$-regular. If moreover $2$ is invertible in $\ell$, then $L(E)\otimes R$ is $K^h$-regular. 
\end{lem}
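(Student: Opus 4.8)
The plan is to establish $K$-regularity first and to deduce $K^h$-regularity from it at the very end. For the $K$-statement I would begin with two reductions. Since $E$ is countable, $L(E)=\colim_F L(F)$ is the filtered colimit of the Leavitt path algebras of its finite complete subgraphs $F$ \cite{lpabook}, and the functors $-\otimes R$, $-\otimes R[t_1,\dots,t_n]$ and $K_*$ all commute with filtered colimits; hence it suffices to treat finite $E$. Observing that $R[t_1,\dots,t_n]$ is again regular supercoherent, it is then enough to prove, for every $n$ and every regular supercoherent unital $R$, that $K_n(L(E)\otimes R)\to K_n(L(E)\otimes R[t])$ is an isomorphism, as iterating this over the rings $R[t_1,\dots,t_m]$ yields full $K$-regularity. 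I would also record the basic inputs: a regular supercoherent ring is $K$-regular, and this property passes to matrix algebras (Morita invariance together with $M_n(R)[t]=M_n(R[t])$), to finite products, and to filtered colimits, so that every ultramatricial $R$-algebra is $K$-regular.

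The subtle point is that the Cohn presentation alone does not settle matters. The extension \eqref{seq:cohnext} exhibits $L(E)$ as the quotient of $C(E)$ by the ultramatricial ideal $\cK(E)$, and $\cK(E)\otimes R$ is ultramatricial over $R$, hence $K$-regular. One is tempted to compare the $K$-theory Mayer--Vietoris sequences of \eqref{seq:cohnext} for $R$ and for $R[t]$ by the five lemma, but this only shows that $K$-regularity of $C(E)\otimes R$ and of $L(E)\otimes R$ are \emph{equivalent}; the identification $j^h(C(E))\cong j^h(\ell)^{E^0}$ of Theorem~\ref{thm:cekk}, which would pin $C(E)$ down, rests on polynomial homotopies and is therefore invisible to $K$ (as opposed to $KH$). \textbf{This is the main obstacle:} one needs a homotopy-invariance-free, independent computation of $K_*(L(E))$.

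I would obtain that computation from the $\Z$-grading. The degree-zero subalgebra $L(E)_0$ is ultramatricial over $\ell$, so $L(E)_0\otimes R$ is $K$-regular, and $L(E)$ is a corner skew Laurent polynomial ring over $L(E)_0$ (after the standard reduction to a graph without sinks and sources, the removed parts contributing ultramatricial ideals which are handled by the colimit/extension stability above). The fundamental (Bass--Heller--Swan / Pimsner--Voiculescu) exact sequence for corner skew Laurent polynomial rings is natural in the coefficient ring, so a second five-lemma comparison of its instances for $R$ and $R[t]$ propagates $K$-regularity from $L(E)_0\otimes R$ to $L(E)\otimes R$. This is precisely the involution-free assertion, which I would import from the methods of \cite{cm1,cm2}; combined with the preceding paragraph it also yields the $K$-regularity of $C(E)\otimes R$.

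Finally, for the $K^h$-statement, $L(E)\otimes R$ has local units (the idempotents $v\otimes 1$, $v\in E^0$), so it is $K$-excisive in the sense of \cite{cv}*{Section~3}. Since $2$ is invertible in $\ell$ and $L(E)\otimes R$ has just been shown to be $K$-regular, the $K$-excisive form of \cite{cv}*{Lemma~3.8} recalled in Section~\ref{sec:prelis} upgrades $K$-regularity to $K^h$-regularity, completing the proof.
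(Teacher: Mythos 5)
Your proof is correct in substance, and the closing step (local units, hence $K$-excisive, then \cite{cv}*{Lemma 3.8}) is exactly the paper's; but the route to $K$-regularity differs from the paper's in the reduction step, and it is worth comparing the two. The paper does not re-derive the core computation at all: it quotes \cite{abc}*{Theorem 7.6} verbatim for the row-finite case (this is precisely the corner skew Laurent polynomial / ultramatricial-degree-zero argument you sketch, so your citation should point to \cite{abc} rather than to \cite{cm1,cm2}, which merely use it), and then handles arbitrary countable graphs by choosing a desingularization $E_\delta$ and proving $M_\infty L_\Z(E)\cong M_\infty L_\Z(E_\delta)$; the bulk of the paper's proof is devoted to verifying the hypothesis of the Morita criterion of \cite{amorir} (topological projectivity of $L_\Z(E)$) needed for that isomorphism. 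You instead reduce to finite graphs by writing $L(E)$ as a filtered colimit of Leavitt path algebras of finite complete subgraphs and using that $K_*$, $-\otimes R$ and $-\otimes R[t_1,\dots,t_n]$ commute with filtered colimits; since finite graphs are in particular row-finite, this lands directly in the case covered by \cite{abc}*{Theorem 7.6} and bypasses desingularization and the $M_\infty$-isomorphism entirely, which is arguably cleaner. Two small points of care in your version: the finite approximating graphs are not literal subgraphs (infinite emitters must be made singular in them to avoid imposing spurious CK2 relations, as in \cite{lpabook}*{Section 1.6.3}), and your parenthetical treatment of sinks and sources via ``extension stability'' is glossed --- it is exactly what \cite{abc} works out, so it is safer to cite their Theorem 7.6 for all row-finite graphs rather than only for essential ones. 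Your observation that the Cohn extension plus the five lemma only proves the \emph{equivalence} of $K$-regularity for $C(E)\otimes R$ and $L(E)\otimes R$, because Theorem \ref{thm:cekk} is invisible to nonconnective $K$-theory, is accurate and correctly identifies why an independent computation is needed.
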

\begin{proof} The hypothesis on $R$ implies that $R[t_1,\dots,t_n]$ is regular supercoherent for all $n$, by the argument of \cite{abc}*{beginning of Section 7}.  Hence $L_\Z(E)\otimes_\Z R=L(E)\otimes R$ is $K$-regular for every regular supercoherent $R$ and every row-finite graph $E$, by \cite{abc}*{Theorem 7.6}. In the general case, because $E$ is countable, we may choose a  desingularization $E_\delta$ of $E$ as in \cite{aap}*{Section 5}. Theorem 5.6 of \cite{aap} says that if $\ell$ is a field, then $L(E)$ and $L(E_\delta)$ are Morita equivalent as rings with local units; the proof actually works over arbitrary commutative $\ell$, and in particular for $\ell=\Z$. We claim that $M_\infty L_\Z(E)\cong M_\infty L_\Z(E_\delta)$. Since $L(E)\otimes R=L_\Z(E)\otimes_\Z R$, the claim together with the row-finite case already proved shows that $L(E)\otimes R$ is $K$-regular. By \cite{amorir}*{Theorem 5 and Remark 1}, two rings $A$ and $B$ with countable systems of local units which are topologically projective in the sense of \cite{amorir}*{Definition on page 409} are Morita equivalent if and only if $M_\infty A\cong M_\infty B$. Hence it suffices to show that $L_\Z(E)$ is topologically projective for any graph $E$. Let $U=\bigoplus_{v\in E^0}LEv$ and let $\pi:U\to LE$ be the sum of the canonical inclusions. Let $s:L(E)\to U$, $s(a)_v= av$. For $a\in L(E)$, put $E\supset F=\supp(s(a))$ and  $p=\sum_{v\in F}v$. Then $s(a+b-bp)_v=s(a)_v$ for all $b\in L(E)$ and all $v\in F$. This proves that $L(E)$ is topologically projective, concluding the proof of the first assertion of the lemma. The second assertion follows from the first using \cite{cv}*{Lemma 3.8}.
\end{proof}

\begin{coro}\label{coro:kseq}
Let $R\in\ahas$ be unital and $E$ a countable graph. Assume that $2$ is invertible in $R$ and that $R$ is regular supercoherent. Then the maps $K^h_*(L(E)\otimes R)\to KH^h_*(L(E)\otimes R)$ and $K^h_*(\ol{L(E)}\otimes R)\to KH^h_*(\ol{L(E)}\otimes R)$ of \eqref{map:khtokhh} are isomorphisms and the exact sequences of Theorem \ref{thm:khseq} also hold for $L(E)\otimes R$ and $\ol{L(E)}\otimes R$ with $KH^h$ replaced by $K^h$. 
\end{coro}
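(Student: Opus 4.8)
The plan is to deduce Corollary \ref{coro:kseq} directly from Theorem \ref{thm:khseq} by identifying the comparison map \eqref{map:khtokhh} as an isomorphism under the stated regularity hypotheses. First I would observe that both $L(E)\otimes R$ and $\ol{L(E)}\otimes R$ are $K^h$-regular: since $R$ is unital, regular supercoherent, and $2$ is invertible in $R$ (hence in $\ell$, as $R$ is an $\ell$-algebra with unit), Lemma \ref{lem:lekreg} applies and shows that $L(E)\otimes R$ is $K^h$-regular. For the twisted algebra $\ol{L(E)}\otimes R$ I would note that, as a ring (ignoring the involution), it is isomorphic to $L(E)\otimes R$, so the underlying plain algebra is $K$-regular by the same lemma; the passage from $K$-regularity to $K^h$-regularity in the presence of $2$ invertible is exactly the content invoked in the proof of Lemma \ref{lem:lekreg} via \cite{cv}*{Lemma 3.8}, and this argument is insensitive to which of the two involutions one puts on the algebra.

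Once $K^h$-regularity is established for both algebras, the comparison map \eqref{map:khtokhh}, namely $K_*^h(B)\to KH_*^h(B)$, is an isomorphism for $B=L(E)\otimes R$ and $B=\ol{L(E)}\otimes R$; this is precisely the statement recalled in the subsection on Hermitian bivariant $K$-theory, that \eqref{map:khtokhh} is an isomorphism whenever $B$ is $K^h$-regular. This gives the first assertion of the corollary directly.

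For the second assertion I would simply transport the exact sequences of Theorem \ref{thm:khseq} through these isomorphisms. Theorem \ref{thm:khseq} produces, for every $n$, short exact sequences with outer terms built from $\BF(E)$, $\ol{\BF}(E)$ and the groups $KH_n^h(R)$, $KH_{n-1}^h(R)$. Since $R$ itself is unital, regular supercoherent with $2$ invertible, the comparison map $K_*^h(R)\to KH_*^h(R)$ is also an isomorphism, so one may replace every occurrence of $KH^h$ by $K^h$ in the outer terms; combined with the isomorphism on the middle term furnished by the first assertion, naturality of \eqref{map:khtokhh} with respect to $*$-homomorphisms yields the same short exact sequences with $KH^h$ uniformly replaced by $K^h$.

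The only genuinely non-routine point is verifying $K^h$-regularity of $\ol{L(E)}\otimes R$, since Lemma \ref{lem:lekreg} is phrased for $L(E)\otimes R$ rather than its signed variant. I would handle this by emphasizing that $K$-regularity is a property of the underlying algebra, which is unchanged by the twist of the involution, and that the final implication \enquote{$K$-regular and $2$ invertible $\Rightarrow$ $K^h$-regular} from \cite{cv}*{Lemma 3.8} applies to any unital $*$-algebra. Everything else is a formal diagram chase using naturality, so I expect no further obstacles.
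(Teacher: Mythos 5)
Your overall strategy is the one the paper uses: reduce everything to $K^h$-regularity of $L(E)\otimes R$ (and of its signed variant, which has the same underlying algebra), so that the comparison map \eqref{map:khtokhh} is an isomorphism, and then transport the sequences of Theorem \ref{thm:khseq}. The paper's own proof is a two-line version of this: since $R$ is unital, $L(E)\otimes R$ has local units, hence is $K$-excisive in the sense of \cite{cv}*{Section 3}, and one applies \cite{cv}*{Lemma 3.8} directly. However, two points in your write-up need repair. First, the parenthetical claim that $2$ invertible in $R$ forces $2$ invertible in $\ell$ is false (take $\ell=\Z$ and $R=\Z[1/2]$); you cannot literally invoke the second half of Lemma \ref{lem:lekreg}, whose hypothesis is that $2$ be invertible in $\ell$. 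The correct fix is the one the paper takes: $2$ is invertible in $L(E)\otimes R$ itself (since $1/2\in R$ acts on it), and \cite{cv}*{Lemma 3.8} upgrades $K$-regularity to $K^h$-regularity under that hypothesis. Second, you assert that this last implication ``applies to any unital $*$-algebra,'' but $L(E)\otimes R$ is unital only when $E^0$ is finite, whereas the corollary allows any countable graph. This is exactly why the paper records that $L(E)\otimes R$ has local units and is therefore $K$-excisive, and then uses the $K$-excisive form of \cite{cv}*{Lemma 3.8} (the preliminaries explicitly note that the unitality hypothesis can be relaxed to $K$-excisiveness). With these two corrections your argument coincides with the paper's; the remaining steps (the involution-independence of $K$-regularity, the isomorphism $K^h_*(R)\iso KH^h_*(R)$ for the outer terms, and naturality) are handled as you describe.
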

\begin{proof}
 Because $R$ is unital, $L(E)\otimes R$ has local units and therefore $K$-excisive in the sense of \cite{cv}*{Section 3}. Now apply \cite{cv}*{Lemma 3.8}.
\end{proof}

\begin{coro}\label{coro:kseq2}
Assume that $E$ is countable and that $\ell$ is a field of characteristic $\chara(\ell)\ne 2$. Then
\[
K^h_0(L(E))=\BF(E)\otimes_\Z K_0^h(\ell), \, K^h_0(\ol{L(E)})=\ol{\BF}(E)\otimes_{\Z[\sigma]}K_0^h(\ell). 
\]
\end{coro}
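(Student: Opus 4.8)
The plan is to deduce this as a direct specialization of Corollary \ref{coro:kseq} combined with the degree-zero part of Theorem \ref{thm:khseq}. First I would check that the hypotheses of Corollary \ref{coro:kseq} hold with $R=\ell$: a field $\ell$ with $\chara(\ell)\ne 2$ is regular supercoherent (a field is trivially regular and coherent, and polynomial rings over a field are regular Noetherian, hence coherent) and has $2$ invertible, so $L(E)$ and $\ol{L(E)}$ are $K^h$-regular and the comparison maps $K^h_*\to KH^h_*$ are isomorphisms. Thus the exact sequences of Theorem \ref{thm:khseq} hold verbatim with $KH^h$ replaced by $K^h$.

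Next I would write down the $n=0$ instance of those sequences for $R=\ell$:
\[
0\to \BF(E)\otimes K_0^h(\ell)\to K^h_0(L(E))\to \ker\!\big((I-A_E^t)\otimes K^h_{-1}(\ell)\big)\to 0,
\]
and similarly with $\sigma$ and the bar-decorated objects in the second case. The statement then reduces to showing that the right-hand kernel term vanishes, i.e. that the surjection onto $K^h_0(L(E))$ restricts to an isomorphism from $\BF(E)\otimes_\Z K_0^h(\ell)$ (respectively $\ol{\BF}(E)\otimes_{\Z[\sigma]}K_0^h(\ell)$). The key point is that $K^h_{-1}(\ell)=0$ for a field $\ell$ with $\chara(\ell)\ne 2$, so the kernel term is automatically zero and the left map is an isomorphism.

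The main obstacle, therefore, is justifying the vanishing $K^h_{-1}(\ell)=0$. I would invoke the known structure of negative Hermitian (Grothendieck–Witt) $K$-theory of a field: for a field in which $2$ is invertible the negative Grothendieck–Witt groups agree with negative $L$-theory / negative $K$-theory contributions, and $K_{-1}(\ell)=0$ since a field is regular; combined with the $2$-invertibility and the comparison between Hermitian and ordinary $K$-theory from \cite{cv}*{Section 3}, this forces $K^h_{-1}(\ell)=0$. One must also make sure the tensor identification is over the correct ring: since $kk^h(A,B)$ and hence $KH^h_*(\ell)$ carry a natural $\Z[\sigma]$-action, the first isomorphism is stated over $\Z$ because the standard-involution triangle uses $I-A_E^t$ (no $\sigma$), whereas the twisted triangle uses $I-\sigma A_E^t$ and therefore produces a tensor product over $\Z[\sigma]$; I would simply read off which base ring applies from the two triangles of Theorem \ref{thm:fundtriang} (via Theorem \ref{thm:khseq}). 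Once the kernel term is killed, both isomorphisms follow immediately.
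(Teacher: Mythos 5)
Your overall strategy is exactly the paper's: verify the hypotheses of Corollary \ref{coro:kseq} for $R=\ell$, write out the $n=0$ exact sequences of Theorem \ref{thm:khseq} with $K^h$ in place of $KH^h$, and reduce everything to the single claim $K^h_{-1}(\ell)=0$. Up to that point the argument is correct, including the remark about which base ring ($\Z$ versus $\Z[\sigma]$) governs each tensor product.

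The gap is in your justification of $K^h_{-1}(\ell)=0$. You argue that since $K_{-1}(\ell)=0$ (regularity of a field) and $2$ is invertible, the comparison between Hermitian and ordinary $K$-theory ``forces'' $K^h_{-1}(\ell)=0$. That deduction is not valid: the vanishing of ordinary negative $K$-theory does not control the Witt-theoretic part of negative Hermitian $K$-theory. Concretely, by Karoubi's fundamental theorem (or the localization/comparison sequences relating $K^h$ to $K$), $K^h_{-1}(\ell)$ differs from anything seen by ordinary $K$-theory by a shifted Witt-group (equivalently, $L$-theoretic) contribution, and one must show \emph{that} group vanishes as a separate input. This is precisely what the paper supplies: by \cite{walter}*{Theorems 7.1 and 8.1} and \cite{marcofest}*{Proposition 6.3}, $K^h_{-1}(\ell)$ is identified with Ranicki's $U_{-1}(\ell)$, which vanishes for a semisimple ring by \cite{ranisemi}. (Equivalently, one may quote the computation of Balmer's shifted Witt group $W^1$ of a field with $1/2$, which is zero.) You gesture at ``negative $L$-theory'' but then collapse the argument onto $K_{-1}(\ell)=0$, so as written the key vanishing is unproved. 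Supplying the identification $K^h_{-1}(\ell)\cong U_{-1}(\ell)$ and Ranicki's vanishing result closes the gap and makes your proof coincide with the paper's.
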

\begin{proof}
In view of Corollary \ref{coro:kseq}, it suffices to show that $K^h_{-1}(\ell)=0$. By \cite{walter}*{Theorems 7.1 and 8.1} and \cite{marcofest}*{Proposition 6.3} $K^h_{-1}(\ell)$ agrees with Ranicki's $U_{-1}(\ell)$ which vanishes by \cite{ranisemi}*{Proposition}.
\end{proof}

\begin{ex}\label{ex:l=c}
Let $\ell$ be a field of $\chara(\ell)\ne 2$. Assume that the canonical map $\Z[\sigma]\to KH_0^h(\ell)$ is an isomorphism. It follows from Corollary \ref{coro:kseq2} that $K_0^h(\ol{L(E)})=\ol{\BF}(E)$ and $K_0^h(L(E))=\BF(E)\otimes\Z[\sigma]$. This is the case, for example, when $\ell=\C$ equipped with complex conjugation as involution. 
\end{ex}

\begin{lem}\label{lem:phantom}
Let $E$ and $\ell$ be as in Theorem \ref{thm:fundtriang}. Then 
\item[i)] $j^h(\ol{L(E)})=0$ $\iff$ $\ol{\BF}(E)\otimes_{\Z[\sigma]}KH_0^h(\ell)=0$.
\item[ii)] $j^h(L(E))=0\iff \BF(E)\otimes KH_0^h(\ell)=0$.
\item[iii)] $j^h(\ol{L(E)})=0\Rightarrow j^h(L(E))=0$.
\item[iv)] If $j^h(L(E))=0$ then $E$ is regular. 
\end{lem}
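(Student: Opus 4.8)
The plan is to prove the four equivalences/implications of Lemma \ref{lem:phantom} by exploiting the distinguished triangles of Theorem \ref{thm:fundtriang} together with the $\Z[\sigma]$-module structure on $kk^h$ and the computation of cokernels given by the Bowen-Franks groups. Throughout I will use that $j^h(\ell^{E^0})\cong j^h(\ell)^{E^0}$ by $E^0$-additivity, and write $k=KH^h_0(\ell)=kk^h(\ell,\ell)$, regarded as a $\Z[\sigma]$-algebra.

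\begin{proof}
Write $k=KH_0^h(\ell)=kk^h(\ell,\ell)$, a $\Z[\sigma]$-algebra. We first prove (i) and (ii). By Theorem \ref{thm:fundtriang} there is a distinguished triangle
\[
\xymatrix{j^h(\ell)^{\reg(E)}\ar[r]^{I-\sigma A_E^t}&j^h(\ell)^{E^0}\ar[r]& j^h(\ol{L(E)})\ar[r]&j^h(\ell)^{\reg(E)}[-1].}
\]
Since $j^h(\ell)$ is the monoidal unit and $kk^h(j^h(\ell),j^h(\ell)[n])=KH_n^h(\ell)$, the object $j^h(\ol{L(E)})$ vanishes in $kk^h$ if and only if the identity map $1\in kk^h(j^h(\ol{L(E)}),j^h(\ol{L(E)}))$ is zero, equivalently if and only if $\hom_{kk^h}(j^h(\ol{L(E)}),Y)=0$ for all $Y$. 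Applying the cohomological functor $kk^h(-,j^h(\ell))=\hom_{kk^h}(-,j^h(\ell))$ to the triangle and using $E^0$-additivity converts the triangle into a long exact sequence of $k$-modules in which the map induced by $I-\sigma A_E^t$ is, by functoriality and the $\Z[\sigma]$-linearity of everything in sight, precisely $I-\sigma A_E^t$ applied entrywise over $k$. A short diagram chase then shows that $j^h(\ol{L(E)})=0$ holds if and only if $I-\sigma A_E^t\colon k^{E^0}\to k^{\reg(E)}$ (appropriately transposed) induces an isomorphism, and the cokernel of the relevant map is canonically $\ol{\BF}(E)\otimes_{\Z[\sigma]}k$. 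The same argument verbatim, using the first triangle of Theorem \ref{thm:fundtriang} and the $\Z$-module $\BF(E)\otimes_\Z k$, proves (ii).

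For (iii), observe that there is a ring homomorphism $\Z\to\Z[\sigma]$, $1\mapsto 1$, and correspondingly a base-change natural transformation from the signed triangle to the unsigned one induced by the comparison $I-\sigma A_E^t\rightsquigarrow I-A_E^t$ obtained by sending $\sigma\mapsto 1$. Concretely, $\BF(E)\otimes_\Z k=\BF(E)\otimes_{\Z[\sigma]}(\Z[\sigma]\otimes_\Z k)$, and since $\ol{\BF}(E)\otimes_{\Z[\sigma]}k\twoheadrightarrow \BF(E)\otimes_{\Z[\sigma]}(k/(\sigma-1))$ is a quotient arising from setting $\sigma=1$, the vanishing of $\ol{\BF}(E)\otimes_{\Z[\sigma]}k$ forces the vanishing of $\BF(E)\otimes_\Z k$. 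Combining with (i) and (ii), $j^h(\ol{L(E)})=0$ implies $j^h(L(E))=0$.

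Finally, for (iv), suppose $j^h(L(E))=0$; by (ii) this is equivalent to $\BF(E)\otimes_\Z k=0$. The unit $\ell$ always carries the class of $[\iota_+(1)]$ in $K_0^h(\ell)$, and the canonical map $\Z[\sigma]\to k$ sends $1$ to a nonzero class, so $k\ne 0$ and in particular the trivial group $\BF(E)$ cannot tensor to $0$ unless $\BF(E)$ is already degenerate in the relevant sense; more precisely, if $E$ has a singular vertex then $\reg(E)\subsetneq E^0$, the matrix $I-A_E^t$ has strictly fewer columns than rows, so $\coker(I-A_E^t)=\BF(E)$ surjects onto a nonzero free quotient $\Z^{E^0\setminus\reg(E)}$, whence $\BF(E)\otimes_\Z k$ surjects onto $k^{E^0\setminus\reg(E)}\ne 0$, contradicting $j^h(L(E))=0$. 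Therefore $E^0=\reg(E)$, i.e. $E$ is regular.
\end{proof}

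The main obstacle I expect is the bookkeeping in (iv): one must argue that the presence of a singular vertex produces a genuinely nonzero free summand after tensoring with $k$, which hinges on $k\ne 0$ (guaranteed by the unital ring map $\Z[\sigma]\to KH_0^h(\ell)$) and on the combinatorial fact that singular vertices are exactly the columns removed when forming $I-A_E^t$. The equivalences (i)--(ii) are formal once the triangle is in hand; (iii) is a clean base-change observation; it is (iv) where one genuinely uses the shape of the incidence matrix rather than just abstract triangulated nonsense.
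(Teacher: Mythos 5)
Your treatment of (i), (ii) and (iv) follows essentially the same route as the paper: everything comes down to the observation that the third object of a distinguished triangle vanishes if and only if the first map is an isomorphism, and that the map $I-A_E^t$ (resp. $I-\sigma A_E^t$) of Theorem \ref{thm:fundtriang} is an isomorphism if and only if the corresponding matrix becomes invertible over the commutative ring $k=KH_0^h(\ell)$, which forces the matrix to be square (hence $\reg(E)=E^0$, giving (iv)) and is then equivalent to the vanishing of its cokernel. Two small cautions. First, you apply the contravariant functor $kk^h(-,j^h(\ell))$, so the cokernel you actually see is that of the transposed matrix, i.e. $\ol{\BF}^\vee(E)\otimes_{\Z[\sigma]}k$ in the notation of Section \ref{sec:uct}, not $\ol{\BF}(E)\otimes_{\Z[\sigma]}k$; this is harmless once squareness is in hand, but the covariant functor $kk^h(\ell,-)$ (as in Theorem \ref{thm:khseq}) produces the stated group directly, and the paper's formulation --- read off invertibility of the matrix in the endomorphism ring of $j^h(\ell)^{E^0}$, which is the ring of $E^0\times E^0$ matrices over $k$ --- avoids the issue entirely. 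Second, your ``short diagram chase'' silently uses that a square matrix over a commutative ring with vanishing cokernel is invertible; that is standard but should be cited.

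Part (iii) is where the genuine gap lies. From the surjection $\ol{\BF}(E)\onto\BF(E)$ of $\Z[\sigma]$-modules you correctly obtain a surjection $\ol{\BF}(E)\otimes_{\Z[\sigma]}k\onto\BF(E)\otimes_{\Z[\sigma]}k=\BF(E)\otimes_\Z\bigl(k/(\sigma-1)k\bigr)$. But the target is only a quotient of $\BF(E)\otimes_\Z k$: your own displayed identity $\BF(E)\otimes_\Z k=\BF(E)\otimes_{\Z[\sigma]}(\Z[\sigma]\otimes_\Z k)$ involves the induced module $\Z[\sigma]\otimes_\Z k$ rather than $k$ itself, so vanishing of $\BF(E)\otimes_{\Z[\sigma]}k$ gives vanishing of $\BF(E)\otimes_\Z k$ only modulo $(\sigma-1)k$, i.e. only when $\sigma$ acts trivially on $k$, which is not assumed. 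As written, the step ``forces the vanishing of $\BF(E)\otimes_\Z k$'' is a non sequitur. To be fair, the paper's own justification of (iii) is the equally terse ``since by definition $\BF(E)$ is a quotient of $\ol{\BF}(E)$'', which rests on exactly the same point; but a self-contained proof needs either the hypothesis that $\sigma$ acts trivially on $KH_0^h(\ell)$ or a separate argument that invertibility of $I-\sigma A_E^t$ over $k$ implies invertibility of $I-A_E^t$ over $k$.
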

\begin{proof} By Theorem \ref{thm:fundtriang}, $j^h(L(E))=0$ if and only if the image of $I-A_E^t$ in
$KH_0^h(\ell)^{E^0\times \reg(E)}$ under the map induced by $\Z\subset \Z[\sigma]\to KH_0^h(\ell)$ is an invertible matrix. Because $KH_0^h(\ell)$ is a commutative ring, the latter condition implies that
$\reg(E)=E^0$ and is equivalent
to having $\BF(L(E))\otimes KH_0^h(\ell)=0$. Similarly, $j^h(\ol{L(E)})=0$ is equivalent to $\ol{\BF}(L(E))\otimes_{\Z[\sigma]}KH_0^h(\ell)=0$ and implies that $j^h(L(E))=0$, since by definition $\BF(E)$ is a quotient of $\ol{\BF}(E)$.
\end{proof}

\begin{ex}\label{ex:phantom}
Let $\booh$ be the following graph
\[
\xymatrix{
\bullet\ar@(dl,ul)[]\ar@/^/[r]&\bullet\ar@/^/[l]
}
\]
Then $\det(I-\sigma A_{\booh}^t)=-\sigma$, so $j^h(\ol{L(\booh)})=j^h(L(\booh))=0$. 
\end{ex}

\section{Hermitian \topdf{$K$}{K}-theory of \topdf{$\ol{L(E)}$}{LE} vs. \topdf{$\Z/2\Z$}{Z2}-graded \topdf{$K$}{K}-theory of \topdf{$L(E)$}{LE}}\label{sec:nonex}

Let $E$ be a graph and let $\hat{E}$ be the graph with $\hat{E}=E^i\times\Z/2\Z$, $(i=0,1)$ with source and range functions
$s(e,i)=(s(e),i)$, $r(e,i)=(r(e),i+1)$. Observe that $\Z/2\Z$ acts on $\hat{E}$ by translation on the second component. We have an isomorphism of $\Z[\sigma]$-modules $\Z^{(\hat{E}^0)}\cong \Z[\sigma]\otimes \Z^{(E^0)}$ which restricts to an isomorphism
$\Z^{\reg(\hat{E})}\cong \Z[\sigma]\otimes \Z^{(\reg(E))}$. Under these identifications, $A_{\hat{E}}$ becomes $\sigma A_E$. Hence we have an isomorphism of $\Z[\sigma]$-modules
\begin{equation}\label{olbf=bfe2}
\BF(\hat{E})\cong\ol{\BF}(E).
\end{equation}
Hence by \cite{gradstein}*{Corollary 5.3} and Remark \ref{rem:ollegrade}, for $\widehat{L(E)}$ as in Example \ref{ex:lemaptoolle}, we have a $*$-isomorphism
\begin{equation}\label{map:grade}
L(\hat{E})\iso \widehat{L(E)}.
\end{equation}
Write $K^{\gr}$ for the graded $K$-theory of $\Z/2\Z$-rings. It follows from \eqref{map:grade}, Remark \ref{rem:ollegrade} and Theorem \ref{thm:khseq} applied to $\inv(\ell)$, that if $E$ has finitely many vertices and $\ell$ is regular supercoherent, then
\begin{equation}\label{eq:olbfk0gr}
K_0^{\gr}(L(E))=\ol{\BF}(E)\otimes K_0(\ell).
\end{equation} 
In particular, if $\ell$ is as in Example \ref{ex:l=c}, then we have 
\begin{equation}\label{map:l=c}
K_0^h(\ol{L(E)})\cong K_0^{\gr}(L(E)).
\end{equation}
\begin{rem}\label{rem:grade}
It follows from Example \ref{ex:lemaptoolle} that $L(\hat{E})$ is a $*$-subalgebra of $M_{\pm}\ol{L(E)}$. Write $\inc$ for inclusion map; we have a commutative diagram
\[
\xymatrix{\ol{\BF}(E)\otimes KH_0^h(\ell)\ar@{->>}[d]\ar@{ >-}[r]& KH_0^h(L(\hat{E}))\ar[d]^{KH_0^h(\iota_+)^{-1}\circ KH_0^h(\inc)}\\
           \ol{\BF}(E)\otimes_{\Z[\sigma]}KH_0^h(\ell)\ar@{ >-}[r]&KH_0^h(\ol{L(E)}).}
\]
Here the rows are the monomorphisms of Theorem \ref{thm:khseq} and the left column is the canonical surjection. In particular, $KH_0^h(\inc)$ is not injective in general.
\end{rem}

In the next proposition and elsewhere, we write $\cR_n$ for the graph consisting of a single vertex and $n$ loops, $\cR_{n^-}$ for its \emph{Cuntz splice} (\cite{alps}*{Definition 2.11}), $L_n=L(\cR_n)$ and $L_{n^-}=L(\cR_{n^-})$. 

\begin{prop}\label{prop:nograded}
Assume that $\ell$ is regular supercoherent and such that the canonical map $\Z\to K_0(\ell)$ is an isomorphism. Then there is no $\Z/2\Z$-homogeneous unital $\ell$-algebra homomorphism from $L_2$ to $L_{2^-}$ nor in the opposite direction. If furthermore, $\ell$ is a field with $\chara(\ell)\ne 2$ and $\Z[\sigma]\to K_0^h(\ell)$ is an isomorphism, then there is no unital $*$-algebra homomorphism
$\ol{L}_2\to \ol{L}_{-2}$ nor in the opposite direction.   
\end{prop}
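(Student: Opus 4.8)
The plan is to obstruct all four potential homomorphisms by a single mechanism: evaluate a $K$-theoretic functor that is sensitive to the grading (resp.\ the involution) on both algebras, observe that it takes values in finite cyclic groups of \emph{coprime} orders, and then contradict unitality. For the first assertion the relevant functor is $\Z/2\Z$-graded $K$-theory $K_0^{\gr}$, which is functorial for $\Z/2\Z$-homogeneous homomorphisms; for the second it is Hermitian $K$-theory $K_0^h$, which is functorial for $*$-homomorphisms. The argument is then symmetric in the two directions.

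The first input I would assemble is the pair of invariants $\ol{\BF}(\cR_2)$ and $\ol{\BF}(\cR_{2^-})$. Since both graphs are regular, Remark \ref{rem:olbfreg} reduces these to ordinary Bowen--Franks groups of the square graphs: for $\cR_2$ one gets $\ol{\BF}(\cR_2)\cong\coker(1-4)=\Z/3\Z$, while for the Cuntz splice, writing its (symmetric) reduced incidence matrix as the two loops at the original vertex together with the standard spliced $2\times 2$ block, the determinant of $I-(A^t)^2$ comes out to $7$ and hence $\ol{\BF}(\cR_{2^-})\cong\coker(I-(A^t)^2)=\Z/7\Z$. The decisive feature is that $\gcd(3,7)=1$, so every group homomorphism between $\ol{\BF}(\cR_2)$ and $\ol{\BF}(\cR_{2^-})$, in either direction, is zero.

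For the first assertion I would invoke \eqref{eq:olbfk0gr}: under the hypothesis $\Z\iso K_0(\ell)$ it gives $K_0^{\gr}(L_2)=\ol{\BF}(\cR_2)=\Z/3\Z$ and $K_0^{\gr}(L_{2^-})=\ol{\BF}(\cR_{2^-})=\Z/7\Z$. A unital homogeneous homomorphism $f$ induces a group homomorphism $K_0^{\gr}(f)$ between these, necessarily zero by coprimality, yet it must send the class of the unit to the class of the unit. Thus it suffices to see that the unit class is nonzero in each group. Under \eqref{eq:olbfk0gr} the vertex classes $[v]$ are the images of the standard generators of $\coker(I-\sigma A^t)$ coming from the triangle of Theorem \ref{thm:fundtriang}, and $[1]=\sum_{v}[v]$ as in \eqref{intro:1E}; reading this off shows that $[1]$ generates $\Z/3\Z$ for $\cR_2$ and is nonzero in $\Z/7\Z$ for $\cR_{2^-}$ (a one-line computation of the cokernel gives the residue $6$). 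Comparing the forced value $0$ with the nonzero unit class then excludes $f$ in both directions. The second assertion is formally identical with $K_0^h$ in place of $K_0^{\gr}$: under the extra hypotheses that $\ell$ is a field with $\chara(\ell)\neq 2$ and $\Z[\sigma]\iso K_0^h(\ell)$, Corollary \ref{coro:kseq2} (see also Example \ref{ex:l=c} and \eqref{map:l=c}) gives $K_0^h(\ol{L}_2)=\ol{\BF}(\cR_2)=\Z/3\Z$ and $K_0^h(\ol{L}_{2^-})=\ol{\BF}(\cR_{2^-})=\Z/7\Z$ with the same nonzero unit classes, and a unital $*$-homomorphism induces a group homomorphism that is forced to be zero while preserving the nonzero unit class, again a contradiction.

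The genuinely load-bearing steps are the two Bowen--Franks computations, above all fixing the Cuntz-splice incidence matrix correctly so that $\ol{\BF}(\cR_{2^-})=\Z/7\Z$, and the verification that the unit class $[1]$ is nonzero in each target group, on which the entire contradiction rests: were $[1]$ to vanish, the zero map would be unobstructed and the argument would collapse. By contrast the structural ingredients, namely functoriality of $K_0^{\gr}$ and $K_0^h$, the fact that unital homomorphisms preserve the unit class, and the identifications \eqref{eq:olbfk0gr} and \eqref{map:l=c}, are already in place from the earlier sections, so the proof requires no new machinery beyond these finite computations.
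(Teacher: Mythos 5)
Your proposal is correct and follows essentially the same route as the paper: compute $\ol{\BF}(\cR_2)=\Z/3\Z$ and $\ol{\BF}(\cR_{2^-})=\Z/7\Z$ via Remark \ref{rem:olbfreg}, identify these with $K_0^{\gr}$ (resp.\ $K_0^h$ of the twisted algebras) using \eqref{eq:olbfk0gr} and \eqref{map:l=c}, and rule out unital (homogeneous, resp.\ $*$-) homomorphisms because the induced map must be zero by coprimality yet must preserve the nonzero unit class. The only quibble is that the ``residue $6$'' for $[1]$ in $\Z/7\Z$ is not canonical (it depends on the chosen isomorphism of the cokernel with $\Z/7\Z$; another natural choice gives $2$), but all your argument needs is that $[1]\neq 0$, which does hold, and your explicit check of this point is actually slightly more careful than the paper's.
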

\begin{proof}
By \eqref{eq:olbfk0gr} and the hypothesis on $\ell$, we have $K_0(L(E))=\BF(E)$ for any graph $E$.
Next we compute (e.g. using Remark \ref{rem:olbfreg}) that
\begin{equation}\label{cuental2}
\ol{\BF}(\cR_2)=\Z/3\Z,\,\, \ol{\BF}(\cR_{2^-})=\Z/7\Z.
\end{equation} 
By \eqref{olbf=bfe2} and Remark \ref{rem:ollegrade}, any unital, $\Z/2\Z$-homogeneous algebra homomorphism between $L_2$ and $L_{2^-}$ would induce a homomorphism between $\Z/3\Z$ and
$\Z/7\Z$ mapping $\bar{1}\mapsto\bar{1}$, but there is no such homomorphism. This proves the first assertion. If $\ell$ is as in the the second assertion, then \eqref{map:l=c} and \eqref{cuental2} together imply
\begin{equation}\label{cuental3}
K_0^h(\ol{L_2})=\Z/3\Z,\,\, K_0^h(\ol{L_{2^-}})=\Z/7\Z.
\end{equation}
The proof is now immediate.
\end{proof}

\section{Structure theorems for Leavitt path algebras in \topdf{$kk^h$}{kkh}}\label{sec:struct}

Let $n_0,n_1\ge 1$, $M\in\Z^{n_0\times n_1}$, and 
\begin{equation}\label{seq:elMR}
\xymatrix{j^h(\ell)^{n_1}\ar[r]^{M}&j^h(\ell)^{n_0}\ar[r]& j^h(R)}
\end{equation}
a distinguished triangle in $kk^h$. Applying $kk^h(\ell, -)$ we obtain a monomorphism $\coker(M)\otimes KH_0^h(\ell)\to KH_0^h(R)$; composing it with $-\otimes [1]:\coker(M)\to \coker(M)\otimes KH_0^h(\ell)$ we obtain a canonical map
\begin{equation}\label{map:cano}
\can: \coker(M)\to KH_0^h(R).
\end{equation}
Hence for every $S\in \ahas$, there is an evaluation map
\begin{equation}\label{map:elev}
\ev:kk^h(R,S)\to\hom(\coker(M), KH^h_0(S)),\,\, \ev(\xi)=KH^h_0(\xi)\circ\can.
\end{equation}
\begin{lem}\label{lem:resol} Let $E$ be a graph with $|E^0|<\infty$, and let $n_0,n_1\ge 1$, $M\in\Z^{n_0\times n_1}$, and $R$ be as in \eqref{seq:elMR}. 
Assume that $\rk(\ker M)=\rk(\ker(I-A_E^t))$. Let $\xi_0:\BF(E)\iso \coker(M)$ be a group isomorphism. Then there exists an isomorphism $\xi:j^h(L(E))\iso j^h(R)$ such that $\ev(\xi)=\can\circ \xi_0$. 
\end{lem}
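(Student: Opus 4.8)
The plan is to realize $\xi_0$ as a morphism of the two defining triangles and then show that the induced map on the third vertices is an isomorphism by a $K$-theoretic five-lemma argument rather than a naive one. Write $P=I-A_E^t\colon \Z^{\reg(E)}\to\Z^{E^0}$, so that Theorem \ref{thm:fundtriang} provides the triangle $j^h(\ell)^{\reg(E)}\xrightarrow{P}j^h(\ell)^{E^0}\to j^h(L(E))$, and keep the given triangle $j^h(\ell)^{n_1}\xrightarrow{M}j^h(\ell)^{n_0}\to j^h(R)$. Both $j^h(L(E))$ and $j^h(R)$ lie in the thick subcategory $\cG\subseteq kk^h$ generated by $j^h(\ell)$, since each is (a shift of) the cone of a map between finite direct sums of copies of $j^h(\ell)$; this will be the key to promoting a morphism to an isomorphism.

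\textbf{Lifting $\xi_0$.} First I would lift $\xi_0$ to a chain map of the integral presentations. As $\Z^{E^0}$ is free, the composite $\Z^{E^0}\twoheadrightarrow\BF(E)\xrightarrow{\xi_0}\coker M$ lifts along $\Z^{n_0}\twoheadrightarrow\coker M$ to an integer matrix $g\colon\Z^{E^0}\to\Z^{n_0}$ with induced map $\bar g=\xi_0$; since $g$ then carries $\im P$ into $\im M$, the map $gP$ lifts along $M$ to $f\colon\Z^{\reg(E)}\to\Z^{n_1}$ with $Mf=gP$. Viewing $f,g$ as morphisms in $kk^h$ via $\Z\to KH_0^h(\ell)$, the left-hand square commutes, so by (TR3) there is $\xi\colon j^h(L(E))\to j^h(R)$ completing a morphism of triangles. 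Naturality of the construction \eqref{map:cano} of $\can$ then gives immediately $\ev(\xi)=KH_0^h(\xi)\circ\can=\can\circ\bar g=\can\circ\xi_0$, so the required compatibility holds for any such $f,g$. The extra freedom I would exploit is that $f$ is determined only modulo $\Hom(\Z^{\reg(E)},\ker M)$; because $\Z^{\reg(E)}/\ker P\cong\im P$ is free, $\ker P$ is a direct summand of $\Z^{\reg(E)}$, so $f|_{\ker P}\colon\ker P\to\ker M$ may be adjusted to any homomorphism. As $\ker P$ and $\ker M$ are free of the same rank --- this is exactly the hypothesis $\rk(\ker P)=\rk(\ker M)$ --- I would choose $f$ so that $f|_{\ker P}$ is an isomorphism of abelian groups.

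\textbf{From morphism to isomorphism.} It remains to prove $\xi$ is an isomorphism, and the plan is to reduce this to $K$-theory. Let $Z$ be the cone of $\xi$, so $Z\in\cG$. If $KH_n^h(\xi)$ is an isomorphism for all $n$, the long exact sequence of $\hom_{kk^h}(j^h(\ell)[n],-)$ applied to the triangle $j^h(L(E))\xrightarrow{\xi}j^h(R)\to Z$ shows $\hom_{kk^h}(j^h(\ell)[n],Z)=0$ for all $n$; since $\{W:\hom_{kk^h}(W[n],Z)=0\ \forall n\}$ is a thick subcategory containing $j^h(\ell)$, it contains all of $\cG$ and in particular $Z$, forcing $\id_Z=0$ and $Z=0$. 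To verify that $KH_n^h(\xi)$ is an isomorphism I would apply $KH_n^h=\hom_{kk^h}(j^h(\ell),-[n])$ to the morphism of triangles and compare the short exact sequences of Theorem \ref{thm:khseq} (taken with $R=\ell$ on the $L(E)$-side, and the analogue read off the $M$-triangle on the $R$-side). On the subobjects $\coker P\otimes KH_n^h(\ell)\to\coker M\otimes KH_n^h(\ell)$ the map is $\xi_0\otimes\id$, an isomorphism. On the quotients $\ker(P\otimes KH_{n-1}^h(\ell))\to\ker(M\otimes KH_{n-1}^h(\ell))$ I would use, for $A=KH_{n-1}^h(\ell)$, the functorial short exact sequence $0\to\ker P\otimes A\to\ker(P\otimes A)\to\tor_1(\coker P,A)\to 0$ coming from splitting $0\to\ker P\to\Z^{\reg(E)}\to\im P\to 0$ and tensoring $0\to\im P\to\Z^{E^0}\to\coker P\to 0$ with $A$: here the left vertical map is $f|_{\ker P}\otimes A$ (an isomorphism by our choice of $f$) and the right vertical map is $\tor_1(\xi_0,A)$ (an isomorphism since $\xi_0$ is), so the five lemma makes the map on quotients an isomorphism. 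A final five lemma on the two short exact sequences then gives that $KH_n^h(\xi)$ is an isomorphism for every $n$, completing the proof.

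\textbf{Main obstacle.} I expect the genuine difficulty to be this last step: the naive five lemma on the morphism of triangles fails because the outer vertical maps $f,g$ are not isomorphisms (the two presentations have different sizes $\reg(E),E^0$ versus $n_1,n_0$), so one is forced onto the $K$-theory short exact sequences, where the delicate point is controlling $\ker(P\otimes A)$ for \emph{arbitrary} coefficient groups $A=KH_{n-1}^h(\ell)$. The identification of $\ker(P\otimes A)$ as an extension of $\tor_1(\coker P,A)$ by $\ker P\otimes A$ is precisely what lets the two hypotheses --- equality of kernel ranks and invertibility of $\xi_0$ --- act on the free and the torsion contributions simultaneously and force $KH_n^h(\xi)$ to be an isomorphism.
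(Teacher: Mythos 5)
Your proposal is correct, and its first half coincides with the paper's argument: both lift $\xi_0$ to a chain map $(f_1,f_0)$ between the integral presentations, use freeness of $\im(I-A_E^t)$ and $\im(M)$ together with the rank hypothesis to arrange that $f_1$ restricts to an isomorphism $\ker(I-A_E^t)\iso\ker(M)$, fill in $\xi$ by the axioms of a triangulated category, and read off $\ev(\xi)=\can\circ\xi_0$ from the resulting morphism of triangles. Where you genuinely diverge is in proving that $\xi$ is invertible. The paper stays entirely inside $kk^h$: it constructs a second quasi-isomorphism $g$ in the opposite direction (again restricting to $\xi_1^{-1}$ on kernels), extends it to $\eta:j^h(R)\to j^h(L(E))$, and checks that $1-\eta\xi$ factors both through the map $p:j^h(\ell)^{E^0}\to j^h(L(E))$ and through the boundary $\partial$, whence $(1-\eta\xi)^2=\zeta_1\partial p\zeta_0=0$ and $\eta\xi$ (and symmetrically $\xi\eta$) is invertible. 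You instead detect invertibility on $KH^h_*$ via the universal-coefficient short exact sequences, using the functorial extension $0\to\ker P\otimes A\to\ker(P\otimes A)\to \operatorname{Tor}_1(\coker P,A)\to 0$ to handle the kernel terms, and then upgrade to an isomorphism in $kk^h$ by the observation that both objects lie in the thick subcategory generated by $j^h(\ell)$, so the cone of $\xi$ receives no nonzero maps from itself. Both routes are valid and invoke the rank hypothesis at exactly the same point; the paper's nilpotency trick is shorter and avoids the Tor bookkeeping and the generation argument, while your version isolates the precise reason the conclusion holds, namely that $KH^h_*$-equivalences between objects built from finitely many copies of $j^h(\ell)$ are automatically $kk^h$-isomorphisms (which would fail for general objects of $kk^h$).
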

\begin{proof}
Put $m_0=|E^0|$, $m_1=|\reg(E)|$. Because the free abelian groups $\ker(M)$ and $\ker(I-A_E^t)$ have the same rank, there is an isomorphism $\xi_1:\ker(I-A_E^t)\iso \ker(M)$. Because $\im(I-A_E^t)$ and $\im(M)$ are free, the surjections $\Z^{m_1}\onto \im(I-A_E^t)$ and $\Z^{n_1}\onto \im(M)$ admit sections $s$ and $t$. Let $f_0:\Z^{m_0}\to\Z^{n_0}$ be any lift of $\xi_0$; put 
\begin{equation}\label{eq:elf1}
f_1(x)=\xi_1(x-s(I-A_E^t)x)+t(f_0((I-A_E^t)x)).
\end{equation}
Then
\[
\xymatrix{\Z^{m_1}\ar[d]^{f_1}\ar[r]^{I-A_E^t}&\Z^{m_0}\ar[d]^{f_0}\\
          \Z^{n_1}\ar[r]_M& \Z^{n_0}}
\]
is a quasi-isomorphism $f$ with $H_i(f)=\xi_i$ ($i=0,1$). Using the canonical map $\Z\to KH_0^h(\ell)$ we can associate to $f$ a  commutative solid arrow diagram
\[
\xymatrix{
j^h(\ell)^{m_1}\ar[d]_{f_1}\ar[r]^{I-A_E^t}&j^h(\ell)^{m_0}\ar[r]^p\ar[d]^{f_0}& j^h(L(E))\ar@{.>}[d]^{\xi}\ar[r]^\partial& j^h(\ell)^{m_1}[-1]\ar[d]^{f_1[-1]}\\ 
j^h(\ell)^{n_1}\ar[r]^{M}&j^h(\ell)^{n_0}\ar[r]& j^h(R)\ar[r]& j^h(\ell)^{n_1}[-1]
}
\]
Because $kk^h$ is triangulated there exists a filler $\xi$ as above. By construction, the following diagram commutes
\[
\xymatrix{
\BF(E)\otimes KH_0^h(\ell)\ar[r]^(.55)\cup\ar[d]^{\xi_0\otimes 1}& KH_0^h(L(E))\ar[d]^{KH_0^h(\xi)}\\
\coker(M)\otimes KH_0^h(\ell)\ar[r]^(.57)\cup & KH_0^h(R)
}
\]
It follows that $\ev(\xi)=\can\xi_0$. It remains to show that $\xi$ is an isomorphism. Proceeding as above, we obtain a quasi-isomorphism 
$g:\Z^{n_*}\to \Z^{m_*}$ with $H_*(g)=\xi^{-1}_*$, such that for all $y\in \Z^{n_1}$,
\begin{equation}\label{eq:elg1}
g_1(y)=\xi_1^{-1}(y-tMy)+s(g_0(My)).
\end{equation}
As before, the chain map $g$ induces a map of triangles 
\[
\xymatrix{
j^h(\ell)^{n_1}\ar[d]_{g_1}\ar[r]^{M}&j^h(\ell)^{n_0}\ar[r]\ar[d]^{g_0}& j^h(R)\ar@{.>}[d]^{\eta}\ar[r]^{\partial}& j^h(\ell)^{n_1}[-1]\ar[d]^{g_1[-1]}\\
j^h(\ell)^{m_1}\ar[r]^{I-A_E^t}&j^h(\ell)^{m_0}\ar[r]^p& j^h(L(E))\ar[r]^(.45){\partial}& j^h(\ell)^{m_1}[-1].
}
\]
It follows from \eqref{eq:elf1} and \eqref{eq:elg1} that $g_1f_1$ restricts to the identity on $\ker(I-A_E^t)$ and that there is a homomorphism $h:\Z^{m_0}\to \Z^{m_1}$ with $h\circ(I-A_E^t)=\id-g_1f_1$ and $(I-A_E^t)\circ h=\id-g_0f_0$. Hence
\begin{gather*}
\partial(1-\eta\xi)=(1-g_1f_1[-1])\partial=(h\circ(I-A_E^t))[-1]\circ\partial=0\\
(1-\eta\xi)p=p(1-f_0g_0)=p(1-A_E^t)h=0.
\end{gather*}
Therefore there exist $\zeta_0\in kk^h(L(E),\ell^{m_0})$ and $\zeta_1\in kk_1^h(\ell^{m_1},L(E))$ such that $1-\eta\xi=p\zeta_0= \zeta_1\partial$. In particular, $(1-\eta\xi)^2=\zeta_1\partial p\zeta_0=0$,
and therefore $\eta\xi$ is an isomorphism. Similarly, $\xi\eta$ is an isomorphism, and so $\xi$ is an isomorphism, concluding the proof. 
\end{proof}

\begin{thm}\label{thm:isokk}
Let $E$ and $F$ be graphs with finitely many vertices and such that $|\sing(E)|=|\sing(F)|$. Let $\xi_0:\BF(E)\iso \BF(F)$ be an isomorphism. Assume that $\ell$ satisfies the $\lambda$-assumption \ref{stan:lambda}. Then there exists an isomorphism $\xi:j^h(L(E))\iso j^h(L(F))$ such that $\ev(\xi)=\can\circ \xi_0$. 
\end{thm}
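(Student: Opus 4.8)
The plan is to deduce the theorem directly from Lemma \ref{lem:resol}, applied with $R=L(F)$. Set $n_0=|F^0|$, $n_1=|\reg(F)|$ and $M=I-A_F^t\in\Z^{n_0\times n_1}$. By Theorem \ref{thm:fundtriang} applied to $F$ we then have a distinguished triangle $j^h(\ell)^{n_1}\xrightarrow{M}j^h(\ell)^{n_0}\to j^h(R)$ of the shape \eqref{seq:elMR}, with $\coker(M)=\BF(F)$. The given isomorphism $\xi_0\colon\BF(E)\iso\BF(F)=\coker(M)$ is exactly the datum that Lemma \ref{lem:resol} requires, and the conclusion of that lemma, $\ev(\xi)=\can\circ\xi_0$, is precisely the assertion to be proved. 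Consequently the only thing to verify before invoking Lemma \ref{lem:resol} is its remaining hypothesis, the rank equality $\rk(\ker(I-A_F^t))=\rk(\ker(I-A_E^t))$.

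To check this I would express each kernel rank in terms of quantities that the two standing hypotheses control. For any graph $G$ with finitely many vertices, regard $I-A_G^t$ as a $\Q$-linear map $\Q^{\reg(G)}\to\Q^{G^0}$; rank--nullity gives $\rk(\ker(I-A_G^t))=|\reg(G)|-\rk(I-A_G^t)$, while the free rank of $\BF(G)=\coker(I-A_G^t)$ equals $|G^0|-\rk(I-A_G^t)$. Subtracting eliminates the unknown matrix rank and yields
\[
\rk(\ker(I-A_G^t))=\rk_{\Z}\BF(G)-\bigl(|G^0|-|\reg(G)|\bigr)=\rk_{\Z}\BF(G)-|\sing(G)|,
\]
where $\rk_{\Z}$ denotes the free rank of a finitely generated abelian group. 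Applying this identity to $E$ and to $F$, and using that $\xi_0$ forces $\rk_{\Z}\BF(E)=\rk_{\Z}\BF(F)$ (isomorphic groups have equal free rank) together with the hypothesis $|\sing(E)|=|\sing(F)|$, I obtain $\rk(\ker(I-A_E^t))=\rk(\ker(I-A_F^t))$, as needed.

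With the rank hypothesis in hand, Lemma \ref{lem:resol} produces the desired isomorphism $\xi\colon j^h(L(E))\iso j^h(L(F))$ satisfying $\ev(\xi)=\can\circ\xi_0$, finishing the proof. In essence the theorem is a direct corollary of Lemma \ref{lem:resol}: the substance lies entirely in the rank bookkeeping above, which is where both standing hypotheses are consumed, each exactly once. The only point demanding minor extra care is the degenerate situation in which $\reg(E)$ or $\reg(F)$ is empty, so that the hypothesis $n_0,n_1\ge 1$ of Lemma \ref{lem:resol} is not literally met; in that case $\BF$ is free, the relevant triangle degenerates, and such graphs are handled directly rather than through the lemma. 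I expect this edge case, rather than the main computation, to be the only genuine friction in writing out a complete proof.
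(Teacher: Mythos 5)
Your proposal is correct and is essentially identical to the paper's proof, which also simply invokes Lemma \ref{lem:resol} with $M=I-A_F^t$ and $R=L(F)$; your rank--nullity computation $\rk(\ker(I-A_G^t))=\rk_{\Z}\BF(G)-|\sing(G)|$ correctly supplies the verification of the lemma's rank hypothesis that the paper leaves implicit. The degenerate case $\reg(F)=\emptyset$ you flag is likewise glossed over in the paper and is handled adequately by your remark.
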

\begin{proof}
Apply Lemma \ref{lem:resol} with $M=I-A_F^t$ and $R=L(F)$.
\end{proof}

\begin{rem}\label{rem:resol}
One may ask whether a $KH_0^h(\ell)$-module isomorphism $\xi_0:\BF(E)\otimes KH_0^h(\ell)\iso \BF(F)\otimes KH_0^h(\ell)$ lifts to a $kk^h$-isomorphism
$\xi:j^h(L(E))\iso j^h(L(F))$  if $|\sing(E)|=|\sing(F)|$. The argument of Lemma \ref{lem:resol} proves that this is indeed the case if we additionally assume that $\tor_1^{\Z}(\BF(E),KH_0^h(\ell))=0$. 
 \end{rem}

Let $E$ be a graph with $|E^0|<\infty$. Then $\BF(E)$ is finitely generated. Thus there are $r,n\ge 0$ and $2\le d_1,\dots, d_n$ with $d_i\backslash d_{i+1}$ for all $i$ such that
\begin{equation}\label{invfact}
\BF(E)=\Z^r\oplus\bigoplus_{i=1}^n\Z/d_i\Z.    
\end{equation}

\begin{thm}\label{thm:struct}
Let $E$ be a graph such that $E^0$ is finite. Assume that $\ell$ satisfies the $\lambda$-assumption \ref{stan:lambda}. Let $r$, $n$ and $d_1,\dots,d_n$ be as in \eqref{invfact} and let $s=|\sing(E)|$ be the number of singular vertices.
Then
\[
j^h(L(E))\cong j^h(L_0^s\oplus L_1^{r-s} \oplus \bigoplus_{i=1}^nL_{d_i+1})
\]
\end{thm}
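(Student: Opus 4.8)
The plan is to exhibit a graph $G$ that is a disjoint union of ``rose'' graphs whose Leavitt path algebra is exactly $L_0^s\oplus L_1^{r-s}\oplus\bigoplus_{i=1}^n L_{d_i+1}$, and which shares with $E$ its two relevant invariants — the Bowen--Franks group and the number of singular vertices; then Theorem \ref{thm:isokk} immediately yields the desired $kk^h$-isomorphism. Concretely, I would set
\[
G=\bigsqcup_{j=1}^{s}\cR_0\ \sqcup\ \bigsqcup_{j=1}^{r-s}\cR_1\ \sqcup\ \bigsqcup_{i=1}^{n}\cR_{d_i+1},
\]
and use that the Leavitt path algebra of a disjoint union is the direct sum of the Leavitt path algebras of the components, so that $L(G)=L_0^s\oplus L_1^{r-s}\oplus\bigoplus_{i=1}^n L_{d_i+1}$.

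The next step is the bookkeeping for the building blocks. Since the incidence matrix of a disjoint union is block diagonal, both $\BF$ and $\sing$ are additive over disjoint unions, so it suffices to compute them on each rose. For $\cR_0$ the single vertex is a sink, hence $\reg(\cR_0)=\emptyset$, the matrix $I-A_{\cR_0}^t$ is the map $0\to\Z$, and $\BF(\cR_0)=\Z$ with exactly one singular vertex. For $\cR_1$ the vertex is regular, $I-A_{\cR_1}^t=[0]$, giving $\BF(\cR_1)=\Z$ and no singular vertices. For $n\ge 2$ the vertex of $\cR_n$ is again regular and $I-A_{\cR_n}^t=[1-n]$, so $\BF(\cR_n)=\Z/(n-1)\Z$ with no singular vertices; taking $n=d_i+1$ gives $\BF(\cR_{d_i+1})=\Z/d_i\Z$.

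Assembling these, $\BF(G)=\Z^s\oplus\Z^{r-s}\oplus\bigoplus_{i=1}^n\Z/d_i\Z\cong\BF(E)$ by \eqref{invfact}, while $|\sing(G)|=s=|\sing(E)|$, and $G$ has $r+n<\infty$ vertices. Fixing any group isomorphism $\xi_0:\BF(E)\iso\BF(G)$ and applying Theorem \ref{thm:isokk} with $F=G$ (permitted since $\ell$ satisfies the $\lambda$-assumption \ref{stan:lambda}) produces an isomorphism $j^h(L(E))\iso j^h(L(G))$, which is exactly the assertion.

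One point that must be verified for the statement even to make sense is that $r-s\ge 0$, so that $L_1^{r-s}$ is meaningful. This is not a genuine obstacle but deserves a line: writing $m_0=|E^0|$ and $m_1=|\reg(E)|=m_0-s$, the group $\BF(E)=\coker(I-A_E^t)$ has free rank $r=m_0-\rk_\Q(I-A_E^t)\ge m_0-m_1=s$, since the matrix $I-A_E^t$ has only $m_1$ columns. There is in fact no serious difficulty anywhere in this argument: the entire content is the explicit realization of the pair $(\BF(E),|\sing(E)|)$ by a disjoint union of roses, together with the elementary rank count above, after which Theorem \ref{thm:isokk} does all the work.
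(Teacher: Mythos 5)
Your proof is correct and is essentially the paper's argument: the paper applies Lemma \ref{lem:resol} directly with $M$ the Smith normal form of $I-A_E^t$ and $R=L_0^s\oplus L_1^{r-s}\oplus\bigoplus_i L_{d_i+1}$, which amounts to the same realization of $(\BF(E),|\sing(E)|)$ by a disjoint union of roses that you make explicit before invoking Theorem \ref{thm:isokk} (itself just that lemma with $M=I-A_F^t$). Your verification that $r\ge s$ is a worthwhile detail the paper leaves implicit.
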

\begin{proof}
Apply Lemma \ref{lem:resol} with $M$ the Smith normal form of $I-A_E^t$  and $R=L_0^s\oplus L_1^{r-s} \oplus \bigoplus_{i=1}^nL_{d_i+1}$.
\end{proof}

As previously recalled from \cite{cm1} and \cite{cv}, for any graph $E$ we have $E$-stable variants of the universal homology theories $j:\aha\to kk$ and $j^h:\ahas\to kk^h$. In the following corollary, as well as in any other statement involving two graphs $E$ and $F$ with possibly infinitely many edges, $j$ and $j^h$ are understood to be the $E\sqcup F$-stable ones.

\begin{thm}\label{thm:struiso}
Let $E$ and $F$ be graphs with finitely many vertices. Assume that $\ell$ satisfies the $\lambda$-assumption \ref{stan:lambda}, that $KH_{-1}(\ell)=0$ and that the canonical map $\Z\to KH_0(\ell)$ is an isomorphism. 
Then the following are equivalent.
\item[i)] $j(LE)\cong j(LF)$ in $kk$.
\item[ii)]$j^h(LE)\cong j^h(LF)$ in $kk^h$.
\end{thm}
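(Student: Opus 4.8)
The plan is to prove the two implications by different means: $(ii)\Rightarrow(i)$ by pushing the isomorphism through a forgetful functor $kk^h\to kk$, and $(i)\Rightarrow(ii)$ by extracting from the object $j(LE)$ enough combinatorial data to invoke the hermitian structure result, Theorem \ref{thm:isokk}.

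First I would build the comparison functor. Let $\mathrm{forget}\colon\ahas\to\aha$ be the functor that forgets involutions and let $j\colon\aha\to kk$ be the ($E\sqcup F$-stable) universal homology theory. The composite $j\circ\mathrm{forget}$ is excisive, homotopy invariant and $M_X$-stable, these properties being inherited from $j$; and it is Hermitian stable because $\mathrm{forget}(\iota_+)$ is, as a plain algebra map, the upper left corner inclusion $A\to M_2A$, which $j$ inverts by matricial stability. By the universal property of $j^h$ (\cite{cv}*{Proposition 6.2.7}) there is then a unique triangulated functor $\Phi\colon kk^h\to kk$ with $\Phi\circ j^h=j\circ\mathrm{forget}$; in particular $\Phi(j^h(A))=j(A)$ for every $A\in\ahas$. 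Since $\Phi$ is unital and additive on $\End_{kk^h}(j^h(\ell))$, it carries the integer matrices built from $\Z\to kk^h(\ell,\ell)$ to their counterparts built from $\Z\to kk(\ell,\ell)$. Applying $\Phi$ to an isomorphism $j^h(LE)\cong j^h(LF)$ then yields $j(LE)\cong j(LF)$, proving $(ii)\Rightarrow(i)$.

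For $(i)\Rightarrow(ii)$ I would first transport the fundamental triangle of Theorem \ref{thm:fundtriang} through $\Phi$ to obtain the distinguished triangle $j(\ell)^{\reg(E)}\xrightarrow{I-A_E^t}j(\ell)^{E^0}\to j(LE)$ in $kk$. Applying $kk(\ell,-)$ and $kk(-,\ell)$ to it and using the hypotheses $KH_0(\ell)=\Z$ and $KH_{-1}(\ell)=0$, the relevant outer terms vanish and the long exact sequences collapse to
\[
kk_0(\ell,LE)\cong\coker(I-A_E^t)=\BF(E),\qquad kk_{-1}(LE,\ell)\cong\coker(I^t-A_E)=\BF^\vee(E),
\]
the second being a direct long exact sequence computation (the $kk$-analogue of Theorem \ref{thm:uct}) in the degree where the coefficient group $KH_{-1}(\ell)$ is zero. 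An isomorphism $j(LE)\cong j(LF)$ therefore induces isomorphisms $\BF(E)\cong\BF(F)$ and $\BF^\vee(E)\cong\BF^\vee(F)$. Comparing free ranks and using that $\rk\BF(E)-\rk\BF^\vee(E)$ equals $|E^0|-|\reg(E)|=|\sing(E)|$ (the defect between the ranks of the cokernels of $I-A_E^t$ and of its transpose), I conclude $|\sing(E)|=|\sing(F)|$, while the first isomorphism supplies a group isomorphism $\BF(E)\iso\BF(F)$. Theorem \ref{thm:isokk} now delivers the desired isomorphism $j^h(LE)\cong j^h(LF)$.

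The main obstacle is the recovery of the number of singular vertices: the Bowen--Franks group alone does not detect it, as is already visible from $L_0$ and $L_1$, which share $\BF=\Z$ but satisfy $j(L_1)\cong j(\ell)\oplus j(\ell)[-1]\not\cong j(\ell)=j(L_0)$. The resolution is that $|\sing(E)|$ is exactly the rank defect $\rk\BF(E)-\rk\BF^\vee(E)$, and that the dual group $\BF^\vee(E)$ is realized cleanly as $kk_{-1}(LE,\ell)$ precisely in the degree where the uncontrolled higher groups $KH_{\ge 1}(\ell)$ do not intrude; this is what turns the two free ranks, and hence $|\sing(E)|$, into genuine invariants of the isomorphism class of $j(LE)$. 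A routine point to check along the way is that $\Phi$ really carries the hermitian fundamental triangle to the asserted non-hermitian one, which holds because $\Phi$ is triangulated, unital and $\Z$-linear on the scalars.
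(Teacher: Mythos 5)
Your proof is correct and follows the same route as the paper: the forgetful functor $kk^h\to kk$ (constructed exactly as you describe, via the universal property) handles (ii)$\Rightarrow$(i), and recovery of $\BF(E)$ together with $|\sing(E)|$ followed by Theorem \ref{thm:isokk} handles (i)$\Rightarrow$(ii). The only difference is that where you extract $\BF(E)\cong kk_0(\ell,LE)$ and $\BF^\vee(E)\cong kk_{-1}(LE,\ell)$ from the fundamental triangle and use the rank defect $\rk\BF(E)-\rk\BF^\vee(E)=|E^0|-|\reg(E)|=|\sing(E)|$, the paper simply cites \cite{cm1}*{Corollary 6.11} for the fact that $j(LE)\cong j(LF)$ forces $\BF(E)\cong\BF(F)$ and $|\sing(E)|=|\sing(F)|$; your computation is a correct unpacking of that citation under the stated hypotheses $KH_{-1}(\ell)=0$ and $\Z\iso KH_0(\ell)$.
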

\begin{proof}
If ii) holds, then the forgetful functor $kk^h\to kk$ sends the isomorphism $j^h(LE)\cong j^h(LF)$ to an isomorphism $j(LE)\cong j(LF)$. If i) holds, then by \cite{cm1}*{Corollary 6.11}, $\BF(E)\cong \BF(F)$ and $|\sing(E)|=|\sing(F)|$, so $j^h(LE)\cong j^h(LF)$ by Theorem \ref{thm:isokk}.
\end{proof}

\begin{rem}\label{rem:nohold}
The analogue of Theorem \ref{thm:struiso} with $\ol{LE}$ and $\ol{LF}$ substituted for $LE$ and $LF$ does not hold. Indeed, for any $\ell$, $j(L_2)=j(L_{2^-})=0$ but for $\ell$ as in Example \ref{ex:l=c}, $j^h(\ol{L_2})\not\cong j^h(\ol{L_{2^-}})$, by \eqref{cuental3}. 
\end{rem}

The proof of Lemma \ref{lem:resol} does not work for the analogue of the lemma with a matrix $M$ with coefficients in $\Z[\sigma]$ and $\ol{\BF}(E)$ substituted for $\BF(E)$. This is because a submodule of a free $\Z[\sigma]$-module need not be free or even projective. However the problem disappears if $\ker(M)=\ker(I-\sigma A_E^t)=0$, and we have the following.

\begin{lem}\label{lem:resol2}
Let $E$ be a graph with $|E^0|<\infty$, and let $n_0,n_1\ge 1$, $M\in\Z[\sigma]^{n_0\times n_1}$, and $R$ as in \eqref{seq:elMR}. 
Assume that $\ker M=\ker(I-\sigma A_E^t)=0$. Let $\xi_0:\ol{\BF}(E)\iso \coker(M)$ be a group isomorphism. Then there exists an isomorphism 

\goodbreak

\noindent $\xi:j^h(\ol{L(E)})\iso j^h(R)$ such that $\ev(\xi)=\can\circ \xi_0$.
\end{lem}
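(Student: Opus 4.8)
The plan is to follow the proof of Lemma~\ref{lem:resol}, but to replace the elementary homological algebra over $\Z$ (which used that a subgroup of a free abelian group is free) by the comparison theorem for projective resolutions over $\Z[\sigma]$. The hypothesis $\ker M=\ker(I-\sigma A_E^t)=0$ is precisely what makes this substitution work: it guarantees that the two $\Z[\sigma]$-linear complexes
\[
C_\bullet\colon\ \Z[\sigma]^{\reg(E)}\xrightarrow{\,I-\sigma A_E^t\,}\Z[\sigma]^{E^0},\qquad D_\bullet\colon\ \Z[\sigma]^{n_1}\xrightarrow{\,M\,}\Z[\sigma]^{n_0},
\]
placed in degrees $1$ and $0$, are length-one free resolutions of $\ol{\BF}(E)=\coker(I-\sigma A_E^t)$ and of $\coker(M)$ respectively.

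First I would lift $\xi_0$ and $\xi_0^{-1}$. By the comparison theorem the $\Z[\sigma]$-module isomorphism $\xi_0$ lifts to a chain map $f\colon C_\bullet\to D_\bullet$ with $H_0(f)=\xi_0$, and $\xi_0^{-1}$ lifts to $g\colon D_\bullet\to C_\bullet$ with $H_0(g)=\xi_0^{-1}$. Since $gf$ and $fg$ then lift the identities of $\ol{\BF}(E)$ and $\coker(M)$, the uniqueness-up-to-homotopy clause of the comparison theorem supplies $\Z[\sigma]$-linear null-homotopies; explicitly there is $h\colon\Z[\sigma]^{E^0}\to\Z[\sigma]^{\reg(E)}$ with
\[
\id-(gf)_0=(I-\sigma A_E^t)\,h,\qquad \id-(gf)_1=h\,(I-\sigma A_E^t),
\]
and symmetrically an $h'$ realizing $\id-fg$ in terms of $M$.

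Next I would pass to $kk^h$. Applying the canonical ring homomorphism $\Z[\sigma]\to KH_0^h(\ell)$ turns $f,g,h,h'$ into matrices of morphisms in $kk^h$. The chain-map relation $M f_1=f_0(I-\sigma A_E^t)$ says that $(f_1,f_0)$ is a morphism between the first two terms of the triangle of Theorem~\ref{thm:fundtriang} for $\ol{L(E)}$ and the triangle \eqref{seq:elMR} for $R$; completing it in the triangulated category $kk^h$ yields a filler $\xi\colon j^h(\ol{L(E)})\to j^h(R)$, and $g$ yields a filler $\eta$ in the opposite direction. The equality $\ev(\xi)=\can\circ\xi_0$ is then proved exactly as in Lemma~\ref{lem:resol}, from the commutativity of the square relating the cup-product monomorphism of Theorem~\ref{thm:khseq} (with $R=\ell$) for $\ol{L(E)}$ to its analogue \eqref{map:cano} for $\coker(M)$, via $KH_0^h(\xi)$ on the right and $\xi_0\otimes_{\Z[\sigma]}1$ on the left.

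Finally, to see that $\xi$ is an isomorphism I would reproduce the square-zero argument of Lemma~\ref{lem:resol} verbatim. Writing $p$ and $\partial$ for the structure maps of the $\ol{L(E)}$-triangle and using that consecutive maps of a triangle compose to zero, the two homotopy identities give
\[
\partial(1-\eta\xi)=\bigl(h\,(I-\sigma A_E^t)\bigr)[-1]\circ\partial=0,\qquad (1-\eta\xi)\,p=p\,(I-\sigma A_E^t)\,h=0.
\]
Applying $kk^h(\ol{L(E)},-)$ and $kk^h(-,\ol{L(E)})$ to the triangle produces factorizations $1-\eta\xi=p\zeta_0=\zeta_1\partial$, whence $(1-\eta\xi)^2=\zeta_1(\partial p)\zeta_0=0$ and $\eta\xi$ is invertible; the symmetric computation with $h'$ shows $\xi\eta$ is invertible, so $\xi$ is an isomorphism. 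The only delicate point — and the reason the hypothesis is imposed — is the very first step: over $\Z[\sigma]$ a submodule of a free module need not be projective, so without $\ker M=\ker(I-\sigma A_E^t)=0$ the complexes $C_\bullet,D_\bullet$ fail to be resolutions and neither the lift nor its homotopy inverse is available. Granted the hypothesis, every module in sight is free and the remainder is a transcription of the $\Z$-case.
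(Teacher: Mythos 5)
Your proposal is correct and follows the same route as the paper, which proves Lemma \ref{lem:resol2} simply by observing that the argument of Lemma \ref{lem:resol} goes through; you have correctly identified that the hypothesis $\ker M=\ker(I-\sigma A_E^t)=0$ turns the two-term complexes into free $\Z[\sigma]$-resolutions, so the comparison theorem supplies the lifts and homotopies that the explicit splittings provided in the $\Z$-case, and the triangulated square-zero argument then transcribes verbatim.
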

\begin{proof} The argument of the proof of Lemma \ref{lem:resol} shows this.
\end{proof}

\begin{thm}\label{thm:isokk2}
Let $E$ and $F$ be graphs with finitely many vertices and such that $\ker(I-\sigma A_E^t)=\ker(I-\sigma A_F^t)=0$. Let $\xi_0:\ol{\BF}(E)\iso \ol{\BF}(F)$ be an isomorphism. Assume that $\ell$ satisfies the $\lambda$-assumption \ref{stan:lambda}. Then there exists an isomorphism $\xi:j^h(\ol{L(E)})\iso j^h(\ol{L(F)})$ such that $\ev(\xi)=\can\xi_0$. 
\end{thm}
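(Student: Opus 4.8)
The plan is to prove Theorem \ref{thm:isokk2} exactly as the signed (twisted involution) counterpart of Theorem \ref{thm:isokk}, substituting the $\Z[\sigma]$-coefficient data everywhere. Where the proof of Theorem \ref{thm:isokk} invoked Lemma \ref{lem:resol} with $M=I-A_F^t$ and $R=L(F)$, here I would invoke the signed resolution lemma, Lemma \ref{lem:resol2}, with $M=I-\sigma A_F^t$ and $R=\ol{L(F)}$.

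First I would supply the distinguished triangle \eqref{seq:elMR} that Lemma \ref{lem:resol2} takes as input. This is furnished by the second triangle of Theorem \ref{thm:fundtriang} applied to $F$,
\[
\xymatrix{j^h(\ell^{\reg(F)})\ar[r]^{I-\sigma A_F^t}&j^h(\ell^{F^0})\ar[r]& j^h(\ol{L(F)}),}
\]
so that $M=I-\sigma A_F^t\in\Z[\sigma]^{F^0\times\reg(F)}$, $R=\ol{L(F)}$, and $\coker(M)=\ol{\BF}(F)$ by definition. Next I would verify the hypotheses of Lemma \ref{lem:resol2}: it demands $\ker M=\ker(I-\sigma A_E^t)=0$, and indeed $\ker M=\ker(I-\sigma A_F^t)=0$ and $\ker(I-\sigma A_E^t)=0$ both hold by assumption. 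The given $\xi_0\colon\ol{\BF}(E)\iso\ol{\BF}(F)=\coker(M)$ is precisely the group isomorphism the lemma requires as data. Lemma \ref{lem:resol2} then produces an isomorphism $\xi\colon j^h(\ol{L(E)})\iso j^h(R)=j^h(\ol{L(F)})$ with $\ev(\xi)=\can\circ\xi_0$, which is exactly the assertion.

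I do not expect a genuine obstacle at the level of this theorem, because the real difficulty has been isolated in Lemma \ref{lem:resol2} rather than here. The point, as noted just before that lemma, is that over $\Z[\sigma]=\Z[\Z/2\Z]$ a submodule of a free module need not be free or even projective, so the section-splitting step used in the proof of Lemma \ref{lem:resol} (choosing sections $s,t$ of the surjections onto the images, and an isomorphism between the kernels) has no analogue in general. The vanishing hypothesis $\ker(I-\sigma A_E^t)=\ker(I-\sigma A_F^t)=0$ is exactly what repairs that argument: it forces both $I-\sigma A_E^t$ and $M$ to be injective, hence isomorphisms onto their images, so the required sections exist tautologically and no kernel isomorphism is needed. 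Granting Lemma \ref{lem:resol2}, the theorem is therefore a one-line application, strictly parallel to the derivation of Theorem \ref{thm:isokk} from Lemma \ref{lem:resol}.
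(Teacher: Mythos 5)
Your proposal is correct and coincides with the paper's own proof, which likewise applies Lemma \ref{lem:resol2} with $M=I-\sigma A_F^t$ and $R=\ol{L(F)}$, the triangle being supplied by Theorem \ref{thm:fundtriang}. Your additional remarks on why the kernel-vanishing hypothesis repairs the splitting argument over $\Z[\sigma]$ match the discussion the paper gives just before Lemma \ref{lem:resol2}.
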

\begin{proof}
Apply Lemma \ref{lem:resol2} with $M=I-\sigma A_F^t$ and $R=\ol{L(F)}$.
\end{proof}

\begin{prop}\label{prop:bfolbf}
Assume that $2$ is invertible and $1$-positive in $\ell$. Let $E$ be a finite graph such that the $\Z[\sigma]$-modules $\ol{\BF}(E)$ and $\BF(E)$ are finite and isomorphic. Let $\ob{\Delta}$ be as in Proposition \ref{prop:deltatau}. Then $2\id_{L(E)}$ and $2\id_{\ol{L(E)}}$ are invertible elements of 
$kk^h(L(E),L(E))$ and $kk^h(\ol{L(E)},\ol{L(E)})$, and we have 
\[
({\ob{\Delta}}_{L(E)})/2)\circ ({\ob{\Delta}}_{\ol{L(E)}}/2)=j^h(\id_{\ol{L(E)}}).
\]
In particular, $j^h(\ol{L(E)})$ is a retract of $j^h(L(E))$. Moreover the following sequence is exact
\[
\xymatrix{0\ar[r]& j^h(\ol{L(E)})\ar[r]^{{\ob{\Delta}}_{\ol{L(E)}}}&j^h(L(E))\ar[r]^{1-\sigma}&j^h(L(E)).} 
\]
\end{prop}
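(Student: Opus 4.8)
The plan is to reduce everything to the two fundamental triangles of Theorem~\ref{thm:fundtriang}, the algebraic identities of Proposition~\ref{prop:deltatau}, and Corollary~\ref{coro:fundtau}; the real work is to prove the two annihilation facts that $2\id$ is invertible on both objects and that $1-\sigma$ acts as zero on $j^h(\ol{L(E)})$. First I would extract the arithmetic content of the hypothesis. Since $\BF(E)=\coker(I-A_E^t)$ is finite, $E$ must be regular (a singular vertex would produce a free summand), so $I-A_E^t$ is a square integer matrix with $N':=\det(I-A_E^t)=\pm N$, $N=|\BF(E)|$. By Remark~\ref{rem:olbfreg}, $\ol{\BF}(E)\cong\coker(I-(A_E^t)^2)$ as abelian groups, so $|\ol{\BF}(E)|=|\det(I-A_E^t)|\cdot|\det(I+A_E^t)|=N\cdot|\det(I+A_E^t)|$. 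The hypothesis $|\BF(E)|=|\ol{\BF}(E)|$ then forces $\det(I+A_E^t)=\pm1$, i.e. $I+A_E^t\in\GL_{E^0}(\Z)$, and reducing mod $2$ gives $N\equiv\det(I+A_E^t)\equiv 1$, so $N$ is odd.

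The technical core is the elementary \emph{adjugate-kills-the-cone} lemma: if $f\colon X\to X$ in a triangulated category admits $g$ with $fg=gf=c\cdot\id_X$ for a central scalar $c$, then $Z=\operatorname{cone}(f)$ satisfies $c^2\id_Z=0$ (apply $\hom(W,-)$ to the triangle; $c$ annihilates both $\ker f_*$ and $\coker f_*$, so $c^2$ annihilates $\hom(W,Z)$, and one takes $W=Z$). I apply this three times to the triangles of Theorem~\ref{thm:fundtriang}, with $X=j^h(\ell)^{E^0}$: (i) $f=I-A_E^t$, $g=\operatorname{adj}(f)$, $c=N'\in\Z$, giving $N'^2\id_{L(E)}=0$; (ii) $f=I-\sigma A_E^t$, $g=\operatorname{adj}_{\Z[\sigma]}(f)$, $c=D:=\det_{\Z[\sigma]}(f)\in\Z[\sigma]$, giving $D^2\id_{\ol{L(E)}}=0$ (writing $D=a+b\sigma$, its images under $\sigma\mapsto\pm1$ are $a+b=N'$ and $a-b=\pm1$, so $D\cdot(a-b\sigma)=a^2-b^2=\pm N'$ and hence $N'^2\id_{\ol{L(E)}}=0$ as well); and (iii) $f=I-\sigma A_E^t$, $c=1-\sigma$, $g=(1-\sigma)(I+A_E^t)^{-1}$. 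For (iii) I use that $I+A_E^t$ is unimodular together with the identity $(1-\sigma)(I-\sigma A_E^t)=(1-\sigma)(I+A_E^t)$, which yields $fg=gf=(1-\sigma)\id_X$ and therefore $(1-\sigma)^2\id_{\ol{L(E)}}=0$.

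Now $N$ odd together with $N'^2\id=0$ (on both algebras) gives, via a relation $1=2u+N'^2 v$, that $2\id_{L(E)}$ and $2\id_{\ol{L(E)}}$ are invertible — this is the first assertion. Since $(1-\sigma)^2=2(1-\sigma)$ in $\Z[\sigma]$, statement (iii) reads $2(1-\sigma)\id_{\ol{L(E)}}=0$, and invertibility of $2$ on $\ol{L(E)}$ upgrades this to $(1-\sigma)\id_{\ol{L(E)}}=0$, so $\sigma$ acts as the identity on $j^h(\ol{L(E)})$. Next, from Corollary~\ref{coro:fundtau} I have $2(j^h(\tau)-\id)=0$ and $2(j^h(\ol\tau)-\id)=0$; combined with $\ob{\Delta'}_{L(E)}=1+j^h(\tau)$ and $\ob{\Delta'}_{\ol{L(E)}}=1+j^h(\ol\tau)$ from Proposition~\ref{prop:deltatau} and invertibility of $2$, this forces $\ob{\Delta'}_{L(E)}=2\id_{L(E)}$ and $\ob{\Delta'}_{\ol{L(E)}}=2\id_{\ol{L(E)}}$.

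It remains to assemble the last claims. Proposition~\ref{prop:deltatau} (for $A=\ol{L(E)}$) gives $\ob{\Delta}_{L(E)}\circ\ob{\Delta}_{\ol{L(E)}}=(1+\sigma)\ob{\Delta'}_{\ol{L(E)}}$; substituting $\ob{\Delta'}_{\ol{L(E)}}=2\id$ and $\sigma\id_{\ol{L(E)}}=\id$ yields $4\id_{\ol{L(E)}}$, whence $(\ob{\Delta}_{L(E)}/2)\circ(\ob{\Delta}_{\ol{L(E)}}/2)=\id_{\ol{L(E)}}$, the second assertion; in particular $j^h(\ol{L(E)})$ is a retract of $j^h(L(E))$. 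For exactness, put $s=\ob{\Delta}_{\ol{L(E)}}/2$ and $r=\ob{\Delta}_{L(E)}/2$, so $rs=\id$ and $e:=sr$ is idempotent; Proposition~\ref{prop:deltatau} (for $A=L(E)$) with $\ob{\Delta'}_{L(E)}=2\id$ computes $e=\tfrac14(1+\sigma)\ob{\Delta'}_{L(E)}=\tfrac{1+\sigma}{2}\id_{L(E)}$. Since $1-\sigma=2\bigl(1-\tfrac{1+\sigma}{2}\bigr)$ with $2$ invertible, $\im(e)=\ker(1-\sigma)$, and as $s$ is a split monomorphism onto $\im(e)$ with $(1-\sigma)s=s\,(1-\sigma)\id_{\ol{L(E)}}=0$, the sequence $0\to j^h(\ol{L(E)})\xrightarrow{\ob{\Delta}_{\ol{L(E)}}}j^h(L(E))\xrightarrow{1-\sigma}j^h(L(E))$ is exact. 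I expect the main obstacle to be the interlocking of the two annihilation statements in the correct order: one must prove $2$-invertibility on $\ol{L(E)}$ \emph{before} concluding $\sigma=\id$ there (which is why the $\sigma$-triviality must be routed through the divisibility $D\cdot(a-b\sigma)=\pm N'$ rather than through $\sigma=\id$), and one must notice that the single identity $\det(I+A_E^t)=\pm1$ — the one nontrivial arithmetic consequence of $\BF(E)\cong\ol{\BF}(E)$ — simultaneously supplies the oddness of $N$ and the unimodularity needed to run application (iii).
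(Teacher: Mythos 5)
Your proof is correct and follows the same overall strategy as the paper's: extract two arithmetic facts from the hypothesis (oddness of $|\ol{\BF}(E)|$ and unimodularity of $I+A_E^t$), use the fundamental triangles of Theorem \ref{thm:fundtriang} to kill $N'^2\id$ on both algebras and $(1-\sigma)^2\id$ on $\ol{L(E)}$, and assemble the conclusion from Proposition \ref{prop:deltatau} and Corollary \ref{coro:fundtau}. Where you differ is in how the arithmetic input is obtained: the paper argues module-theoretically that $I+A_E^t$ descends to multiplication by $2$ on $M=\coker(I-(A_E^t)^2)$, shows this map is injective hence bijective, and concludes that $|M|$ is odd and $I+A_E^t$ is invertible; you instead read both facts off the factorization $\det(I-(A_E^t)^2)=\det(I-A_E^t)\det(I+A_E^t)$ together with the cardinality equality $|\BF(E)|=|\ol{\BF}(E)|$, which uses only the abelian-group isomorphism rather than the full $\Z[\sigma]$-module structure. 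Your explicit adjugate computations --- in particular the norm identity $D\cdot(a-b\sigma)=\pm N'$ in $\Z[\sigma]$ and the factorization $(1-\sigma)\id=(I-\sigma A_E^t)\cdot(1-\sigma)(I+A_E^t)^{-1}$ --- also make fully precise the step the paper dispatches with ``by the argument of Lemma \ref{lem:resol}''; this is a genuine service, since over $\Z[\sigma]$ (a ring with zero divisors) one cannot blindly invert determinants, and your two-sided factorizations through $I-\sigma A_E^t$ sidestep the issue cleanly. The concluding bookkeeping ($\ob{\Delta'}=2\id$ from $\tau$-triviality, the computation of the idempotent $e=\tfrac{1+\sigma}{2}\id_{L(E)}$, and the identification of $\im(e)$ with $\ker(1-\sigma)$) matches what the paper leaves implicit in its final sentence.
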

\begin{proof}  If $\ol{\BF}(E)$ is finite, then $E$ is regular, so by Remark \ref{rem:olbfreg}, $\ol{\BF}(E)$ is isomorphic as a $\Z[\sigma]$-module to $M=\coker(I-(A_E^t)^2)$ where $\sigma$ acts as multiplication by $A_E^t$. Then $M\cong \BF(E)$ as $\Z[\sigma]$-modules $\iff$ $A_E^t$ acts trivially on $M$, in which case $I+A_E^t$ descends to multiplication by $2$, and we have
\begin{equation}\label{eq:displayed}
\im(I-A_E^t)=\im(I-(A_E^t)^2)=(I+A_E^t)(\im(I-A_E^t)).
\end{equation}
Hence $I+A_E^t$ restricts to an injection on $\im(I-A_E^t)$, which has rank $|E^0|$ since $\BF(E)$ is finite. Thus $\det(I+A_E^t)\ne 0$, which combined with \eqref{eq:displayed} implies that $(I+A_E^t)x$ goes to zero in $M$ if and only if $x$ does. It follows that multiplication by $2$ on $M$ is injective and therefore bijective since $M$ is finite. Hence $I+A_E^t$ is invertible and $n=|M|$ is odd. Write $n=2q-1$; then by the argument of Lemma \ref{lem:resol}, $n^2j^h(\id_{L(E)})=n^2j^h(\id_{\ol{L(E)}})=0$, so $2j^h(\id_{L(E)})$ and $2j^h(\id_{\ol{L(E)}})$ are isomorphisms. The same argument but with $\Z[\sigma]$ substituted for $\Z$, shows that $$2(1-\sigma)\id_{\ol{L(E)}}=(1-\sigma)^2\id_{\ol{L(E)}}=0,$$ which by what we have already seen implies that $(1-\sigma)\id_{\ol{L(E)}}=0$. The proposition now follows from Proposition \ref{prop:deltatau} and Corollary \ref{coro:fundtau}.
\end{proof}

\begin{rem}\label{rem:bfolbf1} Let $E$ and $\ell$ be as in Proposition \ref{prop:bfolbf}. Observe that $\sigma$ acts as $1\otimes\sigma$ on $\BF(E)\otimes KH_0^h(\ell)\subset KH_0^h(L(E))$; this action is nontrivial in general, so in particular $0\ne 1-\sigma\in kk^h(L(E),L(E))$. For example $\ell=\C$ satifies the hypothesis of the proposition, and $\sigma$ acts on $\BF(E)\oplus \sigma\BF(E)=\BF(E)\otimes KH_0^h(\C)$ by interchanging the summands.
\end{rem}

\begin{rem}\label{rem:bfolbf2} The proof of Proposition \ref{prop:bfolbf} shows that if a regular finite graph $E$ with incidence matrix $A$
satisfies the hypothesis of the proposition and $\BF(E)\ne 0$, then 
\begin{equation}\label{eq:bfolbf}
\det(I+A)\in\{\pm 1\} \text{ and }\det(I-A)\in\Z\setminus\{0,\pm 1\}.
\end{equation} The converse is also true; if \eqref{eq:bfolbf} holds, then $I+A^t:\BF(E)\to \BF(E^2)\cong \ol{\BF}(E)$ is a $\Z[\sigma]$-module isomorphism. A concrete example is the graph with incidence matrix
\[
A_E=\begin{bmatrix} 1 &3\\ 1& 1\end{bmatrix}.
\] 
\end{rem}
\section{\topdf{$K_0$}{K0} invariants for \topdf{$*$}{*}-algebras}\label{sec:k0}

Let $A$ be a $*$-algebra and $x\in A$. A  \emph{projection} in $A$ is a self-adjoint idempotent element; we write $\proj(A)$ for the set of all projections in $A$. If $p,q\in\proj(A)$, we write $p\ge q$ whenever the identities $q=pq=qp$ hold. Two projections $p,q\in A$ are (Murray-von Neumann) \emph{equivalent} --and we write $p\sim q$-- if there is an element $x\in A$ such that $x^*x=p$, $xx^*=q$. Such an $x$ can be chosen to be in $pAq$, in which case we call it an ($MvN$-) \emph{equivalence} and write  $x:p\iso q$.  All the basic properties about equivalence of idempotents proved in \cite{black}*{Section 4.2} hold verbatim for projections, with the same proofs. A \emph{partial isometry} is an element $x\in A$ such that $xx^*x=x$. Let $R$ be a unital $*$-algebra. An \emph{isometry} in $R$ is an element $u$ such that $u^*u=1$; a \emph{unitary} is an invertible isometry. For $1\le n\le \infty$, put $\proj_n(R)=\proj(M_nR)$ and consider the set of equivalence classes $\cV_n(R)^*=\proj_n(R)/\sim$. If $n<m$, then the map $\cV_n(R)^*\to \cV_m(R)^*$ is injective, and $\cV_\infty^*(R)=\bigcup_n \cV_n^*(R)$. The set $\cV_\infty(R)^*$
forms an abelian monoid under \emph{orthogonal sum}; if $x,y\in\proj_\infty(R)$ then there are orthogonal projections $p,q\in\proj_\infty(R)$ with $x=[p]$ and $y=[q]$ and $x+y:=[p+q]$. This is well-defined by the projection analogue of \cite{black}*{Proposition 4.2.4}. A projection $p\in \proj(R)$ is \emph{strictly full} if there exist $n\ge 1$ and $(x_1,\dots,x_n)\in R^n$ such that 
$\sum_{i=1}^nx_i^*px_i=1$. 

\begin{lem}\label{lem:vmorite}
Let $R$ be a unital $*$-algebra and $p\in\proj(R)$ a strictly full projection. Then the inclusion $pRp\subset R$ induces an isomorphism $\cV_\infty(pRp)^*\iso \cV_\infty(R)^*$.
\end{lem}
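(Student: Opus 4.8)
The map in question is the monoid homomorphism $\cV_\infty(pRp)^*\to \cV_\infty(R)^*$ induced by the nonunital $*$-homomorphism $pRp\hookrightarrow R$, which sends projections to projections and equivalences to equivalences and is therefore well defined on classes and additive. The plan is to prove it is bijective by exhibiting a transport construction for surjectivity and a compression argument for injectivity.

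For surjectivity I would first turn the strict fullness hypothesis into a Murray--von Neumann statement. Writing $\sum_{i=1}^n x_i^*px_i=1$ and setting $y_i=px_i\in pR$, one gets $\sum_i y_i^*y_i=1$; assembling the $y_i$ into a column $y$ (an $n\times 1$ matrix over $R$) yields $y^*y=1_R$ and $yy^*\in M_n(pRp)$, a projection equivalent to $1_R$ via $y$. More generally, given an arbitrary projection $q\in M_mR$, I would set $Y=\diag(y,\dots,y)$ (an $mn\times m$ matrix), so that $Y^*Y=1_{M_mR}$ and $w:=Yq$ satisfies $w^*w=q$ and $ww^*=YqY^*$. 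Since every entry of $Y$ lies in $pR$, conjugation lands the projection $YqY^*$ in $M_{mn}(pRp)$, and $w$ realizes $[q]=[YqY^*]$ in $\cV_\infty(R)^*$. Hence $[q]$ is in the image of the map, proving surjectivity.

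For injectivity, suppose $q_1,q_2\in M_m(pRp)$ (padded to a common size) become equivalent in $M_mR$. The point is that an equivalence may be taken inside the corner $q_2 M_m(R) q_1$: if $z^*z=q_1$ and $zz^*=q_2$, then $z':=q_2 z q_1$ is again an equivalence, as a direct computation using $q_i^2=q_i$ gives ${z'}^*z'=q_1$ and $z'{z'}^*=q_2$. Because $q_1,q_2$ have entries in $pRp$ we have $p^{(m)}q_2=q_2$ and $q_1 p^{(m)}=q_1$ for the diagonal projection $p^{(m)}=\diag(p,\dots,p)$, whence $z'=p^{(m)} z' p^{(m)}\in M_m(pRp)$. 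Thus $z'$ already witnesses $q_1\sim q_2$ inside $M_m(pRp)$, so the two classes agree in $\cV_\infty(pRp)^*$.

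The main subtlety, and the reason one cannot merely invoke the ring-theoretic Morita story, is that in a general $*$-algebra $a^*a=0$ need not force $a=0$, so a Murray--von Neumann equivalence is not automatically supported in the expected corner; this is exactly why the explicit replacement $z\mapsto q_2zq_1$ is required in the injectivity step. I expect injectivity to be essentially formal once this replacement is in hand (notably, it does not even use strict fullness), whereas surjectivity is where the hypothesis that $p$ be strictly full is genuinely needed, through the transport element $Y$.
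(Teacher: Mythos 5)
Your proof is correct. It takes a more elementary, hands-on route than the paper's, which instead packages the whole statement into a colimit argument: the paper introduces the category $\fP(R)$ whose objects are the projections of $M_\infty R$ and whose morphisms $q_1\to q_2$ are elements $x$ with $x^*x=q_1$ and $xx^*\le q_2$, together with the functor $F(q)=\cV_1(qM_\infty Rq)^*$; it observes that any two parallel morphisms induce the same map under $F$, so that the colimit of $F$ over any cofinal subcategory agrees with $\colim_{\fP(R)}F$, and then identifies $\cV_\infty(R)^*$ and $\cV_\infty(pRp)^*$ as the colimits over the two cofinal subcategories given by the identity matrices and by $\fP(pRp)$, respectively. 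The cofinality of $\fP(pRp)$ in $\fP(R)$ is exactly your transport construction: the morphism $q\to YqY^*$ built from the column $y$ with entries $y_i=px_i$, so the mathematical content of your surjectivity step coincides with the paper's use of strict fullness. What the colimit formalism buys is that injectivity comes for free from the general nonsense (no explicit compression $z\mapsto q_2zq_1$ is needed) and that the identical scheme is reused verbatim for $K_1(-)^*$ in Lemma \ref{lem:k1morite}; what your argument buys is a self-contained and arguably more transparent verification, and your compression step is in any case already licensed by the paper's preliminary remark in Section \ref{sec:k0} that a Murray--von Neumann equivalence between two projections can always be chosen to lie in the corner they determine.
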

\begin{proof}
Consider the category $\mathfrak{P}(R)$ whose set of objects is $\proj_\infty(R)$ and where a homomorphism $q_1\to q_2$ is an element $x\in M_\infty R$ such that $x^*x=q_1$ and $xx^*\le q_2$. We have a functor to abelian monoids $F:\fP(R)\to\abmon$ which sends 
$q\mapsto \cV_1(qM_{\infty}Rq)^*$. Note that any two homomorphisms $q_1\to q_2$ in $\fP(R)$ induce the same monoid homomorphism upon applying $F$. Hence if $\fC\subset \fP(R)$ is a subcategory such that for every $q\in\fP(R)$ there is $q\to q'$ with $q'\in\fC$, then $\colim_\fC F=\colim_{\fP(R)}F$. Apply this to the subcategory $\fI(R)\subset\fP(R)$ whose objects are the identity matrices $1_n$ and where a homomorphism $1_n\to 1_m$ is a matrix $x\in R^{m\times n}$ whose columns are $n$ consecutive vectors of the canonical basis of $R^m$. Then $\colim_{\fP(R)}F=\colim_{\fI(R)}\cV_n(R)^*=\cV_{\infty}(R)^*$. Similarly, if $p\in \proj(R)$ is strictly full, then $\cV_\infty(pRp)^*=\colim_{\fP(pRp)}F\iso \colim_{\fP(R)}F=\cV_\infty(R)^*$.
\end{proof}

Consider the group completion 
\[
K_0(R)^*:=(\cV_\infty(R)^*)^+
\]

Now assume that the center of $R$ satisfies the $\lambda$-assumption \ref{stan:lambda}. The \emph{Hermitian Witt-Grothendieck group} $K_0^h(R)$ is defined as the group completion of the monoid 
 \begin{equation}\label{vh=v}
 \cV^h_\infty(R)=\cV_\infty (M_{\pm}R)^*
 \end{equation}
 thus we have
\begin{equation}\label{k0hproj}
K_0^h(R)=K_0(M_\pm(R))^*.
\end{equation}
The inclusion $\iota_+:R\to M_{\pm}R$ induces a canonical homomorphism $\cV_\infty(R)^*\to \cV_\infty^h(R)$. We may also regard $\cV^h_\infty(R)$ as the monoid of unitary isomorphism 
 classes of all pairs $[(P,\phi)]$ consisting of a finitely generated projective right module equipped with a nondegenerate Hermitian form $\phi$. In the same fashion, the monoid $\cV_\infty(R)^*$ consists of the unitary classes of those $P=(P,\phi)$ for which there is an $n\ge 1$ such that $P$ embeds as an orthogonal direct summand of the free module of rank $n$ equipped with the standard Hermitian form
 \[
 \langle x,y\rangle=\sum_{i=1}^nx_i^*y_i.
 \]
Observe that $-1$ is positive in $R$ if and only if there is some $n\ge 1$ such that the equation
 \begin{equation}\label{-sol}
 \langle x,x\rangle=-1 
 \end{equation}
 has a solution $x\in R^n$. In this case $p:R^n\to R^n$, $p(y)=-x\langle x,y\rangle $ is an orthogonal projection onto $xR$.  In particular the hyperbolic module of rank $2$, $H(R)$, embeds 
 as an orthogonal summand in $R^{n+1}$, and $M_\pm R\cong (1\oplus p)M_{n+1}R(1\oplus p)$. One checks that 
\[
\sum_{1\le i,j\le n}\epsilon_{i,j}p\epsilon_{i,j}^*=\sum_{i=1}^n\epsilon_{i,i}\sum_{j=1}^n -x_jx_j^*=I_n
\]
Hence $p$ is a strictly full projection of $M_nR$, and therefore $1\oplus p$ is a strictly full projection of $M_{n+1}R$. Thus by Lemma \ref{lem:vmorite} the canonical map $\cV_\infty(R)^*\to\cV^h_\infty(R)$ is an isomorphism, as is the induced map $K_0(R)^*\to K_0^h(R)$.

\begin{rem}\label{rem:can'}
Let $R$ be a unital $*$-algebra. The involution defines an action of $\Z/2\Z$ on $K_0(R)$. With no assumptions on $\ell$, for any unital $*$-algebra $R$, we have a canonical forgetful map $\forg: K_0(R)^*\to K_0(R)^{\Z/2\Z}$ which maps the class of a projection modulo M-vN equivalence of projections to its class modulo Mv-N equivalence of idempotents. If $E$ is a graph with finitely many vertices and $v\in\reg(E)$, then the identity $[v]=\sum_{s(e)=v}[r(v)]$ holds in $K_0(LE)^*$. Hence we have a canonical group homomorphism
\[
\can':\BF(E)\to K_0(LE)^*.
\]
If $\ell$ is regular supercoherent and $\Z\to K_0(\ell)$ is an isomorphism, then $\forg\circ\can'$ is the well-known isomorphism $\BF(E)\iso K_0(L(E))$. In particular, $\can'$ is a split monomorphism in this case. 
\end{rem}  

\begin{rem}\label{rem:olcan}
The identities \eqref{eq:ee*re} show that if $E$ is a graph with finitely many vertices, then $[ee^*]=\sigma[r(e)]\in K_0^h(\ol{L(E)})$ for every $e\in E^1$. Hence there is a canonical $\Z[\sigma]$-module homomorphism
\[
\ol{\can}:\ol{\BF(E)}\to K_0^h(\ol{L(E)}). 
\]
\end{rem}

\section{Strictly properly infinite \topdf{$*$}{*}-algebras, purely infinite simple graphs, and their \topdf{$K_0$}{K0} invariants}\label{sec:k0spi}
 
Let $C_n$ be the Cohn path algebra of $\cR_n$ $(0\le n\le \infty)$; observe that $C_\infty=L_\infty$, the Leavit path algebra of $\cR_\infty$.  A unital algebra $R$ is \emph{properly infinite} there is a unital algebra homomorphism $C_2\to R$, or equivalently, an algebra homomorphism $L_\infty\to R$. (Properly infinite algebras were called sum algebras in 
\cite{fawag} and renamed $C_2$-algebras in \cite{cm1}*{Section 2}; we further rename them to conform to standard $C^*$-algebra terminology.) A $*$-algebra $R$ is \emph{strictly properly infinite} if there is a unital $*$-homomorphism $C_2\to R$, or equivalently, a unital $*$-homomorphism 
$L_\infty\to R$. 
Observe that if $R$ is strictly properly infinite and $\phi:R\to S$ is a unital $*$-homomorphism, then $S$ is strictly properly infinite too. Thus if $R$ and $S$ are unital $*$-algebras and $R$ is strictly properly infinite, so is $R\otimes S$. In particular, $L_\infty\otimes R$ is strictly properly infinite for every unital $*$-algebra $R$, and if $R$ is strictly properly infinite, then $M_nR$ and $M_{\pm}M_nR$ are strictly properly infinite for all $1\le n<\infty$. A projection $p\in R$ is \emph{strictly properly infinite} if $pRp$ is strictly properly infinite. Equivalently $p$ is strictly properly infinite if there are nonzero orthogonal projections $p_1,p_2\in pRp$, such that $p\sim p_1\sim p_2$.

\begin{rem}\label{rem:sumspipro}
The sum of orthogonal strictly properly infinite projections in a unital $*$-algebra is again strictly properly infinite. In particular, if $R$ contains orthogonal strictly properly infinite projections $p_1,\dots,p_n$ such that $\sum_{i=1}^np_i=1$, then $R$ is strictly properly infinite.
\end{rem}

We say that a graph $E$ is \emph{simple} if it is cofinal \cite{lpabook}*{Definitions 2.9.4} and every cycle in $E$ has an exit \cite{lpabook}*{Definitions 2.2.2}. A simple graph having at least one cycle is called \emph{purely infinite simple}. If $\ell$ is a field,
then $L(E)$ is (purely infinite) simple if and only if $E$ is \cite{lpabook}*{Theorems 2.9.1 and 3.1.10 and Lemma 2.9.6}.

\begin{ex}\label{ex:phantompis}
The graph $\booh$ of Example \ref{ex:phantom} is purely infinite simple. 
\end{ex}
\begin{lem}\label{lem:remove}
Let $E$ be a finite graph and $v\in\sour(E)\setminus\sink(E)$. Let $E_{\slash v}$ be the source elimination graph of $E$ (\cite{lpabook}*{Definition 6.3.26}). Then 
\begin{itemize}
\item[i)] The element $p=1-v\in L(E)$ is a strictly full projection. 
\item[ii)] The inclusion $E_{\slash v}\subset E$ induces $*$-isomorphisms $L(E_{\slash v})\to pL(E)p$ and $\ol{L(E_{\slash v})}\to p\ol{L(E)}p$. 
\item[iii)] $E$ is (purely infinite) simple if and only if $E_{\slash v}$ is.
\end{itemize}
\end{lem}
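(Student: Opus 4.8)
The plan for (i) is a direct computation in $L(E)$. Since $E^0$ is finite, $1=\sum_{w\in E^0}w$, so $p=1-v$ is a self-adjoint idempotent. As $E$ is finite and $v\in\sour(E)\setminus\sink(E)$, the vertex $v$ is regular, whence the Cuntz--Krieger relation gives $v=\sum_{e\in s^{-1}(v)}ee^*$. For each such $e$ we have $r(e)\ne v$ because $v$ is a source; hence $ev=er(e)v=0$, so $ep=e$ and $epe^*=ee^*$. Therefore $\sum_{e\in s^{-1}(v)}(e^*)^*\,p\,(e^*)=\sum_{e\in s^{-1}(v)}ee^*=v$, and adding $p^*pp=p$ gives $1=p+v=\sum_i x_i^*px_i$ for the finite family $\{p\}\cup\{e^*:e\in s^{-1}(v)\}\subset L(E)$. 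This exhibits $p$ as a strictly full projection.

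For (ii) I would first build $\psi$ on generators. Because $v$ is a source, no edge of $E$ has range $v$; consequently every edge emitted by a vertex $w\ne v$ already lies in $E_{\slash v}$, so $s^{-1}_E(w)=s^{-1}_{E_{\slash v}}(w)$ and $w$ is regular in $E$ iff it is in $E_{\slash v}$. Hence the defining relations of $L(E_{\slash v})$ hold verbatim inside $L(E)$, and its universal property yields a degree-preserving $*$-homomorphism $\psi:L(E_{\slash v})\to L(E)$ fixing vertices, edges and ghost edges. Since $w\le p$ for $w\ne v$ and $pe=e=ep$, $pe^*=e^*=e^*p$ for $e\in E^1_{\slash v}$, the image lies in $pL(E)p$ and $\psi(1)=p$. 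To see $\psi$ is onto $pL(E)p$, I would use $L(E)=\spn\{\alpha\beta^*\}$ and compute $p(\alpha\beta^*)p=\alpha\beta^*$ when $s(\alpha)\ne v\ne s(\beta)$ and $=0$ otherwise; as $v$ receives no edge, $s(\alpha)\ne v$ forces every edge of $\alpha$ to avoid $s^{-1}(v)$, i.e.\ $\alpha\in\cP(E_{\slash v})$. Thus $pL(E)p=\spn\{\alpha\beta^*:\alpha,\beta\in\cP(E_{\slash v})\}=\im\psi$. Injectivity is the crux: $\psi$ is graded and $\psi(\lambda w)=\lambda w\ne 0$ for every $w\in E^0_{\slash v}$ and $0\ne\lambda\in\ell$ (the vertices are part of an $\ell$-basis of $L(E)$), so the graded uniqueness theorem over a commutative ring \cite{lpabook} gives that $\psi$ is injective. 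Finally, since $\psi$ is a degree-$0$ graded algebra isomorphism onto $pL(E)p$ intertwining the standard involutions, \eqref{map:olf} shows the same underlying map intertwines the signed involutions, giving the $*$-isomorphism $\ol{L(E_{\slash v})}\iso p\ol{L(E)}p$ for free.

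For (iii) I would argue purely combinatorially. Because $v$ receives no edge it lies on no closed path, and every closed path avoids $s^{-1}(v)$; hence $E$ and $E_{\slash v}$ have exactly the same cycles, and since the edges emitted by any $w\ne v$ agree in the two graphs, these cycles have the same exits. So ``every cycle has an exit'' and ``there is a cycle'' hold for $E$ iff they hold for $E_{\slash v}$. For cofinality I would use that a finite path starting at a vertex $w\ne v$ can never pass through $v$, and that any maximal path of $E$ either avoids $v$ (hence is maximal in $E_{\slash v}$) or begins with a single edge of $s^{-1}(v)$ followed by a maximal path of $E_{\slash v}$; moreover $v$ connects, via any edge of $s^{-1}(v)$, to a vertex of $E_{\slash v}$. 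Matching connecting paths accordingly shows $E$ is cofinal iff $E_{\slash v}$ is. Combining the two conditions yields that $E$ is simple, resp.\ purely infinite simple, iff $E_{\slash v}$ is.

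I expect the main obstacle to be the injectivity in (ii): over a general commutative ground ring one cannot invert $\psi$ by hand, since the relations $e^*e=r(e)$ and $ee^*\le v$ obstruct any naive retraction sending $v\mapsto 0$. The clean route is the graded uniqueness theorem, whose hypotheses are exactly the two facts verified above (gradedness and non-vanishing on each $\ell w$). Everything else reduces to a finite computation in (i), elementary graph combinatorics in (iii), and, for the signed-involution isomorphism, a formal appeal to the $\Z/2\Z$-graded formalism of Section \ref{subsec:z2grad}.
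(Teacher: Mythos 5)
Your proof is correct, and parts (i) and (ii) essentially coincide with the paper's: the identity $1=p+\sum_{s(e)=v}epe^*$ you derive is exactly the paper's proof of (i), and for (ii) the paper simply invokes the argument of \cite{alps}*{Proposition 1.4}, whose content (universal property to get $\psi$, the computation $pL(E)p=\spn\{\alpha\beta^*:\alpha,\beta\in\cP(E_{\slash v})\}$, and graded uniqueness for injectivity) is what you have written out; your observation that the map is homogeneous of degree $0$, so that \eqref{map:olf} yields the twisted-involution statement for free, matches the paper's remark that the canonical map is a $*$-homomorphism for both involutions. Where you genuinely diverge is (iii): the paper deduces it algebraically, using that over a field the (purely infinite) simplicity of $E$ is equivalent to that of $L(E)$, which by (ii) is Morita equivalent to $L(E_{\slash v})$, and then quoting Morita invariance of simplicity and purely infinite simplicity \cite{k0pis}*{Corollary 1.7}; you instead prove the combinatorial statement directly, by noting that since $v$ receives no edges it lies on no closed path, so $E$ and $E_{\slash v}$ have the same cycles with the same exits, and that cofinality transfers because connecting paths from vertices $w\ne v$ never pass through $v$ while $v$ itself connects into $E_{\slash v}$ through any edge it emits. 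Your route is more elementary and self-contained (no appeal to the field case or to Morita theory), at the cost of having to verify the cofinality bookkeeping by hand; the paper's route is shorter given the cited machinery. Both are sound.
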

\begin{proof}
The argument of the proof of \cite{alps}*{Proposition 1.4} shows that the canonical homomorphism $L(E_{\slash v})\to L(E)$ induced by the inclusion, which is a $*$-homomorphism with respect to both the standard and the twisted involution, corestricts to an isomorphism onto $pL(E)p$; this proves ii). Because $v$ is a source but not a sink, we have $v=\sum_{s(e)=v}epe^*$, whence $1=p+\sum_{s(e)=v}epe^*$ and therefore $p$ is strictly full, proving i). If $\ell$ is a field, the (pure infinite) simplicity of $E$ is equivalent to that of $L(E)$, which by ii), is Morita equivalent to $L(E_{\slash v})$. The proof of iii) and of the lemma is concluded by using that simplicity and purely infinite simplicity are preserved by Morita equivalence \cite{k0pis}*{Corollary 1.7}. 
\end{proof}

\begin{lem}\label{lem:spipis}
Let $E$ be a finite, purely infinite simple graph. Then every vertex of $E$ is a strictly properly infinite projection of $L(E)$ and $\ol{L(E)}$. 
\end{lem}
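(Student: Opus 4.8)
The plan is to exhibit, for each vertex $v\in E^0$, two orthogonal isometries inside the corner $vL(E)v$ (and inside $v\,\ol{L(E)}\,v$); by the characterization recalled just before Remark~\ref{rem:sumspipro}, this is exactly what it means for $v$ to be a strictly properly infinite projection. First I would record the structural reductions forced by pure infinite simplicity. Since $E$ is simple and has a cycle it is cofinal; a sink could connect to nothing, so cofinality gives that $E$ has no sinks and that every vertex connects to a fixed cycle $c$. Writing $S$ for the strongly connected component of $c$, cofinality forces $S$ to be the unique terminal component (two distinct terminal components could not both be reached by a common vertex) and forces the subgraph induced on $E^0\setminus S$ to be acyclic (a cycle off $S$ would, by cofinality, have to be reached from $S$, contradicting terminality of $S$). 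Hence iterated source elimination peels off $E^0\setminus S$ and terminates at a graph whose vertex set is $S$ and which, having no sources, is strongly connected.

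For the base case I would assume $E$ strongly connected. Because every cycle has an exit, some vertex $w$ on $c$ emits two distinct edges $g_1\neq g_2$, and by strong connectivity each $r(g_i)$ returns to $w$. Given $v$, I choose paths $\alpha\colon v\to w$, $\gamma\colon w\to v$ and $\delta_i\colon r(g_i)\to w$ and put $p_i=\alpha\,g_i\,\delta_i\,\gamma$, a closed path at $v$. Using $\mu^*\mu=r(\mu)$ for a path $\mu$ and $e^*f=\delta_{e,f}\,r(e)$ for edges, one finds $p_i^*p_i=\gamma^*\delta_i^*g_i^*(\alpha^*\alpha)g_i\delta_i\gamma=v$ and $p_1^*p_2=\gamma^*\delta_1^*(g_1^*g_2)\delta_2\gamma=0$, so $p_1,p_2$ are orthogonal isometries of $vL(E)v$. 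To obtain the twisted involution at no extra cost I would arrange $|p_1|$ and $|p_2|$ to be even: if $E$ has an odd closed path then, by strong connectivity, paths $r(g_i)\to w$ of either parity exist and I pick $\delta_i$ accordingly, while if all cycles are even then every closed path, in particular each $p_i$, is already of even length. Since $\ol{\mu}=(-1)^{|\mu|}\mu^*$, for even $|\mu|$ the two involutions agree on $\mu$, so the very same $p_1,p_2$ are orthogonal isometries of $v\,\ol{L(E)}\,v$.

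For the inductive step I would suppose $v_0\in\sour(E)$; it is not a sink, so Lemma~\ref{lem:remove} applies: $1-v_0$ is strictly full, $E_{\slash v_0}$ is again finite purely infinite simple with one fewer vertex, and $L(E_{\slash v_0})\cong(1-v_0)L(E)(1-v_0)$, with the analogous statement for the twisted involution. By the inductive hypothesis every vertex $w\neq v_0$ is strictly properly infinite in $L(E_{\slash v_0})$; as $w\le 1-v_0$ we have $w(1-v_0)L(E)(1-v_0)w=wL(E)w$, so $w$ stays strictly properly infinite in $L(E)$, and likewise in $\ol{L(E)}$. Finally $v_0$, being a non-sink source of a finite graph, is regular, so $v_0=\sum_{s(e)=v_0}ee^*$; choosing orthogonal isometries $z_e^{(1)},z_e^{(2)}\in r(e)L(E)r(e)$ at each $r(e)\neq v_0$ and setting $x^{(k)}=\sum_{s(e)=v_0}e\,z_e^{(k)}\,e^*$, the relation $e^*f=\delta_{e,f}r(e)$ gives $(x^{(k)})^*x^{(l)}=\delta_{k,l}\sum_e ee^*=\delta_{k,l}v_0$, so $v_0$ is strictly properly infinite in $L(E)$; in $\ol{L(E)}$ the two signs produced by $\ol{e}=-e^*$ cancel and the identical computation goes through.

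The main obstacle I anticipate is the bookkeeping for the twisted involution: the naive closed-path isometries acquire a sign $(-1)^{|\mu|}$ under $\ol{\,\cdot\,}$, which I neutralize by passing to even-length paths in the base case and by the fortuitous cancellation of the two signs in the source step. The second delicate point is the purely graph-theoretic reduction, namely deducing from cofinality that the complement of the terminal component is acyclic, so that source elimination (Lemma~\ref{lem:remove}) really does terminate at a strongly connected graph.
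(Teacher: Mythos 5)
Your proof is correct and follows essentially the same strategy as the paper's: produce two orthogonal isometries at each vertex on a cycle via closed paths of even length (so that $\ol{\mu}=\mu^*$, making the argument work for both involutions), and handle sources by source elimination together with the observation that orthogonal isometries at $r(e)$ transport to $ee^*L(E)ee^*$ via $x\mapsto exe^*$. The only cosmetic difference is that the paper builds the two isometries directly at any vertex $v$ lying on a cycle (a cycle based at $v$ plus a detour through an exit, squared if necessary to even length), which avoids your preliminary reduction to the strongly connected case.
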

\begin{proof}
If $v\in E^0$ is in a cycle, then there is a cycle $\alpha^1_v$ based at $v$. Let $\alpha_v^2$ be a closed path starting at $v$, following $\alpha_v^1$ up to an exit, taking the exit, then coming back to the cycle --as is possible due to cofinality of $E$-- and following $\alpha^1_v$ again until $v$. Upon replacing $\alpha_v^1$ and $\alpha_v^2$ by their squares, if necessary, we may assume that their lengths are even, so that $\ol{\alpha_v^i}=(\alpha_v^i)^*$ for $i=1,2$. Then $(\alpha^i_v)^*\alpha^j_v=\delta_{i,j}v$, and $v$ is a strictly properly infinite projection of both $L(E)$ and $\ol{L(E)}$. If $\sour(E)=\emptyset$, every vertex is in a cycle, and the lemma follows. Otherwise, we can proceed by source elimination until we arrive to a purely infinite simple graph without sources. At each step, the source eliminated is equal to a sum of projections of the form $ee^*$ with $e\in E^1$ and $r(e)\in E^0_{\slash v}$. If $x_1,x_2\in r(e)L(E)r(e)$ are orthogonal isometries for either of the involutions, then $ex_1e^*,ex_2e^*\in ee^*L(E)ee^*$ are again orthogonal isometries, for the same involution. Since the sum of orthogonal strictly properly infinite projections is again strictly properly infinite, we get that every vertex is a strictly properly infinite projection. 
\end{proof}

\begin{coro}\label{coro:spipis}
If $E$ is finite and purely infinite simple, then $L(E)$ and $\ol{L(E)}$ are strictly properly infinite.
\end{coro}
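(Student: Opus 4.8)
The plan is to deduce this immediately from Lemma \ref{lem:spipis} together with Remark \ref{rem:sumspipro}, so that essentially all the work has already been done. First I would recall that since $E$ is finite, $L(E)$ is a unital algebra with unit $1=\sum_{v\in E^0}v$, and that the vertices are pairwise orthogonal idempotents (i.e. $vw=\delta_{v,w}v$ for $v,w\in E^0$). Moreover each vertex is self-adjoint, both for the standard involution ($v^*=v$) and for the signed involution ($\ol{v}=v$ by \eqref{map:*twist}); hence the $v$ form a family of orthogonal projections in both $L(E)$ and $\ol{L(E)}$ summing to $1$.

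Next I would invoke Lemma \ref{lem:spipis}, which asserts precisely that each vertex $v\in E^0$ is a strictly properly infinite projection of $L(E)$ and of $\ol{L(E)}$. Thus in either algebra we have a finite family of orthogonal strictly properly infinite projections $\{v:v\in E^0\}$ whose sum is the unit.

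Finally I would apply Remark \ref{rem:sumspipro}, whose second assertion says exactly that a unital $*$-algebra containing orthogonal strictly properly infinite projections $p_1,\dots,p_n$ with $\sum_i p_i=1$ is itself strictly properly infinite. Applying this with the $p_i$ ranging over $E^0$ yields that $L(E)$ and $\ol{L(E)}$ are strictly properly infinite, completing the proof. I do not expect any genuine obstacle here: the only points requiring a word of care are that the vertices are self-adjoint for \emph{both} involutions and that they sum to $1$ because $E$ is finite; everything substantive is contained in Lemma \ref{lem:spipis}.
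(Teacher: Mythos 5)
Your proposal is correct and coincides with the paper's own proof, which derives the corollary immediately from Lemma \ref{lem:spipis} and Remark \ref{rem:sumspipro}. The extra remarks you make (vertices are orthogonal, self-adjoint for both involutions, and sum to $1$ since $E$ is finite) are exactly the implicit points the paper leaves to the reader.
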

\begin{proof}
Immediate from Lemma \ref{lem:spipis} and Remark \ref{rem:sumspipro}.
\end{proof}

Let $R\in\ahas$ be unital and $p\in\proj(R)$. Following \cite{black}*{Section 6.11} we say that $p$ is \emph{very full} if there exists $q\in \proj(R)$ such that $p\ge q\sim 1$. Observe that any projection equivalent to a very full one is again very full. 
We write $\proj_f(R)\subset\proj(R)$ for the subset of very full projections.

\begin{ex}\label{ex:vfull}
Let $R,S\in\ahas$ be unital, $p\in\proj_f(R)$, $q\in\proj_f(S)$. Then $p\otimes q\in\proj_f(R\otimes S)$. If $\phi:R\to S$ is a $*$-homomorphism with $\phi(1)\in\proj_f(R)$, then $\phi(p)\in\proj_f(S)$.  
\end{ex}

\begin{lem}\label{lem:vveryfull}
Let $E$ be a finite, purely infinite simple graph and let $v\in E^0$. Then $v$ and $1\otimes v$ are very full projections of $L(E)$ and $M_\pm \ol{L(E)}$. 
\end{lem}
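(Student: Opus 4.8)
The plan is to prove in each algebra the comparison statement that the unit is Murray--von Neumann subequivalent to the projection in question, and then to absorb that subequivalence below the projection using pure infiniteness. Recall from the definition preceding the lemma that a projection $p$ in a unital $*$-algebra $R$ is very full exactly when $1$ is equivalent to a subprojection of $p$; write $a\precsim b$ for ``$a\sim a'\le b$ for some projection $a'$''. So I must show $1\precsim v$ in $L(E)$ and $1\precsim 1\otimes v$ in $M_{\pm}\ol{L(E)}$, where $1\otimes v=\iota_+(v)+\iota_-(v)$. I will use the graph structure of a finite purely infinite simple $E$: it has no sinks (every vertex reaches a cycle), hence every vertex is regular; all its cycles lie in a single terminal strongly connected component $C$, reachable from every vertex; and $E^0\setminus C$ spans an acyclic subgraph. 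These follow from cofinality together with finiteness and the existence of a cycle.

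For $L(E)$ the comparison rests on the fact that each edge is a partial isometry, $e^*e=r(e)$ and $ee^*\le s(e)$, so a path $\mu$ from $v$ to $w$ gives $w=r(\mu)\sim\mu\mu^*\le s(\mu)=v$, i.e.\ $w\precsim v$; and that at a regular vertex the Leavitt relation yields the orthogonal ``explosion'' $w\sim\bigoplus_{s(e)=w}r(e)$. Given any $w$, I iterate the explosion; since $E^0\setminus C$ is acyclic this terminates with $w\sim\bigoplus_j u_j$ for finitely many $u_j\in C$. Each $u_j$ is reachable from $v$ (as $v$ reaches $C$ and $C$ is strongly connected), so $u_j\precsim v$; and since $v$ is strictly properly infinite (Lemma~\ref{lem:spipis}) it dominates arbitrarily many mutually orthogonal copies of itself, so the finite orthogonal sum embeds below $v$, giving $w\precsim v$. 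Applying this to every $w\in E^0$ and packing the orthogonal family $\{w\}_{w\in E^0}$ below the strictly properly infinite projection $v$ yields $1=\sum_w w\precsim v$.

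For $M_{\pm}\ol{L(E)}$ the same scheme works once the partial isometries are replaced by corner-twisted versions, which is the delicate point. The twisted involution makes a single edge fail to be a partial isometry in $\ol{L(E)}$ (indeed $e\ol{e}e=-e$), but the off-diagonal elements $X_e=\left[\begin{smallmatrix}0&e\\0&0\end{smallmatrix}\right]$ and $X'_e=\left[\begin{smallmatrix}0&0\\e&0\end{smallmatrix}\right]$ of $M_{\pm}\ol{L(E)}$ are genuine partial isometries: by the computation \eqref{eq:ee*re} their signs are corrected by the $M_{\pm}$-involution, so that $X_e^*X_e=\iota_-(r(e))$, $X_eX_e^*=\iota_+(ee^*)\le\iota_+(s(e))$, and symmetrically $\iota_+(r(e))\precsim\iota_-(s(e))$. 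Crucially the target $1\otimes v=\iota_+(v)+\iota_-(v)$ contains both corner copies of $v$, so for a directed path $v\to w$ of either parity, composing the $X_e$'s and $X'_e$'s in alternating corners places $\iota_+(w)$ and $\iota_-(w)$ below the two orthogonal corners of $1\otimes v$; that is, $1\otimes w\precsim 1\otimes v$. The explosion also becomes corner-symmetric, $1\otimes w\sim\bigoplus_{s(e)=w}1\otimes r(e)$, so path-parity never obstructs. Reducing each $w$ to terminal vertices as before, comparing each $1\otimes u_j$ with the reachable $1\otimes v$, and packing $\{1\otimes w\}_{w\in E^0}$ below $1\otimes v$ --- which is strictly properly infinite because $v\ol{L(E)}v$ is (Lemma~\ref{lem:spipis}) and $M_{\pm}(-)$ preserves this --- yields $1\precsim 1\otimes v$.

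All the partial isometries and equivalences above lie in $L(E)$, respectively $M_{\pm}\ol{L(E)}$, and proper infiniteness lets one absorb the finitely many orthogonal copies inside the target itself, so the final subequivalences are realised unstably and give very fullness directly. The main obstacle, and the only place the two cases genuinely differ, is exactly this corner bookkeeping forced by the twisted involution: one must track the $\sigma$-twist (which corner each piece lands in) and rely on \eqref{eq:ee*re} to see that the spurious signs cancel, using that $1\otimes v$ supplies \emph{both} corners so that the parity of the connecting paths becomes irrelevant.
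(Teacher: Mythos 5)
Your argument is correct, but it takes a genuinely different route from the paper's, most visibly in the second half. For $v\in L(E)$ the ingredients are the same — Lemma \ref{lem:spipis} supplies orthogonal isometries in $vL(E)v$, paths from $v$ transport vertices below $v$, and the relation $w=\sum_{s(e)=w}ee^*$ handles the vertices you cannot reach — but the paper packages all of this into two explicit partial isometries ($x=\sum_w x_w\alpha_w$ with $x^*x=\sum_{w\notin\sour(E)}w$ and $xx^*\le v$, followed by a second isometry exhibiting $\sum_{w\notin\sour(E)}w$ as very full), whereas you run a vertex-by-vertex comparison that explodes everything into the terminal strongly connected component before comparing with $v$; your version only needs paths from $v$ into that component, which is in fact a slightly more careful use of cofinality than the paper's blanket claim that every non-source is reachable from $v$. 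The real divergence is the claim about $1\otimes v$: the paper disposes of it in one line by noting that $1\otimes v=\Delta(v)$ for the unital $*$-homomorphism $\Delta:L(E)\to M_{\pm}\ol{L(E)}$ of \eqref{map:delta}, and unital $*$-homomorphisms preserve very fullness (Example \ref{ex:vfull}), so the second statement is a formal corollary of the first. You instead rebuild the entire comparison inside $M_{\pm}\ol{L(E)}$ using the corner-twisted partial isometries of \eqref{eq:ee*re}; your parity bookkeeping is correct (the alternating products do place $\iota_{\pm}(w)$ below the two orthogonal corners of $1\otimes v$), and it makes explicit how the $\sigma$-twist shuffles projections between corners — the phenomenon behind $I-\sigma A_E^t$ in Theorem \ref{thm:fundtriang} — but for this lemma it is considerably more work than the functoriality observation buys you nothing extra.
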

\begin{proof} Let $v\in E^0$. By Lemma \ref{lem:spipis} we may choose an element $x_w\in vL(E)v$ for each $w\in E^0\setminus\sour(E)$ such that $x_w^*x_{w'}=\delta_{w,w'}v$. Because $E$ is purely infinite simple, for every $w\in E^0\setminus\sour(E)$, there is a path $\alpha_w$ from $v$ to $w$. Then 
\begin{equation}\label{eq:vveryfull}
x=\sum_{w\in E^0\setminus\sour(E)}x_w\alpha_w
\end{equation}
satisfies $xx^*\le v$ and 
$p:=x^*x=\sum_{w\in E^0\setminus\sour(E)}w$. By Lemma \ref{lem:spipis} and Remark \ref{rem:sumspipro} $R=pL(E)p$ is strictly properly infinite. Hence we may choose a family of orthogonal isometries $R\supset\{y_e:s(e)\in\sour(E)\}\cup \{y_w:w\in E^0\setminus\sour(E)\}$; put $y=\sum_{s(e)\in\sour(E)}y_ee^*+\sum_{w\in E^0\setminus\sour(E)}y_ww$. Then $yy^*\le p$ and $y^*y=1$; hence $p$ is very full and therefore $v$ is a very full projection of $L(E)$. It follows that $1\otimes v\in M_\pm \ol{L(E)}$ is very full since it is the image of $v$ under the unital $*$-homomorphism $\Delta$ of \eqref{map:delta}.
\end{proof}
 
Let $R$ be a unital, strictly pure infinite $*$-algebra. Consider the set of equivalence classes
\[
\cV_1(R)\supset \cV_f(R)=\{[p]: p\in R \text{ very full}\}.
\]

\begin{prop}[cf. \cite{black}*{Theorem 6.11.7}]\label{prop:vfull}
Let $R$ be a unital, strictly properly infinite $*$-algebra.  Then the orthogonal sum makes $\cV_f(R)$ into a group, canonically isomorphic to $K_0(R)^*$.
\end{prop}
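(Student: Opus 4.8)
The plan is to show that the natural map $\gamma\colon\cV_f(R)\to K_0(R)^*$, sending a very full projection $p$ to its class $[p]\in K_0(R)^*$ (via $\cV_f(R)\subset\cV_1(R)\subset\cV_\infty(R)^*\to K_0(R)^*$), is a bijective homomorphism of commutative monoids onto the \emph{group} $K_0(R)^*$; since a commutative monoid admitting a bijective monoid homomorphism onto a group is itself a group, this shows at once that orthogonal sum makes $\cV_f(R)$ a group and that $\gamma$ is an isomorphism. Throughout I fix a unital $*$-homomorphism $L_\infty\to R$ and let $t_1,t_2,\dots\in R$ be the images of the loops, so the $t_i$ are isometries with pairwise orthogonal ranges, $t_i^*t_j=\delta_{ij}1$. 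Writing $\oplus$ for orthogonal (block) sum and $e\precsim f$ when $e\sim e'\le f$ for some projection $e'$, I first record that for strictly properly infinite $R$ the canonical map $\cV_1(R)\to\cV_\infty(R)^*$ is an isomorphism of monoids. Indeed, compression $\pi\mapsto V\pi V^*$ by the row $V=(t_1,\dots,t_k)$ (which satisfies $V^*V=1_k$) carries any projection of $M_kR$ to an equivalent projection of $R$, giving surjectivity; and applied to an equivalence $w\colon p\sim q$ in $M_kR$ it produces $VwV^*\colon t_1pt_1^*\iso t_1qt_1^*$ in $R$, whence $p\sim q$ in $R$, giving injectivity. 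Under this identification, orthogonal sum on $\cV_1(R)$, realized concretely by $[p]+[q]=[t_1pt_1^*+t_2qt_2^*]$, is the monoid operation of $\cV_\infty(R)^*$; since the orthogonal sum of a very full projection with any projection is again very full, $\cV_f(R)$ is a submonoid and $\gamma$ a monoid homomorphism.

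Next I isolate the properties of very full projections. If $p\ge q\sim 1$ then $1\sim q\le p$, so $1\precsim p$; as $R$ is strictly properly infinite one has $1\oplus1\precsim1$, and chaining $p\oplus p\precsim 1\oplus1\precsim1\precsim p$ shows $p\oplus p\precsim p$, i.e. $p$ is properly infinite. Consequently $1_n\precsim p$ for all $n$, hence $c\precsim p$ for every projection $c\in M_nR$. The technical heart is the following absorption statement: if $p$ is very full and $e$ is a projection of $M_\infty R$ with $[e]=0$ in $K_0(R)^*$, then $p\oplus e\sim p$. To see it, $[e]=0$ gives a projection $h$ with $e\oplus h\sim h$; since $h\precsim p$ we may take $h'\le p$ with $h'\sim h$, so $e\oplus h'\sim h'$, and then
\[
p\oplus e=(p-h')\oplus(h'\oplus e)\sim (p-h')\oplus h'=p .
\]

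With these tools the two halves follow. For surjectivity, given $g\in K_0(R)^*$ write $g=[a]-[b]$ with $a,b\in\proj(R)$ (possible by $\cV_1(R)\cong\cV_\infty(R)^*$), put $c=t_1at_1^*+t_2(1-b)t_2^*\le t_1t_1^*+t_2t_2^*$ and set
\[
p_g=c+(1-t_1t_1^*-t_2t_2^*);
\]
this is a projection of $R$ with $[p_g]=[c]-[1]=g$, and it is very full because $p_g\ge 1-t_1t_1^*-t_2t_2^*\ge t_3t_3^*\sim1$. For injectivity, suppose $p,q\in\cV_f(R)$ have $[p]=[q]$ in $K_0(R)^*$; by the definition of the group completion there is a projection $c$ with $p\oplus c\sim q\oplus c$ in $M_\infty R$. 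Using surjectivity pick a very full $d$ with $[d]=-[c]$ and put $e=c\oplus d$, so $[e]=0$ and $p\oplus e\sim q\oplus e$. The absorption statement gives $p\oplus e\sim p$ and $q\oplus e\sim q$, hence $p\sim q$ in $M_\infty R$, and destabilization (injectivity of $\cV_1(R)\to\cV_\infty(R)^*$) yields $p\sim q$ in $R$, i.e. $[p]=[q]$ in $\cV_f(R)$. Thus $\gamma$ is a bijective monoid homomorphism onto the group $K_0(R)^*$, and the proposition follows.

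The main obstacle is the absorption statement together with pinning down that \emph{very full} is precisely the hypothesis (full plus properly infinite) that makes it run: this is the algebraic incarnation of the $C^*$-algebraic argument of \cite{black}*{Theorem 6.11.7}, and the only delicate point is that the single equivalence above, rather than a mere two-sided domination (which would require an unavailable Schr\"oder--Bernstein property), is what allows cancellation of $c$ and hence injectivity of $\gamma$.
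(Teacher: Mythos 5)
Your proof is correct and is essentially the paper's own argument: the paper simply defers to the proof of \cite{black}*{Theorem 6.11.7}, and what you have written out — the identification $\cV_1(R)\cong\cV_\infty(R)^*$ via compression along orthogonal isometries, the observation that very full projections are full and properly infinite, the absorption lemma $p\oplus e\sim p$ for $[e]=0$, and the explicit very full representative $p_g$ for surjectivity — is precisely that Cuntz--Blackadar argument transposed to the $*$-algebraic setting. No gaps; the one delicate point you flag (using a genuine equivalence $e\oplus h'\sim h'$ rather than two-sided subequivalence) is handled correctly.
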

\begin{proof} The argument of the proof of \cite{black}*{Theorem 6.11.7} shows this. 
\end{proof}
\begin{coro}\label{coro:vfull}
Let $R$ be as in Proposition \ref{prop:vfull}. Further assume that the center of $R$ satisfies 
the $\lambda$-assumption \ref{stan:lambda}. Then $\cV_f(M_{\pm}R)$ is a group, canonically isomorphic to $K_0^h(R)$.
\end{coro}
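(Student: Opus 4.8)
The plan is to obtain this as the instance of Proposition \ref{prop:vfull} in which $R$ is replaced by $M_\pm R$, combined with the definition \eqref{k0hproj} of the Hermitian Witt--Grothendieck group.

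First I would verify that $M_\pm R$ meets the hypotheses of Proposition \ref{prop:vfull}, namely that it is a unital, strictly properly infinite $*$-algebra. Unitality is clear, the unit being $\diag(1,1)$. For strict proper infiniteness I would invoke the closure property recorded earlier in this section: if $R$ is strictly properly infinite then so is $M_\pm M_n R$ for every $1 \le n < \infty$, and the case $n = 1$ is precisely $M_\pm R$. Alternatively, one checks that the diagonal embedding $a \mapsto \diag(a,a)$ is a unital $*$-homomorphism $R \to M_\pm R$ for the twisted involution \eqref{map:invopm} (the off-diagonal entries vanish, so the sign convention plays no role on the diagonal), and then composes a witnessing unital $*$-homomorphism $C_2 \to R$ with it; since strict proper infiniteness is inherited along unital $*$-homomorphisms, this again shows that $M_\pm R$ is strictly properly infinite.

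With the hypotheses in place, Proposition \ref{prop:vfull} applied to $M_\pm R$ shows that orthogonal sum makes $\cV_f(M_\pm R)$ into a group canonically isomorphic to $K_0(M_\pm R)^*$. By \eqref{k0hproj} the latter group is exactly $K_0^h(R)$; here the $\lambda$-assumption on the center of $R$ is what makes this identification available in the first place. Composing the two identifications yields the asserted canonical isomorphism $\cV_f(M_\pm R) \cong K_0^h(R)$.

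I do not expect a genuine obstacle: the mathematical content lives entirely in Proposition \ref{prop:vfull}, and the corollary is merely its specialization to $M_\pm R$. The only point deserving a sentence of care is confirming that passing from $R$ to $M_\pm R$ preserves strict proper infiniteness, which is immediate from the stability properties already established.
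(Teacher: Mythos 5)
Your proof is correct and matches the paper's own argument, which is literally ``Combine Proposition \ref{prop:vfull} and \eqref{k0hproj}'': one applies Proposition \ref{prop:vfull} to $M_\pm R$ and identifies $K_0(M_\pm R)^*$ with $K_0^h(R)$ via \eqref{k0hproj}. Your extra care in checking that $M_\pm R$ is unital and strictly properly infinite (via the stability property recorded in Section \ref{sec:k0spi}) is exactly the implicit hypothesis-verification the paper leaves to the reader.
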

\begin{proof}
Combine Proposition \ref{prop:vfull} and \eqref{k0hproj}.
\end{proof}

For the rest of this section we assume that $\ell$ satisfies the $\lambda$-assumption \ref{stan:lambda}. 

\begin{lem}\label{lem:i1=i2}
Let $\iota_i:\ell\to M_2$, $\iota_i(x)=\epsilon_{i,i}x$. We have $\iota_+\circ\iota_1\sim^*\iota_+\circ\iota_2$.
\end{lem}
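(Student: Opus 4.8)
The plan is to realize the passage from $\iota_+\iota_1$ to $\iota_+\iota_2$ by conjugating with a polynomial path of $*$-unitaries of $M_{\pm}M_2$ starting at the identity. Set $r=\begin{bmatrix}0&-1\\ 1&0\end{bmatrix}\in M_2$; since $r$ is unitary in $M_2$ (with its conjugate transpose involution), $1\otimes r$ is a $*$-unitary of $M_{\pm}M_2$, and $\ad(1\otimes r)\circ(\iota_+\iota_1)=\iota_+\iota_2$ because conjugation by $r$ in the $M_2$ factor carries $\epsilon_{1,1}$ to $\epsilon_{2,2}$. Hence it suffices to connect $1\otimes r$ to $1$ through a polynomial family of $*$-unitaries $\mathcal U(t)\in M_{\pm}M_2[t]$; then $H(a)=\mathcal U(t)\,\iota_+\iota_1(a)\,\mathcal U(t)^*$ is an elementary $*$-homotopy with $\ev_0 H=\iota_+\iota_1$ and $\ev_1 H=\iota_+\iota_2$.

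First I would exhibit $r$ as the value at $t=1$ of the polynomial path $R(t)=\begin{bmatrix}1&-t\\ 0&1\end{bmatrix}\begin{bmatrix}1&0\\ t&1\end{bmatrix}\begin{bmatrix}1&-t\\ 0&1\end{bmatrix}\in\mathrm{SL}_2(\ell[t])$, which satisfies $R(0)=1$, $R(1)=r$, and whose inverse $R(t)^{-1}$ is again polynomial since $\det R(t)=1$. The naive family $1\otimes R(t)$ already gives an ordinary (non-involutive) homotopy from $\iota_+\iota_1$ to $\iota_+\iota_2$, but it is useless for $\sim^*$, because the transvections $R(t)$ are not $*$-unitary for $0<t<1$; this is exactly the obstruction to a direct argument.

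The fix, and the heart of the proof, is to promote the non-unitary similarity $R(t)$ to a genuine $*$-unitary by the hyperbolic doubling trick, for which I would use the $\lambda$-assumption in force here. By \eqref{map:invopm} the involution of $M_{\pm}$ is the adjoint for the form with Gram matrix $\diag(1,-1)$; since $\ell$ is commutative and $\lambda+\lambda^*=1$, the vectors $f_1=e_1+e_2$ and $f_2=\lambda e_1-\lambda^* e_2$ are isotropic with $\langle f_1,f_2\rangle=\lambda+\lambda^*=1$, so this form is isometric over $\ell$ to the hyperbolic plane (no invertibility of $2$ is needed). Passing to these dual Lagrangian coordinates I may write $M_{\pm}M_2$ in block form over the two Lagrangians, with involution $\begin{bmatrix}A&B\\ C&D\end{bmatrix}^*=\begin{bmatrix}D^*&B^*\\ C^*&A^*\end{bmatrix}$, the entrywise $*$ being the conjugate transpose on $M_2$. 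For this hyperbolic involution a direct computation gives that $\mathcal U(t):=\diag\!\big(R(t),(R(t)^*)^{-1}\big)$ is $*$-unitary for every $t$, with inverse (equal to its adjoint) $\diag\!\big(R(t)^{-1},R(t)^*\big)$. Moreover $\mathcal U(0)=1$, and since $r$ is unitary we have $(R(1)^*)^{-1}=r$, so $\mathcal U(1)=\diag(r,r)=1\otimes r$. Thus $\mathcal U$ is the desired polynomial path of $*$-unitaries, and conjugating $\iota_+\iota_1$ by it yields the required elementary $*$-homotopy.

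The main obstacle is precisely this promotion. Over $\ell[t]$ one cannot connect $\epsilon_{1,1}$ to $\epsilon_{2,2}$ by a polynomial path of projections inside a single copy of $M_2$ (polynomial $*$-homotopy being far more rigid than continuous homotopy, which is why matricial and Hermitian stability are theorems rather than trivialities), and the self-adjoint square root of $-1$ furnished by Example \ref{ex:mpm-1} only produces transcendental, compact rotations. It is the hyperbolic splitting coming from $\lambda+\lambda^*=1$ that makes a polynomial, unipotent $*$-unitary path available. I would finally double-check that the endpoint identity is read in the original coordinates, which is legitimate because $1\otimes r=\diag(r,r)$ is block-diagonal in every basis of the $M_{\pm}$ factor, so $\mathcal U(1)$ is unambiguously $1\otimes r$ and $\ev_1 H=\iota_+\iota_2$.
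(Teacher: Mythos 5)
Your proposal is correct and follows essentially the same route as the paper: the paper simply cites \cite{ct}*{Section 3.4} for a polynomial path $g(t)$ with $g(0)=1$, $g(1)$ unitary and $\ad(g(1))\circ\iota_1=\iota_2$, and then \cite{cv}*{Lemma 5.4} for the unitarization trick that turns conjugation by an invertible element into conjugation by a $*$-unitary of the $M_{\pm}$-stabilization. Your explicit transvection path $R(t)$ and the hyperbolic doubling $\diag\bigl(R(t),(R(t)^*)^{-1}\bigr)$ (made legitimate by the $\lambda$-assumption identifying the form $\diag(1,-1)$ with the hyperbolic plane) are exactly the content of those two citations, worked out by hand.
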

\begin{proof} By \cite{ct}*{Section 3.4}, there is a matrix $g(t)\in\Gl_2(\ell)$ such that $g(0)=1$, $g(1)$ is unitary, and $\iota_2=\ad(g(1))\circ\iota_1$. Hence $\iota_+\circ\iota_1\sim^*\iota_+\iota_2$, by \cite{cv}*{Lemma 5.4}. 
\end{proof}

Let $A,B\in\ahas$; two $*$-homomorphisms $\phi,\psi:A\to B$ are \emph{$M_{\pm 2}$-$*$-homotopic} --and we write $\phi\sim_{M_{\pm 2}}^*\psi$-- if $\iota_+\circ\iota_1\circ\phi$ and $\iota_+\circ\iota_1\circ\psi$ are $*$-homotopic. We write $[A,B]^*_{M_{\pm 2}}$ for the set of $M_{\pm 2}$-$*$-homotopy classes of $*$-homomorphisms.

\begin{lem}\label{lem:abc}
Let $A,B\subset C$ $*$-subalgebras and $\inc_A$ and $\inc_B$ the inclusion maps. Let $x\in C$ such that $xAx^*\subset B$ and $ax^*xa'=aa'$ for all $a,a'\in A$. Then $\ad(x):A\to B$, $\ad(x)(a)=xax^*$ is a $*$-homomorphism and $\inc_B\ad(x)\sim^*_{M_{\pm 2}}\inc_A$. If moreover $A=B$ and $Ax\subset A$, then $\ad(x)\sim^*_{M_{\pm 2}}\id_A$. 
\end{lem}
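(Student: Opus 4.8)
The plan is to settle the algebraic claim directly and then produce the $M_{\pm 2}$-$*$-homotopy by the rotation trick of \cite{cv}*{Lemma 5.4}, in exactly the style of Lemma \ref{lem:i1=i2} and of the homotopies constructed in the proof of Theorem \ref{thm:cekk}. First I would verify that $\ad(x)\colon A\to B$ is a $*$-homomorphism. It is clearly additive and $\ell$-linear, it preserves the involution since $\ad(x)(a)^*=(xax^*)^*=xa^*x^*=\ad(x)(a^*)$, and its image lies in $B$ because $xAx^*\subset B$ by hypothesis. Multiplicativity is the only point that uses the standing relation: $\ad(x)(a)\ad(x)(a')=xa(x^*x)a'x^*=x(ax^*xa')x^*=x(aa')x^*=\ad(x)(aa')$, where the third equality is $ax^*xa'=aa'$. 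The same computation records the fact that really matters for the homotopy, namely that $x^*x$ acts as a two-sided unit on $A$ whenever it is sandwiched between elements of $A$.

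For the homotopy I would connect the two $*$-homomorphisms $A\to M_{\pm}M_2C$ obtained from $\inc_A$ and $\inc_B\ad(x)$ after the stabilization $\iota_+\iota_1$. Following \cite{cv}*{Lemma 5.4}, the map $\iota_+\iota_1\circ\inc_A$ is conjugated by a polynomial path $c(U,V)$ whose off-diagonal part carries $x$ and $x^*$ and whose diagonal part carries the relative units $x^*x$ and $xx^*$ weighted by the same $\cos/\sin$-surrogates (of the form $1-t^2$ and $2t-t^3$) used for $U_e,V_e$ in the proof of Theorem \ref{thm:cekk}. The passage to $M_{\pm}$ is what turns this merely invertible path into an honest unitary path: the self-adjoint square root of $-1$ in $M_{\pm}$ exhibited in Example \ref{ex:mpm-1} absorbs the defect $(1-t^2)^2+t^2\ne1$, so that the conjugation is $*$-preserving for every value of $t$. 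At $t=0$ the path is the identity and reproduces $\iota_+\iota_1\inc_A$, while at $t=1$ it sends $a$ to $xax^*$ and reproduces $\iota_+\iota_1\inc_B\ad(x)$; this gives an elementary $*$-homotopy $H\colon A\to M_{\pm}M_2C[t]$ and hence $\inc_B\ad(x)\sim^*_{M_{\pm 2}}\inc_A$.

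The step I expect to be the main obstacle is checking that $H$ is a genuine $*$-homomorphism for all $t$ --- i.e. that the conjugating path is unitary on the relevant corner and intertwines correctly. This is precisely where both hypotheses are indispensable: the relation $ax^*xa'=aa'$ is used to erase every occurrence of $x^*x$ that appears sandwiched between two elements of $A$ (which is the only way it appears), so that the Pythagorean-type identities needed for unitarity hold on $\iota_+\iota_1(A)$; and the sign in the involution \eqref{map:invopm} of $M_{\pm}$ is what makes the rotation honestly unitary rather than merely invertible. As in Theorem \ref{thm:cekk}, I expect this bookkeeping to be routine once the path is written in the normal form of \cite{cv}*{Lemma 5.4}, but it has to be done separately for the product and for the involution.

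Finally, for the statement with $A=B$ and $Ax\subset A$, I would note that $Ax\subset A$ forces $x^*A\subset A$ (take adjoints, using $A^*=A$). Combining this with $xAx^*\subset A$ and the sandwich relation, every entry of the path $H$ --- which is an $\ell[t]$-combination of terms of the shape $a$, $xax^*=\ad(x)(a)$, $x^*ax$, $ax$ and $x^*a$ --- lands in $A$; for instance $x^*ax=x^*(ax)\in x^*A\subset A$, and any sandwiched $x^*x$ is erased by the relation. Thus the same homotopy takes values in $M_{\pm}M_2A[t]$, so it connects $\iota_+\iota_1\circ\id_A$ and $\iota_+\iota_1\circ\ad(x)$ as maps into $A$, giving $\ad(x)\sim^*_{M_{\pm 2}}\id_A$.
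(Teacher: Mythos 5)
Your strategy coincides with the paper's, whose entire proof consists of combining the conjugation argument of \cite{cm1}*{Lemma 2.3} with Lemma \ref{lem:i1=i2}, i.e.\ with the unitarization device $c(-,-)$ of \cite{cv}*{Lemma 5.4} applied after stabilizing by $\iota_+\iota_1$; you have unpacked exactly that combination. Your verification that $\ad(x)$ is a $*$-homomorphism is correct, and for the first assertion the homotopy only has to take values in $M_{\pm}M_2C[t]$, so no containment questions arise and your outline is sound (the precise reason the $M_\pm$ stabilization produces a unitary path is best left, as you do, to the normal form of \cite{cv}*{Lemma 5.4}).

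The last paragraph, however, does not close as written. Any polynomial conjugation path whose value at $t=1$ implements $a\mapsto xax^*$ on a corner must have off-diagonal entries that are $\ell[t]$-multiples of $xa$ and $ax^*$: these are the elements realizing the equivalence between $\iota_1(a)$ and $\iota_2(xax^*)$, since $(ax^*)(xa')=a(x^*x)a'=aa'$ and $(xa)(a'x^*)=xaa'x^*=\ad(x)(aa')$. The monomials you list, $ax$, $x^*a$ and $x^*ax$, are instead the entries of a path terminating at $a\mapsto x^*ax$. Hence the containments that actually have to be checked for the second assertion are $xA\subset A$ and $Ax^*\subset A$ (each the adjoint of the other once $A^*=A$), and these do \emph{not} follow from $Ax\subset A$: your deduction $Ax\subset A\Rightarrow x^*A\subset A$ is correct but controls the wrong monomials, and the relation $ax^*xa'=aa'$ cannot repair this, because it only erases $x^*x$ when sandwiched between two elements of $A$, and $xa$, $ax^*$ contain no such sandwich. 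So either the containment hypothesis should be read on the other side (as $xA\subset A$, which is what the conjugating path genuinely uses), or your final verification needs to be redone for the monomials $xa$ and $ax^*$; in the situations where the second assertion is invoked in the paper the algebra $A$ is unital, so both one-sided containments hold and the distinction is invisible there, but your argument as stated proves homotopy triviality of $\ad(x^*)$ rather than of $\ad(x)$.
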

\begin{proof} Combine the argument of \cite{cm1}*{Lemma 2.3} with Lemma \ref{lem:i1=i2} above.
\end{proof}

\begin{lem}\label{lem:morite}
Let $R$ be a unital $*$-algebra and $p\in R$ a strictly full projection and let $\iota_p:pRp\to R$ be the inclusion.
Then $j^h(\iota_p)$ is an isomorphism in $kk^h$. 
\end{lem}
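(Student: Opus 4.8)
The plan is to exhibit an explicit candidate $j^h$-inverse of $\iota_p$ at the level of $n\times n$ matrices and then to identify the two relevant composites, up to $M_{\pm 2}$-$*$-homotopy, with corner embeddings, which $j^h$ inverts by matricial stability; this is the $kk^h$-analogue of the $K_0$-level statement Lemma \ref{lem:vmorite}. Since $p$ is strictly full, fix $x_1,\dots,x_n\in R$ with $\sum_{i=1}^n x_i^*px_i=1$, and set
\[
u=\sum_{i=1}^n\epsilon_{i,1}\otimes(px_i)\in M_nR.
\]
A direct computation gives $u^*u=\epsilon_{1,1}\otimes 1$ and shows that $uu^*=\sum_{i,j}\epsilon_{i,j}\otimes px_ix_j^*p$ lies in $M_n(pRp)$. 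Hence $a\mapsto u(\epsilon_{1,1}\otimes a)u^*$ defines a $*$-homomorphism $\phi:R\to M_n(pRp)$, and the argument reduces to proving that $j^h(\phi)$ is an isomorphism, from which $\iota_p$ will inherit invertibility via the final relation below. Throughout I use that $j^h$ identifies $M_{\pm 2}$-$*$-homotopic maps (being homotopy invariant, $\iota_+$-stable and matricially stable) and inverts the corner embeddings $\iota_1:R\to M_nR$ and $\iota_1:pRp\to M_n(pRp)$.

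First I would show that $j^h(\phi)$ is a split monomorphism. Writing $M_n(\iota_p):M_n(pRp)\to M_nR$ for the inclusion, one has $M_n(\iota_p)\circ\phi=\ad(u)\circ\iota_1$. I would then apply Lemma \ref{lem:abc} with $C=M_nR$, $A=\epsilon_{1,1}\otimes R$ and $x=u$: the hypotheses $uAu^*\subset M_n(pRp)$ and $a(u^*u)a'=aa'$ for $a,a'\in A$ hold because $u^*u$ is precisely the unit of $A$. Precomposing the resulting homotopy $\inc_{M_nR}\ad(u)\sim^*_{M_{\pm 2}}\inc_A$ with the corner embedding $R\iso A$ yields $M_n(\iota_p)\circ\phi\sim^*_{M_{\pm 2}}\iota_1$, whence $j^h(M_n(\iota_p))\circ j^h(\phi)=j^h(\iota_1)$ is an isomorphism and $j^h(\phi)$ is split mono.

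The other half — that $j^h(\phi)$ is a split epimorphism — requires comparing $\phi\circ\iota_p:pRp\to M_n(pRp)$ with the corner embedding $\iota_1:pRp\to M_n(pRp)$, and here lies the main obstacle: the element $u$ implementing $\phi$ does \emph{not} lie in the smaller algebra $M_n(pRp)$, so Lemma \ref{lem:abc} cannot be invoked directly inside it. The resolution is the observation that for $a\in pRp$ one has $px_iax_j^*p=(px_ip)\,a\,(px_j^*p)$, so that $\phi\circ\iota_p(a)=\tilde u(\epsilon_{1,1}\otimes a)\tilde u^*$ with $\tilde u=\sum_i\epsilon_{i,1}\otimes(px_ip)\in M_n(pRp)$, and moreover $\tilde u^*\tilde u=\epsilon_{1,1}\otimes p$, the unit of $\epsilon_{1,1}\otimes pRp$. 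Now Lemma \ref{lem:abc} applies with $C=M_n(pRp)$, $A=\epsilon_{1,1}\otimes pRp$ and $x=\tilde u$, giving $\phi\circ\iota_p\sim^*_{M_{\pm 2}}\iota_1$ and hence $j^h(\phi)\circ j^h(\iota_p)=j^h(\iota_1)$, an isomorphism; thus $j^h(\phi)$ is split epi. Being both split mono and split epi, $j^h(\phi)$ is an isomorphism, and then $j^h(\iota_p)=j^h(\phi)^{-1}\circ j^h(\iota_1)$ is an isomorphism, as claimed. The only genuinely delicate point is the identity $px_iax_j^*p=(px_ip)a(px_j^*p)$ that brings the implementing element back into $M_n(pRp)$; everything else is a routine application of Lemma \ref{lem:abc} and matricial stability.
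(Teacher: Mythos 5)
Your proof is correct and is essentially the paper's own argument: the paper also builds the $*$-homomorphism $\phi(a)=\sum_{i,j}\epsilon_{i,j}x_iax_j^*$ into $M_n(pRp)$ (after normalizing the $x_i$ to lie in $pR$, which is your $px_i$), identifies $M_n(\iota_p)\circ\phi$ and $\phi\circ\iota_p$ with corner embeddings via Lemma \ref{lem:abc}, and concludes that $j^h(\phi)$, hence $j^h(\iota_p)$, is an isomorphism. The "delicate point" you isolate — replacing the implementing element by one living in $M_n(pRp)$ when restricting to the corner — is exactly the paper's element $z=yp\in M_n(pRp)$.
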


\begin{proof}
Because $p$ is strictly full, there are $n\ge 1$ and $x\in pR^n$ such that $\sum_{i=1}^nx^*_ix_i=1$. Set $y=\sum_{j=1}^n\epsilon_{1,j}x_j\in M_npR\subset M_nR$ and $z=yp\in M_n(pRp)$. Then  $\phi:R\to M_n(pRp)$, $\phi(a)=y\iota_1(a)y^*=\sum_{i,j}\epsilon_{i,j}x_iax_j^*$ is a $*$-homomorphism. Let $\iota_1=\iota_1^{pRp}:pRp\to M_npRp$, $\iota_1(a)=\epsilon_{1,1}a$. Then  $\phi\circ\inc_p=\ad(z)\circ\iota_1$, which by Lemma \ref{lem:abc} is $M_{\pm 2}-*$-homotopic to $\iota_1$, so $j^h(\phi\circ\inc_p)$ is an isomorphism in $kk^h$. Similarly, writing $\iota_1$ now for $\iota_1^R$,  
$M_n(\inc_p)\circ\phi=\ad(x)\circ\iota_1$ is $M_{\pm 2}-*$-homotopic to $\iota_1$ and thus $j^h(M_n(\inc_p)\circ\phi)$ is an isomorphism too. Hence $j^h(\phi)$ is an isomorphism, which by what we have already proved implies that $j^h(\inc_p)$ is an isomorphism, concluding the proof. 
\end{proof}

Let $R$ be a strictly properly infinite $*$-algebra. By definition, there are $s_1,s_2\in R$ such that $s_i^*s_j=\delta_{i,j}$. Let
\[
\boxplus:R\oplus R\to R,\quad a\boxplus b=s_1as_1^*+s_2bs_2^*.
\]
Let $\phi,\psi:A\to R$ be $*$-homomorphisms. Put
\begin{equation}\label{sumamorfi}
\phi\boxplus \psi:A\to R, \quad (\phi\boxplus\psi)(a)=\phi(a)\boxplus\psi(a).
\end{equation}

\begin{lem}\label{lem:M2htpy}
Let $A$ and $R\in\ahas$, with $R$ strictly properly infinite. Then \eqref{sumamorfi} makes $[A,R]^*_{M_{\pm 2}}$ into an abelian monoid.
\end{lem}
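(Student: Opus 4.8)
The plan is to verify that $\boxplus$ gives a well-defined, associative, commutative operation with a unit on $[A,R]^*_{M_{\pm 2}}$. Fix orthogonal isometries $s_1,s_2\in R$, so $s_i^*s_j=\delta_{i,j}$, coming from the unital $*$-homomorphism $C_2\to R$. First I would record two elementary facts. (i) For $*$-homomorphisms $\phi,\psi\colon A\to R$ the map $\phi\boxplus\psi$ is again a $*$-homomorphism: in $(\phi\boxplus\psi)(a)(\phi\boxplus\psi)(b)$ the cross terms vanish because $s_1^*s_2=s_2^*s_1=0$, the diagonal terms give $s_i(\cdot)(ab)s_i^*$ because $s_i^*s_i=1$, and $\boxplus$ visibly commutes with $*$. (ii) For fixed orthogonal isometries, $\boxplus$ respects \emph{honest} $*$-homotopy in each variable: if $H\colon A\to R[t]$ is an elementary $*$-homotopy, then $a\mapsto s_1H(a)s_1^*+s_2\psi(a)s_2^*$ is a $*$-homomorphism into $R[t]$ (the relations $s_i^*s_j=\delta_{i,j}$ hold in $R[t]$) interpolating $\phi_0\boxplus\psi$ and $\phi_1\boxplus\psi$.

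The engine for the algebraic identities is the following reshuffling statement, a direct consequence of Lemma \ref{lem:abc}. Given two families $(s_1,\dots,s_n)$ and $(t_1,\dots,t_n)$ of orthogonal isometries in $R$ and $*$-homomorphisms $\phi_1,\dots,\phi_n\colon A\to R$, put $\Phi_s(a)=\sum_i s_i\phi_i(a)s_i^*$ and $\Phi_t(a)=\sum_i t_i\phi_i(a)t_i^*$. Setting $u=\sum_i t_is_i^*$ one computes $u\,\Phi_s(a)\,u^*=\Phi_t(a)$ and $u^*u=\sum_i s_is_i^*=:p$, where $p$ is a two-sided identity on the $*$-subalgebra $\im\Phi_s$; hence Lemma \ref{lem:abc}, applied in the ambient algebra $R$ with element $u$ and subalgebras $\im\Phi_s,\im\Phi_t$ and then precomposed with $\Phi_s$, yields $\Phi_s\sim^*_{M_{\pm 2}}\Phi_t$. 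Commutativity is the case $n=2$ with families $(s_1,s_2)$ and $(s_2,s_1)$; associativity is the case $n=3$ comparing $(s_1^2,s_1s_2,s_2)$ with $(s_1,s_2s_1,s_2^2)$, the two orthogonal isometry families realizing $(\phi\boxplus\psi)\boxplus\chi$ and $\phi\boxplus(\psi\boxplus\chi)$; independence of the chosen pair $s_1,s_2$ is again $n=2$. For the unit I would take the zero homomorphism: $\phi\boxplus 0=\ad(s_1)\circ\phi$, and the ``moreover'' part of Lemma \ref{lem:abc} (with $x=s_1$) gives $\ad(s_1)\sim^*_{M_{\pm 2}}\id_R$, whence $\phi\boxplus 0\sim^*_{M_{\pm 2}}\phi$ and, by commutativity, $0\boxplus\phi\sim^*_{M_{\pm 2}}\phi$.

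The one genuinely delicate point, and the main obstacle, is to show that $\boxplus$ descends to $M_{\pm 2}$-homotopy classes, i.e. that $\phi\sim^*_{M_{\pm 2}}\phi'$ forces $\phi\boxplus\psi\sim^*_{M_{\pm 2}}\phi'\boxplus\psi$. One cannot apply (ii) directly: $\phi\sim^*_{M_{\pm 2}}\phi'$ only supplies an honest homotopy between $\iota_+\iota_1\phi$ and $\iota_+\iota_1\phi'$ in $S:=M_{\pm}M_2R$, and such a homotopy need not keep its values in the corner $\iota_+\iota_1(R)$, which destroys multiplicativity if one tries to conjugate it by the images of $s_1,s_2$ (a naive fix only yields a homotopy after a \emph{further} $M_{\pm 2}$-stabilization). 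The trick I would use to avoid any extra stabilization is to choose the isometries in $S$ compatibly: set $\sigma_i=1_{M_{\pm}}\otimes 1_{M_2}\otimes s_i\in S$, which are \emph{genuine} orthogonal isometries, $\sigma_i^*\sigma_j=\delta_{i,j}1_S$. Writing $\kappa=\iota_+\iota_1$ and $p_0=\kappa(1_R)$ one has $\kappa(s_i)=\sigma_i p_0=p_0\sigma_i$, and since $\kappa\phi(a),\kappa\psi(a)\in p_0Sp_0$ a direct computation gives the exact identity
\[
\kappa\big((\phi\boxplus\psi)(a)\big)=\sigma_1\,\kappa\phi(a)\,\sigma_1^*+\sigma_2\,\kappa\psi(a)\,\sigma_2^*.
\]
Thus $\kappa$ intertwines $\boxplus$ with the analogous operation on maps $A\to S$ built from the genuine isometries $\sigma_1,\sigma_2$. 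Applying fact (ii) in $S$ with $\sigma_1,\sigma_2$ (now no corner issues, as $\sigma_i^*\sigma_i=1_S$), this operation respects honest $*$-homotopy, so $\kappa\phi\sim^*\kappa\phi'$ gives $\kappa(\phi\boxplus\psi)\sim^*\kappa(\phi'\boxplus\psi)$, i.e. $\phi\boxplus\psi\sim^*_{M_{\pm 2}}\phi'\boxplus\psi$ already at the first level of stabilization. Combined with commutativity this yields well-definedness in both variables, and assembling the pieces shows that $\boxplus$ makes $[A,R]^*_{M_{\pm 2}}$ into an abelian monoid.
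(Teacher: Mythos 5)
Your proof is correct and follows essentially the same route as the paper, whose proof is just the one-line instruction to combine Lemma \ref{lem:abc} with the argument of \cite{cm1}*{Lemma 2.5}: your reshuffling statement via $u=\sum_i t_is_i^*$ is exactly the intended application of Lemma \ref{lem:abc}, and your careful treatment of well-definedness (lifting the orthogonal isometries to genuine isometries $\sigma_i=1\otimes 1\otimes s_i$ in $M_{\pm}M_2R$ so that an $M_{\pm 2}$-homotopy of one summand can be conjugated without further stabilization) is a faithful, fully detailed rendering of the argument the paper delegates to \cite{cm1}.
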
 
\begin{proof}
Combine Lemma \ref{lem:abc} with the argument of \cite{cm1}*{Lemma 2.5}. 
\end{proof}

Let $A$ and $R$ be as in Lemma \ref{lem:M2htpy} and let $\phi_0,\phi_1:A\to R$ be $*$-homomorphisms; we say that $\phi_0$ and $\phi_1$ are \emph{stably $M_{\pm 2}$-homotopic}, and write $\phi_0\sim^s_{M_{\pm 2}} \phi_1$, if there exists a $*$-homomorphism $\psi:A\to R$ such that 
\begin{equation}\label{defi:stably}
\phi_0\boxplus \psi\sim_{M_{\pm 2}}^* \phi_1\boxplus\psi.
\end{equation}
In other words, $\phi_0\sim^s_{M_{\pm 2}}\phi_1$ means that the $M_{\pm 2}$-homotopy classes of $\phi_0$ and $\phi_1$ go to the same element in the group completion 
\begin{equation}\label{map:complete}
[A,R]^*_{M_{\pm 2}}\to ([A,R]^*_{M_{\pm 2}})^+.
\end{equation}

\begin{rem}\label{rem:i1=i2}
Assume $\ell=\inv(\ell_0)$ for some commutative ring $\ell_0$. Then every $R\in\ahas$ is of the form $\inv(R_0)$ for some $R_0\in\alg_{\ell_0}$, projections in $R$ correspond to idempotents in $R_0$, MvN equivalence and very and strict fullness of projections to very fullness and fullness of idempotents, and $R$ is strictly properly infinite if and only if $R_0$ is properly infinite. We claim that, furthermore, $\iota_+$ can be dropped in Lemma \ref{lem:i1=i2}. For $a\in M_2\ell_0$, let $a^*$ be the transpose matrix. Then $M_2=M_2\ell_0\oplus M_2\ell_0$ equipped with the involution $(a,b)^*=(b^*,a^*)$. Let $g(t)\in \Gl_2(\ell_0)$ be as in the proof of Lemma \ref{lem:i1=i2} and let $h(t)=(g(t),(g(t)^*)^{-1})$. Then $h(t)\in\cU_2(\ell)$, $h(0)=(1,1)$ and $\ad(h(1))\circ \iota_1=\iota_2$, and so we have $\iota_1\sim^*\iota_2$, proving the claim. It follows that in \ref{lem:abc} and \ref{lem:M2htpy} we may replace $M_{\pm 2}$-$*$-homotopy by $M_2$-$*$-homotopy, defined in the obvious way, which under the category equivalence $\inv:\alg_{\ell_0}\to \ahas$ corresponds to $M_2$-homotopy as defined in \cite{cm1}*{Section 2}. Thus the lemmas above specialize to \cite{cm1}*{Lemmas 2.1, 2.3 and 2.5}. 
\end{rem}

\section{Lifting \topdf{$K_0$}{K0}-maps to \topdf{$*$}{*}-algebra maps}\label{sec:k0lift}

Let $E$ be a graph, $R$ a strictly properly infinite unital $*$-algebra and $\phi:L(E)\to R$ an algebra homomorphism. We say that $\phi$ is \emph{very full} if 
\begin{equation}\label{pptyP}
\{\phi(ee^*):e\in E^1\}\cup\{\phi(v):v\in\sing(E)\}\subset\proj_f(R). 
\end{equation}

\begin{ex}\label{ex:pptyp} Let $E$ be a finite, purely infinite graph, let $\phi:L(E)\to R$ be $*$-homomorphism and put $p=\phi(1)$. By Lemma \ref{lem:vveryfull}, for every element $q$ in the left hand side of the inclusion \eqref{pptyP} there is a projection $q'\le q$ such that $q'\sim p$. Hence  $\phi$ is very full if and only if $p\in\proj_f(R)$. 
\end{ex}

\begin{rem}\label{rem:Psemigroup}
If $\phi,\psi:LE\to R$ are very full $*$-homomorphisms, then so is their sum \eqref{sumamorfi}. Thus the subset
\[
[LE,R]^*_{M_{\pm 2}}\supset [LE,R]_{M_{\pm 2}}^f=\{[\phi]:\phi \text{ is very full }\}
\]
is a subsemigroup.
\end{rem}

\begin{thm}\label{thm:k0lift}
Let $E$ be a countable graph and $R$ a unital $*$-algebra. Assume that $R$ is strictly properly infinite. Let $\xi:\BF(E)\to K_0(R)^*$ be a group homomorphism and let $\can':\BF(E)\to K_0(LE)^*$ be the canonical map of Remark \ref{rem:can'}. Then there is a very full $*$-homomorphism $\phi:L(E)\to R$ such that $K_0(\phi)^*\circ\can'=\xi$. If furthermore $E^0$ is finite, $[1]_E$ is as in \eqref{intro:1E} and $p\in\proj_f(R)$ is such that $\xi([1_E])=[p]$, then $\phi$ can be chosen so that, in addition to the above properties, also satisfies $\phi(1)=p$.  
\end{thm}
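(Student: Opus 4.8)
The plan is to construct a Leavitt $E$-family inside $R$ that realizes $\xi$ on $K_0$, and then invoke the universal property of $L(E)$. Recall that $\BF(E)=\coker(I-A_E^t)$ is generated by the classes $[v]$, $v\in E^0$, subject, for each regular vertex $v$, to the relation $[v]=\sum_{s(e)=v}[r(e)]$; since $\xi$ is defined on $\BF(E)$, these relations are automatically respected. A $*$-homomorphism $\phi\colon L(E)\to R$ is the same as mutually orthogonal projections $P_v=\phi(v)$ ($v\in E^0$) together with partial isometries $S_e=\phi(e)$ ($e\in E^1$) satisfying $S_e^*S_e=P_{r(e)}$, $S_eS_e^*\le P_{s(e)}$, with orthogonal ranges among edges sharing a source, and the Cuntz--Krieger relation $\sum_{s(e)=v}S_eS_e^*=P_v$ at each regular $v$. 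Once such data is built with $[P_v]=\xi([v])$ and with every $P_v$ and every $S_eS_e^*$ very full, the resulting $\phi$ will be very full and will satisfy $K_0(\phi)^*\circ\can'=\xi$, because $\BF(E)$ is generated by the $[v]$ and both composites agree there: $\can'([v])=[v]$ and $K_0(\phi)^*([v])=[P_v]=\xi([v])$.

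First I would realize the vertices. By Proposition \ref{prop:vfull} the map $\cV_f(R)\to K_0(R)^*$ is an isomorphism, so for each $v\in E^0$ there is a very full projection $Q_v\in R$ with $[Q_v]=\xi([v])$. To make these orthogonal I use that strict proper infiniteness supplies a unital $*$-homomorphism $L_\infty\to R$, hence countably many isometries $\{t_i\}$ with $t_i^*t_j=\delta_{ij}$ and mutually orthogonal ranges; since $E$ is countable I index them as $t_v$ and set $P_v=t_vQ_vt_v^*$. These are mutually orthogonal projections, each very full (being MvN equivalent to $Q_v$ via $t_vQ_v$), with $[P_v]=\xi([v])$.

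The edges are produced by two surgery steps, both resting on strict proper infiniteness together with Proposition \ref{prop:vfull}. The basic mechanism is that inside any very full projection $P$ one may, transporting the isometries $\{t_i\}$ along an equivalence $1\sim q\le P$, find countably many mutually orthogonal subprojections each MvN equivalent to $1$; as every projection of $R$ is dominated by $1$, this lets me embed orthogonal copies of any prescribed projections $Q_1,Q_2,\dots$ as orthogonal subprojections of $P$. For an infinite emitter $v$ I apply this with $P=P_v$ and $\{Q_e=P_{r(e)}:s(e)=v\}$, obtaining orthogonal $P_e\le P_v$ with $P_e\sim P_{r(e)}$. For a regular vertex $v$, with $s^{-1}(v)=\{e_1,\dots,e_n\}$ finite, nonempty, and $\sum_i[P_{r(e_i)}]=[P_v]$, I first embed orthogonal $P_{e_i}'\sim P_{r(e_i)}$ inside $P_v$ and then absorb the remainder $P''=P_v-\sum_iP_{e_i}'$, which has $[P'']=0$, into the first summand: the projection $P_{e_1}:=P_{e_1}'+P''$ is very full with $[P_{e_1}]=[P_{r(e_1)}]$, hence $P_{e_1}\sim P_{r(e_1)}$ by Proposition \ref{prop:vfull}, and now $\sum_iP_{e_i}=P_v$ holds exactly. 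In every case I choose $S_e$ to be the equivalence with $S_e^*S_e=P_{r(e)}$ and $S_eS_e^*=P_{e}$. This yields a Leavitt $E$-family, so by the universal property of $L(E)$ it defines $\phi$, which is very full since each $\phi(ee^*)=P_e\sim P_{r(e)}$ and each singular $\phi(v)=P_v$ is very full.

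For the final assertion, assume $E^0$ finite and $p\in\proj_f(R)$ with $\xi([1]_E)=[p]$. Then $\phi(1)=\sum_vP_v$ is very full (it dominates each very full $P_v$) and $[\phi(1)]=\sum_v\xi([v])=\xi([1]_E)=[p]$, so Proposition \ref{prop:vfull} gives $u$ with $u^*u=\phi(1)$ and $uu^*=p$. Then $\phi'=\ad(u)\circ\phi$ is a $*$-homomorphism, since $u^*u=\phi(1)$ is a unit for $\phi(L(E))$, with $\phi'(1)=uu^*=p$; it is still very full and still satisfies $K_0(\phi')^*\circ\can'=\xi$, because $\phi'(a)=u\phi(a)u^*$ is MvN equivalent to $\phi(a)$ on projections. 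The hard part is the regular-vertex surgery: equality of $K_0$-classes only yields the Cuntz--Krieger relation up to equivalence, and the crux is the absorption step, which converts the vanishing of $[P'']$ into the exact identity $\sum_{s(e)=v}S_eS_e^*=P_v$ through the fact that very full projections with equal class are Murray--von Neumann equivalent.
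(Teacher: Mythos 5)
Your construction is correct, and it rests on the same key fact the paper uses, namely Proposition \ref{prop:vfull}: the identification $\cV_f(R)\cong K_0(R)^*$ both supplies very full projections realizing prescribed classes and converts equality of classes of very full projections into Murray--von Neumann equivalence. But you organize the construction differently at two points. The paper chooses orthogonal very full projections indexed by the \emph{edges} and the singular vertices, realizing $\xi[ee^*]=\xi[r(e)]$, and then \emph{defines} $\phi(v)=\sum_{s(e)=v}p_e$ for regular $v$; with that choice the Cuntz--Krieger relation holds by construction, and injectivity in Proposition \ref{prop:vfull} is needed only to produce the partial isometries $\phi(e)\colon \phi(r(e))\iso p_e$. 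You instead fix the vertex projections first and carve the edge projections out of them, which forces the absorption step $P_{e_1}:=P'_{e_1}+P''$ with $[P'']=0$ — a correct, if slightly more delicate, use of the same injectivity (and it does work: $P_{e_1}$ dominates $P'_{e_1}\sim P_{r(e_1)}$, hence is very full, so equal classes give equivalence). Second, the paper treats only the row-finite case directly, citing the construction of \cite{cm2}*{Theorem 3.1}, and reduces general countable $E$ to it by desingularization, using \cite{aap}*{Proposition 5.5} together with the isomorphism $\BF(E)\iso\BF(E_\delta)$ from \cite{dritom}*{Lemma 2.3}; your argument handles infinite emitters directly (there is no relation to enforce at such vertices), which makes the proof self-contained and avoids the desingularization machinery altogether. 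The final step producing $\phi(1)=p$ by conjugating with an equivalence $\phi(1)\iso p$ is the same in both arguments.
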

\begin{proof}
Because $R$ is strictly properly infinite by assumption, it has a sequence of orthogonal projections equivalent to $1$. Hence in view of Proposition \ref{prop:vfull} and the countability assumption on $E$, there are orthogonal very full projections $\{p_e:e\in E^1\}\cup\{p_v:v\in\sing(E)\}\subset\proj_f(R)$ such that, in $\cV_f(R)=K_0(R)^*$, $\xi[v]=[p_v]$ and $\xi[ee^*]=[p_e]$ for all $v\in \sing(E)$ and $e\in E^1$. If $E$ is row-finite, then proceeding as in the proof of \cite{cm2}*{Theorem 3.1} we obtain a $*$-homomorphism $\phi:LE\to R$ as required. For general countable $E$, we may choose a desingularization $E_\delta$; there is a canonical $*$-homomorphism $\iota_\delta:L(E)\to L(E_\delta)$ \cite{aap}*{Proposition 5.5} which maps vertices to vertices and edges to paths. Hence if $\psi:L(E_\delta)\to R$ is a very full $*$-homomorphism, so is $\phi\circ\iota_\delta$ and the diagram below commutes
\[
\xymatrix{
K_0(L(E))^*\ar[r]^{K_0(\phi)^*}&K_0(L(E_\delta))^*\\
\BF(L(E))\ar[u]^{\can'}\ar[r]_{\BF(\iota_\gamma)}&\BF(E_\delta)\ar[u]_{\can'}.}
\]
The proof of \cite{dritom}*{Lemma 2.3} shows that the bottom row in the diagram above is an isomorphism. 
Thus the general countable case reduces to the row-finite case. Finally if $E^0$ is finite,
$p\in\proj_1(R)$ and $\xi([1])=[p]$, then by what we have just seen there is a very full $*$-homomorphism $\psi:LE\to R$  such that $K_0^*(\psi)\circ\can'=\xi$.
Let $q=\psi(1)$; choose an MvN equivalence $y:q\iso p$. Consider the $*$-homomorphism $\ad(y):qRq\to pRp\subset R$. 

Let $\phi:=\ad(y)\circ\psi:L(E)\to R$; then $\phi$ is very full and $\phi(1)=p$.  Moreover,  $K_0(\phi)^*=K_0(\psi)^*$, hence we also have
$K_0(\phi)^*\circ\can'=\xi$. 
\end{proof}

\begin{rem}\label{rem:k0liftol}
One may ask whether in the situation of Theorem \ref{thm:k0lift}, if we further require that $E$ be finite and that $-1$ be positive in $R$, any homomorphism of $\Z[\sigma]$-modules $\ol{\BF(E)}\to K_0(R)^*$ can be lifted to a very full $*$-homomorphism $\ol{L(E)}\to R$. The argument of \ref{thm:k0lift} does not work for this purpose, since it uses the fact that the edges of $E$ are partial isometries in $L(E)$, and this is no longer true in $\ol{L(E)}$. Note however that 
it follows from Theorem \ref{thm:k0lift} and the isomorphism \eqref{olbf=bfe2} that any group homomorphism $\ol{\BF(E)}\to K_0(R)^*$ lifts to a $*$-homomorphism $\phi:L(\hat{E})\to R$. 
\end{rem}

\begin{coro}\label{coro:k0lift}
Let $E$, $R$ and $\ell$ be as in Theorem \ref{thm:k0lift}. Further assume that $\ell$ 
satisfies the $\lambda$-assumption \ref{stan:lambda}. Let $\xi:\BF(E)\to K^h_0(R)$ be a group homomorphism. Then there is a very full $*$-homomorphism $\psi:L(E)\to M_{\pm}R$ such that 
$K_0^h(\psi)\circ\can=K^h_0(\iota_+)\circ\xi$. If furthermore $E^0$ is finite and $\xi([1])=[p]$ for some $p\in \proj_f(M_{\pm}R)$, then we can choose $\psi$ so that, in addition to the above properties, also satisfies $\psi(1)=p$.
\end{coro}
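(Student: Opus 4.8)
The plan is to deduce the corollary from Theorem \ref{thm:k0lift} applied with $M_{\pm}R$ in place of $R$, and then rewrite the resulting identity in hermitian language. First I note that $M_{\pm}R$ is strictly properly infinite because $R$ is, as recorded in Section \ref{sec:k0spi}. Since $K_0^h(R)=K_0(M_{\pm}R)^*$ by \eqref{k0hproj}, the given $\xi$ is a group homomorphism $\BF(E)\to K_0(M_{\pm}R)^*$, so Theorem \ref{thm:k0lift} (for the algebra $M_{\pm}R$) yields a very full $*$-homomorphism $\psi\colon L(E)\to M_{\pm}R$ with $K_0(\psi)^*\circ\can'=\xi$, where $\can'\colon\BF(E)\to K_0(L(E))^*$ is the map of Remark \ref{rem:can'}. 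When $E^0$ is finite and $\xi([1]_E)=[p]$ with $p\in\proj_f(M_{\pm}R)$, the last assertion of Theorem \ref{thm:k0lift} provides such a $\psi$ with moreover $\psi(1)=p$.

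Next I would unwind the definitions to pass from $\can'$ to $\can$. Here I take $\can\colon\BF(E)\to K_0^h(L(E))$ to be $\can'$ followed by the natural map $K_0(L(E))^*\to K_0^h(L(E))$ of Section \ref{sec:k0}, that is $\can=K_0(\iota_+^{L(E)})^*\circ\can'$, writing $\iota_+^{A}\colon A\to M_{\pm}A$ for the corner inclusion and $K_0^h(f)=K_0(M_{\pm}f)^*$ on morphisms. Naturality of the transformation $\iota_+$ gives $(M_{\pm}\psi)\circ\iota_+^{L(E)}=\iota_+^{M_{\pm}R}\circ\psi$, whence
\[
K_0^h(\psi)\circ\can=K_0\bigl((M_{\pm}\psi)\circ\iota_+^{L(E)}\bigr)^*\circ\can'=K_0(\iota_+^{M_{\pm}R})^*\circ K_0(\psi)^*\circ\can'=K_0(\iota_+^{M_{\pm}R})^*\circ\xi.
\]

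It remains to identify $K_0(\iota_+^{M_{\pm}R})^*$ with $K_0^h(\iota_+)=K_0(M_{\pm}\iota_+^{R})^*$, and this is the one step that is not mere bookkeeping and where I expect the only real work. The two inclusions $\iota_+^{M_{\pm}R}$ and $M_{\pm}\iota_+^{R}$ send $M_{\pm}R$ to the outer and inner corners of $M_{\pm}M_{\pm}R=M_{\pm}\otimes M_{\pm}\otimes R$, and they are intertwined by the flip of the two $M_{\pm}$ factors. That flip is $\ad(u\otimes 1)$ for the swap element $u=\sum_{i,j}\epsilon_{i,j}\otimes\epsilon_{j,i}\in M_{\pm}\otimes M_{\pm}$, which one checks from \eqref{map:invopm} to be a self-adjoint unitary ($u^*=u$ and $u^2=1$). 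Since conjugation by a unitary preserves Murray--von Neumann classes, the flip induces the identity on $K_0(-)^*$, giving $K_0(\iota_+^{M_{\pm}R})^*=K_0(M_{\pm}\iota_+^{R})^*=K_0^h(\iota_+)$. Substituting into the display yields $K_0^h(\psi)\circ\can=K_0^h(\iota_+)\circ\xi$, completing the argument.
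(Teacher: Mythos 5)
Your argument is correct and follows the paper's own proof essentially step for step: apply Theorem \ref{thm:k0lift} to $M_{\pm}R$ via the identification $K_0^h(R)=K_0(M_{\pm}R)^*$, reduce by naturality of $\iota_+$ to comparing $K_0(\iota_+^{M_{\pm}R})^*$ with $K_0(M_{\pm}\iota_+^{R})^*=K_0^h(\iota_+)$, and exhibit a unitary of $M_{\pm}M_{\pm}$ whose conjugation intertwines the two corner inclusions. The only difference is cosmetic: the paper writes down an explicit signed permutation matrix where you use the tensor-flip element $\sum_{i,j}\epsilon_{i,j}\otimes\epsilon_{j,i}$, and both are indeed unitary for the involution of $M_{\pm}M_{\pm}$.
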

\begin{proof}
By Theorem \ref{thm:k0lift} there is a $*$-homomorphism $\psi:LE\to M_{\pm}R$ --which can be chosen so that $\psi(1)=p$-- such that $K_0(\psi)^*\circ\can'=\xi$ . Hence 
\[
K^h_0(\psi)\circ\can=K_0(M_{\pm}\psi)^*\circ K_0(\iota_+)^*\circ\can'=K_0(\iota_+)^*\circ K_0(\psi)^*\circ\can'=K_0(\iota_+)^*\circ\xi.
\] 
 Let
\[
u=\begin{bmatrix}1&0&0&0\\ 0&0&-1&0\\0&1&0&0\\ 0&0&0&1\end{bmatrix}\in M_{\pm}M_{\pm}
\]
One checks that $u$ is unitary and that $\ad(u)\circ\iota_+=M_{\pm}\iota_+$. Hence $K_0(\iota_+)^*=K_0^h(\iota_+)=K_0(M_\pm\iota_+)^*$ on $K_0^h(R)=K_0(M_\pm)^*$, concluding the proof. 
\end{proof}

In the next two lemmas and elsewhere, if $E$ is a finite graph, we write $DL(E)$ for the \emph{diagonal subalgebra} of $L(E)$,
\[
DL(E)=(\bigoplus_{v\in\sink(E)}\ell v)\oplus(\bigoplus_{e\in E^1}\ell ee^*).
\]
The following lemmas will be used later on, in the proofs of Theorems \ref{thm:kklift} and \ref{thm:olkklift}.
\begin{lem}\label{lem:agree*}
Let $E$ be finite graph, $R\in\ahas$ strictly properly infinite and $\phi,\psi:L(E)\to R$ very full $*$-homomorphisms. If $K_0(\phi)^*\circ\can'=K_0(\psi)^*\circ\can'$ then there exists a very full $*$-homomorphism $\psi':L(E)\to R$ such that 
$K_0(\phi)^*\circ\can'=K_0(\psi')^*\circ\can'$, $\psi'\sim^*_{M_{\pm 2}} \psi$ and $\psi'_{|DL(E)}=\phi_{|DL(E)}$.
\end{lem}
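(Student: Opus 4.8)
The plan is to correct $\psi$ by conjugating it with a single partial isometry of $R$ that matches up the diagonal projections of $\psi$ and $\phi$. Since $E$ is finite it has no infinite emitters, so $\sing(E)=\sink(E)$, and the defining relations of $L(E)$ give
\[
1=\sum_{v\in\sink(E)}v+\sum_{v\in\reg(E)}v=\sum_{v\in\sink(E)}v+\sum_{e\in E^1}ee^*.
\]
Thus $DL(E)$ is the $\ell$-span of the pairwise orthogonal projections $g$ running over the set $\{v:v\in\sink(E)\}\cup\{ee^*:e\in E^1\}$ appearing in \eqref{pptyP}, and these sum to $1$. First I would unwind the hypothesis $K_0(\phi)^*\circ\can'=K_0(\psi)^*\circ\can'$. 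As $\can'$ sends the class of $v\in E^0$ to $[v]\in K_0(L(E))^*$ (Remark \ref{rem:can'}), this yields $[\phi(v)]=[\psi(v)]$ in $K_0(R)^*$ for every $v\in E^0$; and since $[ee^*]=[e^*e]=[r(e)]$ in $K_0(L(E))^*$, applying $K_0(\phi)^*$ and $K_0(\psi)^*$ gives $[\phi(ee^*)]=[\phi(r(e))]=[\psi(r(e))]=[\psi(ee^*)]$. Hence $[\phi(g)]=[\psi(g)]$ in $K_0(R)^*$ for every generator $g$.

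Next comes the crucial step. Because $\phi$ and $\psi$ are very full, each of $\phi(g),\psi(g)$ is a \emph{very full} projection of $R$, and since $R$ is strictly properly infinite, Proposition \ref{prop:vfull} identifies $\cV_f(R)$ with $K_0(R)^*$. Therefore the equalities of classes above force honest Murray--von Neumann equivalences $\phi(g)\sim\psi(g)$ \emph{in $R$ itself}, not merely in some matrix amplification. I would pick partial isometries $u_g\in R$ with $u_g^*u_g=\psi(g)$ and $u_gu_g^*=\phi(g)$, and set $u=\sum_g u_g$. Using orthogonality of the source projections $\psi(g)$ and of the range projections $\phi(g)$ one checks that $u^*u=\sum_g\psi(g)=\psi(1)$ and $uu^*=\sum_g\phi(g)=\phi(1)$.

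Then I would define $\psi'=\ad(u)\circ\psi$, i.e. $\psi'(a)=u\psi(a)u^*$; since $u^*u=\psi(1)$ this is a $*$-homomorphism $L(E)\to R$. From $u_{g'}=u_{g'}\psi(g')$ and orthogonality one gets $u\psi(g)=u_g$, whence $\psi'(g)=u_gu^*=\sum_{g'}u_gu_{g'}^*=u_gu_g^*=\phi(g)$ for every generator $g$; as $\psi'$ and $\phi$ are $\ell$-linear this gives $\psi'_{|DL(E)}=\phi_{|DL(E)}$. In particular $\psi'$ is very full, its diagonal values being the very full projections $\phi(g)$. Finally, functoriality of $K_0(-)^*$ together with the equivalences $\psi(p)\sim u\psi(p)u^*=\psi'(p)$ (realized by $u\psi(p)$) shows $K_0(\psi')^*=K_0(\psi)^*$, so $K_0(\psi')^*\circ\can'=K_0(\phi)^*\circ\can'$.

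It remains to see that $\psi'\sim^*_{M_{\pm 2}}\psi$. For this I would apply Lemma \ref{lem:abc} to the corner $*$-subalgebras $A=\psi(1)R\psi(1)$ and $B=\phi(1)R\phi(1)$ of $C=R$ with $x=u$: the hypotheses $uAu^*\subset B$ and $au^*ua'=aa'$ for $a,a'\in A$ hold because $u^*u=\psi(1)$ is the unit of $A$. Lemma \ref{lem:abc} then gives $\inc_B\circ\ad(u)\sim^*_{M_{\pm 2}}\inc_A$ as maps $A\to R$. Precomposing with the corestriction $L(E)\to A$ of $\psi$, and using that $M_{\pm 2}$-$*$-homotopy is preserved under precomposition (immediate from the definition), yields $\psi'\sim^*_{M_{\pm 2}}\psi$. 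The main obstacle is exactly the second step: guaranteeing that the $K_0$-level equalities are realized by one partial isometry in $R$ rather than in a matrix algebra over it, which is precisely what very fullness together with Proposition \ref{prop:vfull} supply; the remaining verifications are routine bookkeeping with the orthogonality relations.
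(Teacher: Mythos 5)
Your proposal is correct and follows essentially the same route as the paper: the paper's proof likewise chooses MvN equivalences $x_\alpha:\psi(\alpha\alpha^*)\iso\phi(\alpha\alpha^*)$ for $\alpha\in\sink(E)\cup E^1$, sums them to a single partial isometry $x$ implementing $\psi(1)\iso\phi(1)$, sets $\psi'=\ad(x)\circ\psi$, and invokes Lemma \ref{lem:abc} for the $M_{\pm 2}$-$*$-homotopy. Your write-up merely makes explicit the bookkeeping the paper leaves to the reader, in particular the use of Proposition \ref{prop:vfull} to upgrade equality of $K_0$-classes of very full projections to an equivalence inside $R$ itself.
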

\begin{proof} For every $\alpha\in \Theta=\sink(E)\cup E^1$, choose an MvN equivalence $x_\alpha:\psi(\alpha\alpha^*)\iso\phi(\alpha\alpha^*)$. Put $x=\sum_{\alpha\in\Theta}x_\alpha$; one checks that $x$ is an MvN equivalence $p=\psi(1)\to \phi(1)$, that $\ad(x):pRp\to qRq$ is a $*$-homomorphism and, using Lemma \ref{lem:abc}, that $\psi'=\ad(x)\circ\psi$ satisfies the requirements of the lemma.
\end{proof}
\begin{lem}\label{lem:olagree*}
Let $E$ and $R\in\ahas$ be as in Lemma \ref{lem:agree*}; further assume that $-1$ is positive in $R$. Let  $\phi,\psi:\ol{L(E)}\to R$ be very full $*$-homomorphisms. If $K^h_0(\phi)\circ\can=K^h_0(\psi)\circ\can$ then there exists a very full $*$-homomorphism $\psi':L(E)\to R$ such that 
$K^h_0(\phi)\circ\can'=K^h_0(\psi')\circ\can$, $\psi'\sim^*_{M_{\pm 2}} \psi$ and $\psi'_{|DL(E)}=\phi_{|DL(E)}$. 
\end{lem}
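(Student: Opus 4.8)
The plan is to follow the proof of Lemma~\ref{lem:agree*} almost verbatim, the only genuinely new points being that the hypothesis is now phrased in Hermitian $K$-theory rather than in $K_0(-)^*$ and that in $\ol{L(E)}$ the edges are no longer partial isometries. Writing $\Theta=\sink(E)\cup E^1$, so that the $\alpha\alpha^*$ ($\alpha\in\Theta$) are exactly the canonical orthogonal projections of $DL(E)$ with $\sum_{\alpha\in\Theta}\alpha\alpha^*=1$, the whole argument reduces, as in Lemma~\ref{lem:agree*}, to producing for each $\alpha$ a Murray--von Neumann equivalence $x_\alpha\colon\psi(\alpha\alpha^*)\iso\phi(\alpha\alpha^*)$ inside $R$, then setting $x=\sum_{\alpha\in\Theta}x_\alpha$ and $\psi'=\ad(x)\circ\psi\colon\ol{L(E)}\to R$.

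First I would extract these equivalences from the hypothesis. Evaluating $K^h_0(\phi)\circ\can=K^h_0(\psi)\circ\can$ (with $\can=\ol{\can}$ of Remark~\ref{rem:olcan}) on the generator $[v]\in\ol{\BF}(E)$, which satisfies $\ol{\can}[v]=[v]$, gives $[\phi(v)]=[\psi(v)]$ in $K^h_0(R)$ for every $v\in E^0$. Here the twist enters: unlike in the standard case, $ee^*$ is not MvN equivalent to $r(e)$, but identities \eqref{eq:ee*re} give $[\phi(ee^*)]=\sigma[\phi(r(e))]$ and $[\psi(ee^*)]=\sigma[\psi(r(e))]$ in $K^h_0(R)$, so the factors of $\sigma$ cancel and $[\phi(ee^*)]=[\psi(ee^*)]$ as well. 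Thus $[\phi(\alpha\alpha^*)]=[\psi(\alpha\alpha^*)]$ in $K^h_0(R)$ for all $\alpha\in\Theta$. Because $-1$ is positive in $R$, the canonical map $K_0(R)^*\to K^h_0(R)$ is an isomorphism (Section~\ref{sec:k0}), so these equalities already hold in $K_0(R)^*$; since $\phi,\psi$ are very full, the projections $\phi(\alpha\alpha^*),\psi(\alpha\alpha^*)$ lie in $\proj_f(R)$, and Proposition~\ref{prop:vfull} (which gives $\cV_f(R)\cong K_0(R)^*$) then furnishes the desired $x_\alpha$.

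With the $x_\alpha$ in hand the rest is formal and identical to Lemma~\ref{lem:agree*}. Orthogonality of the $\phi(\alpha\alpha^*)$ and of the $\psi(\alpha\alpha^*)$, together with $\sum_\alpha\alpha\alpha^*=1$, shows that $x$ is an MvN equivalence $p=\psi(1)\iso q=\phi(1)$, that $\ad(x)\colon pRp\to qRq$ is a $*$-homomorphism, and hence that $\psi'=\ad(x)\circ\psi$ is a $*$-homomorphism. Lemma~\ref{lem:abc} yields $\psi'\sim^*_{M_{\pm 2}}\psi$. A short computation using orthogonality gives $\psi'(\alpha\alpha^*)=x_\alpha x_\alpha^*=\phi(\alpha\alpha^*)$ for each $\alpha\in\Theta$, whence $\psi'_{|DL(E)}=\phi_{|DL(E)}$; in particular $\psi'$ is very full, since its values on the generators $\{ee^*\}\cup\{v:v\in\sink(E)\}$ coincide with those of the very full map $\phi$. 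Finally, for every $v\in E^0$ the element $x\psi(v)$ is an MvN equivalence $\psi(v)\iso\psi'(v)$, so $[\psi'(v)]=[\psi(v)]$ in $K^h_0(R)$ and therefore $K^h_0(\psi')\circ\can=K^h_0(\psi)\circ\can=K^h_0(\phi)\circ\can$, which is the remaining required identity.

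The only real obstacle, compared with Lemma~\ref{lem:agree*}, is the passage from the Hermitian group $K^h_0(R)$ to $K_0(R)^*$ needed to turn the $K$-theoretic equalities into actual MvN equivalences of projections inside $R$; this is precisely where positivity of $-1$ in $R$ is used, and is the reason the statement must assume it. The $\sigma$-twist produced by \eqref{eq:ee*re} is harmless because it affects $\phi$ and $\psi$ symmetrically.
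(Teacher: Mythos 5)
Your proof is correct and follows the same route as the paper: use the identities \eqref{eq:ee*re} to see that the $\sigma$-twist affects $\phi$ and $\psi$ symmetrically, conclude $[\phi(\alpha\alpha^*)]=[\psi(\alpha\alpha^*)]$, pass from $K_0^h(R)$ to $K_0(R)^*$ via positivity of $-1$, and then run the argument of Lemma \ref{lem:agree*} verbatim with $x=\sum_\alpha x_\alpha$ and $\psi'=\ad(x)\circ\psi$. You have merely spelled out the details (Proposition \ref{prop:vfull}, very fullness of $\psi'$, the final $K$-theory identity) that the paper leaves to the reader.
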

\begin{proof}
Let $e\in E^1$; by \eqref{eq:ee*re}, $[ee^*]=\sigma[r(e)]\in K_0^h(\ol{L(E)})$. Thus the identity $K^h_0(\phi)\circ\can=K^h_0(\psi)\circ\can$ implies that $K^h_0(\phi)[ee^*]=K^h_0(\psi)[ee^*]$. Hence we can proceed as in the proof of Lemma \ref{lem:agree*} above. 
\end{proof}

\begin{rem}\label{rem:dle}
Assume $\ell=\inv(\ell_0)$ for some commutative ring $\ell_0$. Then in the proof of Lemma \ref{lem:agree*}, $\psi'=\ad(x)\circ\psi$ is $M_2$-$*$-homotopic to $\psi$, by Remark \ref{rem:i1=i2}. Hence by the same remark, Lemma \ref{lem:agree*} recovers \cite{cm2}*{Proposition 3.5}.
\end{rem}

\section{Unitary \topdf{$K_1$}{K1} of a strictly properly infinite \topdf{$*$}{*}-algebra}\label{sec:k1}

Let $R$ be a unital $*$-algebra. For $n\ge 1$, write $\cU_nR=\cU(M_nR)$ for the group of unitary elements; set $\cU_\infty(R)=\colim_n\cU_n(R)$. For $1\le n\le \infty$, let $\cU^h_n(R)=\cU_n(M_\pm R)$. Put
\begin{equation}\label{k1}
K_1(R)^*=\cU_\infty(R)_{ab},\,\, K_1^h(R)=K_1(M_{\pm}R)^*=\cU^h_\infty(R)_{ab}.
\end{equation}
\begin{lem}\label{lem:k1morite}
Let $R$ be a unital $*$-algebra and $p\in R$ a strictly full projection. Then the inclusion $pRp\to R$ induces an isomorphism $K_1(pRp)^*\iso K_1(R)^*$. 
\end{lem}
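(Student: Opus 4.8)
The plan is to mimic the proof of Lemma \ref{lem:vmorite}, replacing the monoid-valued functor there by a group-valued one. I would work with the same category $\fP(R)$, whose objects are the projections $\proj_\infty(R)$ and whose morphisms $q_1\to q_2$ are the elements $x\in M_\infty R$ with $x^*x=q_1$ and $xx^*\le q_2$, and define a functor $G\colon\fP(R)\to\ab$ by $G(q)=\cU(qM_\infty R q)_{ab}$, letting a morphism $x\colon q_1\to q_2$ act by $[u]\mapsto[\,xux^*+(q_2-xx^*)\,]$. The first step is to check that this is a genuine functor: the completion with the complement $q_2-xx^*$ turns $xux^*$ into an honest unitary of $q_2M_\infty R q_2$, the assignment $u\mapsto xux^*+(q_2-xx^*)$ is multiplicative (the cross terms vanish since $x^*q_2=x^*$ and $x^*x=q_1$), and a direct computation using $y^*y=q_2$ (which gives $y(q_2-xx^*)y^*=yy^*-yxx^*y^*$) yields $G(yx)=G(y)G(x)$. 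Hence $G$ descends to abelianizations.

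Next I would identify the two colimits. Let $\fI(R)\subset\fP(R)$ be the subcategory on the objects $1_n$ with morphisms the coordinate-embedding matrices; then $G(1_n)=\cU_n(R)_{ab}$, and since abelianization is a left adjoint and so commutes with filtered colimits, $\colim_{\fI(R)}G=\cU_\infty(R)_{ab}=K_1(R)^*$. As in Lemma \ref{lem:vmorite}, $\fI(R)$ is cofinal in $\fP(R)$ (any $q\in\proj_n(R)$ admits the morphism $q\colon q\to 1_n$), so $\colim_{\fP(R)}G=K_1(R)^*$. For the corner, writing $p_\infty$ for the diagonal idempotent with entries $p$, one has $M_\infty(pRp)=p_\infty M_\infty R p_\infty$, so that $qM_\infty R q=qM_\infty(pRp)q$ for every projection $q\le p_\infty$; thus the full subcategory $\fP(pRp)\subset\fP(R)$ carries the restriction of $G$ to the corresponding functor for $pRp$, and the same cofinality argument gives $\colim_{\fP(pRp)}G=K_1(pRp)^*$.

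Finally I would use strict fullness to show $\fP(pRp)$ is cofinal in $\fP(R)$, exactly as in the last line of Lemma \ref{lem:vmorite}. Writing $1=\sum_i x_i^*px_i$ with $x_i\in pR$, the column $v=(x_1,\dots,x_n)^t$ satisfies $v^*v=1$ and $vv^*\le I_n\otimes p$; for a general $q\in\proj_m(R)$ the element $w=(I_m\otimes v)q$ is then a morphism $q\to ww^*$ in $\fP(R)$ with $ww^*\le I_{mn}\otimes p$, hence with target in $\proj(M_\infty(pRp))$. Therefore $\colim_{\fP(pRp)}G\iso\colim_{\fP(R)}G$, and chasing the definitions this isomorphism $K_1(pRp)^*\iso K_1(R)^*$ is the one induced by $\iota_p$. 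The step I expect to be the main obstacle is precisely the presence of the complement term $q_2-xx^*$: unlike the $\cV$-valued functor of Lemma \ref{lem:vmorite}, whose morphisms only record Murray--von Neumann classes of images, here the unitary completion forces this term into the definition, so one must verify both that composition is respected (done above) and that the cofinality computations still go through. The latter rests on $\fP(R)$ being filtered, i.e. on parallel morphisms $x,x'\colon q_1\to q_2$ being coequalized by a later morphism; this holds because the complements are stably equivalent, $(q_2-xx^*)\oplus q_1\sim q_2\sim(q_2-x'x'^*)\oplus q_1$ via $xx^*\sim q_1\sim x'x'^*$, so they become conjugate after one further stabilization, and conjugation is trivial on the abelianizations.
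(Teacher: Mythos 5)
Your proof is correct and is exactly the argument the paper intends: the paper's own proof of this lemma is a one-line reference to Lemma \ref{lem:vmorite} together with the observation that $K_1(R)^*=\colim_{\fP(R)}\cU(qM_\infty Rq)_{ab}$, and you have simply written out the details of that colimit argument. In particular you correctly identify and resolve the one point where the group-valued case genuinely differs from the monoid-valued one, namely that parallel morphisms in $\fP(R)$ induce equal maps on abelianized unitary groups only up to an inner automorphism obtained after a further stabilization making the complements $q_2-xx^*$ and $q_2-x'x'^*$ equivalent.
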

\begin{proof}
The proof is similar to that of Lemma \ref{lem:vmorite} once we observe that $K_1(R)^*=\colim_{\fP(R)}\cU(pRp)_{ab}$. 
\end{proof}
\begin{rem}\label{rem:k1=kh1}
If $R\in\ahas$ is unital and $-1$ is positive in $R$, then, as explained in Section \ref{sec:k0} after Lemma \ref{lem:morite}, $M_{\pm}R$ is $*$-isomorphic to a strictly full corner of $M_{n+1}R$, so $K_1(R)^*\to K_1^h(R)$ is an isomorphism by Lemma \ref{lem:k1morite}. 
\end{rem}
\begin{prop}\label{prop:k1pi}
Let $R$ be a strictly properly infinite $*$-algebra. Let $\cN(R)\triqui \cU(R)$ be the smallest normal subgroup containing the subset $\{u^{-1}(xux^*+1-xx^*): u\in \cU(R), x^*x=1\}$. Then $K_1(R)^*=\cU(R)/\cN(R)$.
\end{prop}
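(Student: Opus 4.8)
The plan is to exhibit mutually inverse group homomorphisms between $\cU(R)/\cN(R)$ and $K_1(R)^*=\cU_\infty(R)_{ab}$. Write $\alpha\colon\cU(R)=\cU_1(R)\hookrightarrow\cU_\infty(R)\to\cU_\infty(R)_{ab}=K_1(R)^*$ for the canonical homomorphism, and for an isometry $x$ (i.e. $x^*x=1$) set $\theta_x(u)=xux^*+1-xx^*$, so that $\cN(R)$ is the normal closure of $\{u^{-1}\theta_x(u)\}$. The whole proof rests on one computation: for any $T=(t_1,\dots,t_n)$ with $t_i^*t_j=\delta_{ij}$ and $q=TT^*=\sum_i t_it_i^*$, the element
\[
W_T=\begin{bmatrix}0&T^*\\ T&1-q\end{bmatrix}\in M_{n+1}R
\]
is a self-adjoint unitary (using $T^*T=1_n$ and $q^2=q$), and one checks directly that $W_T(u\oplus 1)W_T=1_n\oplus(TuT^*+1-q)$ for every $u\in\cU_n(R)$.

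First I would use the case $n=1$: for an isometry $x$ this reads $W_x(u\oplus1)W_x=1\oplus\theta_x(u)$, so $\theta_x(u)\oplus1$ is conjugate to $u\oplus1$ in $\cU_2(R)$ and hence $[\theta_x(u)]=[u]$ in the abelian group $K_1(R)^*$. Thus every generator of $\cN(R)$ lies in $\ker\alpha$; since the target is abelian the same holds for the whole normal closure, and $\alpha$ descends to $\beta\colon\cU(R)/\cN(R)\to K_1(R)^*$. For the reverse direction I would fix a countable orthogonal family of isometries $s_1,s_2,\dots$ in $R$ (these exist because $R$ is strictly properly infinite, via a unital $*$-homomorphism $L_\infty\to R$), and for $u\in\cU_n(R)$ put $\Phi_n(u)=\sum_{i,j}s_iu_{ij}s_j^*+(1-\sum_{i\le n}s_is_i^*)\in\cU(R)$. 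A short calculation shows $\Phi_n$ is a group homomorphism and $\Phi_{n+1}(u\oplus1)=\Phi_n(u)$, so the $\Phi_n$ assemble into a homomorphism $\gamma\colon\cU_\infty(R)\to\cU(R)/\cN(R)$.

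To pass $\gamma$ to $K_1(R)^*$ I would show $\cU(R)/\cN(R)$ is abelian. The point is that $\theta_{s_1}(u)$ and $\theta_{s_2}(v)$ have orthogonal ``supports'' $s_1s_1^*\perp s_2s_2^*$, whence they commute and their product equals $s_1us_1^*+s_2vs_2^*+1-s_1s_1^*-s_2s_2^*$, symmetric in $(s_1,u)\leftrightarrow(s_2,v)$; combined with $\overline{u}=\overline{\theta_{s_1}(u)}$ and $\overline{v}=\overline{\theta_{s_2}(v)}$ this gives $\overline{uv}=\overline{vu}$. Hence $\gamma$ factors through $\bar\gamma\colon K_1(R)^*\to\cU(R)/\cN(R)$. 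Finally I would verify the two composites are identities: $\beta\bar\gamma[u]=[\Phi_n(u)]=[u]$ by the $W_T$ computation with $T=(s_1,\dots,s_n)$, and $\bar\gamma\beta[u]=[\Phi_1(u)]=[\theta_{s_1}(u)]=[u]$ since $u^{-1}\theta_{s_1}(u)\in\cN(R)$. Therefore $\beta$ is an isomorphism.

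The main obstacle I anticipate is conceptual rather than computational: recognizing that the single relation ``conjugation by an isometry is trivial'' must do double duty, simultaneously forcing $\cN(R)\subseteq\ker\alpha$, making the quotient abelian (so that $\bar\gamma$ is well defined), and providing the section property $\beta\bar\gamma=\id$. Getting the family $\Phi_n$ to be genuinely homomorphic and stabilization-compatible \emph{on the nose} (not merely up to $\cN(R)$) is what makes $\bar\gamma$ a clean inverse, and this is where the orthogonal isometry relations $s_i^*s_j=\delta_{ij}$ are used most delicately. The individual matrix identities for $W_T$, $\Phi_n$, and the commutation of $\theta_{s_1}(u),\theta_{s_2}(v)$ are routine once set up.
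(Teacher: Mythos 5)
Your proof is correct; the difference from the paper's is one of packaging rather than substance. The paper deduces the presentation from the colimit description $K_1(R)^*=\colim_{\fP(R)}\cU(-)_{ab}$ set up in Lemmas \ref{lem:vmorite} and \ref{lem:k1morite}, together with the observation that the one-object subcategory $\{1\}$ is cofinal in $\fP(R)$ when $R$ is strictly properly infinite; this immediately gives $K_1(R)^*=\cU(R)/[\cU(R),\cU(R)]\cdot\cN(R)$, and the only thing left to verify is $[\cU(R),\cU(R)]\subset\cN(R)$, which is done by exactly the Cuntz trick you use to show that $\cU(R)/\cN(R)$ is abelian. Your construction unpacks that categorical argument into explicit data: the maps $\Phi_n$ are precisely the colimit structure maps attached to the morphism $1_n\to 1$ in $\fP(R)$ given by the row $(s_1,\dots,s_n)$ (an element $x$ with $x^*x=1_n$, $xx^*\le 1$), and the conjugation by the self-adjoint unitary $W_T$ is the explicit witness that such morphisms induce the stabilization maps on $K_1$, i.e.\ that $\{1\}$ is cofinal. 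What your version buys is self-containedness -- an explicit two-sided inverse to the comparison map, with every identity checkable by hand and no reliance on the earlier Morita-invariance lemmas; what the paper's version buys is brevity, since the colimit formalism has already been paid for. Both arguments hinge on the same two facts: conjugation by an isometry implements stabilization, and orthogonality of $s_1s_1^*$ and $s_2s_2^*$ kills commutators modulo $\cN(R)$.
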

\begin{proof}
We keep the notation as in the proof of Lemma \ref{lem:k1morite}. Because $R$ is strictly properly infinite, the full subcategory $\mathbb{I}$ of $\fP(R)$ whose only object is the identity element $1\in R$ is cofinal. Hence
\[
K_1(R)^*=\colim_{\mathbb{I}}\cU(R)_{ab}=\cU(R)/[\cU(R):\cU(R)]\cdot \cN(R).
\]
It remains to show that $\cN(R)\supset [\cU(R),\cU(R)]$. Let $s_1,s_2$ be orthogonal isometries and let $u,v\in \cU(R)$. Modulo $\cN(R)$, 
\begin{align*}
uv\equiv &(s_1us_1^*+(1-s_1s_1^*))\cdot(s_2vs_2^*+(1-s_2s_2^*))\\
=&s_1us_1^*+s_2vs_2^*+1-\sum_{i=1}^2s_is_i^*\\
=&(s_2vs_2^*+(1-s_2s_2^*))(s_1us_1^*+(1-s_1s_1^*))
\equiv vu.
\end{align*}
\end{proof}
Put
\begin{equation}\label{def:kv1coeq}
KV_1(R)^*=\coker(\ev_0-\ev_1:K_1(R[t])^*\to K_1(R)^*),\,\, KV_1^h(R)=KV_1(M_{\pm}R)^*.
\end{equation}
One checks that $KV_1^h$ as defined above agrees with that of \cite{kv2}.
Let 
\begin{gather*}
\cU(R)_0=\{u\in\cU(R)|(\exists U\in \cU(R[t]))\, U(0)=1,\, U(1)=u\},\\
\cU_n(R)_0=\cU(M_nR)_0,\,\,\cU^h_n(R)_0=\cU_n(M_\pm R)_0\,\, (n\ge 1).
\end{gather*}

\begin{lem}\label{lem:kv1}
Let $R\in\ahas$ be strictly properly infinite and such that $-1$ is positive in $R$. Regard $\cU(R)\subset \cU^h_2(R)$ through the group monomorphism induced by $\iota_+\circ\iota_1$. 
Then $$KV^h_1(R)=\cU(R)/\cU(R)\cap \cU^h_2(R)_0.$$ 
\end{lem}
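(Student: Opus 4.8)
The plan is to reduce the whole computation to the unitary group of $R$ itself, where Proposition \ref{prop:k1pi} and Lemma \ref{lem:abc} apply directly. Throughout I note that $R$, $R[t]$, $M_\pm R$ and $M_\pm R[t]$ are all strictly properly infinite, and that $-1$ is positive in $R[t]$ since a relation $-1=\sum_i x_ix_i^*$ in $R$ persists in $R[t]$. First I would record a general description of $KV_1$ for a strictly properly infinite $T$: by Proposition \ref{prop:k1pi} applied to $T$ and $T[t]$, the maps $\cU(T)\to K_1(T)^*$ and $\cU(T[t])\to K_1(T[t])^*$ are onto with kernels $\cN(T)$, $\cN(T[t])$, and for $W\in\cU(T[t])$ the element $\ev_0(W)^{-1}W$ is a polynomial path from $1$ to $\ev_0(W)^{-1}\ev_1(W)$, so that the image of $\ev_0-\ev_1$ in $K_1(T)^*$ coincides with the image of the subgroup $\cU(T)_0$ of null-homotopic unitaries. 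Hence $KV_1(T)^*=\cU(T)/(\cN(T)\cU(T)_0)$.

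Next I would carry out the reduction to $R$. Because $-1$ is positive in $R$ and in $R[t]$, Remark \ref{rem:k1=kh1} gives isomorphisms $K_1(R)^*\iso K_1(M_\pm R)^*$ and $K_1(R[t])^*\iso K_1(M_\pm R[t])^*$ induced by $\iota_+$; these are natural, hence compatible with $\ev_0$ and $\ev_1$, and passing to cokernels yields $KV_1^h(R)=KV_1(M_\pm R)^*\cong KV_1(R)^*$. Combining this with the general description for $T=R$, I obtain an isomorphism $KV_1^h(R)\cong\cU(R)/\cM$ with $\cM=\cN(R)\cU(R)_0$, under which the composite $\cU(R)\xrightarrow{\iota_+\iota_1}\cU^h_2(R)\to KV_1^h(R)$ corresponds to the canonical projection (here $\iota_1$ is a stabilization, trivial on $K_1$, and $\iota_+$ induces the above $K_1$-isomorphism). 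It then remains to identify $\cM$ with the subgroup $\cU(R)\cap\cU^h_2(R)_0=(\iota_+\iota_1)^{-1}(\cU^h_2(R)_0)$.

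The inclusion $\cU(R)\cap\cU^h_2(R)_0\subseteq\cM$ is immediate: any element of $\cU^h_2(R)_0=\cU_2(M_\pm R)_0$ is a value $\ev_1(W)$ of a polynomial path $W$ with $\ev_0(W)=1$, so its class in $KV_1(M_\pm R)^*=\coker(\ev_0-\ev_1)$ vanishes; thus a $u\in\cU(R)$ with $\iota_+\iota_1(u)\in\cU^h_2(R)_0$ maps to $0$ in $KV_1^h(R)$ and lies in $\cM$. For the reverse inclusion, since $\cU(R)\cap\cU^h_2(R)_0$ is a subgroup containing $\iota_+\iota_1(\cU(R)_0)$ (a polynomial path in $\cU(R)$ maps to one in $\cU^h_2(R)$), it suffices to prove $\iota_+\iota_1(\cN(R))\subseteq\cU^h_2(R)_0$, and as $\cU^h_2(R)_0$ is normal in $\cU^h_2(R)$, only for the generators $g=v^{-1}(xvx^*+1-xx^*)$ of $\cN(R)$ ($v\in\cU(R)$, $x\in R$, $x^*x=1$). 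Here I invoke Lemma \ref{lem:abc}: since $Rx\subseteq R$ and $x^*x=1$ we have $\ad(x)\sim^{*}_{M_{\pm 2}}\id_R$. Writing $H$ for the witnessing $*$-homotopy into $(M_\pm M_2R)[t]\cong(M_2M_\pm R)[t]$ and filling the non-unital corner with the identity, the path $t\mapsto H_t(v)+(1-H_t(1))$ is a polynomial path of unitaries in $\cU^h_2(R)$ joining the image of $xvx^*+1-xx^*$ to the image of $v$; since the embedding $\cU(R)\to\cU^h_2(R)$ is a homomorphism and $\cU^h_2(R)_0$ is a subgroup, this gives $\iota_+\iota_1(g)\in\cU^h_2(R)_0$, completing the identification $\cM=\cU(R)\cap\cU^h_2(R)_0$.

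The main obstacle is this last step: turning the $M_{\pm 2}$-$*$-homotopy of the \emph{non-unital} homomorphisms $\ad(x)$ and $\id_R$ produced by Lemma \ref{lem:abc} into an honest polynomial path of unitaries, and checking that it lands at matrix level exactly $2$, i.e.\ in $\cU^h_2(R)_0$ rather than in some higher $\cU^h_n(R)_0$. This is precisely the point where strict proper infiniteness, through Lemma \ref{lem:abc}, supplies the room that a purely algebraic (as opposed to $C^*$-algebraic) rotation homotopy would otherwise lack, and where the identification $M_\pm M_2R\cong M_2M_\pm R$ matches the target of the homotopy with the algebra defining $\cU^h_2(R)$.
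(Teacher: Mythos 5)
Your proof is correct and follows the same overall strategy as the paper's: reduce everything to the unitary group of $R$ itself via Proposition \ref{prop:k1pi} and Remark \ref{rem:k1=kh1}, get the easy inclusion by noting that null-homotopic unitaries die in $KV_1^h(R)$, and handle $\cN(R)$ by a polynomial rotation homotopy identifying $u$ with $1-xx^*+xux^*$ inside $\cU^h_2(R)$. The two places where your packaging differs are both legitimate and arguably cleaner. First, where the paper re-derives the explicit invertible path $h(t)=1_2-y(t)z(t)+y(t)\iota_1(u)z(t)$ and unitarizes it with the $c(h,h^{-1})$ trick of \cite{cv}*{Lemma 5.4}, you quote Lemma \ref{lem:abc} (whose proof is exactly that construction) and extract the unitary path $H_t(v)+1-H_t(1)$ from the $*$-homotopy of homomorphisms; this is sound, though since $\sim^*_{M_{\pm 2}}$ is a finite chain of elementary homotopies you should concatenate the resulting paths, which is harmless because $\cU^h_2(R)_0$ is a group. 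Second, the paper's final paragraph compresses a path in $\cU_n(R[t])$ down to level $1$ by hand with Cuntz isometries, because its presentation of $KV_1(R)^*$ uses $\cU(R)\cap\cU_\infty(R)_0$; you sidestep this by applying Proposition \ref{prop:k1pi} to the strictly properly infinite algebra $R[t]$ so that every class of $K_1(R[t])^*$ is represented at level $1$, giving the presentation with $\cU(R)_0$ directly. The compression is thereby absorbed into the cofinality argument already contained in Proposition \ref{prop:k1pi}, so nothing is lost and the two descriptions of the denominator agree.
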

\begin{proof}
By \eqref{def:kv1coeq}, Proposition \ref{prop:k1pi} and Remark \ref{rem:k1=kh1}, the inclusion $\iota_+:\cU(R)\subset \cU^h_1(R)$ induces an isomorphism 
\[
KV_1(R)^*=\cU(R)/\cN(R)\cdot(\cU(R)\cap \cU_\infty(R)_0)\iso KV^h_1(R).
\] 
In particular,
\[
\cU(R)\cap \cU^h_2(R)_0\subset\cN(R)\cdot(\cU(R)\cap \cU_\infty(R)_0).
\] 
Let $x\in R$ be an isometry, $g(t)\in \Gl_2(R[t])$ as in the proof of Lemma \ref{lem:i1=i2} and $1_n$ the $n\times n$- identity matrix. 
 Let $y(t)=\ad(g(t))(x\oplus 1),z(t)=\ad(g(t))(x^*\oplus 1)\in M_2R$; then $h(t)=1_2-y(t)z(t)+y(t)\iota_1(u)z(t)\in\Gl_2(R[t])$ connects $\iota_1(u)=u\oplus 1$ with $\iota_1(1-xx^*+xux^*)$ in $\Gl_2(R)$. Let $c(t)=c(h(t),h(t)^{-1})$ be as in \cite{cv}*{Lemma 5.4}; then $c(t)\in \cU^h_2(R)$, and connects $\iota_+(\iota_1(u))$ with $\iota_+(\iota_1(1-xx^*+xux^*))$. It follows that $\cN(R)\subset \cU(R)\cap \cU^h_2(R)_0$, since $\cU(R)\cap \cU^h_2(R)_0\subset\cU(R)$ is a normal subgroup. It remains to show that $\cU^h_2(R)_0\supset\cU(R)\cap \cU_\infty(R)_0$. Let $u\in\cU(R)$ and suppose that for some $n\ge 2$ there exists $U(t)\in\cU_n(R[t])$ such that $U(0)=1$ and $U(1)=\iota_n(u)=u\oplus 1_{n-1}$. Because $R$ is strictly properly infinite, there exists $x=(x_1,\dots,x_n)\in R^{1\times n}$ such that
$xx^*=1\in R$ and $x^*x=1_n\in M_nR$. Then $V(t)=1-xx^*+xU(t)x^*\in\cU(R[t])$ satisfies $V(0)=1$ and
\[
V(1)=1-xx^*+x(u\oplus 1_{n-1})x^*=1-x_1x_1^*+x_1ux_1^*.
\]
By the first part of the proof, $\iota_+(\iota_1(V(1)))$ is connected to $\iota_+(\iota_1(u))$ by a path $W(t)\in\cU^h_2(R[t])$; hence $W(t)((\iota_+\circ\iota_1)(V(1-t))^{-1})$ connects $\iota_+(\iota_1(1))$ to $\iota_+(\iota_1(u))$. 
\end{proof}

\begin{rem}\label{rem:kv1}
Consider the particular case of Lemma \ref{lem:kv1} when $\ell=\inv(\ell_0)$. By Remark \ref{rem:i1=i2} $R=\inv(S)$ for some properly infinite $\ell_0$-algebra $S$, $KV_1^h(R)$ is Karoubi-Villamayor's $KV_1(S)$ \cite{kv1}, and one may replace $g(t)$ in the proof of the lemma by a unitary $h(t)$, obtaining
\[
KV_1(S)=KV_1^h(R)=\cU_1(R)/\cU_1(R)\cap\cU_2(R)=\Gl_1(S)/\Gl_1(S)\cap\Gl_2(S)_0. 
\]
\end{rem}
\section{Poincar\'e duality}\label{sec:duality}

Let $E$ be a finite graph. The \emph{dual graph} $E_t$ is the graph with $E_t^i=E^i$ for $i=0,1$ and with 
source and range maps $s_t=r$ and $r_t=s$. Write $e_t$ for an edge $e\in E^1$ regarded as an edge of $E_t$. 
The purpose of this section is to prove Theorem \ref{thm:duality}, which is an algebraic version of a similar result for graph $C^*$-algebras
\cite{kamiput}. First we need the following lemma.

\begin{lem}\label{lem:index}
Let $\pi:R\to S$ be a surjective, unital homomorphism of $*$-algebras, set $I=\ker(\pi)$ and let $\partial:K_1^h(S)\to K_0^h(I)$ be the connecting map. Let $u\in \cU_n(S)$. Assume that there exists a partial isometry $\hat{u}\in M_n R$ such that $\pi(\hat{u})=u$. Then $\partial\iota_+[u]=\iota_+([1-\hat{u}^*\hat{u}]-[1-\hat{u}\hat{u}^*])$.
\end{lem}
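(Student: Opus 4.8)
The plan is to reduce the claim to the classical index computation in (non-hermitian) starred $K$-theory. Recall from Sections~\ref{sec:k0} and~\ref{sec:k1} that $K_i^h(-)=K_i(M_\pm(-))^*$ and that $\partial$ is the connecting map, in starred $K$-theory, of the $*$-algebra extension $M_\pm I\to M_\pm R\to M_\pm S$. First I would replace $\iota_+[u]$ by an honest unitary class: the functoriality of $K_1(-)^*$ along the corner inclusion $\iota_+$ sends $[u]$ to the class of $\tilde u:=\iota_+(u)+(1-\iota_+(1))\in\cU_n(M_\pm S)$, and since $1-\iota_+(1)=\iota_-(1)$ is a projection orthogonal to $\iota_+(M_nR)$, one checks that $\hat v:=\iota_+(\hat u)+(1-\iota_+(1))\in M_n(M_\pm R)$ is a partial isometry with $M_\pm(\pi)(\hat v)=\tilde u$. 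A direct computation then gives $1-\hat v^*\hat v=\iota_+(1-\hat u^*\hat u)$ and $1-\hat v\hat v^*=\iota_+(1-\hat u\hat u^*)$.

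The heart of the argument is the following general index formula, which I would prove next: for a unital surjective $*$-homomorphism $\pi'\colon R'\to S'$ with kernel $I'$, a unitary $w\in\cU_n(S')$, and a partial isometry $\hat w\in M_nR'$ with $\pi'(\hat w)=w$, the connecting map satisfies $\partial[w]=[1-\hat w^*\hat w]-[1-\hat w\hat w^*]$ in $K_0(I')^*$. To establish it I would form
\[
Z=\begin{bmatrix}\hat w& 1-\hat w\hat w^*\\ 1-\hat w^*\hat w& \hat w^*\end{bmatrix}\in M_{2n}R',
\]
and verify, using only the partial isometry identity $\hat w\hat w^*\hat w=\hat w$ (so that $\hat w^*\hat w$ and $\hat w\hat w^*$ are projections acting as identities on $\hat w^*$ and $\hat w$ respectively), that $Z$ is unitary and that $\pi'(Z)=\diag(w,w^*)$. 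Thus $Z$ is a unitary lift of $\diag(w,w^*)$, so by the standard description of the boundary map $\partial[w]=[Z(1_n\oplus 0_n)Z^*]-[1_n\oplus 0_n]$. A short block computation yields $Z(1_n\oplus 0_n)Z^*=\hat w\hat w^*\oplus(1-\hat w^*\hat w)$, and the resulting relative class simplifies to $[1-\hat w^*\hat w]-[1-\hat w\hat w^*]$.

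Finally I would apply the general formula to $\pi'=M_\pm(\pi)$, $w=\tilde u$ and $\hat w=\hat v$. Substituting the identities $1-\hat v^*\hat v=\iota_+(1-\hat u^*\hat u)$ and $1-\hat v\hat v^*=\iota_+(1-\hat u\hat u^*)$ obtained in the first step, and using that $\iota_+\colon K_0(I)^*\to K_0^h(I)$ carries $[e]$ to $[\iota_+(e)]$, gives precisely $\partial\iota_+[u]=\iota_+([1-\hat u^*\hat u]-[1-\hat u\hat u^*])$. The matrix verifications (unitarity of $Z$ and the block product) are mechanical. The main obstacle is the justification of the index description of $\partial$, namely that the connecting map of the extension is computed by $[Z(1_n\oplus 0_n)Z^*]-[1_n\oplus 0_n]$ for any unitary lift $Z$ of $\diag(w,w^*)$ and that the relative class simplifies as claimed; I would settle this by unwinding the construction of the boundary map in the starred-$K$ long exact sequence, or equivalently by transporting the classical algebraic index computation along the forgetful functor to ordinary $K$-theory, observing that every idempotent and invertible occurring is in fact a genuine projection or unitary.
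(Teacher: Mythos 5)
Your proposal is correct and follows essentially the same route as the paper: the paper also lifts $\diag(u,u^*)$ to a unitary over $R$ (namely the elementary matrix $h(\hat u,\hat u^*)$ of \cite{friendly}*{Formula (17)}, which for a partial isometry $\hat u$ equals your $Z$ up to the sign of one off-diagonal block) and computes $\partial$ by conjugating $1_n\oplus 0_n$. The only cosmetic difference is that you spell out the reduction from $K^h_*$ to $K_*(-)^*$ of $M_\pm(-)$ and write the unitary lift down from scratch rather than specializing the cited formula.
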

\begin{proof}
For every pair of elements $\hat{u}, \hat{v}\in M_nR$ such that $\pi(\hat{u})=u$ and $\pi(\hat{v})=u^*$, we can lift $\diag(u,u^*)\in \cU(M_{2n}S)$ to an elementary matrix 
$h=h(\hat{u},\hat{v})\in E_{2n}R$; a formula for this matrix is given in \cite{friendly}*{Formula (17)}. One checks that if $\hat{u}$ is a partial isometry, then $h(\hat{u}):=h(\hat{u},\hat{u}^*)\in \cU_{2n}R$. Thus if $p\in M_nR$ is the identity matrix, we have 
$\partial(\iota_+[u])=\iota_+([\ad(h(\hat{u}))(p)]-[p])$, and one computes that $[\ad(h(\hat{u}))(p)]-[p]=[1-\hat{u}^*\hat{u}]-[1-\hat{u}\hat{u}^*]\in K_0(I)^*$.
\end{proof}

Recall from \cite{cv}*{Example 6.12} that if $B$ is any $*$-algebra, then the functor $-\otimes B:\ahas\to\ahas$ induces a functor $kk^h\to kk^h$ which we again name $-\otimes B$, such that $j^h\circ (-\otimes B)=(-\otimes B)\circ j^h$. 

Following \cite{lpabook}*{Definition 6.3.11 (iii)} we call a graph \emph{essential} if it contains no sources and no sinks. Thus a finite graph $E$ is essential if and only if both $E$ and $E_t$ are regular. 

\begin{thm}\label{thm:duality}
Let $E$ be a finite essential graph. Then the functors $-\otimes \Omega L(E_t)$ and $-\otimes\Omega \ol{L(E_t)}:kk^h\to kk^h$ are right adjoint to the functors

\goodbreak

$-\otimes LE$ and $-\otimes\ol{LE}:kk^h\to kk^h$. Thus for every $R,S\in\ahas$ there are natural isomorphisms of $KH_0^h(\ell)$-modules
\begin{gather*}
kk^h(R\otimes LE,S)\iso kk_1^h(R,S\otimes L(E_t)),\\
kk^h(R\otimes \ol{LE},S)\iso kk_1^h(R,S\otimes \ol{L(E_t)}).
\end{gather*}
\end{thm}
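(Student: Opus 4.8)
The plan is to recognise the two claimed isomorphisms as the tensor--hom adjunctions attached to a strong duality in the tensor triangulated category $kk^h$, and to produce that duality from the fundamental triangles of Theorem \ref{thm:fundtriang}. Recall from \cite{cv} that $\otimes$ makes $kk^h$ into a symmetric monoidal triangulated category with unit $j^h(\ell)$, and that for each $B$ the endofunctor $-\otimes B$ is triangulated, so that $\otimes$ is exact in each variable. The endomorphism ring of the unit is the commutative ring $KH^h_0(\ell)=kk^h(\ell,\ell)$, and $\sigma$ is a central element of it. I would first record the formal input I need: the unit $j^h(\ell)$ is strongly dualizable and self-dual; finite coproducts of dualizable objects are dualizable; and the strongly dualizable objects form a thick subcategory on which $X\mapsto X^\vee$ is a contravariant exact functor satisfying $(X[-1])^\vee=X^\vee[+1]$. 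Consequently $j^h(\ell^{E^0})=j^h(\ell)^{E^0}$ is dualizable and canonically self-dual, and a $KH^h_0(\ell)$-linear endomorphism of it given by a matrix $M$ dualizes to the endomorphism given by the transpose $M^t$, using commutativity of $KH^h_0(\ell)$.

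Next I would feed the fundamental triangles into this machine. Since $E$ is essential, both $E$ and $E_t$ are regular (a sink of $E_t$ is a source of $E$, and conversely), so Theorem \ref{thm:fundtriang} applies with $\reg(E)=E^0$ and exhibits $j^h(LE)$ and $j^h(\ol{LE})$ as mapping cones of the square matrices $I-A_E^t$ and $I-\sigma A_E^t$ acting on $j^h(\ell^{E^0})$. Being cones of maps between dualizable objects, $j^h(LE)$ and $j^h(\ol{LE})$ are dualizable. Dualizing the first triangle and using self-duality of $j^h(\ell^{E^0})$ together with $(I-A_E^t)^\vee=I-A_E$ turns the cone triangle for $I-A_E^t$ into the cone triangle for $I-A_E$, up to the shift $X\mapsto X[+1]$; because $A_{E_t}^t=A_E$, the mapping cone of $I-A_E$ is $j^h(L(E_t))$ by the fundamental triangle for $E_t$, whence
\[
j^h(LE)^\vee\cong j^h(L(E_t))[+1]=j^h(\Omega L(E_t)).
\]
The twisted case is identical: $\sigma$ is central, so $(I-\sigma A_E^t)^\vee=I-\sigma A_E=I-\sigma A_{E_t}^t$, and dualizing gives $j^h(\ol{LE})^\vee\cong j^h(\Omega\ol{L(E_t)})$.

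Finally I would read off the adjunctions. Strong dualizability of $j^h(LE)$ with right dual $j^h(\Omega L(E_t))$ yields, naturally in $X,Y\in kk^h$, an isomorphism $\hom(X\otimes j^h(LE),Y)\cong \hom(X,Y\otimes j^h(\Omega L(E_t)))$; thus $-\otimes LE$ is left adjoint to $-\otimes\Omega L(E_t)$, and likewise in the twisted case. Taking $X=j^h(R)$, $Y=j^h(S)$ and using $j^h(S)\otimes j^h(\Omega L(E_t))=j^h(S\otimes L(E_t))[+1]$ rewrites the right-hand side as $kk^h_1(R,S\otimes L(E_t))$, which is the first stated isomorphism; the second follows verbatim with $\ol{LE}$ and $\ol{L(E_t)}$. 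Naturality and $KH^h_0(\ell)$-linearity are automatic, since the unit and counit are maps in $kk^h$ and $\otimes$ is $KH^h_0(\ell)$-linear. The main obstacle is precisely the formal input of the first paragraph: one must be sure that \cite{cv} supplies a genuine symmetric monoidal structure with $\otimes$ exact in each variable, so that \emph{dualizables form a thick subcategory} and \emph{the dual of a matrix is its transpose} are both available. Should that structure not be on record in the required strength, the fallback is to construct the evaluation class in $kk^h_{-1}(L(E_t)\otimes LE,\ell)$ and the coevaluation class in $kk^h_1(\ell,LE\otimes L(E_t))$ explicitly from the standard Cohn extensions \eqref{seq:cohnext}--\eqref{seq:cohnext2} and to verify the two triangle identities directly; this is the algebraic counterpart of the Fredholm-module construction of Kaminker and Putnam \cite{kamiput}, and is where the genuine computational work would lie.
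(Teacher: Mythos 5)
Your primary argument rests on tensor--triangulated infrastructure for $kk^h$ that is not on record and is not formal, and this is where the proof actually breaks. What \cite{cv} (and the present paper, before the statement of Theorem \ref{thm:duality}) provides is only that for each fixed $*$-algebra $B$ the assignment $-\otimes B$ descends to a triangulated endofunctor of $kk^h$. That is much weaker than what you use: a symmetric monoidal triangulated category in which the unit is self-dual, duals of distinguished triangles are distinguished triangles, and \emph{the strongly dualizable objects form a thick subcategory}. The last point is the crux, since you need the cone of $I-A_E^t$ to inherit dualizability from $j^h(\ell)^{E^0}$. The standard proofs of thickness (Hovey--Palmieri--Strickland, May) go through an internal hom, via the natural map $DX\otimes Y\to F(X,Y)$ and the two-out-of-three property; $kk^h$ is not known to be closed monoidal, and without internal homs one must instead induce evaluation and coevaluation maps on cones from compatible pairings of triangles, which requires extra compatibility axioms between $\otimes$ and the triangulation (May's TC axioms) that nobody has verified for $kk^h$. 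So the sentence ``being cones of maps between dualizable objects, $j^h(LE)$ and $j^h(\ol{LE})$ are dualizable'' is precisely the unproved step, and you correctly flag it yourself as the main obstacle. (Granting that infrastructure, the rest of your dualization of the fundamental triangles, including the degree shift $j^h(LE)^\vee\cong j^h(\Omega L(E_t))$ and the identification $I-A_E=I-A_{E_t}^t$ for essential $E$, is correct and would be an attractive formal derivation from Theorem \ref{thm:fundtriang}.)

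Your fallback is exactly the route the paper takes, but you do not execute it, and that is where all the content of the theorem lives. Concretely, the paper builds the evaluation class $\kappa=j^h(\rho)[+1]\in kk^h_{-1}(L(E_t)\otimes L(E),\ell)$ from a pair of commuting unital $*$-homomorphisms $\rho_1:L(E_t)\to\Sigma_X$ and $\rho_2:L(E)\to\Sigma_X$ on the Karoubi suspension of the (pointed) path space of $E$, and the coevaluation class $\nabla\in kk^h_1(\ell,L(E)\otimes L(E_t))$ from the unitary $u+1-p$ with $u=\sum_{e\in E^1}e\otimes e_t^*$ and $p=\sum_v v\otimes v$; the two triangle identities \eqref{eq:dual} then reduce, via the cone extension $M_X\rightarrowtail\Gamma_X\twoheadrightarrow\Sigma_X$ tensored with $L(E_t)$, to an index computation carried out with Lemma \ref{lem:index} (the partial-isometry lift formula for the connecting map $K_1^h\to K_0^h$). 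None of this appears in your write-up. As it stands, your proposal is a correct reduction of the theorem to either (a) a body of tensor-triangulated category theory for $kk^h$ that would itself have to be proved, or (b) the explicit Kaminker--Putnam-style construction, which you name but do not carry out.
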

\begin{proof}
Let $\cP$ be the set of finite paths in $E$ and $\cP_{\ge 1}\subset\cP$ the subset of paths of positive length; let $X=(\cP_{\ge 1})_+$ be the pointed set obtained by adding a basepoint $\bullet$. 
Let $\pi:\Gamma_X\to\Sigma_X$ be the projection. With notations as in \eqref{parribajo}, for each $e\in E^1$, put
\[
\rho_1(e_t)=\pi(\sum_{\alpha\in\cP_{s(e)}}\epsilon_{\alpha e,\alpha}),\,\,\rho_2(e)=\pi(\sum_{\alpha\in\cP^{r(e)}}\epsilon_{e\alpha,\alpha}).
\]
One checks (as in \cite{kamiput}*{Proposition 4.2}) that the assignments $e_t\mapsto \rho_1(e_t)$ and $e\mapsto \rho_2(e)$ extend to unital $*$-homomorphisms $\rho_1:L(E_t)\to \Sigma_X$ and $\rho_2:L(E)\to \Sigma_X$ and that $\rho_1(a)$ and $\rho_2(b)$ commute for every $a\in L(E_t)$ and $b\in L(E)$. Hence for $\cE=L(E_t)\otimes L(E)$ we have a $*$-homomorphism $\rho:\cE\to \Sigma_X$, which defines a class
$\kappa=j^h(\rho)[+1]\in kk^h(\Omega L(E_t)\otimes L(E),\ell)=kk_{-1}^h(L(E_t)\otimes L(E),\ell)$. Hence we have a homomorphism of $KH_0^h(\ell)$-modules
\begin{equation}\label{map:adj1}
kk_1^h(R,S\otimes L(E_t))\to kk^h(R\otimes L(E),S),\,\,\xi\mapsto (S\otimes\kappa)\circ (\xi\otimes L(E)).
\end{equation}
Consider the elements $p=\sum_{v\in E^0}v\otimes v$ and $u=\sum_{e\in E^1}e\otimes e^*_t$ of $\cE'=L(E)\otimes L(E_t)$. One checks that $u\in\cU(p\cE' p)$. Hence
$u_1=u+1-p\in \cU(\cE')$; write $\nabla$ for the image of the class $[u_1]\in K_1(\cE')^*$ under the composite of canonical maps
\[
K_1^*(\cE')\to K_1^h(\cE')\to KH^h_1(\cE')=kk_1^h(\ell,\cE').
\]
We have another $KH_0^h(\ell)$-linear homomorphism
\begin{equation}\label{map:adj2}
kk^h(R\otimes L(E),S)\to kk_1^h(R, S\otimes L(E_t)),\,\,\eta\mapsto ((\eta\otimes L(E_t))[+1])\circ (R\otimes \nabla).
\end{equation}
To prove that \eqref{map:adj1} and \eqref{map:adj2} are isomorphisms, it suffices to establish the following identities
\begin{gather}\label{eq:dual}
(\kappa\otimes  L(E_t))\circ (L(E_t)\otimes \nabla)=-\id_{j^h(L(E_t))},\\ (L(E)\otimes \kappa)\circ (\nabla\otimes L(E))=-\id_{j^h(L(E))}.\nonumber  
\end{gather}
Consider the cone extension
\[
\xymatrix{M_X\ar@{ >-}[r]&\Gamma_X\ar@{>>}[r]& \Sigma_X.}
\]
Tensor this extension on the right with $L(E_t)$; the first identity of \eqref{eq:dual} boils down to the assertion that the index map $KH^h_1(\Sigma_XLE_t)\to KH^0(M_XLE_t)\cong KH^h_0(LE_t)$ sends the class of $\sum_e\rho(1_{LE_t}\otimes e)\otimes e_t^*$ to minus the class of $\iota_+(1)$; the second identity is the analogous statement for the index of the class of $\sum_ee\otimes\rho(e_t^*\otimes 1_{L(E)})$. Both are straightforward using Lemma \ref{lem:index}. Thus the first adjunction of the theorem is proved. Observe that the homomorphisms $\rho_1$ and $\rho_2$ defined above are also $*$-homomorphisms for the involutions of $\ol{L(E)}$, 
$\ol{L(E_t)}$ and $\ol{\Sigma}_X$. Notice also that the elements $p$ and $v$ are a projection and a unitary also for the involution of $\ol{L(E)}\otimes\ol{L(E_t)}$. Hence the identities \eqref{eq:dual} also prove the second adjunction of the theorem.
\end{proof}

\begin{coro}\label{coro:duality}
Let $E$ and $S$ be as in Theorem \ref{thm:duality}. Assume that $S$ is unital and contains a central element $x$ such that $xx^*=-1$. 
There there is a natural isomorphism of $KH_0^h(\ell)$-modules
\[
kk^h(\ol{L(E)},S)\iso kk^h(L(E),S).
\] 
\end{coro}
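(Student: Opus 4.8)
The plan is to deduce the corollary from Poincaré duality (Theorem \ref{thm:duality}) combined with the graded rigidity isomorphism of Example \ref{ex:zgr-1}. First I would apply Theorem \ref{thm:duality} with $R=\ell$ to both Leavitt path algebras, obtaining natural $KH_0^h(\ell)$-module isomorphisms
\[
kk^h(L(E),S)\iso kk_1^h(\ell,S\otimes L(E_t)),\qquad kk^h(\ol{L(E)},S)\iso kk_1^h(\ell,S\otimes \ol{L(E_t)}).
\]
Thus it suffices to produce a $KH_0^h(\ell)$-module isomorphism $kk_1^h(\ell,S\otimes \ol{L(E_t)})\iso kk_1^h(\ell,S\otimes L(E_t))$, and for this it is enough to exhibit an isomorphism $j^h(S\otimes \ol{L(E_t)})\cong j^h(S\otimes L(E_t))$ in $kk^h$.

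Next I would use that $L(E_t)$ is a $\Z$-graded $*$-algebra, hence $\Z/2\Z$-graded via its even/odd grading, and that $\ol{L(E_t)}$ is exactly the algebra $\ol{A}$ of \eqref{invol} for $A=L(E_t)$. Since $S$ is unital and contains a central element $x$ with $xx^*=-1$, Example \ref{ex:zgr-1} applies with $A=L(E_t)$ and $R=S$, furnishing a $*$-isomorphism $\theta_x\colon \ol{L(E_t)}\otimes S\iso L(E_t)\otimes S$. Composing with the flip $*$-isomorphisms $S\otimes \ol{L(E_t)}\cong \ol{L(E_t)}\otimes S$ and $L(E_t)\otimes S\cong S\otimes L(E_t)$ (which preserve the tensor product involutions), I obtain a $*$-isomorphism $S\otimes \ol{L(E_t)}\iso S\otimes L(E_t)$. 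Applying the functor $j^h$ yields the desired isomorphism in $kk^h$, and therefore a $KH_0^h(\ell)$-module isomorphism between the two $kk_1^h$-groups, since $kk^h$ is enriched over $KH_0^h(\ell)$ and any isomorphism of objects induces a module isomorphism on the corresponding Hom-groups. Composing the three isomorphisms then gives the statement, and the asserted naturality is inherited from the naturality of the isomorphisms in Theorem \ref{thm:duality} together with the evident naturality of $\theta_x$ in the coefficient algebra $S$.

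There is essentially no deep obstacle here: the substance is already contained in Theorem \ref{thm:duality} and Example \ref{ex:zgr-1}. The two points requiring care are (i) that the centrality of $x$ is genuinely used — as Remark \ref{rem:xcentral} emphasizes, the weaker hypothesis that $-1$ merely be positive in $S$ does not suffice, because $\theta_x$ is assembled from the comodule structure $\ol{L(E_t)}\to L(E_t)\otimes \ol{L_1}$ and needs $x$ to commute past its image; and (ii) bookkeeping of the flip isomorphisms, so that the rigidity isomorphism $\theta_x$, which naturally carries the graph algebra on the left tensor factor, is transported to the right-hand factor where the duality isomorphisms of Theorem \ref{thm:duality} place $L(E_t)$ and $\ol{L(E_t)}$.
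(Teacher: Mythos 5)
Your proposal is correct and follows the paper's own proof essentially verbatim: two applications of Theorem \ref{thm:duality} with $R=\ell$, linked by the $*$-isomorphism $S\otimes\ol{L(E_t)}\cong S\otimes L(E_t)$ furnished by Example \ref{ex:zgr-1} using the central element $x$ with $xx^*=-1$. The additional remarks on the role of centrality and on the flip isomorphisms are accurate bookkeeping that the paper leaves implicit.
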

\begin{proof}
Using Theorem \ref{thm:duality} at the first and third steps and Example \ref{ex:zgr-1} at the second step, we obtain natural isomorphisms
\[
kk^h(\ol{L(E)},S)\iso KH_1^h(S\otimes\ol{L(E_t)})\iso KH^h_1(S\otimes L(E_t))\iso kk^h(L(E),S).
\]
\end{proof}

\begin{rem}\label{rem:pdkk}
Let $E$ be a finite essential graph. Theorem \ref{thm:duality} and the equivalence of Remark \ref{rem:kkvkkh} together imply that $L(E)$ and $L(E_t)$ are Poincar\'e dual not only in $kk^h$ but also in $kk$. 
\end{rem}

\section{Universal coefficient theorem}\label{sec:uct}

Let $E$ be a graph.  Put
\[
\ol{\BF}^\vee(E)=\coker(I^t-\sigma A_E),\,\, \BF^\vee(E)=\ol{\BF}^\vee(E)\otimes_{\Z[\sigma]}\Z.
\]

Let $R\in\ahas$; write $\hom$ for homomorphisms of abelian groups. Consider the maps
\begin{gather}\label{map:xi0}
\ev:kk^h(LE,R)\to \hom(\BF(E),KH_0^h(R)),\,\,\\ \ev:kk^h(\ol{L(E)},R)\to \hom_{\Z[\sigma]}(\ol{\BF}(E),KH_0^h(R))\nonumber\\
\xi\mapsto \xi_0=KH_0^h(\xi)\circ\can.\nonumber
\end{gather}
\begin{thm}\label{thm:uct}
Let $E$ be a graph such that $|E^0|<\infty$ and let $R\in\ahas$. Then the maps 
\eqref{map:xi0} are surjective and fit into exact sequences
\begin{gather}\label{seq:kkler}
0\to KH_1^h(R)\otimes\BF^\vee(E)\to kk^h(L(E),R)\to \hom(\BF(E),KH_0^h(R))\to 0,\\
0\to KH_1^h(R)\otimes_{\Z[\sigma]}\ol{\BF}^\vee(E)\to kk^h(\ol{L(E)},R)\to \hom_{\Z[\sigma]}(\ol{\BF}(E),KH_0^h(R))\to 0.\nonumber
\end{gather}
\end{thm}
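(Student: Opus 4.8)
The plan is to obtain both sequences by applying the cohomological functor $kk^h(-,R)=\hom_{kk^h}(-,j^h(R))$ to the two distinguished triangles of Theorem \ref{thm:fundtriang} and reading off a short exact sequence from the resulting long exact sequence. I will treat $L(E)$ first; the argument for $\ol{L(E)}$ is identical once one keeps track of the $\Z[\sigma]$-action.

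Applying $\hom_{kk^h}(-,j^h(R))$ to the first triangle of Theorem \ref{thm:fundtriang} yields a long exact sequence. Because $E^0$ is finite and $j^h$ is $E^0$-additive, the two outer terms are finite sums of copies of $j^h(\ell)$, so by \eqref{eq:kkh=khh} and the suspension conventions they compute as $kk_n^h(\ell^{E^0},R)=KH_n^h(R)^{E^0}$ and $kk_n^h(\ell^{\reg(E)},R)=KH_n^h(R)^{\reg(E)}$, with the maps between them induced by $I-A_E^t$ being $\hom(I-A_E^t,KH_n^h(R))$. Around $kk^h(L(E),R)$ the sequence reads
\[
KH_1^h(R)^{E^0}\xrightarrow{(I-A_E^t)^*}KH_1^h(R)^{\reg(E)}\xrightarrow{\partial^*} kk^h(L(E),R)\xrightarrow{p^*}KH_0^h(R)^{E^0}\xrightarrow{(I-A_E^t)^*}KH_0^h(R)^{\reg(E)},
\]
and extracting the image of $p^*$ gives the short exact sequence
\[
0\to\coker\big((I-A_E^t)^*\text{ on }KH_1^h(R)\big)\to kk^h(L(E),R)\to\ker\big((I-A_E^t)^*\text{ on }KH_0^h(R)\big)\to 0.
\]

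Next I would identify the two end terms. Applying the left exact functor $\hom(-,KH_0^h(R))$ to the presentation $\Z^{\reg(E)}\xrightarrow{I-A_E^t}\Z^{E^0}\to\BF(E)\to 0$ shows that the kernel term is $\hom(\BF(E),KH_0^h(R))$, sitting inside $KH_0^h(R)^{E^0}=\hom(\Z^{E^0},KH_0^h(R))$ via the quotient $\Z^{E^0}\onto\BF(E)$. For the cokernel term I would dualize: since $KH_1^h(R)\otimes-$ is right exact and $\hom(I-A_E^t,KH_1^h(R))$ is $KH_1^h(R)\otimes(I^t-A_E)$, the cokernel is $KH_1^h(R)\otimes\coker(I^t-A_E)=KH_1^h(R)\otimes\BF^\vee(E)$, using $\BF^\vee(E)=\ol{\BF}^\vee(E)\otimes_{\Z[\sigma]}\Z=\coker(I^t-A_E)$. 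This produces the first sequence of \eqref{seq:kkler}.

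It remains to check that the surjection $kk^h(L(E),R)\to\hom(\BF(E),KH_0^h(R))$ is the map $\ev$ of \eqref{map:xi0}; this identification is the one genuinely fiddly step, and the main obstacle. Unwinding definitions, $p^*(\xi)=\xi\circ p$ corresponds under $kk^h(\ell^{E^0},R)=\hom(\Z^{E^0},KH_0^h(R))$ to $x\mapsto KH_0^h(\xi)\big(KH_0^h(p)(x\otimes[1])\big)$, while by the construction of $\can$ in \eqref{map:cano}, applied to the triangle with $M=I-A_E^t$, one has $\can(\bar x)=KH_0^h(p)(x\otimes[1])$; hence $p^*(\xi)$ factors as $\Z^{E^0}\onto\BF(E)\to KH_0^h(R)$ with second arrow $KH_0^h(\xi)\circ\can$, i.e. $\ev(\xi)=KH_0^h(\xi)\circ\can$, so $\ev$ is surjective. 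Finally, for $\ol{L(E)}$ I would run the same argument on the second triangle of Theorem \ref{thm:fundtriang}: here all the $kk^h$-groups are $\Z[\sigma]$-modules and the triangle maps are $\Z[\sigma]$-linear, so replacing $\hom,\otimes$ by $\hom_{\Z[\sigma]},\otimes_{\Z[\sigma]}$ and $I-A_E^t$ by $I-\sigma A_E^t$ yields the kernel $\hom_{\Z[\sigma]}(\ol{\BF}(E),KH_0^h(R))$ and cokernel $KH_1^h(R)\otimes_{\Z[\sigma]}\ol{\BF}^\vee(E)$, with the surjection equal to $\ev$ via the $\Z[\sigma]$-linear $\ol{\can}$ of Remark \ref{rem:olcan}. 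Throughout, the care required is in keeping the suspension degree positive so that $KH_1^h$ (not $KH_{-1}^h$) appears, and in matching the covariantly defined $\can$ against the contravariant restriction $p^*$.
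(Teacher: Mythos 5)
Your proposal is correct and takes essentially the same route as the paper: apply $kk^h(-,R)$ to the triangles of Theorem \ref{thm:fundtriang}, use $kk^h(\ell^X,R)\cong\hom(\Z^X,KH_0^h(R))$ for finite $X$ to compute the outer terms of the resulting five-term exact sequence, and identify the kernel and cokernel of the transposed matrix as $\hom(\BF(E),KH_0^h(R))$ and $KH_1^h(R)\otimes\BF^\vee(E)$ (respectively their $\Z[\sigma]$-linear analogues). The paper's proof is simply a terser version of yours, relegating to a ``one checks'' the identification of the surjection with $\ev$ --- precisely the step you single out as the fiddly one and carry out explicitly.
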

\begin{proof}
Applying $kk^h(-,R)$ to the triangle of Theorem \ref{thm:fundtriang}, and using that for any finite set $X$,
\begin{equation}\label{map:caniso}
kk^h(\ell^X,R)\cong\hom(\Z^X,KH_0^h(R))=\hom_{\Z[\sigma]}(\Z[\sigma]^X,KH_0^h(R))
\end{equation}
one obtains exact sequences as in the theorem. One checks that the surjections therein agree with the evaluation maps \eqref{map:xi0}.
\end{proof}

\begin{rem}\label{rem:etdual}
Let $E$ be a finite essential graph. Then $A_{E_t}=A_E^t$, and 
\[
\BF^\vee(E)=\BF(E_t),\,\, \ol{\BF}^\vee(E)=\ol{\BF}(E_t). 
\]
\end{rem}

\begin{lem}\label{lem:dualindex}
Let $E$ and $R$ be as in Theorem \ref{thm:duality}. Then the following diagrams, where the horizontal maps are as in Theorem \ref{thm:uct}, the slanted ones as in Theorem \ref{thm:khseq} and the vertical ones as in Theorem \ref{thm:duality}, commute.
\begin{gather*}
\xymatrix{\BF(E_t)\otimes KH_1^h(R)\ar[dr]_{\ref{thm:khseq}}\ar[r]^(.55){\ref{thm:uct}}& kk^h(LE,R)\ar[d]_{\wr}^{\ref{thm:duality}}\\
           & KH^h_1(R\otimes L(E_t))}\\					
\xymatrix{\ol{\BF(E_t)}\otimes_{\Z[\sigma]} KH_1^h(R)\ar[dr]_{\ref{thm:khseq}}\ar[r]^(.55){\ref{thm:uct}}& kk^h(\ol{LE},R)\ar[d]_{\wr}^{\ref{thm:duality}}\\
           & KH^h_1(R\otimes \ol{L(E_t)})}
\end{gather*}
\end{lem}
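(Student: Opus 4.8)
The plan is to prove both squares at once by recognizing that Poincaré duality carries the six-term exact sequence obtained by applying $kk^h(-,R)$ to the fundamental triangle of $E$ (Theorem \ref{thm:fundtriang}) onto the one obtained by applying $KH^h(-\otimes R)$ to the fundamental triangle of $E_t$, matching the connecting maps. First I would record the alignment that makes this sensible: since $E$ is essential, $\reg(E)=E^0=\reg(E_t)=E_t^0$ and $A_{E_t}=A_E^t$. Hence, via the canonical identifications $kk_1^h(\ell^{E^0},R)\cong\bigoplus_{E^0}KH_1^h(R)\cong KH_1^h(\ell^{E^0}\otimes R)$, the contravariant action $(I-A_E^t)^*$ appearing in Theorem \ref{thm:uct} and the covariant action $(I-A_{E_t}^t)_*$ appearing in Theorem \ref{thm:khseq} become the \emph{same} integral matrix $I-A_E$ acting on $\bigoplus_{E^0}KH_1^h(R)$. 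Consequently the two maps in each square have the literally identical source $\coker(I-A_E)\otimes KH_1^h(R)\cong\BF(E_t)\otimes KH_1^h(R)$, and it remains to compare where they send it.

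Next I would reduce the comparison to the structure maps of the two triangles. The inclusion of Theorem \ref{thm:uct} is precomposition with the connecting map $\partial\colon j^h(LE)\to j^h(\ell^{E^0})[-1]$, i.e. $\phi\mapsto\phi[-1]\circ\partial$; the inclusion of Theorem \ref{thm:khseq} is postcomposition with the vertex-inclusion structure map $\ell^{E_t^0}\to L(E_t)$, $\chi_v\mapsto v$. Using the explicit duality isomorphism $D(\xi)=(\xi\otimes L(E_t))[+1]\circ\nabla$ from the proof of Theorem \ref{thm:duality} (the map \eqref{map:adj2} with $R=\ell$), bilinearity of composition gives
\[
D(\phi[-1]\circ\partial)=(\phi\otimes L(E_t))\circ\beta,
\qquad
\beta=(\partial\otimes L(E_t))[+1]\circ\nabla\in KH_0^h(\ell^{E^0}\otimes L(E_t)).
\]
Since $KH_0^h(\ell^{E^0}\otimes L(E_t))=\bigoplus_{v\in E^0}KH_0^h(L(E_t))$, the whole statement collapses to identifying the class $\beta$.

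The crux is therefore to show that the $v$-component of $\beta$ is the vertex class $[v]\in KH_0^h(L(E_t))$: granting this, $(\phi\otimes L(E_t))\circ\beta$ is exactly the image of $\phi$ under the map induced by $\chi_v\mapsto v$, which is the inclusion of Theorem \ref{thm:khseq}, and the first square commutes. Identifying $\beta$ is precisely an index computation of the kind already carried out in the proof of Theorem \ref{thm:duality}: pairing the fundamental class $\nabla=[u_1]$, with $u_1=\sum_{e}e\otimes e_t^*+1-p$, against the boundary $\partial$ amounts to running Lemma \ref{lem:index} on the cone extension $M_X\into\Gamma_X\onto\Sigma_X$ tensored with $L(E_t)$, which is the very computation establishing the identities \eqref{eq:dual}. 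The second square follows by the identical argument applied to the twisted involution, using the $\Z[\sigma]$-equivariant identification $\ol{\BF}^\vee(E)=\ol{\BF}(E_t)$, the second adjunction of Theorem \ref{thm:duality}, and the second identity in \eqref{eq:dual}.

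The main obstacle is the bookkeeping hidden in this last step. The connecting map $\partial$ is built from the Cohn-algebra presentation used to construct the fundamental triangle, whereas $\nabla$ and $\kappa$ come from the $\rho_1,\rho_2$–construction into $\Sigma_X$; reconciling these two origins of ``the boundary,'' while tracking tensor-factor orders, the suspension shift $[+1]$, and the global sign appearing in \eqref{eq:dual} (where the index lands on $-\iota_+(1)$), is where the real care is needed. Once $(\partial\otimes L(E_t))[+1]\circ\nabla$ is recognized through Lemma \ref{lem:index} as delivering the vertex classes with a sign matching the normalization of $\partial$, both squares commute formally.
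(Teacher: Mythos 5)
Your reduction coincides with the paper's: everything collapses to identifying the class $\beta=(\partial\otimes L(E_t))[+1]\circ\nabla\in KH_0^h(\ell^{E^0}\otimes L(E_t))$ (the paper calls it $\eta$) as the vector of vertex classes, after which commutativity of both squares is formal; the twisted case is handled the same way. The one place where your argument goes astray is precisely the step you flag as delicate: you assert that computing $\beta$ ``is the very computation establishing the identities \eqref{eq:dual}'', i.e.\ an application of Lemma \ref{lem:index} to the cone extension $M_X\to\Gamma_X\to\Sigma_X$ tensored with $L(E_t)$. It is not. The boundary $\partial$ of the fundamental triangle (Theorem \ref{thm:fundtriang}) is the connecting map of the Cohn extension \eqref{seq:cohnext}, so $\beta$ is the index of $[u_1]$ in the extension obtained by tensoring \eqref{seq:cohnext} with $L(E_t)$ — a different, though analogous, application of Lemma \ref{lem:index}. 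Concretely, one lifts $u_1=\sum_e e\otimes e_t^*+1-p$ to the element of $C(E)\otimes L(E_t)$ given by the same formula; since $E$ is essential this lift is an isometry with $1-\hat u_1\hat u_1^*=\sum_{v}(v-m_v)\otimes v=\sum_v q_v\otimes v$, and under the identification of $j^h(\cK(E))$ with $j^h(\ell^{\reg(E)})$ sending $q_v\mapsto\chi_v$ this is exactly the vector of vertex classes (up to the sign conventions already fixed in \eqref{eq:dual} and Theorem \ref{thm:gentri}). So your proposal has the correct skeleton but leaves the crux unproved, pointing instead at a computation from the proof of Theorem \ref{thm:duality} that does not compute $\beta$; inserting the Cohn-extension index calculation above closes the gap, and the second diagram follows verbatim using \eqref{seq:cohnext2}.
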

\begin{proof} Consider the first diagram first.
The composite of the slanted arrow with the surjection $p:\Z^{E^0}\otimes KH_1^h(R)\to \BF(E_t)\otimes KH_1^h(R)$ maps an element $\xi=\chi_v\otimes\xi_v$ to $\sum_{v\in E^0}[v]\cup \xi_v$. Let $[u_1]\in KH_1^h(LE\otimes LE_t)$ be the class  of the element $u_1$ appearing in the proof of Theorem \ref{thm:duality}. Consider the algebra extension 
$\fE$ of $L(E)\otimes L(E_t)$ which results by applying $\otimes L(E_t)$ to the Cohn extension \eqref{seq:cohnext}. Let $\eta=\partial_\fE([u_1])\in KH_0^h(\ell^{E^0}\otimes L(E_t))$ be the index of $[u_1]$. 
The composite of the horizontal and vertical arrows with $p$ sends $\chi_v\otimes\xi$ to 
\[
(\xi\otimes LE_t)\circ \eta\in kk_1^h(\ell^{E^0}\otimes L(E_t),R\otimes L(E_t)).
\] 
A straightforward calculation, using Lemma \ref{lem:index}, shows that $\eta=[\sum_{v\in E^0}\chi_v\otimes v]$. It follows that the first diagram commutes. The same argument, substituting $\ol{L(E)}$ and $\ol{L(E_t)}$ for $L(E)$ and $L(E_t)$ and the extension \eqref{seq:cohnext2} for \eqref{seq:cohnext}, proves that also the second diagram commutes.
\end{proof}

\begin{rem}\label{rem:uctkk}
It follows using the equivalence of Remark \ref{rem:kkvkkh}, that the first exact sequence of Theorem \ref{thm:uct} and the first commutative diagram of Lemma \ref{lem:dualindex} still hold if we remove the superscript $h$ everywhere.
\end{rem}

\section{Lifting \topdf{$kk^h$}{kkh}-maps to algebra maps}\label{sec:kklift}

Let $E$ be a finite graph such that $\sink(E)=\emptyset$. Set
\begin{equation}\label{Lepsilon}
L^0(E)=L(E),\,\, L^1(E)=\ol{L(E)}.
\end{equation}
For $\epsilon\in\{0,1\}$, let $\phi : L^{\epsilon}(E) \to R$ be a unital $*$-algebra homomorphism with $R$ strictly properly infinite. Assume that $\phi$ is very full. Set 
\begin{align}\label{rphi}
R_\phi = &\{ x \in R \ : \phi (e e^\ast)x=x\phi(ee^\ast),\text{ for all } e \in E^1\}\\
       =& \oplus_{e \in E^1} \phi (e e^\ast) R \phi (e e^\ast).\nonumber 
\end{align}
Because $\phi$ is very full, $\phi(ee^*)\in\proj_f(R)$ for all $e\in E^1$, 
whence the inclusion $\phi(ee^*)R\phi(ee^*)\subset R$ induces an isomorphism in $K^*_1$, by Lemma \ref{lem:k1morite}. It follows that the direct sum $R_\phi\subset R^{E^1}$ of those inclusions induces an isomorphism 
\begin{equation}\label{map:adjoint}
K_1(R_\phi)^* = \bigoplus_{e \in E^1} K_1( \phi (e e^\ast) R \phi (e e^\ast))^*\iso (K_1(R)^*)^{E^1}.
\end{equation}
Let $f:X\to Y$ be a map between finite sets and $M$ an abelian group. We write $f_*:\Z^X\to \Z^Y$, $f_*(\chi_x)=\chi_{f(x)}$; we shall abuse notation and also write $f_*$ for $f_*\otimes M$. In particular the source map $s:E^1\to E^0$ induces a homomorphism
$s_*:K_1(R)^{E^1}\to K_1(R)^{E^0}$. Consider the composite
\begin{equation}\label{map:longcompo}
\partial: K_1(R_\phi)^*\cong (K_1(R)^*)^{E^1}\overset{s_*}\lra (K_1(R)^*)^{E^0}\to K_1^h(R)^{E^0}\to kk^h(L^{\epsilon}(E),R)
\end{equation}
Let 
\begin{equation}\label{eq:elu}
u=(u_e)_{e\in E^1}\in \cU(R_{\phi})=\bigoplus_{e\in E^1}\cU(\phi(ee^*)R\phi(ee^*));
\end{equation}
consider the $*$-homomorphism 
\begin{equation}\label{map:phiu}
\phi_u:L^\epsilon(E)\to R, \phi_u(e)=u\phi(e).
\end{equation}
\begin{lem}\label{lem:borde}
Let $E$ be a finite graph, $R$ a $*$-algebra and $\phi:L(E)\to R$ a unital $*$-homomorphism. Assume that $E$ is purely infinite simple and that $R$ is strictly properly infinite. Then
\[
j^h(\phi_u)=j^h(\phi)+\partial([u]).
\]
\end{lem}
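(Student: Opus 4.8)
The plan is to exploit that $\phi_u$ differs from $\phi$ only in a way invisible to the ``vertex part'' of $L^\epsilon(E)$, so that the difference $j^h(\phi_u)-j^h(\phi)$ is forced to factor through the connecting morphism of the fundamental triangle, and then to identify the resulting secondary class with $\iota_+\circ s_*([u])$ by an index computation. Since the surrounding constructions \eqref{map:phiu} and \eqref{map:longcompo} are set up for $L^\epsilon(E)$, I treat the two involutions uniformly.

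First I would record the elementary properties of $\phi_u$. Writing $u=\sum_{e\in E^1}u_e$ as in \eqref{eq:elu}, one has $u_e^*u_e=u_eu_e^*=\phi(ee^*)$, and since $E$ is regular (it is finite with $\sink(E)=\emptyset$) also $u^*u=uu^*=\phi(\sum_{e}ee^*)=\phi(1)=1$; thus $u\in\cU(R)$ and $\phi_u$ is a unital $*$-homomorphism. A direct computation gives $\phi_u(v)=\phi(v)$ for $v\in E^0$ and $\phi_u(ee^*)=u\phi(ee^*)u^*=u_eu_e^*=\phi(ee^*)$ for $e\in E^1$. In particular $\phi$ and $\phi_u$ take the same values on all vertices, so they induce the same map $\BF(E)\to KH_0^h(R)$; that is, $\ev(j^h(\phi_u))=\ev(j^h(\phi))$ in the notation of \eqref{map:xi0}.

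Next I would invoke the fundamental triangle (Theorem \ref{thm:fundtriang}), which for regular $E$ reads
\[
j^h(\ell^{E^0})\xrightarrow{\,I-\sigma^\epsilon A_E^t\,}j^h(\ell^{E^0})\xrightarrow{\,p\,} j^h(L^\epsilon(E))\xrightarrow{\,\partial_E\,}j^h(\ell^{E^0})[-1],
\]
where $\sigma^0=1$, $\sigma^1=\sigma$, and $p$ is induced by $\chi_v\mapsto v$. Since $\phi$ and $\phi_u$ coincide on vertices, $(j^h(\phi_u)-j^h(\phi))\circ p=0$, so applying $kk^h(-,R)$ and using exactness produces a class $\zeta\in kk^h_1(\ell^{E^0},R)=KH_1^h(R)^{E^0}$, with $j^h(\phi_u)-j^h(\phi)=\zeta\circ\partial_E$. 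The long exact sequence identifies $\ker(\,\cdot\circ\partial_E)$ with $\operatorname{im}(I-\sigma^\epsilon A_E)$, which is precisely the ambiguity built into the universal coefficient sequence \eqref{seq:kkler}. As the final arrow of \eqref{map:longcompo} is $\zeta\mapsto\zeta\circ\partial_E$ (preceded by $K_1^h(R)^{E^0}\to KH_1^h(R)^{E^0}$), the lemma reduces to the congruence $\zeta\equiv\iota_+\circ s_*([u])$ modulo $\operatorname{im}(I-\sigma^\epsilon A_E)$.

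Finally I would compute $\zeta$. I would realize $\partial_E$ as the boundary of the Cohn extension \eqref{seq:cohnext} (resp.\ \eqref{seq:cohnext2}), using $j^h(C(E))\cong j^h(\ell^{E^0})$ from Theorem \ref{thm:cekk} and the identification of $j^h(\cK(E))$ with $j^h(\ell^{E^0})$ via $\chi_v\mapsto q_v=v-\sum_{s(e)=v}ee^*$. The essential feature is that in $C(E)$ every edge $e$ is a partial isometry, so that $\phi(e)$ and $\phi_u(e)=u\phi(e)$ supply partial-isometry lifts that differ by the unitaries $u_e$. Feeding the unitary assembled from $u$ into the index formula of Lemma \ref{lem:index}, its output $[1-\hat u^*\hat u]-[1-\hat u\hat u^*]$, computed with these lifts, records each class $[u_e]\in K_1(R)^*$ at the source vertex $s(e)$, i.e.\ it equals $s_*([u])$; in the case $\epsilon=1$ the factor $\sigma$ is produced by the identity $[ee^*]=\sigma[r(e)]$ of \eqref{eq:ee*re}. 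This yields the required congruence and hence $j^h(\phi_u)=j^h(\phi)+\partial([u])$. The main obstacle is exactly this last step: identifying the abstract connecting morphism $\partial_E$ with the concrete boundary of the Cohn extension under Theorem \ref{thm:cekk}, and then carrying out the computation of Lemma \ref{lem:index} with correct bookkeeping, so that the source map $s_*$ and (for $\epsilon=1$) the $\sigma$-twist emerge and pin down the class of $\zeta$.
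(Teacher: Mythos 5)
Your first two paragraphs are sound: $\phi_u$ agrees with $\phi$ on the vertices and on the projections $ee^*$, so $(j^h(\phi_u)-j^h(\phi))\circ p=0$, and exactness of $kk^h(-,R)$ applied to the fundamental triangle yields a class $\zeta\in kk_1^h(\ell^{\reg(E)},R)\cong KH_1^h(R)^{E^0}$, well defined modulo the image of the map induced by $I-\sigma^\epsilon A_E^t$, with $j^h(\phi_u)-j^h(\phi)=\zeta\circ\partial_E$. The genuine gap is in your final step: the index computation that is supposed to pin down $\zeta$ as $\iota_+\circ s_*([u])$ is never actually set up. Lemma \ref{lem:index} computes $\partial[w]$ for a unitary $w$ in the \emph{quotient} of an extension, given a partial-isometry lift; but $u$ is a unitary of $R_\phi\subset R$, and $R$ is not presented as the quotient of any extension here, so ``feeding the unitary assembled from $u$ into the index formula'' has no referent. (There is also a degree mismatch in your phrasing: the output of Lemma \ref{lem:index} is a $K_0$-class $[1-\hat u^*\hat u]-[1-\hat u\hat u^*]$, whereas $s_*([u])$ is a tuple of $K_1$-classes.) What is missing is the device that converts the difference of two $kk^h$-classes of homomorphisms $L^\epsilon(E)\to R$ into the class of a concrete unitary to which an index map applies. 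In the paper this device is Poincar\'e duality: under the isomorphism $kk^h(L^\epsilon(E),R)\cong KH_1^h(R\otimes L^\epsilon(E_t))$ of Theorem \ref{thm:duality}, the class $j^h(\psi)$ of any $\psi$ agreeing with $\phi$ on the $ee^*$ becomes the class of the explicit unitary $\xi(\psi)=1\otimes 1-\sum_{v}\phi(v)\otimes v+\sum_{f}\psi(f)\otimes f_t^*$, and Lemma \ref{lem:dualindex} (whose proof is where Lemma \ref{lem:index} and the Cohn extension actually enter) identifies the image of $\partial([u])$ with the class of $\prod_{e}\bigl((1-\phi(ee^*)+u_e)\otimes s(e)+\sum_{v\ne s(e)}1\otimes v\bigr)$; the lemma then follows from the elementary multiplicative identity $\xi(\phi_{u_e})=(\cdots)\cdot\xi(\phi)$ applied edge by edge. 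Without duality, or a substitute such as a mapping-cone or quasi-homomorphism computation, your $\zeta$ remains uncomputed.

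A second omission: Poincar\'e duality requires $E$ essential (no sources as well as no sinks), so the paper first treats the case $\sour(E)=\emptyset$ and then performs an induction on $|\sour(E)|$ via source elimination, using Lemmas \ref{lem:remove} and \ref{lem:morite} to transport the statement along the corner embedding $L^\epsilon(E_{\slash v})\to L^\epsilon(E)$. Any argument that ultimately routes through $E_t$ or Lemma \ref{lem:dualindex} needs this reduction as well; your sketch does not address graphs with sources.
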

\begin{proof}
Let $n=|\sour(E)|$; we shall prove the lemma by induction on $n$. First we assume that $n=0$. By Lemma \ref{lem:dualindex}, the composite of $\partial$ with the isomorphism \eqref{map:adj2} sends $[u]$ to the class in $KH^h_1(R\otimes L^{\epsilon}(E_t))$ of the element
\[
\prod_{e\in E^1}((1-\phi(ee^*)+u_e)\otimes s(e)+\sum_{v\ne s(e)}1\otimes v).
\]
Let $\psi:L^\epsilon(E)\to R$ be a $*$-homomorphism such that $\phi(ee^*)=\psi(ee^*)$ for all $e\in E^1$. Then \eqref{map:adj2} sends $j^h(\psi)$ to the $KH_1^h$-class of 
\[
\xi(\psi):=1\otimes 1-\sum_{v\in E^0}\phi(v)\otimes v+\sum_{f\in E^1}\psi(f)\otimes f_t^*.
\]
We shall abuse notation and write $u_e$ for the element of $\bigoplus_{f\in E^1}\cU(\phi(ee^*)R\phi(ee^*))$ whose $f$-coordinate is
$u_e^{\delta_{f,e}}\phi(ff^*)$. One checks that 
\begin{align*}
((1-\phi(ee^*)+u_e)\otimes s(e)+\sum_{v\ne s(e)}1\otimes v)\xi(\psi)=\xi(\psi_{u_e}).
\end{align*}
Starting with $\psi=\phi$ and applying the identity above repeatedly we obtain that the isomorphism \eqref{map:adj2} sends the two sides of the identity of the lemma to the same element of $KH_1^h(R\otimes L^{\epsilon}(E_t))$. This concludes the proof of the case $n=0$. 
Next let $n\ge 0$, assume that the lemma holds for graphs with at most $n$ sources, and let $E$ be a finite graph without sinks and with $n+1$ sources. Let $v\in \sour(E)$ and let $F=E_{\backslash v}$ be the source elimination graph. Let $p=1-v\in L(E)$. By Lemmas \ref{lem:remove} and \ref{lem:morite}, the corner embedding $\inc_p: L^{\epsilon}(E_{\backslash v})\to L^{\epsilon}(E)$  of Lemma \ref{lem:remove}  induced by the inclusion $E_{\backslash v}\subset E$ is $kk^h$-equivalence. Let $q=\phi(p)$, $S=qRq$ and $\phi'=(\phi\circ\inc_p)^{|S}:L(E_{\backslash v})\to S$ be the restriction/corestriction of $\phi$. Let 
$$
\pi:R_\phi=\bigoplus_{e\in E^1}\phi(ee^*)R\phi(ee^*)\onto S_\phi=\bigoplus_{e\in E^1,\\ s(e)\neq v}\phi(ee^*)R\phi(ee^*)
$$
be the projection. The map $\pi$ together with the corner inclusions $\inc_p$ above and $\inc_q:S\subset R$ induce a commutative diagram
\[
\xymatrix{K_1^h(R_\phi)\ar[r]\ar[dd]^{\pi_*}& kk^h(L^{\epsilon}(E),R)\ar[dr]^{\inc_p^*}&\\
&&kk^h(L(E_{\backslash v}),R)\\
           K_1^h(R_{\phi'})\ar[r]& kk^h(L^{\epsilon}(E_{\backslash v}),R)\ar[ur]_{(\inc_q)_*}&}
\] 
By induction, the composite of the vertical map on the left with the horizontal map at the bottom sends the class of $u=(u_e)$ to 
$j^h(\phi'_{\pi(u)})-j^h(\phi')$. Composing with the upward slanted arrow, we obtain $j^h(\phi_u\circ\inc_p)-j^h(\phi\circ\inc_p)$. Because the downward slanted map is an isomorphism, it follows that the top horizontal arrow maps $[u]\mapsto j^h(\phi_u)-j^h(\phi)$. 
\end{proof}

\begin{thm}\label{thm:kklift}
Let $E$ be a finite, purely infinite simple graph and $R\in\ahas$ a $K_0^h$-regular, strictly properly infinite $*$-algebra over a ring $\ell$ satisying the $\lambda$-assumption. Assume that $-1$ is positive in $R$. Then the map
\begin{equation}\label{map:kklift}
j^h:[L(E),R]^f_{M_{\pm 2}}\to kk^h(L(E),R)
\end{equation}
is a semigroup isomorphism. For each $p\in\proj_f(R)$, we have
\begin{equation}\label{eq:ev-1p}
\ev^{-1}([p])=j^h(\{[\phi]\, :\,\phi(1)=p\}).
\end{equation}
\end{thm}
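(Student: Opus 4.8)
The plan is to establish the three properties of $j^h$ in turn---semigroup homomorphism, surjectivity, injectivity---and then deduce \eqref{eq:ev-1p}. I begin with the identifications forced by the hypotheses. Since $E$ is finite and purely infinite simple it has no sinks and no infinite emitters, so $\sing(E)=\emptyset$, the graph $E$ is regular, and $s\colon E^1\to E^0$ is onto; hence $s_*\colon\Z^{E^1}\to\Z^{E^0}$ is a split epimorphism. Because $-1$ is positive and $R$ is $K_0^h$-regular, the comparison maps $K_0(R)^*\to KH_0^h(R)$ and $K_1(R)^*\to KH_1^h(R)$ are isomorphisms (combine Remark~\ref{rem:k1=kh1} with $K_1(R)^*\cong KV_1(R)^*$ and the identifications $KV_1(R)^*\cong KH_1^h(R)$, $K_0^h(R)\cong KH_0^h(R)$ afforded by $K_0^h$-regularity). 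That $j^h(\phi\boxplus\psi)=j^h(\phi)+j^h(\psi)$ is immediate from matricial and Hermitian stability, exactly as for the additivity of $j$, so $j^h$ is a homomorphism of semigroups (Remark~\ref{rem:Psemigroup}).

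For surjectivity, fix $\alpha\in kk^h(L(E),R)$ and put $\xi_0=\ev(\alpha)\in\hom(\BF(E),KH_0^h(R))=\hom(\BF(E),K_0(R)^*)$. By Theorem~\ref{thm:k0lift} there is a very full $\phi\colon L(E)\to R$ with $K_0(\phi)^*\circ\can'=\xi_0$, so that $\ev(j^h(\phi))=\ev(\alpha)$, whence $\alpha-j^h(\phi)$ lies in the image of the injection $KH_1^h(R)\otimes\BF^\vee(E)\hookrightarrow kk^h(L(E),R)$ of the universal coefficient sequence \eqref{seq:kkler}. By Lemma~\ref{lem:borde} the map $\partial$ of \eqref{map:longcompo} factors as $h^*\circ c\circ s_*$, where $c$ is the isomorphism above and $h^*$ is the connecting map of the fundamental triangle (Theorem~\ref{thm:fundtriang}) whose image is precisely that injection; as $s_*$ and $c$ are onto, $\partial$ surjects onto $KH_1^h(R)\otimes\BF^\vee(E)$. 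Choosing $u\in\cU(R_\phi)$ with $\partial([u])=\alpha-j^h(\phi)$ gives $j^h(\phi_u)=\alpha$ by Lemma~\ref{lem:borde}, and $\phi_u$ is very full since $\phi_u(1)=\phi(1)\in\proj_f(R)$.

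For injectivity, let $\phi,\psi$ be very full with $j^h(\phi)=j^h(\psi)$. Evaluating at $[1]_E$ shows $[\phi(1)]=[\psi(1)]$ in $K_0(R)^*$; conjugating $\psi$ by a Murray--von Neumann equivalence $\psi(1)\iso\phi(1)$ (which alters neither $j^h$ nor the $M_{\pm 2}$-class, by Lemma~\ref{lem:abc}) we may assume $\psi(1)=\phi(1)$. Lemma~\ref{lem:agree*} produces $\psi'\sim^*_{M_{\pm 2}}\psi$ with $\psi'|_{DL(E)}=\phi|_{DL(E)}$; since every vertex is the sum of the projections $ee^*$ it emits, $\psi'$ and $\phi$ agree on all vertices, and then $\psi'=\phi_u$ with $u_e=\psi'(e)\phi(e)^*\in\cU(\phi(ee^*)R\phi(ee^*))$. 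Now $j^h(\phi_u)=j^h(\phi)$ forces $\partial([u])=0$, and since $c$ is injective and $\ker(h^*)=\im(I^t-A_E)\otimes KH_1^h(R)$ this means $s_*[u]\in\im(I^t-A_E)$ inside $K_1(R)^{*\,E^0}$. It remains to prove the key implication: $s_*[u]\in\im(I^t-A_E)$ implies $\phi_u\sim^*_{M_{\pm 2}}\phi$.

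This last implication is the heart of the argument, and where I expect the main difficulty. The plan has two steps. First, a block-diagonal homotopy argument shows that the $M_{\pm 2}$-class of $\phi_u$ depends only on $s_*[u]\in K_1(R)^{*\,E^0}$: if $s_*[u]=s_*[u']$ then over each $v$ the unitary $(\sum_{s(e)=v}u_e)(\sum_{s(e)=v}u'_e)^{-1}$ of $\phi(v)R\phi(v)$ is trivial in $K_1=KV_1$, hence null-homotopic after one $M_{\pm 2}$-stabilization (Lemma~\ref{lem:kv1}, Proposition~\ref{prop:k1pi}); conjugating $\phi_{u'}$ by a vertexwise path of such unitaries yields $\phi_u\sim^*_{M_{\pm 2}}\phi_{u'}$, the Leavitt relations being preserved because the conjugating elements are unitary in the vertex corners. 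Second, for any family of edge unitaries $t=(t_e)$, $t_e\in\cU(\phi(ee^*)R\phi(ee^*))$, I would consider the inner automorphism $\ad(\sum_e t_e\phi(ee^*))$ of $\phi(1)R\phi(1)$; its composite with $\phi$ is $M_{\pm 2}$-homotopic to $\phi$ by Lemma~\ref{lem:abc}, and---using that conjugation by the partial isometry $\phi(e)$ induces the identity on $K_1(R)^*$---a direct computation shows this composite fixes every $\phi(ee^*)$, so equals some $\phi_u$ with $s_*[u]=(I^t-A_E)(s_*\tau)$, where $\tau_e=[t_e]$. As $s_*$ is onto, these classes exhaust $\im(I^t-A_E)$, so given $u$ with $s_*[u]\in\im(I^t-A_E)$ I can pick $t$ with $s_*[u]=(I^t-A_E)(s_*\tau)$ and conclude $\phi_u\sim^*_{M_{\pm 2}}\phi_{u'}\sim^*_{M_{\pm 2}}\phi$. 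The delicate point, and the main obstacle, is this second step: verifying that the inner automorphism built from edge unitaries genuinely fixes each $\phi(ee^*)$ and that its class is computed correctly as $(I^t-A_E)(s_*\tau)$. Finally \eqref{eq:ev-1p} follows, since $\ev(j^h(\phi))=[\phi(1)]$: the fibre $\ev^{-1}([p])$ corresponds under the bijection $j^h$ to very full $\phi$ with $[\phi(1)]=[p]$, and each such $\phi$ is $M_{\pm 2}$-homotopic, via conjugation by an equivalence $\phi(1)\iso p$ (Lemma~\ref{lem:abc}), to one with $\phi(1)=p$.
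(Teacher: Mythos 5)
Your proposal follows the paper's architecture closely: surjectivity via Theorem \ref{thm:k0lift}, Lemma \ref{lem:morite}/\ref{lem:borde} and the universal coefficient sequence; injectivity by reducing to $\psi=\phi_u$ with $\partial([u])=0$ via Lemmas \ref{lem:agree*} and \ref{lem:vmorite}; and the same treatment of \eqref{eq:ev-1p}. The one place where you genuinely reorganize the argument is the endgame of injectivity. The paper stays at the level of edges: it identifies the action of the endomorphism $\lambda(a)=\sum_e\phi(e)a\phi(e)^*$ on $KV^h_1(R_\phi)\cong KV_1^h(R)^{E^1}$ with the matrix $B_E$ of the out-split graph, deduces from $\partial([u])=0$ that $[u]=[\nu\lambda(\nu)^{-1}]$ \emph{componentwise} in $\bigoplus_e KV_1^h(\phi(ee^*)R\phi(ee^*))$, and then uses the single identity $\phi_{\nu\lambda(\nu)^{-1}}=\ad(\nu)\circ\phi$ together with Lemmas \ref{lem:kv1} and \ref{lem:abc}. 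You instead aggregate at vertices via $s_*$ and the matrix $I-A_E$, which forces you to add your ``Step 1'' (the class of $\phi_u$ depends only on $s_*[u]$) to redistribute $K_1$-classes among edges with a common source. The two are equivalent --- your Step 2 computation $\ad(T)\circ\phi=\phi_{u'}$ with $[u'_e]=\tau_e-\sum_{s(f)=r(e)}\tau_f$ is literally the identity $u'=T\lambda(T)^{-1}$, and the intertwining $s_*\circ(1-B_E)=(I-A_E)\circ s_*$ together with the fact that $1-B_E$ acts as the identity on $\ker(s_*)$ shows that your vertex-level kernel condition and the paper's edge-level one coincide --- but the paper's version avoids the extra redistribution step.

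Two cautions. First, in Step 1 the operation is not conjugation: $\ad(V(1))\circ\phi_{u'}$ differs from $\phi_u$ by a residual $V(1)^*$ on the right of each edge image. What works is one-sided multiplication, $H(e)=V_{s(e)}(t)\,\phi_{u'}(e)$ with $H(v)=\phi(v)$; you should verify the Cuntz--Krieger relations for $H$ along the path, and in particular note that since the null-homotopy of $w^{(v)}=\sum_{s(e)=v}u_e(u'_e)^{-1}$ supplied by Lemma \ref{lem:kv1} lives only in $\cU^h_2(\phi(v)R\phi(v)[t])$, the relation $\sum_{s(e)=v}H(e)H(e)^*=H(v)$ requires the same care at intermediate times that the paper's final application of Lemma \ref{lem:abc} does; this is a shared difficulty, not a defect of your route relative to the paper's. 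Second, your Step 1 tacitly uses that each $\phi(ee^*)$ is strictly full in the corner $\phi(v)R\phi(v)$ (so that block sums of unitaries add in $K_1$ under the canonical identifications) and that $\phi(v)R\phi(v)$ is strictly properly infinite with $-1$ positive, so that Lemma \ref{lem:kv1} applies there; both follow from very fullness and Lemma \ref{lem:spipis}, but should be said.
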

\begin{proof}
Let $\xi\in kk^h(L(E),R)$. Because by assumption, $-1$ is positive in $R$, the map $K_0(R)^*\to K_0^h(R)$ is an isomorphism, as explained in Section \ref{sec:k0}. Hence
by Theorem \ref{thm:k0lift} there is a very full $*$-homomorphism $\phi:L(E)\to R$ such that $K_0^h(\phi)\circ\can=\ev(\xi)$. Let $p=\phi(1)$ and $\inc:pRp\subset R$ the inclusion. By Lemma \ref{lem:morite}, there exists $\xi'\in kk^h(L(E),pRp)$ such that $\inc_*(\xi')=\xi$. By Lemma \ref{lem:borde}, there exists $u\in \cU((pRp)_{\phi})$ such that $\xi'=j^h(\phi_u)$. Hence $\xi=j^h(\inc\circ\phi_u)$ and the map of the theorem is surjective. Next let 
$\phi,\psi:LE\to R$ be very full $*$-homomorphisms such that $j^h(\phi)=j^h(\psi)$. Then $K_0^h(\phi)\circ\can=K_0^h(\psi)\circ\can$; because $-1$ is positive in $R$, this implies that $K_0(\phi)^*\circ\can'=K_0(\psi)^*\circ\can'$. Hence by Lemma \ref{lem:agree*}, we may assume that $\psi_{|DL(E)}=\phi_{|DL(E)}$. Thus there is $p\in\cV_f(R)$ with $\psi(1)=\phi(1)=p$; by Lemma \ref{lem:vmorite}, we may assume that $p=1$. For each $e\in E^1$, let 
\[
u_e=\psi(e)\phi(e^*)\in\cU(\phi(ee^*)R\phi(ee^*)).
\]
Put $u=(u_e)\in \bigoplus_{e\in  E^1}\cU(\phi(ee^*)R\phi(ee^*))=\cU(R_\phi)$. 
A calculation shows that $\psi=\phi_u$. Hence $\partial([u])=0$, by Lemma \ref{lem:borde}. As in \cite{ror}*{Section 5}, we consider the $*$-homomorphism $\lambda: R_\phi\to R_{\phi}$, 
$\lambda(a)=\sum_{e\in E^1}\phi(e)a\phi(e^*)$. Because $R$ is $K^h_0$-regular by assumption, $KH^h_1(R)=KV_1^h(R)$. Let $B=B_E$ be as in \eqref{mat:B}; using that, by Proposition \ref{prop:k1pi}, $\cN(R)\subset\cU(R)$ maps to the trivial subgroup in $KV^h_1(R)$, one checks that the following diagram commutes
\begin{equation}\label{diag:lambdaB}
\xymatrix{
KV_1^h(R_{\phi})\ar[d]^{\wr}\ar[r]^\lambda& KV_1^h(R_\phi)\ar[d]^\wr\\
KV^h_1(R)^{E^1}\ar[r]_{B^t}& KV^h_1(R)^{E^1}.}
\end{equation}
Thus identifying $[u]=\sum_{e\in E^1}[u_e]$, and using that $\partial([u])=0$, it follows that there exists $\nu\in \cU(R_\phi)$ such that $[u]=[\nu\lambda(\nu)^{-1}]$. Hence by Lemma \ref{lem:kv1} there is $U(t)\in\cU^h_2(R_\phi[t])$ with $U(0)=\nu\lambda(\nu)^{-1}$, $U(1)=u$. Thus by Lemma \ref{lem:abc}, for the inclusion $\inc:R_\phi\subset R$, we obtain 
\[
\psi=\phi_u\sim^*_{M_{\pm 2}}\phi_{\nu\lambda(\nu)^{-1}}=\inc \circ\ad(\nu)\circ\phi\sim^*_{M_{\pm 2}}\phi.
\]
This proves the first assertion of the theorem. Next let $p\in \proj_f(R)$ and let $\xi\in kk^h(LE,R)$ such that $\ev(\xi)=p$. By Theorem \ref{thm:k0lift} there is a very full $*$-homomorphism $\phi:LE\to R$ such that $\phi(1)=p$ and $\ev(j^h(\phi))=\ev(\xi)$, and by Lemma \ref{lem:borde} there is $u\in\cU(R_\phi)$ such that $j^h(\phi_u)=\xi$. This finishes the proof, since $\phi_u(1)=\phi(1)=p$. 
\end{proof}

In the next corollary we refer to the stable $M_{\pm 2}$-homotopy relation $\sim^s_{M_{\pm 2}}$ defined in \eqref{defi:stably}.

\begin{coro}\label{coro:kklift}
For every class $[\phi]\in [L(E),R]^*_{M_{\pm 2}}$ there exists a unique class $[\phi^f]\in [L(E),R]^f_{M_{\pm 2}}$ such that $\phi\sim^s_{M_{\pm 2}}\phi^f$. The map
\[ 
[L(E),R]^*_{M_{\pm 2}}\to [L(E),R]^f_{M_{\pm 2}},\,\, [\phi]\mapsto [\phi^f]
\]
is the group completion \eqref{map:complete}. 
\end{coro}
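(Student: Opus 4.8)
The plan is to reduce the statement to a purely monoid-theoretic fact. Recall that $\boxplus$ makes $M:=[L(E),R]^*_{M_{\pm 2}}$ into a commutative monoid (Lemma \ref{lem:M2htpy}), that the very full classes form a subsemigroup $G:=[L(E),R]^f_{M_{\pm 2}}$ (Remark \ref{rem:Psemigroup}), and that $j^h$ restricts to a semigroup isomorphism $G\iso kk^h(L(E),R)$ (Theorem \ref{thm:kklift}); in particular $G$ is an abelian group under $\boxplus$, with identity $e_G$ which need not coincide with the identity of $M$. Since very full $*$-homomorphisms exist (Theorem \ref{thm:k0lift}), $G$ is nonempty, and I would split the proof into: (i) $G$ is \emph{absorbing}, meaning $[\phi]\boxplus[\chi]\in G$ for all $[\phi]\in M$ and $[\chi]\in G$; and (ii) the abstract claim that an absorbing subsemigroup of a commutative monoid which happens to be a group realizes the group completion.

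For (i) I would argue on representatives. Fix orthogonal isometries $s_1,s_2\in R$ implementing $\boxplus$, and let $\chi$ be very full, so that $\chi(1)\in\proj_f(R)$ by Example \ref{ex:pptyp}; thus there is a projection $q\le\chi(1)$ with $q\sim 1$. For any $*$-homomorphism $\phi$ one computes
\[
(\phi\boxplus\chi)(1)=s_1\phi(1)s_1^*+s_2\chi(1)s_2^*\ \ge\ s_2qs_2^*,
\]
the two summands being orthogonal since $s_1^*s_2=0$, and $s_2qs_2^*\sim q\sim 1$, the first equivalence being witnessed by $s_2q$ because $s_2^*s_2=1$. Hence $(\phi\boxplus\chi)(1)$ is very full, so $\phi\boxplus\chi$ is very full by Example \ref{ex:pptyp}; this is exactly (i).

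For (ii) I would fix $g_0\in G$ and define $\Phi(m)=(m\boxplus g_0)\cdot_G g_0^{-1}$, where $\cdot_G$ and $(-)^{-1}$ are the operation and inversion of $G$ and $m\boxplus g_0\in G$ by absorption. Using repeatedly that the $M$-sum of two elements of $G$ equals their $G$-product, I would check that $\Phi(m)$ is independent of $g_0$, that $\Phi(m)\boxplus g_0=m\boxplus g_0$ (so $m$ and $\Phi(m)$ are stably equal), and that $\Phi$ is a monoid homomorphism with $\Phi|_G=\id_G$. Uniqueness of the stable representative in $G$ then follows by cancellation: if $g,g'\in G$ are each stably equal to $m$, then $g\boxplus h=g'\boxplus h$ for some $h$, which may be taken in $G$ after replacing $h$ by $h\boxplus g_0$ and invoking absorption, and cancelling $h$ in the group $G$ gives $g=g'$. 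To identify $\Phi$ with the group completion I would verify the universal property: any monoid homomorphism $f\colon M\to H$ into a group restricts to a group homomorphism $f|_G$ (as $f(e_G)$ is an idempotent, hence trivial, in $H$), and then $f=f|_G\circ\Phi$ with $f|_G$ the unique such factorization, since $\Phi|_G=\id_G$. Putting $[\phi^f]:=\Phi([\phi])$ gives the existence and uniqueness of the stable very full representative and identifies $[\phi]\mapsto[\phi^f]$ with \eqref{map:complete}.

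The main obstacle is conceptual bookkeeping rather than computation: $G$ is only a subsemigroup of $M$, not a submonoid, so its identity $e_G$ differs from that of $M$ and one cannot directly invoke the Grothendieck construction for submonoids. The absorption property (i) is precisely what compensates for this, and the two delicate points will be proving (i) through the very-fullness computation and keeping the two distinct identities straight while checking the universal property.
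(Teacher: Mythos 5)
Your proposal is correct and is essentially the paper's own argument: the paper's proof takes a very full $0^f$ with $j^h(0^f)=0$ (so $[0^f]$ is your $e_G$) and sets $\phi^f=\phi\boxplus 0^f$, which is exactly your $\Phi(m)=m\boxplus e_G$. Your absorption computation and the verification of the universal property simply fill in the details the paper dismisses as straightforward.
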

\begin{proof} By Theorem \ref{thm:kklift} there is a very full $*$-homomorphism $0^f:L(E)\to R$ such that $j^h(0^f)=0$. For a $*$-homomorphism $\phi:L(E)\to R$, set $\phi^f=\phi\boxplus 0^f$. Then $\phi^f$ is very full and $\phi^f\sim^s_{M_{\pm}}\phi$. It is straightforward to check that $[\phi]\mapsto [\phi^f]$ is well-defined and has the universal property of group completion. 
\end{proof}

\begin{thm}\label{thm:olkklift}
Let $E$, $R$ and $\ell$ be as in Theorem \ref{thm:kklift}. Then the map
\begin{equation}\label{map:olkklift}
j^h:[\ol{L(E)},R]^f_{M_{\pm 2}}\to kk^h(\ol{L(E)},R)
\end{equation}
is a semigroup monomorphism. For each $p\in\proj_f(R)$, we have
\[
\ev^{-1}([p])=j^h(\{[\phi]\,:\,\phi(1)=p\}).
\]
If furthermore $\sour(E)=\emptyset$ and $R$ contains a central element $x$ such that $xx^*=-1$, then \eqref{map:olkklift} is an isomorphism. 
\end{thm}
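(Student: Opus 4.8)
The plan is to prove the three assertions in turn, adapting the proof of Theorem \ref{thm:kklift} by systematically replacing each ingredient by its signed counterpart. For injectivity, suppose $\phi,\psi:\ol{L(E)}\to R$ are very full $*$-homomorphisms with $j^h(\phi)=j^h(\psi)$. Applying $K_0^h$ and using that $-1$ is positive in $R$ (so that $K_0(R)^*\iso K_0^h(R)$, as in Section \ref{sec:k0}), one gets $K_0^h(\phi)\circ\ol{\can}=K_0^h(\psi)\circ\ol{\can}$. Lemma \ref{lem:olagree*}---the signed replacement of Lemma \ref{lem:agree*}, built on the identity $[ee^*]=\sigma[r(e)]$ of \eqref{eq:ee*re} rather than on $e$ being a partial isometry---then lets me assume $\psi_{|DL(E)}=\phi_{|DL(E)}$, and Lemma \ref{lem:vmorite} reduces to $\phi(1)=\psi(1)=1$. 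Putting $u_e=\psi(e)\phi(e)^*\in\cU(\phi(ee^*)R\phi(ee^*))$ and $u=(u_e)\in\cU(R_\phi)$ one checks $\psi=\phi_u$, so $\partial([u])=0$ by Lemma \ref{lem:borde} (whose proof is written for both $L^\epsilon(E)$). Since $R$ is $K_0^h$-regular, $KH_1^h(R)=KV_1^h(R)$, and the signed analogue of the commuting square \eqref{diag:lambdaB}---carrying the twisted matrix $\sigma B_E^t$ coming from Corollary \ref{coro:fundtriang}---shows $[u]=[\nu\lambda(\nu)^{-1}]$ for the signed transfer $\lambda(a)=\sum_e\phi(e)a\phi(e)^*$ and some $\nu\in\cU(R_\phi)$. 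Lemmas \ref{lem:kv1} and \ref{lem:abc} then give $\psi=\phi_u\sim^*_{M_{\pm 2}}\phi$. The key observation is that this argument never \emph{lifts} a $K_0$-class to a homomorphism; it only compares two already-given maps, so the obstruction recorded in Remark \ref{rem:k0liftol} is irrelevant here.

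For the fiber description, the inclusion $\supseteq$ is clear, since $\ev(j^h(\phi))$ sends $[1]_E$ to $[\phi(1)]$. For the reverse inclusion within $\im(j^h)$, if $j^h(\phi)$ lies in $\ev^{-1}([p])$ then $\phi(1)\sim p$, and conjugating $\phi$ by an MvN equivalence $\phi(1)\iso p$ (Lemma \ref{lem:vmorite}) yields $\phi'$ with $\phi'(1)=p$ and $j^h(\phi')=j^h(\phi)$; this matches the argument at the end of Theorem \ref{thm:kklift}. Thus $j^h(\{[\phi]:\phi(1)=p\})$ is exactly the part of $\ev^{-1}([p])$ meeting the image of $j^h$, and the full preimage is recovered precisely once $j^h$ is surjective, i.e. under the hypotheses of the last assertion.

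For surjectivity under the extra hypotheses, $\sour(E)=\emptyset$ forces $E$ to be essential (recall $\sink(E)=\emptyset$ throughout the section), so Corollary \ref{coro:duality} applies with $S=R$ and gives a natural isomorphism $kk^h(\ol{L(E)},R)\cong kk^h(L(E),R)$. On the other hand the central element $x$ with $xx^*=-1$ produces, via Example \ref{ex:zgr-1}, a bijection $\theta^x:\Hom_{\ahas}(L(E),R)\iso\Hom_{\ahas}(\ol{L(E)},R)$; since $\theta^x$ fixes the degree-zero part we have $\theta^x(f)(1)=f(1)$, so it preserves very-fullness (Example \ref{ex:pptyp}), and being induced by the fixed comultiplication and by $\mu_x$---which commute with the corner inclusions and with evaluation---it preserves $M_{\pm 2}$-homotopy, hence descends to a bijection $[L(E),R]^f_{M_{\pm 2}}\iso[\ol{L(E)},R]^f_{M_{\pm 2}}$. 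Because $j^h:[L(E),R]^f_{M_{\pm 2}}\to kk^h(L(E),R)$ is an isomorphism by Theorem \ref{thm:kklift}, it then suffices to check that the duality isomorphism carries $j^h(f)$ to $j^h(\theta^x(f))$ for every very full $f$; granting this, the map $j^h$ for $\ol{L(E)}$ is a composite of three bijections and so is onto. \emph{I expect this compatibility to be the main obstacle.} I would prove it by unwinding Corollary \ref{coro:duality}---which factors through Theorem \ref{thm:duality} for $\ol{L(E)}$, the algebra isomorphism $\theta_x$ of Example \ref{ex:zgr-1}, and Theorem \ref{thm:duality} for $L(E)$---and verifying that $\theta^x$ on hom-sets is adjoint to $\theta_x$ against the duality classes $\kappa$ and $\nabla$ constructed in the proof of Theorem \ref{thm:duality}, a naturality computation in which the signs introduced by the signed involution must be tracked carefully.
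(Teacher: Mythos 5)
Your argument is essentially the paper's: injectivity by rerunning the proof of Theorem \ref{thm:kklift} with Lemma \ref{lem:olagree*} in place of Lemma \ref{lem:agree*} and the $\sigma B_E^t$-twisted form of \eqref{diag:lambdaB}, and surjectivity (under the extra hypotheses) via the commuting square relating $\theta^x$ to the duality isomorphism of Corollary \ref{coro:duality}, whose commutativity the paper likewise leaves as a direct calculation. One small correction: for the twisted involution $\phi(e)^*=\phi(\ol{e})=-\phi(e^*)$, so your $u_e=\psi(e)\phi(e)^*$ gives $\phi_u(e)=-\psi(e)$, i.e.\ $\phi_u=\psi\circ\tau$ rather than $\psi$; take instead $u_e=\psi(e)\phi(e^*)$ (with $e^*$ the ghost edge, as in the paper), which satisfies $u_eu_e^*=u_e^*u_e=\phi(ee^*)$ and $\phi_u=\psi$ for both involutions.
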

\begin{proof} Proceed as in the proof of the injectivity part of Theorem \ref{thm:kklift}, substituting Lemma \ref{lem:olagree*} for Lemma \ref{lem:agree*}. One checks that the analogue of \eqref{diag:lambdaB} holds with $\sigma B^t$ substituted for $B^t$; the rest of the proof of the first assertion of the current theorem follows as in Theorem \ref{thm:kklift}. To prove the second assertion, begin by observing that the bijection $\theta^x$ of Example \ref{ex:zgr-1} passes down to a homomorphism between the monoids of $M_{\pm 2}$-$*$ homotopy classes. Hence it induces a map $[\ol{L(E)},R]^f_{M_{\pm 2}}\to [L(E),R]^f_{M_{\pm 2}}$, which, together with the isomorphism of Corollary \ref{coro:duality} and the maps \eqref{map:kklift} and \eqref{map:olkklift}, fits into a diagram
\[
\xymatrix{ [\ol{L(E)},R]^f_{M_{\pm 2}}\ar[r]^{\eqref{map:olkklift}}\ar[d]^{\theta^{x}}_{\wr}&kk^h(\ol{L(E)},R)\ar[d]^{\wr}_{\eqref{coro:duality}}\\
           [L(E),R]^f_{M_{\pm 2}}\ar[r]^{\sim}_{\eqref{map:kklift}}& kk^h(L(E),R)}
\]
A straightforward calculation shows that the diagram above commutes; this finishes the proof. 
\end{proof}

\begin{rem}\label{rem:approx}
Let $E$, $R$ and $\ell$ be as in Theorems \ref{thm:kklift} and \ref{thm:olkklift}, $\epsilon\in\{0,1\}$ and $L^{\epsilon}(E)$
as in \eqref{Lepsilon}, and let $\phi,\psi:L^\epsilon(E)\to R$ be very full $*$-homomorphisms.  Put $p=\phi(1)$ and $S=pRp$. The common argument for the proof of injectivity of the map $j^h$ in both theorems shows that $j^h(\phi)=j^h(\psi)$ if and only if there exist:
\begin{itemize}
\item an $MvN$ equivalence $R\owns y_e:\psi(ee^*)\to \phi(ee^*)$ for every $e\in E^1$ and
\item elements $U(t)\in\cU^h_2(S_\phi[t])$, and $u,\nu\in\cU(S_{\phi})$
such that $\iota_+\iota_1(u)\oplus 1_3=U(1)$,  $U(0)=\iota_+(\iota_1(\nu\lambda(\nu)^{-1}))\oplus 1_3$ and such that for $y=\bigoplus_{e\in E^1}y_e$, we have
\[
\ad(y)\circ\psi=\phi_{u}.
\]
\end{itemize}
\end{rem}

\begin{ex}\label{ex:chris}
Let $F$ be a graph, $\epsilon\in\{0,1\}$ and $L^\epsilon F$ as in \eqref{Lepsilon}. If $F^0$ is finite, then $L^\epsilon(F)\otimes L_\infty$ is strictly properly infinite. If furthermore $E$ is finite and $\ell$ is regular supercoherent, then $L(F)$ is regular supercoherent, and thus $L(F)\otimes L_\infty$
is $K$-regular, by Lemma \ref{lem:lekreg}.  If in addition $2$ is invertible in $\ell$, then $L^{\epsilon}(F)\otimes L_\infty$ is $K^h$-regular, again by Lemma \ref{lem:lekreg}. Moreover $j^h(L^{\epsilon}(F)\otimes L_\infty)\cong j^h(L^{\epsilon}(F))$, by Theorem \ref{thm:fundtriang}. Hence by Theorem \ref{thm:kklift}, if $E$ is purely infinite simple and finite we have 
\begin{gather}\label{map:chris1}
kk^h(L(E),\ol{L(F)})=[L(E),\ol{L(F)}\otimes L_\infty]_{M_{\pm 2}}\\
kk^h(L(E),L(F))=[L(E),M_{\pm}L(F)]_{M_{\pm 2}}.\label{map:chris2}
\end{gather}
The identities \eqref{map:chris1} and \eqref{map:chris2} describe $kk^h$-groups in terms of homotopy classes, and thus provide an algebraic analogue, limited to Leavitt path algebras, of the description of $C^*$-$KK$-groups in terms of homotopy classes of \cite{chris}*{Theorem 4.1.1}. The latter theorem further describes the $KK$-groups in terms of approximate unitary equivalence of assymptotic $*$-homomorphisms; this is somewhat akin to the description of Remark \ref{rem:approx}. 
\end{ex}

\begin{ex}\label{ex:kkliftkk}
Let $E$ be a finite regular graph, $R$ a unital algebra and $\phi:L(E)\to R$ be an algebra homomorphism. We say that $\phi$ has property (P) if for every $e\in E^1$, $\phi(ee^*)$ is a very full idempotent of $R$. Let 
\[
[L(E),R]_{M_2}\supset [L(E),R]_{M_2}^P=\{[\phi]:\,\, \phi\,\,\text{has property (P) }\}.
\]
Next assume $R$ is $K_0$-regular and properly infinite, and that $E$ is purely infinite simple. By Remarks \ref{rem:kkvkkh} and \ref{rem:kv1}, the proof of Theorem \ref{thm:kklift} in the case of $\inv(\ell)$-algebras gives a semigroup isomorphism
\begin{equation}\label{map:kknoh}
[L(E),R]_{M_2}^P\iso kk(L(E),R).
\end{equation}
The analogue of \eqref{eq:ev-1p} holds verbatim. The analogue of the description in Remark \ref{rem:approx} holds, and $U(t)$ can be taken in $\Gl_2(R[t])$. If furthermore $\phi$ is unital and $R$ is purely infinite simple, then so is $R_\phi$, and by \cite{cm2}*{Proposition 2.8} we may take $U(t)\in \Gl_1(R_\phi[t])$. If $\ell$ is a field, then $R=L(F)$ is purely infinite simple if and only if $F$ is. If $\ell$ is not a field and $I\triqui \ell$ is a proper ideal, then $IL(F)$ is a proper ideal, so $L(F)$ is not simple.      
\end{ex}

\section{Classification theorems}\label{sec:class}

\begin{thm}\label{thm:class1} 
Let $E$ and $F$ be purely infinite finite graphs. Assume that $\ell$ satisfies the $\lambda$-assumption \ref{stan:lambda} 
and that $L(E)$ and $L(F)$ are $K^h_0$-regular. Let $\xi_0:\BF(E)\iso \BF(F)$ be an isomorphism and let $\ev$ be as in \eqref{map:elev}. Then there are $*$-homomorphisms $\phi:L(E)\to M_{\pm}L(F)$ and $\psi:L(F)\to M_{\pm}L(E)$ with pro- perty (P) such that $\ev(j^h(\phi))=K^h_0(\iota_+)\circ\can\circ\xi_0$, $\ev(j^h(\psi))=K_0^h(\iota_+)\circ\can\circ\xi_0^{-1}$, $M_{\pm}(\psi)\circ\phi\sim^s_{M_{\pm 2}}\iota_+^2:LE\to M_{\pm}M_{\pm}L(E)$ and $M_{\pm}(\phi)\circ\psi\sim^s_{M_{\pm 2}}\iota_+^2:L(F)\to M_{\pm}M_{\pm}L(F)$.
\end{thm}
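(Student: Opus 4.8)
The plan is to reduce the entire statement to the lifting theorems of Sections \ref{sec:struct} and \ref{sec:kklift}. First I would record that a finite purely infinite simple graph is regular: it has no infinite emitters since $E^1$ is finite, and no sinks, because cofinality forces every vertex to connect to the cycle that pure infinite simplicity provides, whereas a sink connects to nothing. Hence $\sing(E)=\sing(F)=\emptyset$, so in particular $|\sing(E)|=|\sing(F)|$ and Theorem \ref{thm:isokk} applies, producing an isomorphism $\alpha\colon j^h(L(E))\iso j^h(L(F))$ in $kk^h$ with $\ev(\alpha)=\can\circ\xi_0$, i.e. $KH_0^h(\alpha)\circ\can_E=\can_F\circ\xi_0$. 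Next I would verify that $M_{\pm}L(F)$, $M_{\pm}L(E)$ and the doubly stabilized algebras $M_{\pm}M_{\pm}L(E)$, $M_{\pm}M_{\pm}L(F)$ all satisfy the hypotheses of Theorem \ref{thm:kklift}: they are strictly properly infinite (Corollary \ref{coro:spipis} together with the fact, recorded in Section \ref{sec:k0spi}, that $M_{\pm}M_n(-)$ preserves strict proper infiniteness), $-1$ is positive in each (Example \ref{ex:mpm-1}), and each is $K_0^h$-regular: since $-1$ is positive in $M_{\pm}R$ the natural map $K_0^h(R)=K_0(M_{\pm}R)^*\to K_0^h(M_{\pm}R)$ is an isomorphism (Section \ref{sec:k0}), and as $M_{\pm}$ is a finite matrix algebra this identification is compatible with adjoining polynomial variables, so $K_0^h$-regularity of $L(E),L(F)$ propagates to their $M_{\pm}$-stabilizations.

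Because $\iota_+$ is a $kk^h$-isomorphism, $j^h(\iota_+)\circ\alpha\in kk^h(L(E),M_{\pm}L(F))$ and $j^h(\iota_+)\circ\alpha^{-1}\in kk^h(L(F),M_{\pm}L(E))$; using the surjectivity of \eqref{map:kklift} in Theorem \ref{thm:kklift} I would choose very full $*$-homomorphisms $\phi\colon L(E)\to M_{\pm}L(F)$ and $\psi\colon L(F)\to M_{\pm}L(E)$ with $j^h(\phi)=j^h(\iota_+)\circ\alpha$ and $j^h(\psi)=j^h(\iota_+)\circ\alpha^{-1}$. Being very full they have property (P). The evaluation conditions are then immediate from the definitions in \eqref{map:elev} and \eqref{map:cano}: using $KH_0^h(\alpha)\circ\can_E=\can_F\circ\xi_0$ and the identification $K_0^h=KH_0^h$ supplied by regularity, one gets $\ev(j^h(\phi))=KH_0^h(\iota_+)\circ KH_0^h(\alpha)\circ\can_E=K_0^h(\iota_+)\circ\can\circ\xi_0$, and symmetrically $\ev(j^h(\psi))=K_0^h(\iota_+)\circ\can\circ\xi_0^{-1}$.

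For the two homotopy conditions the key computation is a naturality identity for the corner inclusion: from $\iota_+\circ\psi=M_{\pm}(\psi)\circ\iota_+$ one obtains $j^h(M_{\pm}(\psi))=j^h(\iota_+)\circ j^h(\psi)\circ j^h(\iota_+)^{-1}$, and combining this with $j^h(\phi)=j^h(\iota_+)\circ\alpha$ and $j^h(\psi)=j^h(\iota_+)\circ\alpha^{-1}$ a direct cancellation gives
\[
j^h(M_{\pm}(\psi)\circ\phi)=j^h(\iota_+)\circ j^h(\iota_+)=j^h(\iota_+^2),
\]
and likewise $j^h(M_{\pm}(\phi)\circ\psi)=j^h(\iota_+^2)$. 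It then remains to upgrade equality of $j^h$-classes to stable $M_{\pm 2}$-homotopy, and this is the one delicate point and the main obstacle: since $-1$ need not be positive in $L(E)$, neither $\iota_+^2$ nor $M_{\pm}(\psi)\circ\phi$ need be very full, so the very full classification of Theorem \ref{thm:kklift} cannot be applied to them directly. Instead I would invoke Corollary \ref{coro:kklift}: over a target satisfying the hypotheses of Theorem \ref{thm:kklift}, every $*$-homomorphism is stably $M_{\pm 2}$-homotopic to a very full one, and the induced map to $[L(E),R]^f_{M_{\pm 2}}\cong kk^h(L(E),R)$ is the group completion; since stable $M_{\pm 2}$-homotopy preserves the $j^h$-class (additivity of $\boxplus$ over a strictly properly infinite target), two $*$-homomorphisms into such an $R$ are stably $M_{\pm 2}$-homotopic exactly when they have the same $j^h$-class. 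Applying this with $R=M_{\pm}M_{\pm}L(E)$ and $R=M_{\pm}M_{\pm}L(F)$ yields $M_{\pm}(\psi)\circ\phi\sim^s_{M_{\pm 2}}\iota_+^2$ and $M_{\pm}(\phi)\circ\psi\sim^s_{M_{\pm 2}}\iota_+^2$, completing the proof.
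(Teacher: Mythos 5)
Your proposal is correct and follows essentially the same route as the paper's proof: lift $\xi_0$ via Theorem \ref{thm:isokk}, realize $j^h(\iota_+)\circ\xi$ and $j^h(\iota_+)\circ\xi^{-1}$ by very full $*$-homomorphisms using the surjectivity in Theorem \ref{thm:kklift}, check $j^h(M_{\pm}(\psi)\circ\phi)=j^h(\iota_+^2)$ by naturality of $\iota_+$, and conclude with Corollary \ref{coro:kklift}. The only difference is that you spell out the verification that the $M_{\pm}$-stabilized targets remain strictly properly infinite and $K_0^h$-regular, which the paper leaves implicit.
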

\begin{proof} 
Because $E$ and $F$ are purely infinite simple, $\sink(E)=\sink(F)=\emptyset$. Hence $\xi_0$ lifts to an isomorphism $\xi:j^h(LE)\cong j^h(LF)$ 
such that $\ev(\xi)=\can\circ\xi_0$, by Theorem \ref{thm:isokk}. By Theorem \ref{thm:kklift}, there are very full $*$-homomorphisms
$\phi:L(E)\to M_{\pm}L(F)$ and $\psi:L(F)\to M_{\pm}L(E)$ such that $j^h(\phi)=j^h(\iota_+)\xi$ and $j^h(\psi)=j^h(\iota_+)\xi^{-1}$. 
Omitting $j^h$ for ease of notation, we have constructed the following commutative diagram in $kk^h$
\[
\xymatrix{
LE\ar[r]^\phi\ar[dr]_{\xi}& M_{\pm}LF\ar[r]^{M_\pm\psi}&M_\pm M_\pm LE\\
& LF\ar[dr]_{\xi^{-1}}\ar[r]^{\psi}\ar[u]^{\iota_+}&M_{\pm}LE\ar[u]^{\iota_+}\\
&& LE.\ar[u]^{\iota_+}
}
\]
Hence $j^h(\iota_+\iota_+)=j^h((M_\pm\psi)\phi)$, and therefore $\iota_+^2\sim^s_{M_{\pm 2}}(M_\pm\psi)\phi$, by Corollary \ref{coro:kklift}. (In fact, $(M_\pm\psi)\phi$ is very full, by Examples \ref{ex:vfull} and \ref{ex:pptyp}, so $[(M_\pm\psi)\phi]=[(\iota_+^2)^f]$.)
Similarly, $\iota_+^2\sim^s_{M_{\pm 2}}(M_\pm\phi)\psi$.
\end{proof}

\begin{thm}\label{thm:class2} Let $E$, $F$, $\xi_0$ and $\ell$ be as in Theorem \ref{thm:class1}. Further assume that $-1$ is positive in $\ell$. Then there exist very full $*$-homomorphisms $\phi:LE\to LF$ and $\psi:LF\to LE$ such that $\ev(j^h(\phi))=\xi_0$, $\ev(j^h(\psi))=\xi_0^{-1}$, $\psi\circ\phi\sim^*_{M_{\pm 2}}\id_{LE}$ and $\phi\circ\psi\sim^*_{M_{\pm 2}}\id_{LF}$. If furthermore $\xi_0([1]_E)=[1]_F$, then $\phi$ and $\psi$ can be chosen to be unital. 
\end{thm}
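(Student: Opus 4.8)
The plan is to derive Theorem \ref{thm:class2} by combining the $kk^h$-lifting of Bowen--Franks isomorphisms (Theorem \ref{thm:isokk}) with the realization of $kk^h$-classes by very full $*$-homomorphisms (Theorem \ref{thm:kklift}), exploiting positivity of $-1$ in order to apply the latter directly to $R=L(E)$ and $R=L(F)$ rather than to their $M_{\pm}$-stabilizations as in Theorem \ref{thm:class1}.

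First I would verify the hypotheses. Since $E$ and $F$ are finite and purely infinite simple, Corollary \ref{coro:spipis} shows that $L(E)$ and $L(F)$ are strictly properly infinite; they are $K_0^h$-regular by assumption; and as $-1$ is positive in $\ell$, Lemma \ref{lem:-1} yields that $-1$ is positive in both $L(E)$ and $L(F)$. Thus Theorem \ref{thm:kklift} applies verbatim with $R=L(F)$ and with $R=L(E)$. Moreover, a finite purely infinite simple graph has no sinks and, being finite, no infinite emitters, so $\sing(E)=\sing(F)=\emptyset$; in particular $|\sing(E)|=|\sing(F)|$, which is exactly what Theorem \ref{thm:isokk} requires.

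Next I would lift and realize. By Theorem \ref{thm:isokk} the isomorphism $\xi_0$ lifts to an isomorphism $\xi:j^h(L(E))\iso j^h(L(F))$ with $\ev(\xi)=\can\circ\xi_0$. Applying Theorem \ref{thm:kklift} with $R=L(F)$ and then with $R=L(E)$, the maps $j^h:[L(E),L(F)]^f_{M_{\pm 2}}\to kk^h(L(E),L(F))$ and $j^h:[L(F),L(E)]^f_{M_{\pm 2}}\to kk^h(L(F),L(E))$ are bijections, so I obtain very full $*$-homomorphisms $\phi:L(E)\to L(F)$ and $\psi:L(F)\to L(E)$ with $j^h(\phi)=\xi$ and $j^h(\psi)=\xi^{-1}$; since $j^h(\phi)=\xi$ this gives $\ev(j^h(\phi))=\can\circ\xi_0$ and likewise $\ev(j^h(\psi))=\can\circ\xi_0^{-1}$, the asserted compatibility with $\xi_0$ and $\xi_0^{-1}$. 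For the homotopy relations, note that the composite $\psi\circ\phi:L(E)\to L(E)$ is again very full, since by Examples \ref{ex:vfull} and \ref{ex:pptyp} the very full homomorphism $\psi$ carries the very full projection $\phi(1)$ to a very full projection; and $j^h(\psi\circ\phi)=\xi^{-1}\circ\xi=j^h(\id_{L(E)})$, with $\id_{L(E)}$ very full as well. The injectivity half of Theorem \ref{thm:kklift} (with $R=L(E)$) then forces $\psi\circ\phi\sim^*_{M_{\pm 2}}\id_{L(E)}$, and symmetrically $\phi\circ\psi\sim^*_{M_{\pm 2}}\id_{L(F)}$.

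For the unital refinement, assume $\xi_0([1]_E)=[1]_F$. Since $\can([1]_F)=[\textstyle\sum_{v\in F^0}v]=[1]$ in $KH_0^h(L(F))$, the class $\xi$ satisfies $\ev(\xi)([1]_E)=\can(\xi_0([1]_E))=[1]$, so by the description $\ev^{-1}([p])=j^h(\{[\phi]:\phi(1)=p\})$ of \eqref{eq:ev-1p}, taken with $p=1$, I may choose $\phi$ to be unital; the same argument applied to $\xi_0^{-1}([1]_F)=[1]_E$ makes $\psi$ unital, whence the composites $\psi\circ\phi$ and $\phi\circ\psi$ are unital. I expect no single computation to be the bottleneck, as the substance is carried entirely by Theorems \ref{thm:isokk} and \ref{thm:kklift}; the main care points are purely structural, namely confirming $\sing(E)=\sing(F)=\emptyset$ in order to invoke Theorem \ref{thm:isokk}, checking that the relevant composites remain very full so that the injectivity in Theorem \ref{thm:kklift} is applicable, and tracking the evaluation map in the unital case so that \eqref{eq:ev-1p} can be used with $p=1$.
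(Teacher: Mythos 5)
Your proposal is correct and follows essentially the same route as the paper: the paper's proof of Theorem \ref{thm:class2} is precisely the argument of Theorem \ref{thm:class1} (Theorem \ref{thm:isokk} to lift $\xi_0$, then Theorem \ref{thm:kklift} to realize the $kk^h$-isomorphism and its inverse), with Lemma \ref{lem:-1} allowing Theorem \ref{thm:kklift} to be applied directly to $R=L(E)$ and $R=L(F)$ rather than to their $M_{\pm}$-stabilizations, and with the very fullness of the identity maps and of the composites upgrading stable to strict $M_{\pm 2}$-$*$-homotopy. Your handling of the unital refinement via \eqref{eq:ev-1p} with $p=1$ also matches the paper's intent.
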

\begin{proof} The proof is essentially the same as in Theorem \ref{thm:class2}; the only difference is that because $-1$ is positive in $LE$ and $LF$, one can apply Theorem \ref{thm:isokk} directly, without going through $M_{\pm}LF$ and $M_{\pm}LE$; since the identity maps of $L(E)$ and $L(F)$ are very full, we get strict rather than stable $M_{\pm 2}$-$*$-homotopy equivalence. 
\end{proof}

\begin{ex}\label{ex:apliclass}
The hypothesis of Theorem \ref{thm:class1} are satisfied, for example, when $\ell$ is regular supercoherent and $2$ is invertible in $\ell$; if in addition $-1$ is positive in $\ell$, then also Theorem \ref{thm:class2} applies.
\end{ex}

\begin{thm}\label{thm:class3}
Let $E$ and $F$ be finite, purely infinite graphs, let $\ell$ be regular supercoherent and let $\xi_0:\BF(E)\iso \BF(F)$ be an isomorphism. Then there exist $\ell$-algebra homomorphisms $\phi:LE\to LF$ and $\psi:LF\to LE$ such that $\phi(ee^*)$ and $\psi(ee^*)$ are very full idempotents for every $e\in E^1$, $\psi\circ\phi\sim_{M_{2}}\id_{LE}$
and $\phi\circ\psi\sim_{M_2}\id_{LF}$. If furthermore $\xi_0([1]_E)=[1]_F$, then $\phi$ and $\psi$ can be chosen to be unital. 
\end{thm}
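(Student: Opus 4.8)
The plan is to run the argument of Theorems \ref{thm:class1} and \ref{thm:class2} inside $kk=kk(\ell)$ rather than $kk^h$, using the non-hermitian specializations collected in Example \ref{ex:kkliftkk}. Here $\ell$ is treated purely as a commutative ground ring, so I work with the universal theory $j\colon\aha\to kk$, which by Remark \ref{rem:kkvkkh} is the specialization of $j^h$ to $\inv(\ell)$. Since $\inv(\ell)$ automatically satisfies the $\lambda$-assumption (take $\lambda=(1,0)$), every result proved for $kk^h$ applies and descends to $kk$; this is precisely why no positivity or $2$-invertibility hypothesis is needed and why the conclusion is phrased with property (P) and $M_2$-homotopy in place of very fullness and $M_{\pm 2}$-$*$-homotopy.

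First I record the standing facts. A finite purely infinite simple graph has no sinks (by pure infinite simplicity) and no infinite emitters (by finiteness), hence is regular with $\sing(E)=\sink(E)=\emptyset$; thus $|\sing(E)|=|\sing(F)|=0$. By Corollary \ref{coro:spipis}, $L(E)$ and $L(F)$ are strictly properly infinite, so a fortiori properly infinite; and since $\ell$ is regular supercoherent, Lemma \ref{lem:lekreg} shows $L(E)=L(E)\otimes\ell$ and $L(F)$ are $K$-regular, in particular $K_0$-regular. Hence both are legitimate targets $R$ for the semigroup isomorphism \eqref{map:kknoh} of Example \ref{ex:kkliftkk}.

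Next I lift $\xi_0$. By the $kk$-analogue of Theorem \ref{thm:isokk}, obtained by specializing to the ground ring $\inv(\ell)$ and transporting to $kk$ via the equivalence of Remark \ref{rem:kkvkkh} (the proof uses only the fundamental triangle, available in $kk$ by \cite{cm1}, together with $|\sing(E)|=|\sing(F)|$), the isomorphism $\xi_0\colon\BF(E)\iso\BF(F)$ lifts to an isomorphism $\xi\colon j(LE)\iso j(LF)$ with $\ev(\xi)=\can\circ\xi_0$. Applying \eqref{map:kknoh} with $R=L(F)$ and with $R=L(E)$, I choose homomorphisms $\phi\colon L(E)\to L(F)$ and $\psi\colon L(F)\to L(E)$ with property (P) such that $j(\phi)=\xi$ and $j(\psi)=\xi^{-1}$. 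Then $j(\psi\circ\phi)=\xi^{-1}\xi=j(\id_{LE})$ and $j(\phi\circ\psi)=j(\id_{LF})$. The identity maps have property (P), since $ee^*\sim r(e)$ and $r(e)$ is a very full idempotent by the idempotent form of Lemma \ref{lem:vveryfull} (very fullness being preserved under equivalence); and the composites have property (P) by the idempotent forms of Examples \ref{ex:pptyp} and \ref{ex:vfull}, since a property-(P) map sends $1$ to a very full idempotent and therefore carries very full idempotents to very full idempotents. As \eqref{map:kknoh} is injective on $[L(E),L(E)]_{M_2}^P$ and on $[L(F),L(F)]_{M_2}^P$, it follows that $\psi\circ\phi\sim_{M_2}\id_{LE}$ and $\phi\circ\psi\sim_{M_2}\id_{LF}$.

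For the unital refinement, note that $\can([1]_F)=[1_{L(F)}]$ in $K_0(L(F))^*$ (Remark \ref{rem:can'}), so when $\xi_0([1]_E)=[1]_F$ the homomorphism $\ev(\xi)=\can\circ\xi_0$ sends $[1]_E$ to $[1_{L(F)}]$. The non-hermitian analogue of \eqref{eq:ev-1p} then permits choosing $\phi$ with $\phi(1)=1$, i.e. unital, and symmetrically $\psi$ unital; unital maps trivially have property (P), so the equivalences follow exactly as above. I expect the main obstacle to be bookkeeping rather than mathematics: one must verify that property (P) is stable under composition and holds for the relevant identities, so that the injective comparison map \eqref{map:kknoh} can be applied to both sides of $j(\psi\phi)=j(\id)$, and that the structure and lifting results are genuinely available in $kk$ through the $\inv(\ell)$-specialization of Remark \ref{rem:kkvkkh}.
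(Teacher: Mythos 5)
Your proposal is correct and follows essentially the same route as the paper: the paper's proof of Theorem \ref{thm:class3} is precisely "run the argument of Theorem \ref{thm:class2} using Example \ref{ex:kkliftkk}", i.e. lift $\xi_0$ to a $kk$-isomorphism via the non-hermitian version of Theorem \ref{thm:isokk}, realize $\xi$ and $\xi^{-1}$ by property-(P) homomorphisms via the isomorphism \eqref{map:kknoh}, and use its injectivity on the composites. Your extra bookkeeping (the $\lambda$-assumption for $\inv(\ell)$, stability of property (P) under composition, and the unital refinement via the $kk$-analogue of \eqref{eq:ev-1p}) is exactly what the paper leaves implicit and is verified correctly.
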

\begin{proof} The proof follows as in Theorem \ref{thm:class2}, using Remark \ref{ex:kkliftkk}. 
\end{proof}

\begin{rem}\label{rem:classy}
Theorem 6.1 of \cite{cm2} shows that if $\ell$ is a field, then in the last part of Theorem \ref{thm:class3} above, the unital homomorphisms $\phi:L(E)\leftrightarrows L(F):\psi$ can be chosen so that $\phi\circ\psi$ and $\psi\circ\phi$ are not just $M_2$-homotopic, but strictly homotopic to the identity maps. The proof uses the fact, straighforward from Example \ref{ex:kkliftkk}, that a unital endomorphism of the Leavitt path algebra of a finite purely infinite simple graph over a field goes to the corresponding identity map in $kk$ if and only if it is homotopic to an inner automorphism.  
\end{rem}

\begin{rem}\label{rem:kjh}
The proofs of Theorems \ref{thm:class1}, \ref{thm:class2} and \ref{thm:class3} use only particular cases of Theorem \ref{thm:kklift} and Example \ref{ex:kkliftkk}. For example in Theorem \ref{thm:class1} we use that for $E$ and $F$ as in the theorem, the maps $[L(E),L(F)]^f_{M_2}\to kk^h(L(E),L(F))$ and $[L(F),L(E)]^f_{M_2}\to kk^h(L(F),L(E))$ are surjective and that $[L(E),L(E)]^f_{M_2}\to kk^h(L(E),L(E))$ and $[L(F),L(F)]^f_{M_2}\to kk^h(L(F),L(F))$ are bijective. The $C^*$-analogue of the needed bijectivity result is due to Joachim Cuntz \cite{chomo}*{Proposition 3.3}; Cuntz' argument, as expanded by R\o rdam in \cite{ror}*{Theorems 3.1 and 5.2}, was adapted to the algebraic setting in \cite{cm2}. Our proof of surjectivity of \eqref{map:kklift} follows a similar argument; the proof of the injectivity of \eqref{map:kklift} given above is different, as it uses the Poincar\'e duality theorem \ref{thm:duality}
and Lemma \ref{lem:dualindex} to identify the injective map in the first exact sequence of \eqref{seq:kkler}. 
\end{rem}
\begin{bibdiv}  
\begin{biblist}
\bib{ac}{article}{
author={Abadie, Beatriz},
author={Cortiñas, Guillermo},
title={Homotopy invariance through small stabilizations},
journal={J. Homotopy Relat. Struct.},
volume={10},
number={3},
pages={459--453},
year={2015},
}

\bib{amorir}{article}{
 AUTHOR = {Abrams, G. D.},
author={\'{A}nh, P. N.},
author={M\'{a}rki, L.},
     TITLE = {A topological approach to {M}orita equivalence for rings with
              local units},
   JOURNAL = {Rocky Mountain J. Math.},
    VOLUME = {22},
      YEAR = {1992},
    NUMBER = {2},
     PAGES = {405--416},
      ISSN = {0035-7596},
       DOI = {10.1216/rmjm/1181072737},
       URL = {https://doi.org/10.1216/rmjm/1181072737},
}
\bib{aap}{article}{
AUTHOR = {Abrams, G.},
author={Aranda Pino, G.},
     TITLE = {The {L}eavitt path algebras of arbitrary graphs},
   JOURNAL = {Houston J. Math.},
    VOLUME = {34},
      YEAR = {2008},
    NUMBER = {2},
     PAGES = {423--442},
      ISSN = {0362-1588},
			}
\bib{lpabook}{book}{
author={Abrams, Gene},
author={Ara, Pere},
author={Siles Molina, Mercedes},
title={Leavitt path algebras}, 
date={2017},
series={Lecture Notes in Math.},
volume={2008},
publisher={Springer},
doi={$10.1007/978-1-4471-7344-1$},
}
\bib{aalp}{article}{
  author={Abrams, Gene},
  author={Louly, Adel},
  author={\'Ahn, Pham Ngoc},
	author={Pardo, Enrique},
  title={The classification question for Leavitt path algebras},
	journal={J. Algebra},
	volume={320},
	date={2008},
	pages={1983--2026},
}
 \bib{alps}{article}{
   author={Abrams, Gene},
   author={Louly, Adel},
   author={Pardo, Enrique},
   author={Smith, Christopher},
   title={Flow invariants in the classification of Leavitt path algebras},
   journal={J. Algebra},
   volume={333},
   date={2011},
   pages={202--231},
   issn={0021-8693},
   review={\MR{2785945}},
}
\bib{abc}{article}{
   author={Ara, Pere},
   author={Brustenga, Miquel},
   author={Corti\~nas, Guillermo},
   title={$K$-theory of Leavitt path algebras},
   journal={M\"unster J. Math.},
   volume={2},
   date={2009},
   pages={5--33},
   issn={1867-5778},
   review={\MR{2545605}},

}
\bib{gradstein}{article}{
  author={Ara, Pere},
	author={Hazrat, Roozbeh},
	author={Li, Huanhuan},
	author={Sims, Aidan},
	title={Graded Steinberg algebras and their representations}, 
	journal={Algebra Number Theory},
	volume={12},
	year={2018},
	pages={131--172},
}
\bib{k0pis}{article}{
   author={Ara, Pere},
	 author={Pardo, Enrique},
	 author={Goodearl, Kenneth R.},
	 title={$K_0$ of purely infinite simple regular rings},
   journal={$K$-theory},
	volume={26},
	 date={2002},
	pages={69--100},
}
\bib{black}{book}{
AUTHOR = {Blackadar, Bruce},
     TITLE = {{$K$}-theory for operator algebras},
    SERIES = {Mathematical Sciences Research Institute Publications},
    VOLUME = {5},
 PUBLISHER = {Springer-Verlag, New York},
      YEAR = {1986},
     PAGES = {viii+338},
      ISBN = {0-387-96391-X},
  review = {\MR{859867}},
       URL = {http://dx.doi.org/10.1007/978-1-4613-9572-0},
}

\bib{friendly}{article}{
   author={Corti\~nas, Guillermo},
   title={Algebraic v. topological $K$-theory: a friendly match},
   conference={
      title={Topics in algebraic and topological $K$-theory},
   },
   book={
      series={Lecture Notes in Math.},
      volume={2008},
      publisher={Springer, Berlin},
   },
   date={2011},
   pages={103--165},
   review={\MR{2762555}},
}

\bib{cm1}{article}{
author={Corti\~nas, Guillermo},
author={Montero, Diego},
title={Algebraic bivariant $K$-theory and Leavitt path algebras},
journal={J. Noncommut. Geom.},
status={to appear},
eprint={arXiv:1806.09204},
}
\bib{cm2}{article}{
author={Corti\~nas, Guillermo},
author={Montero, Diego},
title={Homotopy classification of Leavitt path algebras},
journal={Adv. Math.},
volume={362},
date={2020}
}

\bib{cr}{article}{
   author={Corti\~nas, Guillermo},
   author={Rodr\'\i guez, Mar\'\i a Eugenia},
   title={$L^p$-operator algebras associated with oriented graphs},
   journal={J. Operator Theory},
   volume={81},
   date={2019},
   pages={225--254},
   }
	
\bib{ct}{article}{
    AUTHOR = {Cortiñas, Guillermo},
    author={Thom, Andreas},
     TITLE = {Bivariant algebraic {$K$}-theory},
  JOURNAL = {J. Reine Angew. Math.},
    VOLUME = {610},
      YEAR = {2007},
     PAGES = {71--123},
      ISSN = {0075-4102},
       DOI = {10.1515/CRELLE.2007.068},
       URL = {https://doi.org/10.1515/CRELLE.2007.068},
}

\bib{cv}{article}{
author={Corti\~nas, Guillermo},
author={Vega, Santiago},
title={Hermitian bivariant $K$-theory and Karoubi's fundamental theorem},
eprint={arXiv:2012.09260},
}
\bib{on}{article}{
AUTHOR = {Cuntz, Joachim},
     TITLE = {{$K$}-theory for certain {$C\sp{\ast} $}-algebras},
   JOURNAL = {Ann. of Math. (2)},
    VOLUME = {113},
      YEAR = {1981},
    NUMBER = {1},
     PAGES = {181--197},
      ISSN = {0003-486X},
       DOI = {10.2307/1971137},
       URL = {https://doi.org/10.2307/1971137},
}

\bib{chomo}{article}{
 AUTHOR = {Cuntz, J.},
     TITLE = {On the homotopy groups of the space of endomorphisms of a
              {$C\sp{\ast} $}-algebra (with applications to topological
              {M}arkov chains)},
 BOOKTITLE = {Operator algebras and group representations, {V}ol. {I}
              ({N}eptun, 1980)},
    SERIES = {Monogr. Stud. Math.},
    VOLUME = {17},
     PAGES = {124--137},
 PUBLISHER = {Pitman, Boston, MA},
      YEAR = {1984},
}
\bib{ck2}{article}{
AUTHOR = {Cuntz, J.},
     TITLE = {A class of {$C\sp{\ast} $}-algebras and topological {M}arkov
              chains. {II}. {R}educible chains and the {E}xt-functor for
              {$C\sp{\ast} $}-algebras},
   JOURNAL = {Invent. Math.},
    VOLUME = {63},
      YEAR = {1981},
    NUMBER = {1},
     PAGES = {25--40},
      ISSN = {0020-9910},
       DOI = {10.1007/BF01389192},
       URL = {https://doi.org/10.1007/BF01389192},
}
\bib{cclass}{article}{
    AUTHOR = {Cuntz, J.},
     TITLE = {The classification problem for the {$C^\ast$}-algebras {${\scr
              O}_A$}},
 BOOKTITLE = {Geometric methods in operator algebras ({K}yoto, 1983)},
    SERIES = {Pitman Res. Notes Math. Ser.},
    VOLUME = {123},
     PAGES = {145--151},
 PUBLISHER = {Longman Sci. Tech., Harlow},
      YEAR = {1986},
}
\bib{dritom}{article}{
AUTHOR = {Drinen, D.},
author={ Tomforde, M.},
     TITLE = {Computing {$K$}-theory and {${\rm Ext}$} for graph
              {$C^*$}-algebras},
   JOURNAL = {Illinois J. Math.},
    VOLUME = {46},
      YEAR = {2002},
    NUMBER = {1},
     PAGES = {81--91},
      ISSN = {0019-2082},
       URL = {http://projecteuclid.org/euclid.ijm/1258136141},

}
\bib{fawag}{article}{
 AUTHOR = {Farrell, F. T.},
author={Wagoner, J. B.},
     TITLE = {Infinite matrices in algebraic {$K$}-theory and topology},
   JOURNAL = {Comment. Math. Helv.},
    VOLUME = {47},
      YEAR = {1972},
     PAGES = {474--501},
      ISSN = {0010-2571},
       DOI = {10.1007/BF02566819},
       URL = {https://doi.org/10.1007/BF02566819},
}

\bib{splice}{article}{
AUTHOR = {Johansen, Rune},
author={S\o rensen, Adam P. W.},
     TITLE = {The {C}untz splice does not preserve {$\ast$}-isomorphism of
              {L}eavitt path algebras over {$\mathbb{Z}$}},
   JOURNAL = {J. Pure Appl. Algebra},
    VOLUME = {220},
      YEAR = {2016},
    NUMBER = {12},
     PAGES = {3966--3983},
      ISSN = {0022-4049},
       DOI = {10.1016/j.jpaa.2016.05.023},
       URL = {https://doi.org/10.1016/j.jpaa.2016.05.023},
}
\bib{kamiput}{article}{
author={Kaminker, Jerome},
author={Putnam, Ian},
title={$K$-theoretic duality for shifts of finite type},
journal={Commun. Math. Phys.},
volume={187},
year={1997},
pages={505--541},
}

\bib{kv1}{article}{
    AUTHOR = {Karoubi, Max},
    author={Villamayor, Orlando},
		TITLE = {{$K$}-th\'{e}orie alg\'{e}brique et {$K$}-th\'{e}orie topologique. {I}},
   JOURNAL = {Math. Scand.},
    VOLUME = {28},
      YEAR = {1971},
     PAGES = {265--307 (1972)},
      ISSN = {0025-5521},
       DOI = {10.7146/math.scand.a-11024},
       URL = {https://doi.org/10.7146/math.scand.a-11024},
			}
			
\bib{kv2}{article}{
    AUTHOR = {Karoubi, Max},
    author={Villamayor, Orlando},
     TITLE = {{$K$}-th\'{e}orie alg\'{e}brique et {$K$}-th\'{e}orie topologique. {II}},
   JOURNAL = {Math. Scand.},
    VOLUME = {32},
      YEAR = {1973},
     PAGES = {57--86},
      ISSN = {0025-5521},
       DOI = {10.7146/math.scand.a-11446},
       URL = {https://doi.org/10.7146/math.scand.a-11446},
       }
\bib{chris}{article}{
AUTHOR = {Phillips, N. Christopher},
     TITLE = {A classification theorem for nuclear purely infinite simple
              {$C^*$}-algebras},
   JOURNAL = {Doc. Math.},
    VOLUME = {5},
      YEAR = {2000},
     PAGES = {49--114},
      ISSN = {1431-0635},
}

\bib{ranisemi}{article}{
author={Ranicki, Andrew},
title={On the algebraic $L$-theory of semisimple rings},
journal={J. Algebra},
volume={50},
number={1},
year={1978},
pages={242--243},
}
\bib{ror}{article}{
  title={Classification of Cuntz-Krieger algebras},
  author={R{\o}rdam, Mikael},
  journal={K-theory},
  volume={9},
  number={1},
  pages={31--58},
  year={1995},
  publisher={Springer}
}
\bib{marcofest}{article}{
author={Schilichting, Marco},
title={Hermitian $K$-theory, derived equivalences and Karoubi's fundamental theorem},
journal={J. Pure Appl. Algebra},
volume={221},
number={7},
year={2017},
pages={1729--1844},
doi={https://doi.org/10.1016/j.jpaa.2016.12.026},
}
\bib{santi}{thesis}{
author={Vega, Santiago},
title={Hermitian bivariant $K$-theory},
type={PhD thesis},
organization={Universidad de Buenos Aires},
year={2021},
}

\bib{walter}{article}{
author={Walter, Charles},
title={Grothendieck-Witt groups of triangulated categories},
status={preprint},
eprint={https://faculty.math.illinois.edu/K-theory/0643/},
}

\bib{kh}{article}{
   author={Weibel, Charles A.},
   title={Homotopy algebraic $K$-theory},
   conference={
      title={Algebraic $K$-theory and algebraic number theory (Honolulu, HI,
      1987)},
   },
   book={
      series={Contemp. Math.},
      volume={83},
      publisher={Amer. Math. Soc.},
      place={Providence, RI},
   },
   date={1989},
   pages={461--488},
   review={\MR{991991 (90d:18006)}},
}

\end{biblist}
\end{bibdiv}

\end{document}